\theoremstyle{plain}
\newtheorem{theorem}{Theorem}
\newtheorem{proposition}{Proposition}
\newtheorem{lemma}{Lemma}
\newtheorem{conjecture}{Conjecture}
\theoremstyle{remark} 
\newtheorem*{remark}{Remark}
\theoremstyle{definition} 
\newtheorem*{definition}{Definition}
\newcommand{\calA}{\mathcal{A}}
\newcommand{\calB}{\mathcal{B}}
\newcommand{\calC}{\mathcal{C}}
\newcommand{\calD}{\mathcal{D}}
\newcommand{\calE}{\mathcal{E}}
\newcommand{\calF}{\mathcal{F}}
\newcommand{\calG}{\mathcal{G}}
\newcommand{\bbH}{\mathbb{H}}
\newcommand{\bbN}{\mathbb{N}}
\newcommand{\bbR}{\mathbb{R}}
\newcommand{\bbS}{\mathbb{S}}
\newcommand{\bbU}{\mathbb{U}}
\newcommand{\bbZ}{\mathbb{Z}}
\newcommand{\ep}{\varepsilon}
\newcommand{\Z}{\mathbb{Z}}
\newcommand{\g}{\gamma}
\newcommand{\N}{\mathbb{N}}
\newcommand{\R}{\mathbb{R}}
\renewcommand{\P}{\mathbb{P}}
\newcommand{\Ps}[1]{\P[\bbS][1/0]{#1}} 
\newcommand{\lr}[1][]{\xleftrightarrow{\: #1 \:\:}}
\renewcommand{\P}[1][]{
  \def\argI{{#1}}
  \PRelay
}
\newcommand{\PRelay}[2][]{
  \phi_{\argI}^{#1} \left [ #2 \right ]
}
\newcommand{\PP}[1][]{
  \def\argI{{#1}}
  \PPRelay
}
\newcommand{\PPRelay}[1][]{
  \phi_{\argI}^{#1}
}
\renewcommand{\cal}{\mathcal}
\newcommand{\cl}[1]{\Cl[small]{#1}}
\title{Continuity of the phase transition for planar random-cluster and Potts models with $1\le q\le 4$} \author{Hugo
  \textsc{Duminil-Copin} \and Vladas \textsc{Sidoravicius} \and Vincent
  \textsc{Tassion}}
\date{\today} 
\begin{document}
\maketitle

\begin{abstract}
This article studies the planar Potts model and its random-cluster representation. We show that the phase transition of the nearest-neighbor ferromagnetic $q$-state Potts model on $\bbZ^2$ is continuous for $q\in\{2,3,4\}$, in the sense that there exists a unique Gibbs state, or equivalently that there is no ordering for the critical Gibbs states with monochromatic boundary conditions. 

The proof uses the random-cluster model with cluster-weight $q\ge1$ (note that $q$ is not necessarily an integer) and is based on two ingredients:
\begin{itemize}[noitemsep,nolistsep]
\item The fact that the two-point function for the free state decays sub-exponentially fast for cluster-weights $1\le q\le 4$, which is 
derived studying parafermionic observables on a discrete Riemann surface.
\item A new result proving the equivalence of several properties of critical random-cluster models:
\begin{itemize}
\item the absence of infinite-cluster for wired boundary conditions, 
\item the uniqueness of infinite-volume measures,
\item the sub-exponential decay of the two-point function for free boundary conditions, 
\item a Russo-Seymour-Welsh type result on crossing probabilities in rectangles with {\em arbitrary boundary conditions}. \end{itemize}
\end{itemize}
The result leads to a number of consequences concerning the scaling limit of the random-cluster model with $q\in[1,4]$. It shows that the family of interfaces (for instance for Dobrushin boundary conditions) are tight when taking the scaling limit and that any sub-sequential limit can be parametrized by a Loewner chain. We also study the effect of boundary conditions on these sub-sequential limits. Let us mention that the result should be instrumental in the study of critical exponents as well. \end{abstract}

\tableofcontents

\section{Introduction}

\subsection{Motivation}

The Potts model is a model of random coloring of $\Z^2$ introduced as
a generalization of the Ising model to more-than-two components spin
systems. In this model, each vertex of $\Z^2$ receives a spin among $q$
possible colors. The energy of a configuration is proportional to the
number of frustrated edges, meaning edges whose endpoints have
different spins. Since its introduction by Potts \cite{Pot52}, the model has been a
laboratory for testing new ideas and developing far-reaching tools. In
two dimensions, it exhibits a reach panel of possible critical
behaviors depending on the number of colors, and despite the fact that
the model is exactly solvable (yet not rigorously for $q\ne2$), the
mathematical understanding of its phase transition remains restricted
to a few cases (namely $q=2$ and $q$ large). We refer to \cite{Wu82}
for a review on this model.

The question of deciding whether a phase transition is continuous or
discontinuous constitutes one of the most fundamental question in
statistical physics, and an extensive literature has been devoted to this
subject in the case of the Potts model. In the planar case, Baxter
\cite{Bax71,Bax73,Bax89} used a mapping between the Potts model and
solid-on-solid ice-models to compute the spontaneous magnetization at criticality
for $q\ge 4$. He was able to predict that the phase transition was discontinuous for
$q\ge 5$. While this computation gives
a good insight on the behavior of the model, it relies on unproved
assumptions which, forty years after their formulation, seem still
very difficult to justify rigorously (and are related to the nature of the phase transition itself). 
Furthermore, Baxter outlined another heuristic prediction that the phase transition is 
continuous for $q\le 4$, but, according to the author himself, the justification is not as solid 
for this case. Among other results, the present
article proves that the phase transition is continuous for
$q\in\{2,3,4\}$ without any reference to unproved assumptions.

Most of our article will be devoted to the study of the 
random-cluster model. This model is a probability measure on edge
configurations, each edge being declared open or closed, such that the
probability of a configuration is proportional to $p^{\#\,\text{open
    edges}}(1-p)^{\#\,\text{closed edges}}q^{\#\,\text{clusters}}$,
where clusters are maximal connected subgraphs of the configuration, and
$(p,q)\in[0,1]\times\R_+$. For $q=1$, the model is Bernoulli
percolation. 

Since its introduction by Fortuin and Kasteleyn
\cite{ForKas72}, the random-cluster model has become an important tool in
the study of phase transitions. The spin correlations of Potts models
are rephrased as cluster connectivity properties of their
random-cluster representations. As a byproduct, properties of the random-cluster model
can be transferred to the Potts model, and vice-versa. {\em The random-cluster model 
being an important model of its own, Theorems~\ref{thm:main} and \ref{thm:decide} below 
are of independent interest.}

While the understanding of critical Bernoulli percolation in dimensions two is
relatively advanced, the case of the random-cluster model remains
mysterious. The long range dependency makes the model challenging to
study by probabilistic tools, and some of its most basic properties were
not proved until recently. In this article, we derive several
properties of the critical model, including a suitable generalization
of the celebrated {\em Russo-Seymour-Welsh} approach available for
Bernoulli percolation. This powerful tool enables us to prove several
new results for the critical phase.

This article fits in the more general context of the study of
conformally invariant planar lattice models. In the early eighties,
physicists Belavin, Polyakov and Zamolodchikov postulated conformal
invariance of critical planar statistical models \cite{BelPolZam84,BelPolZam84a}.
This prediction enabled physicists to invoke Conformal Field Theory
in order to formulate many conjectures on these models. From a
mathematical perspective, proving rigorously the conformal invariance
of a model (and properties following from it) constitutes a formidable
challenge.

In recent years, the connection between discrete holomorphicity and
planar statistical physics led to spectacular progress in this
direction. Kenyon \cite{Ken00,Ken01}, Smirnov \cite{Smi10} and Chelkak
and Smirnov \cite{CheSmi12} exhibited discrete holomorphic observables in
the dimer and Ising models and proved their convergence to conformal
maps in the scaling limit. These results paved the way to the rigorous
proof of conformal invariance for these two models. Other discrete
observables have been proposed for a number of critical models,
including self-avoiding walks and Potts models. While these
observables are {\em not exactly discrete holomorphic}, their discrete
contour integrals vanish, a property shared with discrete holomorphic
functions.
It is a priori unclear whether this property is of any relevance for
the models. Nevertheless, in the case of the self-avoiding walk, it
was proved to be sufficient to compute the connective constant of the
hexagonal lattice \cite{DumSmi12}. One can consider that this article
is part of a program initiated informally in \cite{DumSmi12} and
devoted to the study of the possible applications of the property
mentioned above. In our case, we will use parafermionic observables introduced independently in
\cite{FraKad80,RivCar06,Smi06,Smi10} to prove our main theorem as well as
several corollaries. (More precisely, we will use a corollary which may be deduced from the
study of such observables.) Since such observables have been found in a large
class of planar critical models, we believe that similar applications
can be obtained for these models as well, and that the tools developed in this 
article should be instrumental there. Last but not least, these techniques improve the understanding of the scaling limit of these models, 
and we think that they will be useful for proving conformal invariance.

\subsection{Definition of the models and main statements}

\subsubsection{Definition of Potts models and statement of the main theorem}

Consider an integer $q\ge 2$ and a subgraph $G=(V_G,E_G)$ of the square
lattice $\bbZ^2$. Here and below, $V_G$ is the set of {\em vertices} of $G$ and
$E_G\subset V_G^2$ is the set of {\em edges}. For simplicity, the square
lattice will be identified with its set of vertices, namely $\Z^2$.
For two vertices $x,y\in V_G$, $x\sim y$ denotes the fact that $(x,y)\in
E_G$.

Let $\tau\in\{1,\dots,q\}^{\Z^2}$. The {\em $q$-state Potts model on
  $G$ with boundary conditions $\tau$} is defined as follows. The
space of configurations is $\Omega=\{1,\dots,q\}^{\Z^2}$. For a
configuration $\sigma=(\sigma_x:x\in\Z^2)\in \Omega$, the quantity
$\sigma_x$ is called the {\em spin} at $x$ (it is sometimes
interpreted as being a color). The {\em energy} of a configuration
$\sigma\in \Omega$ is given by the Hamiltonian
$$H_G^\tau(\sigma)~:=~\begin{cases}\displaystyle-\sum_{\substack{x\sim
      y\\\{x,y\}\cap G\neq\emptyset}}\delta_{\sigma_x,\sigma_y}&\text{ if $\sigma_x=\tau_x$ for $x\notin V_G$,}\\
  \ \ \ \ \infty&\text{ otherwise.}\end{cases} $$ Above,
$\delta_{a,b}$ denotes the Kronecker symbol equal to 1 if $a=b$ and 0
otherwise. The spin-configuration is sampled proportionally to its
Boltzmann weight: at an inverse-temperature $\beta$, the probability
$\mu_{G,\beta}^\tau$ of a configuration $\sigma$ is defined by
$$ \mu_{G,\beta}^\tau[\sigma]~:=~\frac{{\rm e}^{-\beta H_{G}^\tau(\sigma)}}{Z_{G,\beta}^\tau}\quad\text{ where }\quad Z_{G,\beta}^\tau~:=~\sum_{\sigma\in\Omega}{\rm e}^{-\beta H_{G}^\tau(\sigma)}$$
is the {\em partition function} defined in such a way that
the sum of the weights over all possible configurations equals 1. By
construction, configurations that do not coincide with $\tau$ outside
of $G$ have probability 0.

Infinite-volume Gibbs measures can be defined by taking limits, as $G$
tends to $\Z^2$, of finite-volume measures $\mu_{G,\beta}^\tau$. In
particular, if $(i)$ denotes the constant boundary configuration equal to
$i\in\{1,\dots,q\}$, the sequence of measures $\mu_{G,\beta}^{(i)}$
converges, as $G$ tends to infinity, to a Gibbs measure denoted by
$\mu_{\Z^2,\beta}^{(i)}$. This measure is called the {\em
  infinite-volume Gibbs measure with monochromatic boundary conditions
  $i$}.

The Potts models undergo a phase transition in infinite volume at a
certain {\em critical inverse-temperature} $\beta_c(q)\in(0,\infty)$
in the following sense
$$\mu_{\Z^2,\beta}^{(i)}[\sigma_0=i]=\begin{cases}\tfrac1q&\text{ if $\beta<\beta_c(q)$,}\\ \tfrac1q+m_\beta>\frac1q&\text{ if $\beta>\beta_c(q)$.}\end{cases}$$
The value $\beta_c(q)$ is computed in \cite{BefDum12b} and is equal to
$\log(1+\sqrt q)$ for any integer $q$ (this value was previously known
for $q=2$ \cite{Ons44} and $q\ge 26$ \cite{LaaMesRui86}). 

This article is
devoted to the study of the phase transition at $\beta=\beta_c(q)$.
The phase transition is said to be {\em continuous} if
$\mu_{\Z^2,\beta_c(q)}^{(i)}[\sigma_0=i]=\tfrac1q$ and {\em
  discontinuous} otherwise. The main result is the following.
\begin{theorem}[Continuity of the phase transition for 2, 3 or 4 colors]\label{thm:main Potts}
Let $q\in\{2,3,4\}$. Then for any $i\in\{1,\dots,q\}$, we have
$$\mu_{\Z^2,\beta_c(q)}^{(i)}[\sigma_0=i]=\tfrac1q.$$
\end{theorem}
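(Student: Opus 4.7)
The plan is to translate the statement about the Potts model into a statement about its random-cluster representation, and then invoke the two main ingredients announced in the abstract. Via the Edwards--Sokal coupling, the monochromatic measure $\mu_{\Z^2,\beta_c(q)}^{(i)}$ corresponds to the critical random-cluster measure $\phi^{1}_{\Z^2,p_c,q}$ with wired boundary conditions, with $p_c = 1 - e^{-\beta_c(q)} = \sqrt{q}/(1+\sqrt{q})$. A standard computation then gives the identity
$$\mu_{\Z^2,\beta_c(q)}^{(i)}[\sigma_0 = i] \;=\; \tfrac{1}{q} + \bigl(1 - \tfrac{1}{q}\bigr)\,\phi^{1}_{\Z^2,p_c,q}\bigl[0 \leftrightarrow \infty\bigr].$$
Therefore the theorem reduces to showing that there is no infinite cluster under the critical wired random-cluster measure, that is $\phi^{1}_{\Z^2,p_c,q}[0 \leftrightarrow \infty] = 0$, for $q \in \{2,3,4\}$.

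Next I would invoke the two results highlighted in the abstract. The parafermionic observable, studied on a discrete Riemann surface, yields sub-exponential decay of the two-point function $\phi^{0}_{\Z^2,p_c,q}[0 \leftrightarrow x]$ under \emph{free} boundary conditions, valid for the whole range $1 \le q \le 4$ (and in particular for $q \in \{2,3,4\}$, which is what we need). This is Theorem~\ref{thm:main} in the paper (the first main ingredient).

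I would then apply the equivalence theorem (Theorem~\ref{thm:decide}), which asserts that for the critical random-cluster model the following properties are all equivalent: sub-exponential decay of the free two-point function, absence of an infinite cluster under wired boundary conditions, uniqueness of the infinite-volume measure, and an RSW-type bound on crossing probabilities with arbitrary boundary conditions. Combining this equivalence with the sub-exponential decay furnished by the parafermionic observable gives $\phi^{1}_{\Z^2,p_c,q}[0 \leftrightarrow \infty] = 0$, which, by the identity above, is exactly $\mu_{\Z^2,\beta_c(q)}^{(i)}[\sigma_0 = i] = 1/q$.

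The conceptually straightforward part is the Edwards--Sokal reduction. The hard part is entirely hidden in the equivalence theorem: one must show how to upgrade sub-exponential decay under \emph{free} boundary conditions to absence of an infinite wired cluster, despite the fact that the free and wired measures may a priori differ at criticality. This is the step requiring the new generalization of the Russo--Seymour--Welsh machinery to the random-cluster model with arbitrary boundary conditions, which is the main technical novelty of the paper. Once that equivalence is in hand, however, the deduction of continuity of the Potts phase transition for $q \in \{2,3,4\}$ is immediate.
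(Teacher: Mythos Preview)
Your approach is correct and essentially identical to the paper's: reduce via the Edwards--Sokal coupling to showing $\phi^1_{\Z^2,p_c,q}[0\leftrightarrow\infty]=0$, then combine the sub-exponential decay of the free two-point function (from the parafermionic observable) with the equivalence theorem to conclude. One small correction: you have swapped the labels. In the paper, Theorem~\ref{thm:main} is the \emph{equivalence} of {\bf P1}--{\bf P5}, while Theorem~\ref{thm:decide} is the parafermionic input giving sub-exponential decay (property {\bf P4}) for $1\le q\le 4$.
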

This result was known in the $q=2$ case. For two colors, the model is
 the Ising model. Onsager computed the free energy in
\cite{Ons44} and Yang obtained a formula for the magnetization in
\cite{Yan52}. In particular, this formula implies that the
magnetization is zero at criticality. This results has been reproved
in a number of papers since then. Let us mention a recent proof
\cite{Wer09a} not using any exact integrability.

For $q$ equal to 3 or 4, the result appears to be new. As mentioned in
the previous section, exact yet non rigorous computations performed
by Baxter strongly suggest that the phase transition is continuous for
$q\le 4$, and discontinuous for $q>4$. This result therefore tackles
the whole range of $q$ for which the phase transition is continuous.
Let us mention that the technology developed here is not restricted to
the study of Potts models for $q\le 4$: a property of Potts models
with $q\ge 5$ colors witnessing some sort of ordering at criticality is also
derived, see Proposition~\ref{prop:qge5}. Unfortunately, we were
unable to show rigorously that the phase transition is discontinuous
in the sense defined above.

In dimension $d\ge 3$, the phase transition is expected to be
continuous if and only if $q=2$. The best results in this direction
are the following. On the one hand, the fact that the phase transition
is continuous for the Ising model ($q=2$) is known for any $d\ge 3$
\cite{AizDumSid15} (in fact, the critical exponents are known to be taking
their mean-field value for $d\ge4$ \cite{AizFer86}). On the other hand,
mean-field considerations together with Reflection Positivity enabled
\cite{BisChaCra06} to prove that for any $q\ge 3$, the $q$-state Potts model
undergoes a discontinuous phase transition above some dimension
$d_c(q)$. Finally, Reflection Positivity can be harnessed to prove
that for any $d\ge 2$, the phase transition is discontinuous provided
$q$ is large enough \cite{KotShl82}.
 
\subsubsection{The random-cluster model} The proof of
Theorem~\ref{thm:main Potts} is based on the study of a graphical
representation of the Potts model, called the {\em random-cluster
  model}. Let us define it properly. Consider $p\in[0,1]$, $q>0$ and a
subgraph $G=(V_G,E_G)$ of the square lattice. A configuration $\omega$ is
an element of $\Omega'=\{0,1\}^{E_G}$. An edge $e$ with $\omega(e)=1$ is
said to be {\em open}, while an edge with $\omega(e)=0$ is said to be
{\em closed}. Two vertices $x$ and $y$ in $V_G$ are said to be {\em
  connected} (this event is denoted by $x\longleftrightarrow y$) if
there exists a sequence of vertices $x=v_1,v_2,\dots,v_{r-1},v_r=y$
such that $\{v_i,v_{i+1}\}$ is an open edge for every $i<r$. A {\em
  connected component} of $\omega$ is a maximal connected subgraph of
$\omega$. Let $o(\omega)$ and $c(\omega)$ be respectively the number
of open and closed edges in $\omega$.

The {\em random-cluster measure on $E_G$ with edge-weight $p$,
  cluster-weight $q$, and {\bf free} boundary conditions} is defined
by the formula
$$\P[G,p,q][0]{\omega}=\frac{p^{o(\omega)}(1-p)^{c(\omega)}q^{k_0(\omega)}}{Z^0_{G,p,q}},$$
where $k_0(\omega)$ is the number of connected components of the graph
$\omega$, and $Z^0_{G,p,q}$ is defined in such a way that the sum of
the weights over all possible configurations equals 1. We also define
the {\em random-cluster measure on $E_G$ with edge-weight $p$,
  cluster-weight $q$, and {\bf wired} boundary conditions} by the
formula
$$\P[G,p,q][1]{\omega}=\frac{p^{o(\omega)}(1-p)^{c(\omega)}q^{k_1(\omega)}}{Z^1_{G,p,q}},$$
where $k_1(\omega)$ is the number of connected components of the graph $\omega$, except that all the connected components of vertices in the vertex boundary $\partial G$, i.e. the set of vertices in $ V_G$ with less than four neighbors in $G$, are counted as being part of the same connected component. Again, $Z^1_{G,p,q}$ is defined in such a way that the sum of the weights over all possible configurations equals 1. 

For $q\geq 1$, infinite-volume measures can be defined on $\Z^2$ by
taking limits of finite-volume measures for graphs tending to $\Z^2$.
In particular, the {\em infinite-volume random-cluster measure with
  free (resp. wired) boundary conditions} $\PP[\Z^2,p,q][0]$ (resp.
$\PP[\Z^2,p,q][1]$) can be defined as the limit of the sequence of
measures $\PP[{G},p,q][0]$ (resp.\@ $\PP[{G},p,q][1]$) for
$G\nearrow\Z^2$. We refer the reader to \cite{Gri06} for more details
on this construction.

The random-cluster model with $q\ge 1$ undergoes a phase transition in infinite volume in the following sense. There exists $p_c(q)\in(0,1)$ such that 
$$\P[\Z^2,p,q][1]{0\longleftrightarrow \infty}=\begin{cases}0&\text{ if $p<p_c(q)$}, \\\theta^1(p,q)>0&\text{ if $p>p_c(q)$},\end{cases}$$
where $\{0\longleftrightarrow\infty\}$ denotes the event that 0 belongs to an infinite connected component. The value of $p_c(q)$ was recently proved to be equal to $\sqrt q/(1+\sqrt q)$ for any $q\ge 1$ in \cite{BefDum12b} (see also \cite{DumMan15}). The result was previously proved in \cite{Kes80} for Bernoulli percolation ($q=1$), in \cite{Ons44} for $q=2$ using the connection with the Ising model and in \cite{LaaMesMir91} for $q\ge 25.72$. 

Similarly to the Potts model case, a notion of continuous/discontinuous phase transition can be defined: the phase transition is said to be {\em continuous} if $\P[\Z^2,p_c(q),q][1]{0\longleftrightarrow \infty} =0$ and discontinuous otherwise. The following theorem is the {\em alter ego} of Theorem~\ref{thm:main Potts}.

\begin{theorem}[Continuous phase transition for cluster-weight $1\le q\le 4$]\label{thm:main RCM}
Let $q\in[1,4]$, then $\P[\Z^2,p_c(q),q][1]{0\longleftrightarrow \infty} =0$.
\end{theorem}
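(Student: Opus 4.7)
The plan is to combine the two ingredients advertised in the abstract. The first, obtained from a parafermionic observable on a discrete Riemann surface, is the sub-exponential decay of the two-point function under the free infinite-volume measure $\PP[\Z^2,p_c(q),q][0]$ at criticality, valid for $q\in[1,4]$. The second is an equivalence theorem asserting that for every $q\ge 1$ the following four critical properties are equivalent: absence of an infinite cluster under wired boundary conditions; uniqueness of the infinite-volume measure; sub-exponential decay of the free two-point function; and a Russo--Seymour--Welsh type estimate on crossing probabilities of rectangles with arbitrary boundary conditions. Granting both ingredients, the conclusion is immediate: the parafermionic input delivers one of the equivalent properties, and the equivalence transports it to the desired statement $\P[\Z^2,p_c(q),q][1]{0\longleftrightarrow \infty}=0$.

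The bulk of the work therefore lies in the equivalence, which I would organize as a cycle of implications. Starting from sub-exponential decay of $\P[\Z^2,p_c(q),q][0]{0\longleftrightarrow x}$, the first step is to turn it into a lower bound on horizontal crossings of fixed-aspect-ratio rectangles under free boundary conditions. Here duality enters crucially: since the model on $\Z^2$ is self-dual at $p_c(q)$, if horizontal crossings of some large rectangle had vanishing probability under the free measure, dual vertical crossings would be forced, which via a standard sub-multiplicativity argument would produce exponential decay of the dual two-point function and hence, by self-duality, of the primal free two-point function, contradicting the hypothesis. Once crossings have a uniform lower bound under free boundary conditions, one upgrades this to a genuine RSW statement valid for arbitrary rectangles by gluing, using FKG and comparison between boundary conditions. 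The crucial further upgrade is to make the crossing estimates uniform over \emph{all} boundary conditions on the outer box. From this, a standard coupling argument along a dual circuit shows that $\PP[\Z^2,p_c(q),q][0]=\PP[\Z^2,p_c(q),q][1]$, which immediately rules out wired percolation.

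The main obstacle will be the boundary-condition-uniform RSW step. For Bernoulli percolation ($q=1$), independence of edges allows one to decouple the interior of a box from its boundary cleanly; for $q>1$ only FKG and monotonicity in boundary conditions are available, and crossing probabilities can depend sensitively on what happens near $\partial G$. A renormalization argument at a carefully chosen intermediate scale, combined with the construction of a ``free arc'' inside the box on which one can condition to screen the outside, will be required. I expect the technical heart of the paper to be this screening construction, carried out using only FKG, comparison between boundary conditions, and self-duality at $p_c(q)$, without any appeal to independence. Once this is in place, the remaining implications of the equivalence (RSW $\Rightarrow$ no wired percolation $\Rightarrow$ uniqueness $\Rightarrow$ sub-exponential decay for the free state) follow by relatively standard arguments exploiting duality and monotonicity.
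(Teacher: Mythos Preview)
Your high-level strategy matches the paper exactly: Theorem~\ref{thm:main RCM} follows immediately by combining the parafermionic input (Theorem~\ref{thm:decide}, which delivers {\bf P4}) with the equivalence (Theorem~\ref{thm:main}).

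However, your sketch of the equivalence mislocates the difficulty. You place the hard step at ``crossings under free boundary conditions $\Rightarrow$ crossings uniform in boundary conditions'', but in the paper this upgrade is nearly free: once {\bf P5} (crossings with free boundary conditions on a box at macroscopic distance from the rectangle) is established, arbitrary boundary conditions follow by comparison between boundary conditions and duality (Proposition~\ref{prop:classical RSW}). The real obstacle is the step you label a ``standard sub-multiplicativity argument'': going from {\bf P4} to {\bf P5}. Your duality contradiction does not work as written. If primal horizontal crossings vanish under the \emph{free} measure, duality forces dual vertical crossings under the \emph{wired} dual measure; but that is exactly the Beffara--Duminil-Copin input (Theorem~\ref{thm:weakRSW}) already taken for granted, and it says nothing about exponential decay of the \emph{free} primal two-point function. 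There is no self-duality of the free measure at $p_c$; the dual of free is wired, so the argument does not close.

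The paper's actual route from {\bf P4} to {\bf P5} is a two-stage renormalization. First (Proposition~\ref{prop:dualityP_n}), one proves a dichotomy: either $\inf_n \phi^0_{\Lambda_{8n}}[\mathcal A_n]>0$, or connection probabilities decay stretched-exponentially. The engine is the inequality $\phi^0_{\Lambda_{56n}}[\mathcal A_{7n}]\le C\,\phi^0_{\Lambda_{8n}}[\mathcal A_n]^2$ (Lemma~\ref{lem:induction1}), obtained by building dual screens via an RSW-in-strips argument with \emph{mixed} boundary conditions (Lemmas~\ref{lem:stripRSW} and~\ref{lem:push}), not free ones. Second (Proposition~\ref{prop:dualityQ_n}), stretched-exponential decay is upgraded to exponential via a separate renormalization using a geometric lemma about open paths touching the wired arc in tilted strips (Lemma~\ref{lem:touch}). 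Your anticipated ``screening construction'' does appear, but inside this dichotomy argument rather than as a separate free-to-arbitrary-BC passage.
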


Note that $q$ may not be an integer in this theorem. Let us now describe briefly the coupling between Potts models and their random-cluster representation. Fix $q\ge 2$  integer. From a random-cluster configuration sampled according to $\PP[G,p,q][1]$, color each component (meaning all the vertices in it) with one color chosen uniformly in $\{1,\dots,q\}$, except for the connected component containing the vertices in $\partial G$ which receive color $i$. The law of the random coloring thus obtained is $\mu_{G,\beta}^{(i)}$, where $\beta=-\log(1-p)$. This coupling between the random-cluster model with integer cluster-weight and the Potts models enables us to deduce Theorem~\ref{thm:main Potts} from Theorem~\ref{thm:main RCM} immediately: simply notice that
$$\mu_{G,\beta}^{(i)}[\sigma_0=i]=\tfrac1q+\phi^1_{\bbZ^2,p,q}[0\lr \partial G],$$
and take the limit as $G$ tends to $\bbZ^2$. For this reason, we now focus on Theorem~\ref{thm:main RCM}.

\subsubsection{An alternative for the behavior of critical random-cluster models} 
Proving Theorem~\ref{thm:main RCM} requires a much better understanding of the critical phase than the one available until now. Indeed, except for the $q=1$, $q=2$ and $q\ge 25.72$ cases, very little was known on critical random-cluster models. The following theorem provides new insight on the possible critical behavior of these models.

For an integer $n$, let $\Lambda_n$ denote the {\em box $[-n,n]^2$ of size $n$}. An open path is a path of adjacent open edges (we refer to the next section for a formal definition). Let $0\longleftrightarrow\partial\Lambda_n$ be the event that there exists an open path from the origin to the boundary of $\Lambda_n$. For a rectangle $R=[a,b]\times[c,d]$, let $\calC_h(R)$ be the event that there exists an open path in $R$ from $\{a\}\times[c,d]$ to $\{b\}\times[c,d]$. 

 \begin{theorem}\label{thm:main}
Let $q\ge 1$. The following assertions are equivalent :
 \begin{enumerate}
 \item[{\rm \bf P1}] (Absence of infinite cluster at criticality) $\P[\bbZ^2,p_c,q][1]{0\longleftrightarrow \infty}=0$.
 \item[{\rm \bf P2}] $\PP[\bbZ^2,p_c,q][0]=\PP[\bbZ^2,p_c,q][1]$.
     \item[{\rm \bf P3}] (Infinite susceptibility) $\displaystyle \chi^0(p_c,q):=\sum_{x\in \mathbb Z^2} \P[\bbZ^2,p_c,q][0]{0\longleftrightarrow x} =\infty.$
\item[{\rm \bf P4}] (Sub-exponential decay for free boundary conditions) 
 $$\displaystyle \lim_{n\to \infty} \tfrac 1 n \log \P[
     \bbZ^2,p_c,q][0]{0\longleftrightarrow \partial\Lambda_n} =0.$$
  \item[{\rm \bf P5}] (RSW) For any $\alpha>0$, there exists $c=c(\alpha)>0$ such that for all $n\geq 1$,
   $$\P[{[-n,(\alpha+1)n]\times[-n,2n]}, p_c,q][0]{\calC_h([0,\alpha n]\times[0,n])}\geq c.$$
  \end{enumerate}
\end{theorem}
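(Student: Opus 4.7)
The plan is to establish Theorem~\ref{thm:main} by closing the cycle
\[
\mathrm{P5} \Rightarrow \mathrm{P4} \Rightarrow \mathrm{P3} \Rightarrow \mathrm{P1} \Leftrightarrow \mathrm{P2} \Rightarrow \mathrm{P5},
\]
with essentially all of the novel work concentrated in the last arrow.

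\textbf{The easy arrows.} The equivalence $\mathrm{P1} \Leftrightarrow \mathrm{P2}$ is already part of the general theory of random-cluster models with $q \ge 1$: non-uniqueness of infinite-volume measures at $p_c$ is witnessed precisely by an infinite cluster under wired boundary conditions (Grimmett's monograph). For $\mathrm{P5} \Rightarrow \mathrm{P4}$, apply FKG to combine the crossings given by $\mathrm{P5}$ over $\log_2 n$ dyadic scales surrounding the origin; this produces a polynomial lower bound $\phi^0_{\Z^2,p_c,q}[0 \longleftrightarrow \partial \Lambda_n] \ge n^{-C}$, which is in particular sub-exponential.

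\textbf{From sub-exponential decay to uniqueness.} For $\mathrm{P4} \Rightarrow \mathrm{P3}$, use the trivial bound $\phi^0[0 \longleftrightarrow \partial \Lambda_n] \le |\partial \Lambda_n| \max_{x \in \partial \Lambda_n} \phi^0[0 \longleftrightarrow x]$ to rule out exponential decay of the two-point function, and then upgrade this to $\chi^0(p_c,q) = \infty$ via a Simon--Lieb-style inequality together with monotonicity of finite-volume measures. Then $\mathrm{P3} \Rightarrow \mathrm{P1}$ follows from the sharp phase transition theorem for random-cluster models with $q \ge 1$ (Duminil-Copin--Tassion): finiteness of $\chi^0(p_c,q)$ would force exponential decay of $\phi^0[0 \longleftrightarrow \partial \Lambda_n]$ at $p_c$, which at the self-dual point $p_c(q) = \sqrt{q}/(1+\sqrt{q})$ is incompatible with the simultaneous existence of an infinite primal wired cluster, by a standard no-coexistence argument based on planar duality.

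\textbf{The main obstacle: $\mathrm{P2} \Rightarrow \mathrm{P5}$.} This is the essential new content of the theorem and where the genuine difficulty lies. The goal is to deduce, from the single scalar input that the two infinite-volume measures coincide, uniform RSW-type crossing estimates in rectangles of arbitrary aspect ratio under the most hostile (free) boundary conditions. The strategy proceeds in three stages. (a) Use planar self-duality at $p_c$ to turn uniqueness of the Gibbs state into a ``balanced'' primal/dual connectivity statement: at every scale, either the primal or the dual crosses a given annulus with probability bounded away from $0$. (b) Propagate such a single positive-probability crossing event at some scale into a scale-invariant family of crossing bounds via a renormalization / push argument in the spirit of the Bernoulli RSW scheme of Duminil-Copin--Sidoravicius--Tassion. (c) Control the boundary conditions. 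Step (c) is the real obstacle for $q > 1$: the random-cluster model has no finite-energy / insertion-tolerance, and boundary data can influence crossings arbitrarily far inside the domain, so the Bernoulli gluing lemmas do not transfer. The plan is to handle this by conditioning on the outermost dual circuit in the $n$-thick free buffer of the enlarged rectangle $[-n,(\alpha+1)n]\times[-n,2n]$ appearing in the statement of $\mathrm{P5}$, which turns the hostile free BC on the enlarged rectangle into favourable (dual-wired) BC on the conditioned sub-domain, where the crossing bounds produced in stages (a)--(b) can be applied. Making this last step work \emph{uniformly in boundary conditions and aspect ratio} is where the hard work of the proof is concentrated.
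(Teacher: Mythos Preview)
Your cycle runs in the opposite direction to the paper's, and this is not innocuous: the paper proves
\[
\mathrm{P1}\Rightarrow\mathrm{P2}\Rightarrow\mathrm{P3}\Rightarrow\mathrm{P4}\Rightarrow\mathrm{P5}\Rightarrow\mathrm{P1},
\]
with all of the work concentrated in $\mathrm{P4}\Rightarrow\mathrm{P5}$. In that direction the arrows $\mathrm{P2}\Rightarrow\mathrm{P3}$ and $\mathrm{P3}\Rightarrow\mathrm{P4}$ are genuinely soft (the first uses the already-known wired-RSW of Beffara--Duminil-Copin plus $\phi^0=\phi^1$; the second is submultiplicativity). Your reversed arrows $\mathrm{P4}\Rightarrow\mathrm{P3}$ and $\mathrm{P3}\Rightarrow\mathrm{P1}$ are not soft, and the second one is where your proposal breaks.

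The argument you give for $\mathrm{P3}\Rightarrow\mathrm{P1}$ does not work. You assert that exponential decay of $\phi^0_{p_c}[0\leftrightarrow\partial\Lambda_n]$ is ``incompatible with the simultaneous existence of an infinite primal wired cluster'' by planar duality. It is not: at the self-dual point, duality turns exponential decay for $\phi^0$ into exponential decay of \emph{dual} clusters under $\phi^1$, which is perfectly consistent with an infinite \emph{primal} cluster under $\phi^1$. Indeed this coexistence (NOT $\mathrm{P4}$ together with NOT $\mathrm{P1}$) is exactly what is expected, and known for $q$ large, so no ``standard no-coexistence argument'' can rule it out. Zhang's argument only forbids infinite primal and infinite dual clusters under the \emph{same} measure. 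The sharpness theorem you invoke concerns $p<p_c$ and says nothing about this implication at $p_c$; and the general-$q$ version (Duminil-Copin--Raoufi--Tassion) postdates the paper in any case. The upshot is that $\mathrm{P3}\Rightarrow\mathrm{P1}$ secretly carries the full difficulty of the dichotomy, and you have not addressed it.

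Two smaller points. First, the random-cluster model \emph{does} have finite energy/insertion tolerance (and the paper uses it repeatedly); what it lacks is independence, so BK-type gluing is unavailable --- that is the correct obstruction. Second, your sketch for $\mathrm{P2}\Rightarrow\mathrm{P5}$ has a chicken-and-egg issue: conditioning on an outermost circuit in the buffer to convert boundary conditions presupposes a lower bound on the probability of such a circuit under free boundary conditions, which is precisely a form of the RSW estimate you are trying to prove. The paper breaks this circularity by working first in an infinite \emph{strip} with wired/free boundary conditions on the two sides (Lemma~\ref{lem:stripRSW}), where a self-duality trick gives an unconditional crossing bound, and then pushing this into rectangles (Lemma~\ref{lem:push}) and running a renormalisation (Lemma~\ref{lem:induction1}, Propositions~\ref{prop:dualityP_n} and~\ref{prop:dualityQ_n}) to obtain the dichotomy $\mathrm{P4}\Rightarrow\mathrm{P5}$.
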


The previous theorem does not show that these conditions are all satisfied, but that they are equivalent. In fact, whether the conditions are satisfied or not will depend on the value of $q$, see Section~\ref{sec:intro parafermion} for a more detailed discussion. 
\medbreak
The previous result was previously known in a few cases:
\begin{itemize}[nolistsep,noitemsep]
\item {\em Bernoulli percolation (random-cluster model with $q=1$).} In such case {\bf P2} is obviously satisfied. Furthermore, Russo \cite{Rus78} proved that {\bf P1}, {\bf P3} and {\bf P4} are all true (and therefore equivalent). Finally, {\bf P5} was proved by Russo \cite{Rus78} and Seymour-Welsh \cite{SeyWel78}. 
\item {\em Random-cluster model with $q=2$.} This model is directly related to the Ising via the coupling between Potts and random-cluster models mentioned above. Therefore, all of these properties can be proved to be true using the following results on the Ising model: Onsager proved that the critical Ising measure is unique and that the phase transition is continuous in \cite{Ons44}. Properties {\bf P3} and {\bf P4} follow from Simon's correlation inequality for the Ising model \cite{Sim80}. Property {\bf P5} was proved in \cite{DumHonNol11} using a proof specific to the Ising model. Interestingly, in the Ising case each property is derived independently and no direct equivalence was known previously. 
\item {\em Random-cluster model with $q\ge 25.72$.}
In this case, none of the above properties are satisfied, as proved by using the Pirogov-Sinai technology \cite{LaaMesMir91}. 
\end{itemize}
Except for these special cases, no general result was known, and Theorem~\ref{thm:main} represents, to the best of our knowledge, the first formal proof of the equivalence between these conditions for a relatively large class of dependent percolation models. We expect that a similar result can be stated for a large class of models, and that some of the tools developed in this article may be extended to these models.
\medbreak
\begin{remark}{\bf P4}$\Rightarrow${\bf P1} implies that whenever there is an infinite-cluster for the wired boundary conditions,  correlations decay exponentially fast at criticality  for free boundary conditions.  
\end{remark}
 \medbreak
Before proceeding further, let us discuss alternative conditions which could replace the conditions {\bf P1}--{\bf P5}. Once again, $q$ is assume to be larger or equal to 1.

The condition {\bf P1} has the following interpretation: it is equivalent  to \medbreak
{\rm \bf P1'} (Continuous phase transition) $\displaystyle\lim_{p\searrow p_c}\P[\bbZ^2,p,q][1]{0\longleftrightarrow \infty}=0$
\medbreak
\noindent (simply use  \cite[(4.35)]{Gri06}). Note that the (almost sure) absence of an infinite-cluster for $\PP[\bbZ^2,p_c,q][0]$ follows from  Zhang's argument \cite[Theorem~(6.17)(a)]{Gri06} and is true for any $q\ge1$. Nevertheless, it does not imply the (almost sure) absence of an infinite-cluster for $\PP[\bbZ^2,p_c,q][1]$ nor the continuity of the phase transition. 
 
The property {\bf P2} can be reinterpreted in terms of infinite-volume measures (see \cite{Gri06} for a formal definition). Then, {\bf P2} is equivalent to (see \cite[Theorem (4.34)]{Gri06})
\medbreak
{\rm \bf P2'} The infinite-volume measure on $\bbZ^2$ at $p_c$ and $q$ is unique.
\medbreak
Let us now turn to {\bf P4} which can be understood in terms of the so-called {\em correlation length} defined for $p<p_c(q)$ by the formula
$$\displaystyle \xi(p,q)=\Big(-\lim_{n\rightarrow\infty}\tfrac1n\log\phi_{\bbZ^2,p,q}^0[(0,0)\longleftrightarrow (n,0)]\Big)^{-1}.$$
Now, {\bf P4} is equivalent to
     \medbreak
{\rm \bf P4'} (vanishing mass-gap) $\xi(p,q)$ tends to $+\infty$ as $p\nearrow p_c(q)$.
\medbreak
Condition {\bf P1} together with {\bf P3} have an interesting consequence in terms of the order of the phase transition for the Potts model. We do not enter in the details here but let us briefly mention that properties {\bf P1} and {\bf P3} are respectively equivalent to the continuity and the non-differentiability with respect to the magnetic field $h$ of the free energy at $(\beta=\beta_c,h=0)$. Therefore, these properties mean that the phase transition of the corresponding Potts model is of {\em second order}.
The properties {\bf P1}--{\bf P4} (and their equivalent formulations) are classical definitions describing continuous phase transitions and are believed to be equivalent for many natural models, even though it is a priori unclear how this can be proved in a robust way. 

Now that we have an interpretation for properties {\bf P1}--{\bf P4}, let us explain why {\bf P5} is of particular interest: it provides an equivalent to the RSW theorem proved in \cite{BefDum12b} which is {\em uniform in boundary conditions} (see Proposition~\ref{prop:classical RSW} below).
This uniformity with respect to boundary conditions is crucial for applications, especially when trying to decouple events, see e.g. Section~\ref{sec:applications 1}. Let us also emphasize that the fact that {\bf P5} can be derived from the other properties requires the development of a Russo-Seymour-Welsh theory for dependent percolation models. As mentioned above, such a theory existed for Bernoulli percolation \cite{SeyWel78,Rus78}, and for $q=2$ \cite{DumHonNol11}, but in the latter case the proof was based on discrete holomorphicity, hence hiding the close connection between {\bf P5} and the other properties. This Russo-Seymour-Welsh theory is expected to apply to a large class of planar models, and we insist on the fact that uniformity on boundary conditions is crucial. \medbreak
 
\begin{remark}
The restriction on boundary conditions being at distance $n$ from the rectangle can be relaxed in the following way: if {\bf P5} holds, then for any $\alpha>0$ and $\ep>0$, there exists $c=c(\alpha,\ep)>0$ such that for every $n\ge1$,
$$\P[{[-\ep n,(\alpha+\ep)n]\times[-\ep n,(1+\ep)n]},p_c,q][\xi]{\calC_h([0,\alpha n]\times[0,n])}\ge c.$$ It is natural to ask why boundary conditions are fixed at distance $\ep n$ of the rectangle $[0,\alpha n]\times[0,n]$ and not simply on the boundary. The reason is the latter property is not equivalent to {\bf P1}--{\bf P5}. Indeed, it may in fact be the case that {\bf P5} holds but that crossing probabilities of rectangles $[0,\alpha n]\times[0,n]$ with free boundary conditions on their boundary converge to zero as $n$ tends to infinity.  Such phenomenon does not occur for $1\le q<4$ as shown in Theorem~\ref{thm:strongest RSW} below but is expected to occur for $q=4$. For this reason, we will always work with boundary conditions at ``macroscopic distance'' from the boundary.\end{remark}

\subsubsection{Random-cluster model with cluster weight $q\in[1,4]$}\label{sec:intro parafermion} The previous alternative provides us with a powerful tool to prove Theorem~\ref{thm:main RCM}. Namely, it is sufficient to prove one of the properties {\bf P2}--{\bf P5} when $1\le q\le 4$ to derive our result. We will therefore focus on property {\bf P4}, which is the easiest to check. 

In order to prove {\bf P4}, we will use estimates on the probability of being connected by an open path which can be deduced from the fact that  discrete contour integrals of the so-called {\em (edge) parafermionic observable} vanish. This observable was introduced in \cite{Smi06,Smi10} for $q\in(0,4)$ and then generalized to $q>4$ in \cite{BefDumSmi12} (the $q=4$ case also requires the introduction of a slightly different observable). It satisfies local relations that can be understood as discretizations of the Cauchy-Riemann equations when the model is critical. These relations imply that discrete contour integrals vanish. We do not recall the definition of the parafermionic observable nor do we describe its principal properties, and simply mention an important corollary (see Equation~\eqref{eq:main equation}). For more details on the parafermionic observable, we refer to \cite{Smi06} or  \cite[Chapter 6]{Dum13}.

In any case, the parafermionic observable can be used to show the following theorem dealing with random-cluster models with $1\le q\le 4$. 
\begin{theorem}[\cite{Dum12}]\label{thm:decide}
Let $1\le q\le 4$, then $\displaystyle \lim_{n\to \infty} \tfrac 1 n \log \P[\mathbb
     Z^2,p_c(q),q][0]{0\longleftrightarrow \partial\Lambda_n} =0.$
\end{theorem}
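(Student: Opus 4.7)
The plan is to deploy the edge parafermionic observable of Smirnov, available for $q\in[0,4]$, which provides a discrete conservation law at the critical point $p_c(q)$. Fix the real spin $\sigma=\sigma(q)\in[0,1]$ via $\sqrt q=2\sin(\pi\sigma/2)$. Given a Dobrushin domain $(\Omega,a,b)$ — a simply connected subgraph of $\Z^2$ whose boundary is split by two marked points $a,b$ into a wired arc $(ab)$ and a free arc $(ba)$ — define on the medial edges $e$ of $\Omega$ the observable
$$F(e)\;=\;\phi_{\Omega,p_c,q}^{\mathrm{Dob}}\!\Big[\mathrm e^{-\mathrm i\sigma\,W(e,b)}\,\mathbf 1_{e\in\gamma}\Big],$$
where $\gamma$ is the exploration interface from $a$ to $b$ and $W(e,b)$ its winding between the midpoint of $e$ and $b$. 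The crucial property, which holds precisely at $p=p_c(q)$, is Smirnov's vertex relation: for every interior medial vertex $v$,
$$\sum_{e\ni v}F(e)\,\eta_v(e)\;=\;0,$$
where $\eta_v(e)$ are explicit unit phases determined by the edge orientations at $v$. Summing this identity over the interior vertices of any finite region yields a discrete Stokes-type identity which forces the weighted boundary sum of $F$ around the region to vanish.

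I would then apply this contour identity to the rectangle $R_n=[0,n]\times[0,n]$ equipped with Dobrushin boundary conditions wired on the bottom side and free on the three remaining sides. The identity furnishes a linear relation between the four weighted $F$-sums along the four sides. Along each side the winding $W(e,b)$ is constant up to a finite number of corner jumps, so each $F(e)$ contributes with essentially fixed phase and modulus equal to the probability that $\gamma$ passes through $e$. Isolating the contribution of the top side and combining the triangle inequality with comparison between Dobrushin and free boundary conditions (the relevant connection events being increasing), one extracts an inequality of the shape
$$\phi_{R_n,p_c,q}^{0}\!\big[\text{top of }R_n\text{ is connected to bottom of }R_n\big]\;\ge\;\frac{c(q)}{n^{\alpha(q)}},$$
with $\alpha(q)$ explicit in $\sigma$ and $c(q)>0$ independent of $n$. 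This is the concrete corollary of the parafermionic observable alluded to in the excerpt.

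Given this polynomial lower bound, a free-measure top-to-bottom crossing of $R_n$ contains at least one vertex connected in the free measure on $\Z^2$ to a point at graph distance of order $n$. A union bound over starting points together with translation invariance and monotonicity of $\phi^{0}_{\Z^2,p_c,q}$ in the ambient domain then gives
$$\phi_{\Z^2,p_c,q}^{0}\!\big[0\longleftrightarrow\partial\Lambda_n\big]\;\ge\;\frac{c'(q)}{n^{\alpha'(q)}},$$
which is much stronger than sub-exponential decay; taking $\tfrac1n\log$ and letting $n\to\infty$ concludes the argument.

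The main obstacle is the passage from the discrete Cauchy--Riemann relation to a genuine lower bound on the boundary sum: since $F$ is complex-valued with oscillating phases, the side-by-side decomposition must carefully exploit the fact that $W(e,b)$ is essentially constant along each boundary arc, so that edge contributions do not cancel against each other when summed. The restriction $q\le 4$ enters decisively here: for $q>4$ the parameter $\sigma$ becomes complex, the factor $\mathrm e^{-\mathrm i\sigma W}$ grows exponentially in the winding, and the method breaks down — consistently with Baxter's prediction of a discontinuous transition in that regime.
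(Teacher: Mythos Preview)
Your outline is in the right spirit for $1\le q\le 2$, and the paper's argument in that range is close to what you sketch (though it takes $R_n=[0,n]\times[-n,n]$ with the wired arc degenerated to the single vertex $(0,0)$, and extracts the bound $\sum_{x\in\partial\Lambda_n}\phi^0[0\leftrightarrow x]\ge c$ rather than a crossing estimate). The sentence ``isolating the contribution of the top side and combining the triangle inequality \dots one extracts an inequality'' is, however, exactly where the real difficulty sits, and this step fails for $2<q\le 4$.

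The problem is one of signs. After writing the vanishing boundary sum and collecting the contribution of each arc, every arc produces a real multiple of a connection probability, with the multiplier depending on $\widehat\sigma=1-\sigma$ through trigonometric factors. In the paper's planar computation the free-arc contribution near the wired vertex carries the prefactor $-4\cos(\widehat\sigma\pi)\sin(\widehat\sigma\pi/4)$. For $q\le 2$ one has $\widehat\sigma\in[\tfrac12,1]$, hence $\cos(\widehat\sigma\pi)\le 0$, this term has the good sign and can be discarded, and a uniform lower bound follows. For $q>2$ the sign flips and no useful inequality survives. The paper makes this explicit: conformal-invariance heuristics give $\phi^0_{R_n}[0\leftrightarrow x]\approx n^{-2\alpha(\pi,q)}$ with $\alpha(\pi,q)>\tfrac12$ for $q>2$, so the inequality you are after is actually false in any planar Dobrushin domain once $q>2$.

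The paper's fix is to leave the plane. It runs the same contour identity on the universal cover $\mathbb U$ of $\mathbb Z^2$ minus a face, a spiral graph with a logarithmic branch point, where the exploration path can accumulate unbounded winding. With the wired arc again a single vertex, the constant term becomes $e^{i\widehat\sigma 2\pi}-1\ne 0$ for all $q<4$ (a modified observable handles $q=4$), yielding $\sum_{x\in\partial U_n}\phi^0_{U_n}[0\leftrightarrow x]\ge c(q)$. The remaining and nontrivial work, carried out in Lemma~\ref{lem:crucial} and the argument following it, is to transport this estimate from $\mathbb U$ back to a planar slit domain $C_n$ and then to $\mathbb Z^2$ with free boundary conditions, using Lemmas~\ref{lem:touch} and~\ref{lem:push} to control the branch point and the change of boundary conditions. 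None of this is anticipated in your proposal, and without it the range $2<q\le 4$ is not covered.
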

Theorem~\ref{thm:decide} together with Theorem~\ref{thm:main} implies that {\bf P1--P5} are satisfied for $1\le q\le 4$. In particular, these two theorems imply Theorem~\ref{thm:main RCM}.

As mentioned above, the proof uses the fact that the discrete contour integrals of parafermionic observables vanish. The intrinsic difficulty of this theorem relies on the fact that, for our proof to work, the random-cluster model needs to be considered on the universal cover of $\Z^2$ minus a face; see Section~\ref{sec:definition} for more details. Bootstrapping the information from the universal cover to the plane is not straightforward. 

Investigating more general lattice models on this universal cover seems an interesting direction of research. Indeed, lattice models on Riemann surfaces have been studied extensively. Nevertheless, the theory mostly deals with surfaces of higher genius, while in our case we are facing (a discretization of) a simply connected planar Riemann surface with a logarithmic singularity at the origin.

The proof of Theorem~\ref{thm:decide} can be found in \cite{Dum12}. Nevertheless, other applications of parafermionic observables which are based on the same principle will be derived in this article, and most of the tools required for the proof of Theorem~\ref{thm:decide} will be used in other places as well. In addition, some of the ideas of the proof of Theorem~\ref{thm:main} allows one to simplify drastically the proof exposed in \cite{Dum12}. We therefore chose to include a streamlined version of the proof here (still, the details on parafermionic observables are omitted).
\medbreak
In order to conclude this section, let us mention that because the discrete contour integrals vanish, the parafermionic observable is therefore a discretization of a divergence-free differential form. For $q=2$ (which corresponds to the Ising model), further information can be extracted from local integrability and the observable satisfies a strong notion of discrete holomorphicity, called {\em $s$-holomorphicity}. In this case, the observable can be used to understand many properties on the model, including conformal invariance of the observable  \cite{CheSmi12,Smi10} and loops \cite{CheDumHon14,HonKyt13}, correlations \cite{CheIzy13,CheHonIzy15,Hon14,HonSmi11} and crossing probabilities \cite{BenDumHon14,DumHonNol11,CheDumHon13}. It can also be extended away from criticality \cite{BefDum12a,DumGarPet14}. We do not discuss special features of the $q=2$ case and we refer to the extensive literature for further information.

\subsection{Applications to the study of the critical phase for $1\le q\le 4$}\label{sec:applications}

The previous theorems have a large number of consequences regarding the understanding of the critical phase. We list some of them now.
\subsubsection{Mixing properties at criticality}\label{sec:applications 1}

The bound {\bf P5} on crossing probabilities enables us to study the spatial mixing properties at criticality. One may decouple events which are depending on edges in different areas of the space, and therefore compensate for the lack of independence. The next theorem illustrates this fact. It will be used in many occasions in the reminder of the paper.

\begin{theorem}[Polynomial ratio weak mixing under condition {\bf P5}] \label{thm:spatial mixing}
Fix $q\ge1$ such that Property {\bf P5} of Theorem~\ref{thm:main} is satisfied. There exists $\alpha>0$ such that for any $2k\leq n$ and for any event $A$ depending only on edges in $\Lambda_k$,
\begin{equation}\label{mixing}
\big|\P[\Lambda_n,p_c,q][\xi]{A}-\P[\Lambda_n,p_c,q][\psi]{A}\big|\leq \left(\frac{k}{n}\right)^\alpha\P[\Lambda_n,p_c,q][\xi]{A}\end{equation}
uniformly in boundary conditions $\xi$ and $\psi$.\end{theorem}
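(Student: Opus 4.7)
The plan is to combine the uniform-in-boundary-conditions crossing estimate P5 with the domain Markov property: using P5 I will construct open circuits in dyadic annuli surrounding $\Lambda_k$, and conditioning on such circuits will effectively shield $\Lambda_k$ from the influence of the boundary conditions on $\partial \Lambda_n$.

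First I will establish a uniform-in-BC circuit estimate: there exist $c,\alpha>0$ such that for any $2k\leq n$, any BC $\xi$, and any configuration $\eta$ on the edges of $\Lambda_k$,
$$\phi^\xi_{\Lambda_n,p_c,q}\bigl[\mathcal{E}\,\big|\,\omega|_{E_{\Lambda_k}}=\eta\bigr]\geq 1-(k/n)^\alpha,$$
where $\mathcal{E}$ is the event that some open circuit surrounds $\Lambda_{2k}$ inside $\Lambda_n\setminus\Lambda_{2k}$. The proof reduces, via the domain Markov property, to estimating the probability under a random-cluster measure on the annulus $\Lambda_n\setminus\Lambda_k$ with arbitrary BCs on both sides. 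Partitioning this annulus into $\log_2(n/(2k))$ dyadic subannuli $A_j=\Lambda_{2^{j+1}k}\setminus\Lambda_{2^j k}$ with $j\geq 1$ (the innermost subannulus $A_0$ is discarded since the BCs sit at distance zero from it), P5 applied to four overlapping rectangles within each $A_j$ combined with FKG yields a uniform positive probability $c>0$ of an open circuit in $A_j$. Iterating outermost-circuit exploration from outside inwards, the probability of missing a circuit in every $A_j$ is at most $(1-c)^{\log_2(n/(2k))}=(k/n)^\alpha$. Summing this conditional bound over $\eta\in A$ immediately gives, for every $A\in\mathcal{F}_{\Lambda_k}$,
$$\phi^\xi_{\Lambda_n,p_c,q}[A\cap\neg\mathcal{E}]\leq (k/n)^\alpha\,\phi^\xi_{\Lambda_n,p_c,q}[A],$$
and symmetrically for $\psi$; this already handles the ``bad-event'' contribution in the required ratio form.

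Next, on the event $\mathcal{E}$, I will let $\Gamma$ be the outermost open circuit around $\Lambda_{2k}$ in $\Lambda_n\setminus\Lambda_{2k}$. The event $\{\Gamma=\gamma\}$ is measurable with respect to the edges not strictly inside $\gamma$, and by the domain Markov property together with the openness of $\gamma$, the conditional law on $\mathrm{int}(\gamma)$ is $\phi^1_{\mathrm{int}(\gamma)}$, independent of $\xi$. This yields
$$\phi^\xi_{\Lambda_n,p_c,q}[A\cap\mathcal{E}]=\sum_\gamma\phi^\xi_{\Lambda_n,p_c,q}[\Gamma=\gamma]\,\phi^1_{\mathrm{int}(\gamma)}[A],$$
in which the $\xi$-dependence enters only through the weights of $\Gamma$. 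To upgrade this into the desired ratio form, I will iterate the decoupling at successively smaller scales: inside $\mathrm{int}(\gamma)$, reveal a further circuit $\Gamma'$, condition on it, and use that $\phi^1_{\mathrm{int}(\gamma)}[\Gamma'=\gamma']$ depends on $\gamma$ only through its macroscopic shape at distance of order $k$ from $\Lambda_k$. Arranged as an induction on the scale $n$, with careful collection of the accumulated error terms, this gives $\phi^1_{\mathrm{int}(\gamma)}[A]=(1+O((k/n)^\alpha))\,\phi^1_{\Lambda_{2k}}[A]$ uniformly in the admissible $\gamma$. Substituting back and combining with the bad-event bound yields $|\phi^\xi[A]-\phi^\psi[A]|\leq C(k/n)^\alpha\phi^\xi[A]$, concluding after adjusting $\alpha$.

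The hard part is the last step. Absolute weak mixing $|\phi^\xi[A]-\phi^\psi[A]|\leq(k/n)^\alpha$ follows almost immediately from the first two steps, but the ratio form is strictly stronger for rare events $A$. The dependence of the law of $\Gamma$ on $\xi$ must be absorbed via the iterative multi-scale argument, and the careful bookkeeping of error terms across these scales constitutes the main technical subtlety.
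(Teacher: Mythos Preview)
Your first two steps are correct and standard: the conditional circuit bound $\phi^\xi_{\Lambda_n}[A\cap\neg\mathcal E]\le (k/n)^\alpha\phi^\xi_{\Lambda_n}[A]$ indeed follows from {\bf P5} in dyadic annuli, and the decomposition $\phi^\xi[A\cap\mathcal E]=\sum_\gamma\phi^\xi[\Gamma=\gamma]\,\phi^1_{\mathrm{int}(\gamma)}[A]$ is the domain Markov property. As you note, this already yields absolute mixing $|\phi^\xi[A]-\phi^\psi[A]|\le (k/n)^\alpha$.

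The gap is in your third step. Your goal there is to show that $\phi^1_{\mathrm{int}(\gamma)}[A]$ is within a factor $1+O((k/n)^\alpha)$ of a fixed reference, uniformly over admissible $\gamma$. But the outermost circuit $\Gamma$ can sit arbitrarily close to $\partial\Lambda_{2k}$, so $\mathrm{int}(\gamma)$ can be as small as (essentially) $\Lambda_{2k}$; comparing $\phi^1_{\mathrm{int}(\gamma)}[A]$ to $\phi^1_{\Lambda_{2k}}[A]$ across all such $\gamma$ is precisely the theorem at the ratio $k/(2k)=1/2$, which gives a constant error, not $(k/n)^\alpha$. Your suggested ``induction on the scale $n$'' does not close up: if one places the circuit at an intermediate scale $m$, the error picked up is of order $(m/n)^\alpha+(k/m)^\beta$ where $\beta$ is the inductive exponent, and no choice of $m$ makes this $\le (k/n)^\beta$. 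The vague phrase ``depends on $\gamma$ only through its macroscopic shape'' hides exactly the comparison you are trying to prove.

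The paper avoids this difficulty entirely by building an explicit monotone \emph{coupling} of $(\omega_\xi,\omega_1)$ rather than just comparing conditional laws. One reveals edges from $\partial\Lambda_n$ inward (following the $\omega_1$-cluster of the boundary for one direction, the $\omega_\xi^*$-cluster for the other) using common uniform random variables; once a separating dual-open circuit in $\omega_1^*$ (respectively an open circuit in $\omega_\xi$) is discovered, the domain Markov property forces the \emph{same} free (respectively wired) law inside for both configurations, so they can be made pointwise identical on $\Lambda_k$. This pointwise identity gives directly $\phi^1[A\cap E]\le\phi^\xi[A\cap E]$ (with $E$ the dual-circuit event) and hence $\phi^\xi[A]\ge(1-(k/n)^\alpha)\phi^1[A]$; the reverse inequality uses the second coupling and the primal-circuit event. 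The two essential ingredients you are missing are (i) the coupling, which turns ``same conditional law inside $\gamma$'' into ``same configuration inside $\gamma$'', and (ii) the use of \emph{both} primal and dual circuits, one for each direction of the ratio bound.
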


Together with the Domain Markov property, this result implies the following inequality for $2k\le m\le n$ (with $n$ possibly infinite),
\begin{equation*}
\big|\P[\Lambda_n,p_c,q][\xi]{A\cap B}-\P[\Lambda_n,p_c,q][\xi]{A}\P[\Lambda_n,p_c,q][\xi]{B}\big|\leq \left(\frac{k}{m}\right)^\alpha\P[\Lambda_n,p_c,q][\xi]{A}\P[\Lambda_n,p_c,q][\xi]{B},\end{equation*}
where the boundary conditions $\xi$ are arbitrary, $A$ is an event depending on edges of $\Lambda_k$ only, and $B$ is an event depending on  edges of $\Lambda_n\setminus\Lambda_m$.

\begin{remark}For $p\ne p_c(q)$, estimates of this type (with an exponential speed of convergence instead of polynomial) can be established by using the rate of spatial decay for the influence of a single vertex \cite{Ale98}. At criticality, the correlation between distant events does not boil down to correlations between points and a finer argument must be harnessed. Crossing-probability estimates which are \emph{uniform in boundary conditions} are the key in order to prove such results.
\end{remark}
\begin{remark}We will see several specific applications of this theorem in the next chapters. The most striking consequence is the fact that the dependence on boundary conditions can be forgotten as long as the boundary conditions are sufficiently distant from the set of edges determining whether the events under consideration occur or not.  For instance, it allows us to state several theorems in infinite volume, keeping in mind that most of these results possess natural counterparts in finite volume by using the fact that
$$c\P[p_c,q][]{A}\le\P[p_c,q,\Lambda_{2n}][\xi]{A}\le C\P[p_c,q][]{A}$$
for any event $A$ depending on edges in $\Lambda_n$ only, and any boundary conditions $\xi$ (the constants $c$ and $C$ are universal).
\end{remark}

\subsubsection{Consequences for the scaling limit} \label{sec:scaling limit}The phase transition being continuous, the scaling limit of the model is expected to be conformally invariant. This work opens new perspectives in the study of this scaling limit and we now mention several possible directions of research.
We refer the reader to Section~\ref{sec:observable} for the definition of Dobrushin domains and the exploration path.

\begin{conjecture}[Schramm, \cite{Sch07}]\label{conj:conformal invariance}
Let $0\le q\le 4$ and $p=p_c(q)$. Let $(\Omega,a,b)$ be a simply connected domain with two marked points on its boundary $a$ and $b$. Let $(\Omega_\delta,a_\delta,b_\delta)$ be a sequence of Dobrushin domains converging in the Carath\'eodory sense towards $(\Omega,a,b)$. The law of the exploration path $(\gamma_\delta)$ for critical random-cluster model with cluster-weight $q$ and Dobrushin boundary conditions in $(\Omega_\delta,a_\delta,b_\delta)$ converges, as the mesh size $\delta$ tends to zero, to the Schramm-Loewner Evolution between $a$ and $b$ in $\Omega$ with parameter $\kappa=\frac{4\pi}{\arccos(-\sqrt q/2)}$.\end{conjecture}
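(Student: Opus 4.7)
The plan is to carry out the standard three-step programme for identifying scaling limits of critical interfaces as Schramm--Loewner Evolutions: first show precompactness of the law of $\gamma_\delta$, next verify that any subsequential limit is a Loewner chain with continuous driving function, and finally identify this driving function as $\sqrt{\kappa}\,B_t$. Most of the machinery needed for the first two steps is in fact already in hand through the results of this paper.

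For Step~1 (tightness), I would apply the Kemppainen--Smirnov criterion, whose hypothesis is an annulus crossing estimate valid uniformly in boundary conditions. Property~\textbf{P5} of Theorem~\ref{thm:main}, known to hold for $1\le q\le 4$ by Theorem~\ref{thm:decide}, together with Theorem~\ref{thm:spatial mixing}, supplies precisely this input: one can bound from below the probability that the curve avoids a concentric sub-annulus whatever boundary data is generated by the past of the exploration. This yields equicontinuity in the half-plane capacity parametrization of the conformally mapped curves, hence tightness of $(\gamma_\delta)$ in the topology of curves modulo reparametrization.

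For Step~2 (Loewner parametrization), I would show that any subsequential limit $\gamma$ is almost surely parametrizable by a Loewner chain $g_t$ with continuous driving function $W_t$. This is a consequence of the same crossing estimates together with the second condition of Kemppainen--Smirnov, which rules out the pathological behaviours (multiple crossings of thin annuli, fjord-formation) that would obstruct representation by Loewner's equation. Theorem~\ref{thm:spatial mixing} again plays the decoupling role, allowing one to treat the past of the curve as effectively generating arbitrary boundary conditions on the unexplored domain.

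Step~3 is the serious obstacle and explains why the conjecture remains open outside the $q=2$ case. One needs a martingale observable for the discrete exploration path whose scaling limit is a genuine holomorphic function, conformally covariant with spin $\sigma = 1 - \tfrac{2}{\pi}\arccos(\tfrac{\sqrt q}{2})$, and determined by a canonical boundary value problem; the driving process of any subsequential limit can then be identified with $\sqrt{\kappa}B_t$ via Itô's formula applied to the ratio of observables at the tip. The natural candidate is Smirnov's edge parafermionic observable already invoked in Theorem~\ref{thm:decide}; unfortunately its discrete contour integrals only vanish, a discretization of divergence-freeness, falling short of the full discrete Cauchy--Riemann system required to extract a holomorphic scaling limit. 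For $q=2$ this gap is closed by $s$-holomorphicity, which has no known analogue for general $q$. My plan for Step~3 would therefore combine the parafermionic observable on the universal cover used for Theorem~\ref{thm:decide} with the a priori regularity obtained in Steps~1--2 to show that $W_t$ is a continuous semi-martingale, and then attempt to match its quadratic variation with $\kappa\, t$ using conformal covariance of the observable at the tip. Bridging the divergence-free-versus-holomorphic gap for non-integer spin is the essential remaining difficulty, and I do not see how to overcome it with the tools developed here.
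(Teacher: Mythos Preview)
Your assessment is accurate, and in fact there is nothing to compare against: the paper does \emph{not} prove this statement. Conjecture~\ref{conj:conformal invariance} is explicitly presented as open for all $q\notin\{0,2\}$, and the paper's contribution is precisely Step~1 of the three-step programme you describe, packaged as Theorem~\ref{thm:Loewner}. Your outline of Steps~1--2 via the Kemppainen--Smirnov criterion, with \textbf{P5} supplying the uniform annulus-crossing bound, is exactly the route the paper takes to that theorem (see the verification of Condition~\textbf{G2} in the proof of Theorem~\ref{thm:Loewner}).

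Your diagnosis of Step~3 is also correct and matches the paper's own discussion surrounding Conjecture~\ref{FK parafermion}: the parafermionic observable has vanishing discrete contour integrals, which discretizes closedness of a form, but for $q\ne2$ there is no analogue of $s$-holomorphicity to upgrade this to a genuine discrete Cauchy--Riemann system and hence to a holomorphic scaling limit. Without that, one cannot run the martingale-observable argument to identify the driving process. So your proposal is not a proof but an honest and accurate account of what is provable with current tools and where the obstruction lies---which is all that can be said here. One small caveat: the paper's machinery (\textbf{P5}, Theorem~\ref{thm:spatial mixing}, Theorem~\ref{thm:Loewner}) is established only for $1\le q\le 4$, so even Steps~1--2 as written do not cover the full range $0\le q\le 4$ of the conjecture.
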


Schramm-Loewner Evolutions are very-well studied objects; see e.g. \cite{Law05}. This convergence would therefore lead to a deep understanding of the critical phase of the random-cluster models. For the $q=0$ case corresponding to the perimeter curve of the uniform spanning tree, the conjecture was proved by Lawler, Schramm and Werner \cite{LawSchWer04b}. The $q=2$ case was formally proved in \cite{CheDumHon14} even though the fundamental contribution leading to this result was achieved in \cite{Smi10}. Conjecture~\ref{conj:conformal invariance} is open for any other values of $q$ (even for the $q=1$ case corresponding to Bernoulli percolation).

Lawler, Schramm and Werner proposed a global strategy for proving Conjecture~\ref{conj:conformal invariance}, which can be summarized as follows:
\begin{enumerate}
\item Prove compactness of the family of exploration paths $(\gamma_\delta)_{\delta>0}$ and show that any sub-sequential limits can be parametrized as a Loewner chain (with a continuous driving process denoted by $W$). 
\item Prove the convergence of some discrete observables of the model.
\item Extract from the limit of these observables enough information to evaluate
the conditional expectation and quadratic variation of $W_t$ and use L\'evy's theorem to prove that $W_t$ is equal to $\sqrt{\kappa}B_t$, where $B_t$ is the standard Brownian motion. As a consequence, any sub-sequential limits must be the Schramm-Loewner Evolution of parameter $\kappa$.
\end{enumerate}
Step 1 of this program is provided by Theorem~\ref{thm:Loewner} below. 
 We refer to \cite{Law05} for details on Loewner chains.

Let $X$ be the set of continuous parametrized curves and $d$ be the distance on $X$ defined by
$$d(\gamma_1,\gamma_2)=\min_{\varphi_1:[0,1]\rightarrow I, \varphi_2:[0,1]\rightarrow J\text{ increasing}}\sup_{t\in I}|\gamma_1(\varphi_1(t))-\gamma_2(\varphi_2(t))|,$$
where $\gamma_1:I\rightarrow \mathbb C$ and $\gamma_2:J\rightarrow \mathbb C$. Note that $I$ and $J$ can be equal to $\R_+\cup\{\infty\}$.
\begin{theorem}\label{thm:Loewner}
Fix $1\le q\le 4$, $p=p_c(q)$ and a simply connected domain $\Omega$ with two marked points on its boundary $a$ and $b$. Let $(\Omega_\delta,a_\delta,b_\delta)$ be a sequence of Dobrushin domains converging in the Carath\'eodory sense towards $(\Omega,a,b)$. Define $\gamma_\delta$ to be the exploration path in $(\Omega_\delta,a_\delta,b_\delta)$ with Dobrushin boundary conditions. Then, the family $(\gamma_\delta)$ is tight and any sub-sequential limit $\gamma$ satisfies the following properties: 
\begin{itemize}[noitemsep]
\item[R1] $\gamma$ is almost surely a continuous non-intersecting curve from $a$ to $b$ staying in $\Omega$.
\item[R2] For any parametrization $\gamma:[0,1]\rightarrow \infty$, $b$ is a simple point, in the sense that $\gamma(t)=b$ implies $t=1$. Furthermore, almost surely $\gamma(t)$ is on the boundary of $\Omega\setminus\gamma[0,t]$ for any $t\in[0,1]$.
\item[R3] Let $\Phi$ be a conformal map from $\Omega$ to the upper half-plane $\mathbb H$ sending $a$ to $0$ and $b$ to $\infty$. For any parametrization $\gamma:[0,1]\rightarrow \R_+$,  the $h$-capacity of the hull $\widehat K_s$ of $\Phi(\gamma[0,s])$ tends to $\infty$ when $s$ approaches $1$. Furthermore, if $(\widehat K_t)_{t\ge0}$ denotes $(\widehat K_s)_{s\in[0,1]}$ parametrized by $h$-capacity, then $(\widehat K_t)_{t\ge0}$ is a Loewner chain with a driving process $(W_t)_{t\ge0}$ which is $\alpha$-H\"older for any $\alpha<1/2$ almost surely. Furthermore, there exists $\varepsilon>0$ such that for any $t>0$, $\mathbb E[\exp(\varepsilon W_t/\sqrt t)]<\infty$.
\item[R4] There exists $\alpha>0$ such that $\gamma$ has Hausdorff dimension between $1+\alpha$ and $2-\alpha$ almost surely.
\end{itemize}
\end{theorem}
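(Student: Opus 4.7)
The plan is to verify the hypotheses of the Kemppainen--Smirnov framework for tightness and Loewner parametrization of random interfaces, which reduces R1--R3 to a single annulus-crossing estimate that is uniform in boundary conditions, and then to deduce R4 from one-arm and two-arm bounds. The essential input is that {\bf P5} of Theorem~\ref{thm:main} holds throughout $q\in[1,4]$ (via Theorem~\ref{thm:decide}) and that it is \emph{uniform in boundary conditions}, which is what makes the exploration-induced conditioning manageable.

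The heart of the proof is a \emph{uniform unforced-crossing estimate}: for any $z$ and any $r<R$, conditionally on the explored piece $\gamma_\delta[0,t]$ (which induces some complicated mixture of boundary conditions on the unexplored region), the probability that $\gamma_\delta$ enters $B(z,r)$ after time $t$ and then crosses back out to $\partial B(z,R)$ without being topologically forced to do so is bounded by $C(r/R)^\alpha$, uniformly in $\delta$, $t$, and the shape of the explored piece. To establish this, I would argue that an unforced re-crossing of the annulus forces, up to symmetry, the existence of an open primal crossing \emph{and} a dual crossing of the sub-annulus $B(z,R/2)\setminus B(z,2r)$ under the exploration-induced boundary condition. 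Using monotonicity of the random-cluster model to sandwich that boundary condition between free and wired, combined with property {\bf P5} (uniform RSW across boundary conditions) iterated over a chain of dyadic scales between $2r$ and $R/2$, produces a positive-probability dual circuit at every scale that blocks the forbidden second crossing. Theorem~\ref{thm:spatial mixing} is what lets us chain these scales without loss: the influence of the precise shape of the explored curve at scale $r$ on events at scale $R\gg r$ decays polynomially, so the bounds at each scale can be multiplied.

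Once this estimate is in hand, the abstract theorem of Kemppainen--Smirnov applies directly and yields: tightness of $(\gamma_\delta)$ in the metric $d$, plus R1 (continuity and non-self-crossing, since discrete exploration paths are simple at the lattice level and the unforced-crossing estimate prevents microscopic self-touching from surviving in the limit), plus R3 (the hulls form a Loewner chain whose driving function $W_t$ is $\alpha$-H\"older for every $\alpha<1/2$ and satisfies the exponential moment bound $\E[\exp(\varepsilon W_t/\sqrt{t})]<\infty$). For R2, that $b$ is a simple point of $\gamma$ follows from applying the same annulus estimate at the target point, together with the discrete fact that the exploration reaches $b$ only at its final step; that $\gamma(t)\in\partial(\Omega\setminus\gamma[0,t])$ is a standard consequence of the fact that the exploration visits the boundary of the explored region at each discrete step, preserved under the uniform continuity coming from the annulus bound.

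For R4, the upper bound $2-\alpha$ on the Hausdorff dimension comes from a first-moment calculation: cover $\gamma$ by dyadic boxes of side $\varepsilon$ and bound the expected number of boxes visited by $\gamma_\delta$ using a one-arm estimate derived from {\bf P5} (the probability that the interface enters a box of side $\varepsilon$ sitting inside a macroscopic region is at most $C\varepsilon^\eta$ for some $\eta>0$), then let $\delta\to 0$. The lower bound $1+\alpha$ comes from a second-moment argument on the same counting random variable: {\bf P5} gives the matching lower bound on the first moment, while Theorem~\ref{thm:spatial mixing} controls the covariance between the events ``$\gamma$ visits box $B_i$'' and ``$\gamma$ visits box $B_j$'' by the product of marginals up to a polynomial factor in $\text{diam}(B_i)/\text{dist}(B_i,B_j)$, which is enough to run Frostman. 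The main obstacle is the uniform annulus estimate in the second paragraph: the conditioning on the explored piece of $\gamma_\delta$ produces extremely rough boundary data, and classical RSW for a fixed (free or wired) boundary condition is insufficient--this is precisely the place where the uniform form of RSW in {\bf P5}, hence the full strength of Theorem~\ref{thm:main}, is indispensable.
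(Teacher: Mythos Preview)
Your overall strategy matches the paper's: reduce R1--R3 (and, via the same cited framework, R4) to the Kemppainen--Smirnov annulus-crossing condition (the paper calls it {\bf G2}) and verify that condition using {\bf P5}. However, your argument for the crossing estimate has a real gap at the delicate point.

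You write that one can ``sandwich the exploration-induced boundary condition between free and wired'' and then produce a dual circuit at each dyadic scale to block the unforced crossing. But the exploration induces \emph{mixed} Dobrushin boundary conditions on the slit domain: wired along $\partial_{b_\delta c_\delta}$, free along $\partial_{c_\delta b_\delta}$. In a connected component $\mathcal C$ of $A\setminus\gamma_\delta[0,\tau]$ whose boundary meets both arcs, neither comparison is useful by itself: a primal open circuit blocks only dual crossings of the annulus, a dual-open circuit blocks only primal crossings, and the interface carries one of each along its two sides. The paper closes this with a topological observation you do not make: any connected component of $A_\tau$ (the \emph{non-disconnecting} part of $A$, which is precisely what {\bf G2} asks about) whose boundary met both arcs would itself separate $\gamma_\delta(\tau)$ from $b_\delta$, contradicting membership in $A_\tau$. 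Hence every component of $A_\tau$ touches only one arc. In components touching only the wired arc, the configuration stochastically dominates the restriction of the free-boundary measure on the full annulus, and {\bf P5a} produces an open circuit that blocks the interface there; in components touching only the free arc, the dual argument produces a dual-open circuit. The two bounds combine to a probability at least $c^2$ of blocking, and one then chooses $R/r$ large enough that $c^2\ge 1/2$.

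Your appeal to Theorem~\ref{thm:spatial mixing} to ``chain dyadic scales'' is unnecessary and slightly misdirected. Once each component is of a single boundary type, plain stochastic domination against the free (respectively wired) measure on the full annulus already removes all dependence on the rough exploration-induced boundary data; the decay in $R/r$ then comes directly from iterating {\bf P5a} in concentric annuli, with no mixing input. Your separate first/second-moment sketch for R4 is reasonable, though the paper simply defers R4 to the same Kemppainen--Smirnov framework once {\bf G2} is in hand.
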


Tightness criteria for random planar curves were first introduced in
\cite{AizBur99}. They were used as a key step in the proof of convergence
of interfaces to the Schramm-Loewner Evolution for Bernoulli site
percolation on the triangular lattice \cite{CamNew07}. These criteria were improved in
\cite{KemSmi12} to treat the case of random non-self-crossing planar
curves parametrized as Loewner chains. 
\medbreak
Step 2 represents the main challenge in the program outlined above (Step 3 is easy once Steps 1 and 2 have been achieved, see e.g. \cite[Section 13.2]{Dum13}).
Smirnov succeeded to perform Step 2 for $q=2$ using the fermionic observable \cite{Smi10}. He also proposed to consider the parafermionic observables introduced in \cite{Smi06} as a potential candidate for Step 2 in the case of general cluster-weights $q<4$ (let us mention that the choice of the observables in Step 2 are not determined uniquely). For completeness, let us mention a conjecture which, together with the results of this paper, would imply Conjecture~\ref{conj:conformal invariance}.
\begin{conjecture}[Smirnov]\label{FK parafermion}
Let $0<q< 4$, $p=p_c(q)$ and $(\Omega,a,b)$ be a simply connected domain with two points on its boundary. For every $z\in \Omega$,
\begin{eqnarray}
\frac 1{(2\delta)^\sigma}F_{\delta}(z)~\rightarrow~\phi'(z)^\sigma\quad\text{when }\delta\rightarrow 0
\end{eqnarray}
where 
\begin{itemize}[noitemsep]
\item for $\delta>0$, $F_\delta$ is the vertex parafermionic observable of \cite{Smi06} at $p_c(q)$ in $(\Omega_\delta,a_\delta,b_\delta)$.

\item $\sigma=1-\frac 2\pi\arccos (\sqrt q/2)$,  

\item $\phi$ is any conformal map from $\Omega$ to $\mathbb R\times(0,1)$ sending $a$ to $-\infty$ and $b$ to $\infty$.
\end{itemize}
\end{conjecture}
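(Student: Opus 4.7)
The plan is to leverage the crossing and mixing estimates provided by Theorem~\ref{thm:main} (property \textbf{P5}) and Theorem~\ref{thm:spatial mixing} to establish tightness of the family $\{(2\delta)^{-\sigma} F_\delta\}_{\delta > 0}$ on compact subsets of $\Omega \setminus \{a,b\}$, extract a sub-sequential limit that is holomorphic on $\Omega$, and then identify it via a conformally covariant boundary value problem whose unique solution is $\phi'(z)^\sigma$.

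First, I would establish uniform boundedness and equicontinuity of $(2\delta)^{-\sigma} F_\delta$ on compacts. The modulus $|F_\delta(z)|$ is controlled by probabilities that the exploration interface reaches a neighborhood of $z$; such one-arm probabilities, up to polynomial corrections of the anticipated order $\delta^\sigma$, are bounded via the crossing estimates of \textbf{P5} combined with quasi-multiplicativity arguments whose only ingredients are Theorem~\ref{thm:main} and Theorem~\ref{thm:spatial mixing}. Equicontinuity on compacts would then follow by comparing $F_\delta(z)$ and $F_\delta(z')$ at nearby $z,z'$ using the polynomial mixing statement, which decouples the local behaviors up to a small polynomial error.

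Second, I would extract a sub-sequential limit $F$ and show it is holomorphic on $\Omega$. The key discrete input is that discrete contour integrals of $F_\delta$ vanish around every elementary face; after summing over a mesoscopic patch and invoking the equicontinuity bounds, this gives $\oint_\gamma F_\delta(z)\, dz = o(1)$ for any macroscopic smooth contour $\gamma \subset \Omega$. Passing to the limit and invoking Morera's theorem produces a holomorphic $F$. The boundary data are then determined by the defining phase relation $\arg F_\delta(z) \propto -\sigma \cdot \text{winding}$ inherited from the parafermionic observable: on each of the two boundary arcs, $\arg F(z)^{1/\sigma}$ must be parallel to the tangent of $\partial\Omega$, with the correct $\pi$-phase jump between the wired and free arcs dictated by Dobrushin boundary conditions. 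A standard boundary value analysis---pushing forward by the conformal map to the strip $\mathbb R \times (0,1)$ and applying the argument principle---then forces $F(z) = C \phi'(z)^\sigma$ for a global constant $C$, which is pinned down by the asymptotics near $b$ that the normalization $(2\delta)^{-\sigma}$ has been calibrated to capture.

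The main obstacle lies in the second step. For $q=2$ the analogous observable is \emph{$s$-holomorphic} and one has pointwise discrete Cauchy--Riemann equations, which allow a clean discrete Dirichlet--Neumann formulation. For general $1\le q\le 4$ with $q\ne 2$ this rigidity disappears: one has only the integral (divergence-free) statement, and converting it into genuine holomorphicity of the continuum limit requires \emph{quantitative} regularity estimates for $F_\delta$ beyond what is directly extractable from \textbf{P5}. Making the error $o(1)$ in the contour integral above effective, and in particular ruling out oscillatory artifacts on intermediate scales, is the essential difficulty of the approach, and is the principal reason why the conjecture remains open even granted the equivalences of Theorem~\ref{thm:main} and the tightness output of Theorem~\ref{thm:Loewner}.
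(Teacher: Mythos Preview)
This statement is a \emph{conjecture} in the paper, not a theorem; the paper explicitly says ``even though we are currently unable to prove Conjecture~\ref{FK parafermion}'' and offers no proof. There is therefore no proof in the paper to compare your proposal against.

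Your proposal is also not a proof, and to your credit you say as much in the final paragraph. The outline you give is essentially the standard heuristic route (tightness via crossing estimates, sub-sequential limits, Morera-type holomorphicity, boundary value identification), and the obstacle you identify is exactly the one the paper highlights: for $q\neq 2$ the observable is only discretely divergence-free rather than $s$-holomorphic, and no one currently knows how to upgrade vanishing discrete contour integrals to genuine holomorphicity of the limit. A few of your intermediate claims are also not justified by the paper's results: the crossing estimates \textbf{P5} and the mixing of Theorem~\ref{thm:spatial mixing} do not by themselves yield the precise normalization $|F_\delta(z)|\asymp \delta^\sigma$ needed for tightness of $(2\delta)^{-\sigma}F_\delta$ (that would already require knowing the one-arm exponent), nor do they directly give equicontinuity of the rescaled observable. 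So even the ``first step'' of your plan goes beyond what is available.
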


Let us mention that even though we are currently unable to prove Conjecture~\ref{FK parafermion} (and therefore Conjecture~\ref{conj:conformal invariance}), we are still able to obtain  results on the scaling limit. Indeed, the geometry of the random curve $\gamma$ can be easily related to the geometry of clusters boundaries at a discrete level. Keeping in mind that we are not able to prove that the scaling limit of cluster boundaries is well-defined, we may still extract sub-sequential limits and ask simple properties about these objects. For instance, property {\bf R4} of the previous theorem implies that any sub-sequential scaling limit of cluster boundaries of random-cluster models with $1\le q\le 4$ is a random fractal. 
The next theorem corresponds to another property of these sub-sequential scaling limits: it shows that macroscopic clusters touch the boundary of a smooth domain, for instance a rectangle, with free boundary conditions. 
\begin{theorem}\label{thm:strongest RSW}
Fix $1\le q<4$ and $\alpha>0$. There exists $\cl{cst:lower true RSW}>0$ such that for any $n\ge1$,
$$\P[{[0,\alpha n]\times[0,n]},p_c(q),q][0]{\calC_h([0,\alpha n]\times[0,n])}\ge \Cr{cst:lower true RSW}. $$
\end{theorem}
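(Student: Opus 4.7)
The plan is to promote Property~P5 (which holds throughout $q\in[1,4]$ by Theorems~\ref{thm:main} and \ref{thm:decide}) to a statement that permits free boundary conditions directly on the rectangle. Since the statement requires the strict inequality $q<4$, the key new input must come from the parafermionic observable, whose spin $\sigma=1-\tfrac{2}{\pi}\arccos(\sqrt{q}/2)$ is strictly less than $1$ precisely when $q<4$. At $q=4$ one has $\sigma=1$ and, as noted in the Remark preceding Section~\ref{sec:intro parafermion}, the stronger RSW is expected to fail, so any proof strategy must use $q<4$ in an essential way.

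The reduction step is standard: by a Russo--Seymour--Welsh-type argument using FKG, Theorem~\ref{thm:spatial mixing}, and P5 for the bulk, it suffices to prove the single aspect-ratio-one instance, i.e.\ that the left and right sides of a square $S_n=[0,n]^2$ are connected by an open path inside $S_n$, under free boundary conditions on $\partial S_n$, with probability bounded below uniformly in $n$. Other aspect ratios then follow by combining finitely many such square crossings with vertical crossings provided by P5, decoupled via Theorem~\ref{thm:spatial mixing}; a horizontal crossing of $[0,\alpha n]\times[0,n]$ with free BCs on its boundary is then obtained by gluing these pieces using FKG.

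The core estimate is established via the parafermionic observable in $S_n$ with Dobrushin-type boundary conditions (wired on the left side of $S_n$, free elsewhere). One writes the vanishing of the discrete contour integral of the observable along $\partial S_n$ and extracts from the resulting identity a lower bound on the probability that the wired arc is connected inside $S_n$ to the right side. This is the same mechanism that underlies Theorem~\ref{thm:decide} (cf. \cite{Dum12}), but applied to a simply connected domain rather than the universal cover. The weighting of boundary terms in the identity is governed by the spin $\sigma$, and the strict inequality $\sigma<1$ ensures that the singular "corner" contributions at the two points where the boundary condition changes do not dominate the "bulk" contribution along the free arcs, yielding a strictly positive, $n$-independent lower bound.

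The main obstacle is extracting a genuine probability lower bound from the parafermionic identity, which is a priori an identity among complex numbers and thus subject to sign and phase cancellations. Controlling these cancellations requires combining the parafermionic identity with auxiliary lower bounds from P5 and the monotonicity properties of the model, and it is precisely at this step that $\sigma<1$ is used to overpower the corner contributions. Once this quantitative boundary-to-boundary estimate is in hand, the combinatorial combination of squares into a rectangle of arbitrary aspect ratio $\alpha$ is a routine application of the uniform RSW toolkit developed earlier in the paper.
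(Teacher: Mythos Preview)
Your proposal has a genuine gap at the ``core estimate'' step. You propose to apply the parafermionic identity directly in the square $S_n$ with Dobrushin boundary conditions and read off a crossing lower bound, claiming that $\sigma<1$ controls the corner terms. But this is essentially the strategy used in the paper for Theorem~\ref{thm:decide} in the range $1\le q\le 2$ only, and the Remark following that proof explains precisely why it \emph{fails} for $q>2$: in a simply connected planar domain the boundary contributions along the free arc carry complex weights whose signs do not cooperate once $\hat\sigma<1/2$, and the predicted half-plane one-arm exponent becomes too large for the identity to yield a nontrivial lower bound. The obstruction is not a corner phenomenon but a genuine sign/exponent issue, and it is the reason the paper is forced onto non-planar (multi-sheeted) domains already for Theorem~\ref{thm:decide}. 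Your square has an effective opening angle $\pi/2$, which is even worse than the half-plane case discussed in that Remark.

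The paper's actual argument is structurally different. It first reduces (via a short lemma using {\bf P5}) to a half-plane estimate: $\phi^0_{\mathbb H}\big[[-n,n]\times\{0\}\leftrightarrow\mathbb Z\times\{n\}\big]\ge c$. This is proved by a second-moment method on the number of boundary points in $[-n,n]\times\{0\}$ connected to height $n$, and the second-moment bound hinges on the inequality $\sum_{k\le n}p_k\le Cnp_n$ for the half-plane one-arm probabilities $p_k$. It is \emph{this} summability estimate that is extracted from the parafermionic observable, applied not in a planar square but in a finite piece $V_{n,\theta}$ of a branched graph $\mathbb V$ with winding parameter $\theta=\theta(q)$. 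The condition $q<4$ enters twice: it makes the coefficient $2\cos(\hat\sigma 3\pi/4)\sin(\hat\sigma\pi/4)$ strictly positive, and it guarantees $\theta(q)<\infty$, so that the domain has bounded winding and {\bf P5} can be used to compare boundary terms to $p_k$ and $p_n$. None of the ingredients---the half-plane reduction, the second-moment method, or the summability estimate on a multi-sheeted domain---appear in your outline.
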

In the previous theorem, the free boundary conditions are directly on the boundary of the rectangle $[0,\alpha n]\times[0,n]$. This corresponds to the most direct generalization of the Russo-Seymour-Welsh theorem. 

\subsubsection{Consequences for critical exponents} 
Theorem~\ref{thm:decide} has several implications which are postponed to a future paper for so-called arm-events. Let us quickly mention that one can prove a priori bounds on the probability of arm-events, the so-called quasi-multiplicativity and extendability of these probabilities, as well as universal exponents.  These tools are crucial in order to compute critical exponents via the understanding of the scaling limit. 
Let us also mention that universal bounds can be deduced between different critical exponents.

Theorem~\ref{thm:decide} should also instrumental in the understanding of the near-critical regime, and in particular to derive the scaling relations between critical exponents (see \cite[Section~13.2.3]{Dum13} for more details).

In another direction, Theorem~\ref{thm:main} provides the relevant criteria in order to prove that the critical exponents of random-cluster models are universal on isoradial graphs (see \cite{GriMan14,CheSmi12} for the case of percolation and Ising), see \cite{DumLiMan15}.

\subsubsection{Consequences for Potts model} The coupling between Potts and random-cluster models enables one to transfer properties from one model to the other. In order to illustrate this fact, let us state the following theorem (many other results could be proved, but this would make this article substantially longer) which is a direct consequence of the previous theorems.

 \begin{theorem}
\label{prop:uniqueness}
For $q\in\{2,3,4\}$, there exists a unique Gibbs measure $\mu_{\Z^2,\beta_c(q),q}$ for the critical $q$-state Potts model. Furthermore, there exist $\eta_1,\eta_2>0$ such that for any $x\in \Z^2\setminus\{0\}$,
$$\frac{1}{|x|^{\eta_1}}\le\mu_{\Z^2,\beta_c(q),q}[\sigma_x=\sigma_0]-\frac1q\le \frac{1}{|x|^{\eta_2}}.$$
\end{theorem}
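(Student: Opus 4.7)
The statement combines a uniqueness claim with two-sided polynomial bounds on the truncated two-point function. The uniqueness part is a direct consequence of the random-cluster analysis. By Theorem~\ref{thm:main RCM} and Theorem~\ref{thm:main}, property \textbf{P2} holds at $(p_c(q), q)$ for every $q \in \{2,3,4\}$: the free and wired infinite-volume random-cluster measures coincide, and in particular (by \textbf{P1}) there is almost surely no infinite cluster. Under the Edwards--Sokal coupling, each monochromatic Gibbs measure $\mu^{(i)}_{\Z^2,\beta_c(q)}$ is obtained from the unique infinite-volume random-cluster measure by independent uniform colouring of clusters. Consequently all monochromatic measures and the free measure coincide, and the classical bijection between extremal Gibbs states of the Potts model and random-cluster boundary conditions (see e.g.\ \cite[Section~4.6]{Gri06}) yields the unique Gibbs measure $\mu_{\Z^2,\beta_c(q),q}$.

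For the two-point function, the Edwards--Sokal identity
\[
\mu_{\Z^2,\beta_c(q),q}[\sigma_x = \sigma_0] - \tfrac{1}{q} \;=\; \tfrac{q-1}{q}\,\P[\Z^2,p_c(q),q][0]{0 \lr x}
\]
reduces the problem to polynomial upper and lower bounds on the critical connection probability. For the upper bound, I would exploit the self-duality of the random-cluster model at $p_c(q) = \sqrt{q}/(1+\sqrt{q})$: the dual configuration is a critical random-cluster model on the dual lattice with the same cluster weight $q$, so \textbf{P5} applies to the dual process as well. Using \textbf{P5} together with the polynomial mixing of Theorem~\ref{thm:spatial mixing}, the event that a dual open circuit exists in the annulus $\Lambda_{2^{k+1}} \setminus \Lambda_{2^k}$ has probability at least some constant $c > 0$ uniformly in $k$, and these events are sufficiently decorrelated across scales that the probability that none of them occurs for $k \le \log_2 |x| - 1$ decays polynomially, say as $|x|^{-\eta_2}$. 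Since any such dual circuit around $0$ disconnects $0$ from $x$, this gives the upper bound.

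For the lower bound, I would first establish a polynomial lower bound $\P[\Z^2,p_c(q),q][0]{0 \lr \partial \Lambda_n} \ge n^{-\eta}$ on the one-arm probability, using \textbf{P5} and FKG to glue horizontal and vertical crossings of rectangles across dyadic annuli into an open path from $0$ to $\partial \Lambda_n$. Then, for $|x| \asymp n$, I would combine by FKG three increasing events: $0 \lr \partial\Lambda_{n/4}(0)$, $x \lr \partial\Lambda_{n/4}(x)$, and an open crossing of a rectangle of bounded aspect ratio that sweeps through both $\Lambda_{n/2}(0) \setminus \Lambda_{n/4}(0)$ and $\Lambda_{n/2}(x) \setminus \Lambda_{n/4}(x)$; the intersection of these events forces $0 \lr x$, and each has polynomial probability, yielding $\P[\Z^2,p_c(q),q][0]{0 \lr x} \ge |x|^{-\eta_1}$.

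The main obstacle is executing the RSW-and-gluing arguments in the dependent setting with uniform constants. For Bernoulli percolation these gluings are textbook, but here one must carefully use the fact that \textbf{P5} holds \emph{uniformly in boundary conditions} together with the polynomial mixing of Theorem~\ref{thm:spatial mixing} to absorb the effect of conditioning: this is what ensures that the crossing estimates near $0$ and near $x$ can be multiplied, that the dual-circuit events at distinct scales are genuinely approximately independent, and that the resulting exponents $\eta_1, \eta_2 > 0$ do not degenerate.
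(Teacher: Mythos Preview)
Your approach is correct and follows the same skeleton as the paper's, but the paper's execution is considerably lighter than what you outline. For the upper bound, the paper simply invokes Lemma~\ref{lem:one arm}: conditioning on the crossing events in outer annuli induces boundary conditions that are dominated by wired, so by comparison of boundary conditions together with {\bf P5a} for the dual model each annulus independently carries a dual circuit with probability at least $c$; no appeal to the mixing Theorem~\ref{thm:spatial mixing} is needed. For the lower bound, the paper avoids {\bf P5} entirely and uses only the weak RSW of Theorem~\ref{thm:weakRSW} (wired boundary conditions): a horizontal crossing of $[0,n]^2$ forces some boundary vertex to be connected to distance $n$, so a union bound gives $\P[\Z^2,p_c,q][1]{0\lr\partial\Lambda_n}\ge c/n$ directly, without any dyadic gluing. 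The two-point lower bound then follows from FKG applied to $\{0\lr\partial\Lambda_{3|x|}\}$, $\{x\lr\partial\Lambda_{3|x|}\}$, and a circuit event in $\Lambda_{2|x|}\setminus\Lambda_{|x|}$, yielding an explicit $c'/|x|^2$. Your concern that ``the main obstacle is executing the RSW-and-gluing arguments with uniform constants'' is thus overstated: comparison of boundary conditions and FKG already do all the work. Finally, note that your Edwards--Sokal identity carries the correct factor $(q-1)/q$, which the paper's proof actually omits; this is immaterial for the polynomial bounds.
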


The main result should also have consequences for the Glauber dynamics of the Potts model. 
Recently, Lubetzky and Sly \cite{LubSly12} used spatial mixing properties of the Ising model to establish a long standing conjecture on the mixing time of the Glauber dynamics of the Ising model at criticality. As a key step, they employ the equivalent of {\bf P5} together with tools from the analysis of Markov chains, to  provide polynomial upper bounds on the inverse spectral gap of the Glauber dynamics (and also on the total variation mixing time). We plan to prove similar results for 3 and 4 states Potts models in a subsequent paper.

\paragraph*{Acknowledgments.} 
This project started during stays at IMPA and the university of Geneva, we thank both institutions for their support.  HDC and VT were supported by the ERC AG CONFRA, the NCCR SwissMap, as well as by the Swiss {FNS}.

\section{Preliminaries}

The norm $|\cdot|$ will denote the Euclidean norm.

\subsection{Graph notation}

\paragraph{Primal and dual graphs.}
The {\em square lattice} $(\mathbb Z^2,\mathbb E)$ is the graph with vertex set $\mathbb Z^2=\{(n,m):n,m\in\mathbb Z\}$ and edge set $\mathbb E$ given by edges between nearest neighbors. The square lattice will be identified with the set of vertices, i.e. $\bbZ^2$. The \emph{dual square lattice} $(\bbZ^2)^*$ is the dual graph of $\bbZ^2$. The vertex set is $(\tfrac12,\tfrac12)+\bbZ^2$ and the edges are given by nearest neighbors. The vertices and edges of $(\bbZ^2)^*$ are called {\em dual-vertices} and {\em dual-edges}. In particular, every edge of $\bbZ^2$ is naturally associated to a dual-edge, denoted by $e^*$,  that it crosses in its middle. 

Except otherwise specified, we will only consider subgraphs of $\bbZ^2$, $(\bbZ^2)^*$, and use the following notations.
For a graph $G$, we denote by $V_G$ its vertex set and by $E_G$ its edge set. Two vertices $x$ and $y$ are {\em neighbors (in $G$)} if $\{x,y\}\in E_G$. In such case, we write $x\sim y$. Furthermore, if $x$ is an end-point of $e$, we say that $e$ is {\em incident} to $x$.
Finally, the \emph{boundary} of $G$, denoted by $\partial G$, is the set of vertices of $G$ with strictly fewer than four incident edges in $E_G$.

For a graph $G\subset \bbZ^2$, we define $G^*$ to be the subgraph of $(\bbZ^2)^*$ with edge-set $E_{G^*}=\{e^*:e\in E_G\}$ and vertex set given by the end-points of these dual-edges.

Let $\Lambda_n$ be the subgraph of $\bbZ^2$ induced by the vertex set $[-n,n]^2$.

\paragraph{Connectivity properties in graphs.}
A path in $\mathbb Z^2$ is a sequence of vertices $v_0,\dots,v_n$ in
$\mathbb Z^2$ such that $v_i\sim v_{i+1}$ for any $0\le i<n$. The path
will often be identified with the set of edges $(v_0,v_1),
\{v_1,v_2\},\dots,\{v_{n-1},v_n\}$. The path is said {\em to start at $x$
  and to end at $y$} if $v_0=x$ and $v_n=y$. If $x=y$, the path is
called a {\em circuit}.

Two vertices $x$ and $y$ of $G$ are {\em connected} if there exists a
path of edges in $E_G$ from $x$ to $y$. A graph is said to be {\em
  connected} if any two vertices of $G$ are connected. The {\em
  connected components} of $G$ will be the maximal connected subgraphs
of $G$. For three sets $X,Y\subset V_G$ and $F\subset E_G$, we say that
$X$ is connected to $Y$ in $F$ (we denote this fact by $X \lr[F] Y$) if there exist
two vertices $x\in X$ and $y\in Y$ and a path of edges in $F$ starting
at $x$ and ending at $y$.

\subsection{Background on the random-cluster model}\label{sec:definition}


This section is devoted to a very brief description of the tools we will use in the proofs of the next sections. The reader may consult \cite{Gri06} or \cite{Dum13} for more details, proofs and references. 
\paragraph{Random-cluster model and critical point.}
Let $G=(V_G,E_G)$ be a finite subgraph of $\Z^2$ or $\mathbb U$. A \emph{configuration} on $G$ is an element $\omega=\{\omega(e):e\in E_G\}\in \{0,1\}^{E_G}$. An edge $e$ is said to be {\em open} if $\omega(e)=1$, and {\em closed} otherwise. A configuration $\omega$ can be seen as a subgraph of $G$, whose vertex set is $V_G$ and edge set if the set of open edges $\{e\in E_G:\omega(e)=1\}$. A path (resp. circuit) in $\omega$ will be called an {\em open path} (resp. {\em open circuit}). Two sets $X$ and $Y$ are {\em connected} in $\omega$ if there exists an open path from $X$ to $Y$. The connected components of $\omega$ will be called 
\emph{clusters}. 

The boundary conditions on $G$ are given by a partition $\xi=P_1\sqcup \dots\sqcup
P_k$ of $\partial G$. From a configuration $\omega$, define
$\omega^\xi$ to be the graph with vertex set $V_G$ and edge set given by
edges of $\omega$ together with edges of the form $\{x,y\}$, where $x$
and $y$ belong to the same $P_i$. In such case, the vertices $x$ and $y$ are sometimes said to be {\em wired} together.

\begin{definition}\label{def:RCM}
  The {\em random-cluster measure on $G$ with edge-weight $p$,
    cluster-weight $q$, and boundary conditions $\xi$} is defined by
  the formula
$$\P[G,p,q][\xi]{\omega}=\frac{p^{o(\omega)}(1-p)^{c(\omega)}q^{k(\omega^\xi)}}{Z^\xi_{G,p,q}},$$
where $k(\omega^\xi)$ is the number of connected components of the
graph $\omega^\xi$. As usual, $Z^\xi_{G,p,q}$ is defined in such a way
that the sum of the weights over all possible configurations equals
1. 
\end{definition}
The following boundary
conditions play a special role in this article.
 The partition $\xi$ composed of singletons only is called the {\em free boundary conditions} and is denoted by $\xi=0$. It corresponds to no additional connections. 
The partition $\xi=\{\partial G\}$ is called the {\em wired boundary conditions} and is denoted by $\xi=1$. It corresponds to the fact that all the boundary vertices are connected by boundary conditions.

\paragraph{Infinite-volume measures and critical point.}
We do not aim for a complete description, or even a formal definition
of random-cluster measures on $\Z^2$ and we refer to \cite[Chapter
4]{Gri06} for details. When $q\ge 1$, infinite-volume random-cluster
measures can be defined by taking the limit of finite-volume measures.
In particular, the sequence of measures $\PP[\Lambda_n,p,q][1]$ (resp.
$\PP[\Lambda_n,p,q][0]$) converges to the {\em infinite-volume measure
  with wired (resp. free) boundary conditions} $\PP[\Z^2,p,q][1]$
(resp. $\PP[\Z^2,p,q][0]$). Furthermore, there exists
$p_c=p_c(q)\in(0,1)$ such that for $p\ne p_c(q)$, the infinite-volume
measure $\PP[\Z^2,p,q][]$ is unique and satisfies
\begin{equation}
  \P[\Z^2,p,q][]{0\leftrightarrow\infty}=
  \begin{cases} 0&\text{ if $p<p_c(q)$,}\\ 
    \theta(p,q)>0&\text{ if $p>p_c(q)$.}
  \end{cases}
\end{equation}

The critical parameter $p_c(q)$ was proved to be equal to $\sqrt
q/(1+\sqrt q)$ in \cite{BefDum12b}.

\paragraph{Positive association when $q\ge 1$.}
Denote the product ordering on $\{0,1\}^E$ by $\le$. An event $\cal A$
depending on edges in $E$ only is {\em increasing} if for any
$\omega'\ge \omega$, $\omega\in \cal A$ implies $\omega'\in \cal A$.

For $q\ge 1$, the random-cluster model satisfies important properties
regarding increasing events. The first such property is the FKG
inequality \cite[Theorem 3.8]{Gri06}: for any boundary conditions
$\xi$ and  for any increasing events $\cal A$ and $\cal B$,
\begin{equation}\label{FKG} \P[G,p,q][\xi]{\cal A\cap \cal B}\ge \P[G,p,q][\xi]{\cal A}\P[G,p,q][\xi]{\cal B}. 
\end{equation}
The second important property is the comparison between boundary
conditions \cite[Lemma 4.56]{Gri06}: for any increasing event $\cal A$ and for any $\xi\ge\psi$,
\begin{equation}\label{comparison}\P[G,p,q][\xi]{\cal A}\ge
  \P[G,p,q][\psi]{\cal A}. 
\end{equation}
Here, $\xi\ge \psi$ if the partition $\psi$ is finer than the
partition $\xi$ (any wired vertices in $\psi$ are wired in $\xi$). In
such case, $\xi$ is said to {\em dominate} $\psi$ or equivalently
$\psi$ is said to be {\em dominated by} $\xi$. The free (resp. wired)
boundary conditions are dominated by (resp. dominates) any other
boundary conditions.

 \paragraph{Domain Markov property and insertion tolerance.}
 Consider a subgraph $G'$ of $G$. The following proposition
 describes how the influence of the configuration outside $G'$ on the
 measure within $G'$ can be encoded using appropriate boundary
 conditions $\xi$. 
 \begin{proposition}[Domain Markov Property]
  \label{thm:markovProperty}
   Let $p\in[0,1]$, $q>0$ and $\xi$ some boundary conditions. Fix $G'\subset
  G$. Let $X$ be a random variable measurable which is measurable with respect to edges in
  $E_{G'}$. Then,
  $$\P[G,p,q][\xi]{X|\omega_{|E_{G}\setminus E_{G'}}=\psi
    }=\P[G',p,q][\psi^\xi]{X},$$
    for any $\psi\in\{0,1\}^{E_G\setminus E_{G'}}$. Above,
  $\psi^\xi$ is the partition on $\partial G'$ obtained as follows:
  two vertices $x,y\in \partial G'$ are in the same element of the
  partition if they are connected in $\psi^\xi$.
\end{proposition}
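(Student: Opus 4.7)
The plan is to derive the identity directly from the explicit product form of the random-cluster measure. By linearity and approximation, I would reduce to the case where $X$ is the indicator of a cylinder event $\{\omega_{|E_{G'}}=\omega'\}$ for a fixed $\omega'\in\{0,1\}^{E_{G'}}$. Denoting by $\omega$ the unique configuration on $E_G$ equal to $\omega'$ on $E_{G'}$ and to $\psi$ on $E_G\setminus E_{G'}$, the target identity then becomes
\begin{equation}
\P[G,p,q][\xi]{\omega_{|E_{G'}}=\omega'\,\big|\,\omega_{|E_G\setminus E_{G'}}=\psi}\;=\;\P[G',p,q][\psi^\xi]{\omega'}.
\end{equation}

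Applying Bayes' rule, the left-hand side equals, up to a normalizing constant (obtained by summing over $\omega'$) that depends only on $\psi$ and $\xi$, the quantity $p^{o(\omega')+o(\psi)}(1-p)^{c(\omega')+c(\psi)}q^{k(\omega^\xi)}$. The factor $p^{o(\psi)}(1-p)^{c(\psi)}$ is independent of $\omega'$ and disappears after renormalization, so the whole argument boils down to understanding the $\omega'$-dependence of $q^{k(\omega^\xi)}$.

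The main step is to prove the combinatorial identity
\begin{equation}
k(\omega^\xi)\;=\;k\bigl((\omega')^{\psi^\xi}\bigr)+C(\psi,\xi)
\end{equation}
for some constant $C(\psi,\xi)$ not depending on $\omega'$. I would argue this by partitioning the clusters of $\omega^\xi$ into those meeting $V_{G'}$ and those contained in $V_G\setminus V_{G'}$. Clusters of the second type are determined by $\psi$ and $\xi$ alone, hence contribute the constant $C(\psi,\xi)$. For clusters of the first type, the claim is that two vertices $x,y\in V_{G'}$ are in the same cluster of $\omega^\xi$ if and only if they are in the same cluster of $(\omega')^{\psi^\xi}$: any open path from $x$ to $y$ in $\omega^\xi$ decomposes into segments inside $G'$ (using edges of $\omega'$) and excursions into the complement (using edges of $\psi$ together with wiring edges of $\xi$), and each excursion joins two vertices of $\partial G'$ that lie in the same block of $\psi^\xi$ by the very definition of that partition; the converse is immediate.

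Once the identity is in hand, the factor $q^{C(\psi,\xi)}$ cancels against the analogous factor in the normalizing sum over $\omega'$, and the remaining expression is precisely $p^{o(\omega')}(1-p)^{c(\omega')}q^{k((\omega')^{\psi^\xi})}/Z^{\psi^\xi}_{G',p,q}=\P[G',p,q][\psi^\xi]{\omega'}$, as desired. I expect the only delicate point to be the combinatorial identity itself, specifically verifying that the partition $\psi^\xi$ on $\partial G'$ (defined via connectivity in the exterior graph $\psi$ augmented by $\xi$) captures exactly the identifications of boundary vertices induced by paths in $\omega^\xi$ that leave $G'$. This is a purely graph-theoretic bookkeeping exercise, not a probabilistic obstacle.
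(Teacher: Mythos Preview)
The paper does not supply a proof of this proposition: it is stated as standard background on the random-cluster model, with an implicit reference to \cite{Gri06}. Your direct verification from the definition is correct and is precisely the standard argument. The only point worth flagging is that your claim ``excursions into the complement join two vertices of $\partial G'$'' relies on the fact that any edge of $E_G\setminus E_{G'}$ incident to $V_{G'}$ must touch a vertex of $\partial G'$; this holds here because we are working with subgraphs of $\bbZ^2$, so a vertex of $V_{G'}$ with an incident edge outside $E_{G'}$ automatically has fewer than four incident edges in $E_{G'}$ and hence lies in $\partial G'$.
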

 
\newcommand{\Cit}{c_{\scriptscriptstyle \rm IT}}
 The previous proposition has the following corollary, called insertion tolerance.
 For $p,q>0$, there exists $\Cit>0$ such that for any finite graph $G$, any $\omega\in\{0,1\}^{E_G}$, and any boundary conditions $\xi$, $\P[G,p,q][\xi]{\omega}\ge\Cit^{|G|}$.

 \paragraph{Dual representation}\label{sec:dual}
 A configuration $\omega$ on $G$ can be uniquely associated to a {\em
   dual configuration} $\omega^*$ on the dual graph $G^*$ defined as
 follows:
 set $\omega^*(e^*)=1$ if $\omega(e)=0$ and $\omega^*(e^*)=0$ if
 $\omega(e)=1$. A dual-edge $e^*$ is said to be {\em dual-open} if
 $\omega^*(e^*)=1$, it is {\em dual-closed} otherwise. A {\em
   dual-cluster} is a connected component of $\omega^*$. We extend the
 notion of {\em dual-open} path in a trivial way. For two sets
 $X,Y\subset V_{G^*}$, we set $X\lr[*]Y$ if there exists a dual-open path
 from $X$ to $Y$.

If
$\omega$ is distributed according to $\PP[G,p,q][\xi]$, then
$\omega^*$ is distributed according to $\PP[G^*,p^*,q^*][\xi^*]$ with
$q^*=q$ and $\frac{pp^*}{(1-p)(1-p^*)}=q$. In particular, for $p=p_c(q)$ we find $p^*=p=p_c(q)$. The boundary conditions
$\xi^*$ can be deduced from $\xi$ in a case by case manner. We will
mostly be interested in the case of $\xi=0$ or $1$, for which
$\xi^*=1$ and 0 respectively.

\paragraph{Russo-Seymour-Welsh estimate with wired boundary conditions.}
For a rectangle $R=[a,b]\times[c,d]$, let $\calC_h(R)$ be the event that there exists an open path in $R$ from $\{a\}\times[c,d]$ to $\{b\}\times[c,d]\subset\bbZ^2$. 
Such a crossing is called a {\em horizontal crossing} of $R$. Similarly, one defines $\calC_v(R)$ to be the event that there exists an open path in $R$ from $[a,b]\times\{c\}$ to $[a,b]\times\{d\}$ (such a path is called a {\em vertical crossing}). For a rectangle $R^*=[s,t]\times[u,v]\subset (\bbZ^2)^*$ (note that $s,t,u,v$ are half-integers), let $\calC^*_h(R^*)$ be the event that there exists a dual-open dual-path in $R^*$ from $\{s\}\times[u,v]$ to $\{t\}\times[u,v]$ (such a path is called an {\em horizontal dual-crossing}), and similarly $\calC^*_v(R^*)$ is the event that there exists a dual-open dual-path in $R^*$ from $[s,t]\times\{u\}$ to $[s,t]\times\{v\}$ (such a path is called a {\em vertical dual-crossing}).

The following result  will be important in the proof. Let us restate it here.
\newcommand{\Crsw}{c_{\scriptscriptstyle \rm RSW}}
\begin{theorem}[{\cite[Corollary 9]{BefDum12b}}]
  \label{thm:weakRSW}
   For $\alpha>1$ and $q\geq 1$, there exists $\Crsw >0$ such that for every 
  $0<m<\alpha n$,
  \begin{equation}
    \P[\Z^2,p_c,q][1]{\mathcal C_h([0,m]\times[0,n])}\geq 
    \Crsw .
  \end{equation}
\end{theorem}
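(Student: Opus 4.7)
The plan is to combine three ingredients: self-duality of the critical random-cluster model on $\bbZ^2$ at $p_c(q)=\sqrt q/(1+\sqrt q)$, the FKG inequality \eqref{FKG}, and the comparison between boundary conditions \eqref{comparison}. The proof splits naturally into two steps: first, obtain a universal lower bound on the crossing probability of squares with wired boundary conditions, and second, bootstrap this bound to rectangles of arbitrary aspect ratio via an RSW-type combination argument. Passage to the infinite-volume measure $\P[\bbZ^2,p_c,q][1]$ is automatic by monotonicity, since wired boundary conditions on any large box dominate boundary conditions on a smaller box containing the crossing event.

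\textbf{Step 1 (Crossings of squares).} For a square $R=[0,n]^2$, the event $\calC_h(R)$ and the event that there exists a vertical dual-open crossing of $R^*$ are complementary (a classical planar-duality statement). With wired primal boundary conditions on $R$, the dual configuration on $R^*$ carries free boundary conditions. Using the self-duality of the model ($p_c^\ast=p_c$, $q^\ast=q$) together with the invariance of the lattice under a $\pi/2$ rotation, the probability of a dual vertical crossing with free BC is comparable to the probability of a primal horizontal crossing with free BC. Combining these observations with \eqref{comparison} (wired dominates free), one deduces
\[
\P[R,p_c,q][1]{\calC_h(R)} \;\ge\; \tfrac{1}{2} - o(1),
\]
so in particular it is bounded below by a constant $c_0=c_0(q)>0$ for all $n$ large enough.

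\textbf{Step 2 (RSW combination to rectangles).} Given the square estimate, I would extend it to rectangles of fixed aspect ratio by gluing crossings of overlapping squares. Overlap two copies of $[0,n]^2$ along a strip of width $n/2$; a horizontal crossing of each square, combined with a vertical crossing of the overlap region (of aspect ratio $\tfrac12$, which can also be bounded below by Step 1 up to a rotation), produces a horizontal crossing of a rectangle of aspect ratio $3/2$. By FKG these events have probability at least the product of the individual crossing probabilities, uniformly in the ambient wired boundary conditions (using \eqref{comparison} to pass to the smaller pieces). Iterating this construction a bounded number of times (depending on $\alpha$) one reaches any $m<\alpha n$.

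\textbf{Main obstacle.} In Bernoulli percolation, the classical RSW argument conditions on the lowest crossing and exploits the independence of edges above it to perform a reflection. For the random-cluster model with $q>1$, conditioning on the lowest crossing alters the distribution of the remaining edges through the cluster-weight factor, so this trick is not available. The crux of the argument is therefore to replace the conditioning step by a robust combination of FKG and boundary-condition monotonicity: one systematically works with \emph{wired} boundary conditions on the larger domain and uses \eqref{comparison} to pass to sub-domains where the square estimate of Step 1 is applied. Carrying this out cleanly---in particular ensuring that the overlap geometry and the boundary conditions in Step 2 are compatible with the symmetry argument of Step 1---is the technical heart of the proof.
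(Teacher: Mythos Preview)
This statement is quoted from \cite{BefDum12b} (their Corollary~9) and is not proved in the present paper; it is used as a black box.  So the comparison is with the argument of \cite{BefDum12b}, whose flavor you can see in the proof of Lemma~\ref{lem:stripRSW} below (the ``symmetric domain'' construction).

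Your Step~1 is fine: self-duality at $p_c$, together with the rotation symmetry and \eqref{comparison}, does give a uniform lower bound on $\P[\Lambda_n][1]{\calC_h(\Lambda_n)}$ (and in fact even on $\P[\Lambda_n][0]{\calC_h(\Lambda_n)}$, see e.g.~\eqref{crossing square mixed}).

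Step~2, however, is circular as written.  The overlap of $[0,n]^2$ and $[n/2,3n/2]\times[0,n]$ is the rectangle $[n/2,n]\times[0,n]$, and the ``vertical crossing'' you invoke to link the two horizontal crossings is a crossing of a $1{:}2$ rectangle \emph{in the long direction}.  That is precisely the type of estimate RSW is meant to produce; it does \emph{not} follow from the square estimate of Step~1 ``up to a rotation''.  In Bernoulli percolation this is where one conditions on the lowest crossing and uses the reflection trick you correctly rule out in your ``Main obstacle'' paragraph.  Replacing it by ``FKG plus monotonicity in boundary conditions'' does not close the gap: those tools let you compare measures on nested domains, but they do not manufacture the missing hard-direction crossing of a genuine rectangle.

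The actual proof in \cite{BefDum12b} avoids this circularity by a different mechanism: one conditions on extremal crossings and compares the region between them to a carefully chosen \emph{symmetric} domain (with mixed wired/free boundary conditions) in which self-duality yields directly that the two crossings are connected with probability at least $1/(1+q^2)$.  The Claim inside the proof of Lemma~\ref{lem:stripRSW} in this paper is essentially that argument, and reading it will show you what has to replace your overlap step.
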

Note that this bound on crossing probabilities is much weaker than {\bf P5}, since it holds only for wired boundary conditions. Let us mention a particularly interesting case of non wired boundary conditions. We refer the
interested reader to \cite{BefDum12b} for more details.
Consider the random-cluster measure on $\Lambda_n$ with wired boundary conditions on top and
bottom, and free elsewhere (we call these boundary conditions mixed).
Then
\begin{equation}\label{crossing square mixed}\P[\Lambda_n,p_c,q][\rm mixed]{\mathcal C_h(\Lambda_n)}\ge \frac1{1+q^2}.\end{equation}
\bigskip
\bigskip

{\em From now on, we fix $q\ge 1$ and $p=p_c(q)$. In order to lighten the notation, we drop the reference to $p$ and $q$ and simply write $\PP[G][\xi]$ instead of $\PP[G,p_c(q),q][\xi]$. }

\section{Proof of Theorem~\ref{thm:main}}
\label{sec:proof1}

\subsection{A preliminary result}

%

Before diving into the proof, let us mention three useful equivalent formulations of {\bf P5}.
For $z\in \bbR^2$, define $\calA_n(z)$ to be the event that there exists
an open circuit in the annulus $z+(\Lambda_{2n} \setminus
\Lambda_{n})$ surrounding $z$.
Also define $\calA_n=\calA_n(0)$.

\begin{proposition}\label{prop:classical RSW}
Recall that we are at $q\ge1$ and $p=p_c(q)$. The following assertions are equivalent;
\begin{itemize}
\item[{\bf P5}] For any $\alpha>0$, there exists
  $c_1=c_1(\alpha)>0$ such that for all $n\ge 1$, we have
$$\P[{[-n,(\alpha+1)n]\times[-n,2n]}][0]{\calC_h([0,\alpha n]\times[0,n])}\ge c_1.$$
\item[{\bf P5a}]  There exists $c_2>0$ such that for all $n\geq 1$,
   $$\P[\Lambda_{2n}\setminus\Lambda_{n+1}][0]{\calA_n}\geq c_2.$$
\item[{\bf P5b}] For any $R\ge 2$, there exists $c_3=c_3(R)>0$ such that for all $n\geq 1$,
   $$\P[\Lambda_{Rn}][0]{\calA_n}\geq c_3.$$
   \item[{\bf P5c}] For any $\alpha>0$, there exists
  $c_4=c_4(\alpha)>0$ such that for all $n\ge 1$ and for
  all boundary conditions $\xi$ on the boundary of $[-n,(\alpha+1)n]\times[-n,2n]$, we have
$$c_4\le \P[{[-n,(\alpha+1)n]\times[-n,2n]}][\xi]{\calC_h([0,\alpha n]\times[0,n])}\le 1-c_4.$$
\end{itemize}
\end{proposition}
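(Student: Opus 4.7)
My plan is to prove the cycle of implications \textbf{P5c} $\Rightarrow$ \textbf{P5} $\Rightarrow$ \textbf{P5a} $\Rightarrow$ \textbf{P5b} $\Rightarrow$ \textbf{P5c}. The first implication is immediate, since the lower bound in \textbf{P5c} with $\xi=0$ is exactly the content of \textbf{P5}. The implication \textbf{P5a} $\Rightarrow$ \textbf{P5b} requires only the Domain Markov property and the comparison of boundary conditions: writing $A_n:=\Lambda_{2n}\setminus\Lambda_{n+1}\subset \Lambda_{Rn}$, conditioning on the configuration in $\Lambda_{Rn}\setminus A_n$ under $\phi^0_{\Lambda_{Rn}}$ gives, on $A_n$, a random-cluster measure $\phi^\psi_{A_n}$ with some $\psi\ge 0$; by~\eqref{comparison}, $\phi^\psi_{A_n}[\calA_n]\ge \phi^0_{A_n}[\calA_n]\ge c_2$, and taking expectation yields $\phi^0_{\Lambda_{Rn}}[\calA_n]\ge c_2$ uniformly in $R\ge 2$.

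\medbreak
\noindent For \textbf{P5} $\Rightarrow$ \textbf{P5a}, the key point is that while \textbf{P5} cannot be invoked at scale $n$---its required buffer of width $n$ would overflow the thickness $n-1$ of the annulus $A_n$---it can be invoked at any smaller scale. I set $m:=\lfloor n/10\rfloor$ and place a constant number $K$ of squares $S_1,\ldots,S_K$ of side $m$ around $A_n$ in overlapping positions, arranged so that the simultaneous occurrence of appropriate horizontal or vertical crossings of the $S_i$ forces an open circuit surrounding $\Lambda_n$. For each $i$, the buffered domain $D_i:=S_i+[-m,m]^2$ of size $3m\times 3m$ fits strictly inside $A_n$; by \textbf{P5} at scale $m$ with $\alpha=1$, $\phi^0_{D_i}[\calC_h(S_i)]\ge c_1(1)$, and similarly for vertical crossings. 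The Domain Markov property on $\phi^0_{A_n}$ applied to the sub-domain $D_i$, together with comparison of the induced boundary conditions $\psi_i\ge 0$ with the free ones, then gives $\phi^0_{A_n}[\calC(S_i)]\ge \phi^0_{D_i}[\calC(S_i)]\ge c_1(1)$. The FKG inequality~\eqref{FKG} applied to these $K$ increasing events yields $\phi^0_{A_n}[\calA_n]\ge c_1(1)^K=:c_2>0$.

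\medbreak
\noindent For \textbf{P5b} $\Rightarrow$ \textbf{P5c}, set $D:=[-n,(\alpha+1)n]\times[-n,2n]$ and $R:=[0,\alpha n]\times[0,n]$, and consider arbitrary boundary conditions $\xi$ on $\partial D$. Assume first $\alpha\le 3/2$ for the lower bound. There exists $k=n/2+O(1)$ such that a suitably translated annulus $A'=\Lambda_{2k}\setminus \Lambda_{k+1}$ surrounds $R$ and lies inside $D$. By~\eqref{comparison} ($\xi\ge 0$) followed by \textbf{P5b} (applied with $R_b=2$), $\phi^\xi_D[\text{open circuit in }A']\ge c_3(2)$. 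Conditionally on the outermost such circuit $\gamma$, the Domain Markov property puts fully wired boundary conditions along $\gamma$ on the enclosed region $I\supset R$. Since a finite-volume wired measure dominates $\phi^1_{\bbZ^2}$ on increasing events supported in its domain, Theorem~\ref{thm:weakRSW} gives $\phi^1_I[\calC_h(R)]\ge c_\mathrm{RSW}$, whence $\phi^\xi_D[\calC_h(R)]\ge c_3(2)\cdot c_\mathrm{RSW}$. For $\alpha>3/2$, a standard gluing argument reduces to this case: decompose $R$ into $O(\alpha)$ overlapping unit-aspect sub-squares $R_j$, apply the preceding to each $R_j$ and to the vertical crossings of their overlaps, and combine all these increasing events via FKG.

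\medbreak
\noindent The upper bound in \textbf{P5c} follows by planar duality: $\phi^\xi_D[\calC_h(R)]=1-\phi^\xi_D[\calC_v^*(R^*)]$, and the dual event $\calC_v^*(R^*)$ is decreasing in $\omega$, so its probability under $\phi^\xi_D$ is minimized at $\xi=1$ (primal wired), which corresponds to free dual boundary conditions. At $p=p_c$ the dual measure is again a random-cluster measure with cluster-weight $q$, so both \textbf{P5b} and Theorem~\ref{thm:weakRSW} apply verbatim to the dual, and the symmetric argument of the previous paragraph, applied in the dual, gives $\phi^\xi_D[\calC_v^*(R^*)]\ge c$ uniformly in $\xi$, whence $\phi^\xi_D[\calC_h(R)]\le 1-c$. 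The main obstacle in the whole proof is the implication \textbf{P5} $\Rightarrow$ \textbf{P5a}: one has to realize that \textbf{P5} should be applied at a scale small enough for its buffer to fit inside the annulus, a subtlety absent from the classical Bernoulli RSW theory.
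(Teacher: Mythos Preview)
Your proof is correct, and the cycle P5c$\Rightarrow$P5$\Rightarrow$P5a$\Rightarrow$P5b$\Rightarrow$P5c is a clean way to organise the argument. There is one small numerical slip: with $k=n/2+O(1)$, the inner box of your annulus $A'$ has side $\approx n$, which contains $R=[0,\alpha n]\times[0,n]$ only when $\alpha\le 1$, not $\alpha\le 3/2$. This is harmless---just replace the cutoff $3/2$ by $1$ and the gluing argument for larger $\alpha$ goes through unchanged.

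The main difference from the paper lies in the implication P5b$\Rightarrow$P5c. The paper factors through P5: it first proves P5b$\Rightarrow$P5 by chaining together many small annulus events $\calA_{n/(2R)}(z_j)$ whose intersection produces the long horizontal crossing, and then gets P5$\Rightarrow$P5c by comparison with free boundary conditions (lower bound) and duality (upper bound). You instead go directly: surround $R$ by a single annular circuit (via P5b), condition on the outermost one to obtain wired boundary conditions, and invoke the wired RSW estimate of Theorem~\ref{thm:weakRSW}. Your route is arguably more economical---one circuit plus an off-the-shelf theorem rather than a chain of circuits---at the cost of importing Theorem~\ref{thm:weakRSW} as a black box; the paper's route keeps the implication self-contained. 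For P5$\Rightarrow$P5a the two arguments are essentially the same idea at different scales: the paper uses four rectangles of aspect ratio $10$ at scale $n/3$, you use many unit squares at scale $n/10$. Your remark that P5$\Rightarrow$P5a is ``the main obstacle'' is a bit overstated; once one sees that the buffer must fit inside the annulus, the rest is routine FKG.
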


The last condition justifies the fact that the result is uniform with respect to boundary conditions.

\begin{proof} The proof of {\bf P5a}$\Rightarrow${\bf P5b} and {\bf P5c}$\Rightarrow${\bf P5} are obvious by comparison between boundary conditions. In order to prove {\bf P5}$\Rightarrow${\bf P5a}, consider the four rectangles
\begin{align*}
R_1&:=[4n/3,5n/3]\times[-5n/3,5n/3],\\
R_2&:=[-5n/3,-4n/3]\times[-5n/3,5n/3],\\
R_3&:=[-5n/3,5n/3]\times[4n/3,5n/3],\\
R_4&:=[-5n/3,5n/3]\times[-5n/3,-4n/3].
\end{align*}
If the intersection of $\calC_v(R_1)$, $\calC_v(R_2)$, $\calC_h(R_3)$ and $\calC_h(R_4)$ occurs, then $\calA_n$ occurs. In particular, the FKG inequality and the comparison between boundary conditions implies that $c_2$ can be chosen to be equal to $c_1(10)^4$.
\medbreak
Let us now turn to the proof of {\bf P5b}$\Rightarrow${\bf P5}. We start by the lower
bound.  Fix some $R\ge 2$ as in {\bf P5b}  and the corresponding $c_3>0$. Let $\alpha>0$. For $n\ge 4R$, the intersection of the events $\calA_{n/(2R)}[(j\lfloor\tfrac{n}{R}\rfloor,\tfrac n2)]$ for $j=0,\dots,\lceil R\alpha\rceil$ is included in $\calC_h ([0,\alpha n]\times[0,n])$. The FKG inequality implies 
$$\P[{[-n,(\alpha+1)n]\times[-n,2n]}][0]{\calC_h([0,\alpha n]\times[0,n])}\ge c_3^{1+R\lceil\alpha\rceil}.$$
By comparison between boundary conditions, we obtain the lower bound for every $\xi$. 

The upper bound may be obtained from this lower bound as follows. By comparison between boundary conditions once again, it is sufficient to prove the upper bound for the wired boundary conditions. In such case, the complement of $\calC_h([0,\alpha n]\times[0,n])$ is $\calC_v^*([\tfrac12,\alpha n-\tfrac12]\times[-\tfrac12,n+\tfrac12])$. Since the dual of the wired  boundary conditions are the free ones, the boundary conditions for the dual measure are free. We can now harness {\bf P5b} for the dual model to construct a dual path from top to bottom with probability bounded away from 0. This finishes the proof.
\bigbreak
\noindent It remains to prove that {\bf P5} implies {\bf P5c} to conclude. Define 
\begin{align*}
R&:= [0,\alpha n]\times[0,n],\\
\overline R&:=[-n,(\alpha+1)n]\times[-n,2n],\\
R^*&:=[\tfrac12,\alpha n-\tfrac12]\times[-\tfrac12,n+\tfrac12].\end{align*}
First, \eqref{comparison} implies that 
\begin{align*}c_1&\le\P[\overline R][0]{\calC_h(R)}\le \P[\overline R][\xi]{\calC_h(R)},\end{align*}
where the first inequality is due to {\bf P5}. Now, we wish to prove the upper bound. Let us assume without loss of generality that $\alpha<1$. As mentioned above, the complement of the event $\calC_h(R)$ is the event $\calC_v^*(R^*)$. We find that
\begin{align*} c_1&\le \P[\overline R][1]{\calC_h(R)}=1-\P[\overline R][1]{\calC_v^*(R^*)}\le 1-c_1(1/\alpha).\end{align*}
In the last inequality, we used the lower bound proved previously for $\alpha'=1/\alpha$ and $n'=\alpha n$. To justify that we can do so, observe that the dual of wired boundary conditions are the free ones, and that since $\alpha<1$, the dual graph of $\overline R$ contains a rotated version of $[\tfrac12-\alpha n,\alpha n-\tfrac12+\alpha n]\times[-\tfrac12-\alpha n,n+\tfrac12+\alpha n]$.\end{proof}

\subsection{Proof of Theorem~\ref{thm:main}: easy implications}In order to isolate the hard part of the proof, let us start by checking the four ``simple'' implications {\bf P1}$\Rightarrow${\bf P2}, {\bf P2}$\Rightarrow${\bf P3}, {\bf P3}$\Rightarrow${\bf P4} and {\bf P5}$\Rightarrow${\bf P1}.

\medbreak\noindent
\emph{Property {\bf P1} implies {\bf P2}:} This implication is classical, see e.g. \cite[Corollary~4.23]{Dum13}.
\medbreak\noindent
\emph{Property {\bf P2} implies {\bf P3}:} If ${\bf P2}$ holds, \begin{align*}
  (2n+1)\,\P[\bbZ^2][0]{0\leftrightarrow\partial\Lambda_n}
  &=(2n+1)\,\P[\bbZ^2][1]{0\leftrightarrow\partial\Lambda_n}\ge \sum_{x\in\{0\}\times[-n,n]}\P[\bbZ^2][1]{x\leftrightarrow(x+\partial\Lambda_n)}\\
  &\ge
  \P[\bbZ^2][1]{\mathcal C_v([-n,n]\times[0,n])}\ge c, 
\end{align*}
where $c>0$ is a constant independent of $n$.
The first equality is due to the uniqueness of the infinite-volume measure given by {\bf P2} and the second inequality by
Theorem~\ref{thm:weakRSW}. This leads to
$$\sum_{x\in\partial\Lambda_n}\P[\bbZ^2][0]{0\lr x}\ge
\P[\bbZ^2][0]{0\leftrightarrow\partial\Lambda_n}\ge\frac{c }{2n+1}.$$
As a consequence, $\displaystyle\sum_{x\in\bbZ^2}\P[\bbZ^2][0]{0\lr x}=\infty$ and {\bf P3} holds true. 

\medbreak\noindent
\emph{Property {\bf P3} implies {\bf P4}:} Assume that {\bf P4} does not hold. In such case, the fact that 
\begin{align*}\P[\bbZ^2][0]{0\longleftrightarrow (n+m)x}&\ge\P[\bbZ^2][0]{\{0\longleftrightarrow nx\}\cap\{nx\longleftrightarrow (n+m)x\}}\\
&\ge\P[\bbZ^2][0]{0\longleftrightarrow nx}\P[\bbZ^2][0]{0\longleftrightarrow mx}\end{align*}
(the second inequality follows from the FKG inequality) implies the existence of $M>0$ such that for every $x=(x_1,x_2)\in \bbZ^2$,
$$\P[\bbZ^2][0]{0\longleftrightarrow x}\le {\rm e}^{-|x|/M}.$$ 
Summing over every $x\in \bbZ^2$ gives $$\displaystyle\sum_{x\in\Z^2}\P[\bbZ^2][0]{0\leftrightarrow x}<\infty$$ and thus {\bf P3} does not hold.
\medbreak\noindent
\emph{Property {\bf P5} implies {\bf P1}:} Recall that {\bf P5} implies {\bf P5a}. We now prove a slightly stronger result which obviously implies {\bf P1} and will be useful  later in the proof.
\begin{lemma}\label{lem:one arm}
Property {\bf P5a} implies that there exists $\ep>0$ such  that for any $n\ge 1$,
$$\P[\bbZ^2][1]{0\leftrightarrow\partial\Lambda_n}\le n^{-\ep}.$$
\end{lemma}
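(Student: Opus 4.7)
The plan is the classical nested-circuit argument. I will construct $\Theta(\log n)$ disjoint dual-open circuits around the origin inside $\Lambda_n$, each occurring with uniformly positive conditional probability given the configuration outside the annulus in which it lives. Since any one such circuit disconnects $0$ from $\partial\Lambda_n$, the probability that all of them fail decays like $(1-c)^{\Theta(\log n)} = n^{-\epsilon}$, which is exactly the claimed polynomial bound.

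\smallbreak\noindent\textit{Step 1 (uniform dual-circuit estimate).} First I would establish the following: there exists $c>0$ such that for every $n\ge 1$, every finite graph $G\supset \Lambda_{2n}$ and every boundary condition $\xi$ on $G$,
\[
\P[G][\xi]{\calD_n}\ge c,\qquad \calD_n := \{\text{a dual-open circuit surrounds }0\text{ in }\Lambda_{2n}\setminus\Lambda_n\}.
\]
By Proposition~\ref{prop:classical RSW}, \textbf{P5a} implies the stronger \textbf{P5c}, which says horizontal primal crossing probabilities of rectangles lie in $[c_4,1-c_4]$ uniformly in $\xi$. The upper bound $1-c_4$ is equivalent to a vertical dual-open crossing having probability at least $c_4$ uniformly in $\xi$. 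Picking four overlapping rectangles---the north, south, east and west strips of the annulus $\Lambda_{2n}\setminus\Lambda_n$---and demanding a dual-open crossing in each forces the four dual paths to meet pairwise at the corners of the annulus, producing a dual-open circuit around $0$. Applying the FKG inequality to the four (decreasing) crossing events yields $c := c_4^4$, uniformly in $\xi$.

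\smallbreak\noindent\textit{Step 2 (nesting and iteration).} Set $k:=\lfloor\log_2 n\rfloor-1$ and consider the disjoint dyadic annuli $A^{(j)}:=\Lambda_{2^{j+1}}\setminus\Lambda_{2^j}$ for $j=0,1,\dots,k-1$, all contained in $\Lambda_n$. Let $C_j$ be the event that $A^{(j)}$ contains a dual-open circuit around $0$; this is decreasing and depends only on edges inside $A^{(j)}$, so $\bigcap_{i<j}C_i^c$ is measurable with respect to edges outside $A^{(j)}$. The Domain Markov property together with the estimate of Step~1 applied to the conditional random-cluster measure on $A^{(j)}$ (with its induced, arbitrary boundary condition) gives
\[
\P[\bbZ^2][1]{C_j^c \,\bigm|\, \bigcap_{i<j}C_i^c}\le 1-c\quad\text{a.s.}
\]
Iterating yields $\P[\bbZ^2][1]{\bigcap_{j=0}^{k-1}C_j^c}\le (1-c)^k$. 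A single occurrence of some $C_j$ forces $0\not\leftrightarrow\partial\Lambda_n$, so
\[
\P[\bbZ^2][1]{0\leftrightarrow\partial\Lambda_n}\le (1-c)^k \le n^{-\epsilon},
\]
with $\epsilon:=\log_2\!\bigl(1/(1-c)\bigr)>0$ (finitely many small $n$ being absorbed by shrinking $\epsilon$).

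\smallbreak\noindent\textit{Main obstacle.} The only genuinely delicate point is Step~1: combining four dual crossings of rectangles into a dual circuit around $0$ in a manner robust to boundary conditions. The four-rectangle geometry forces a circuit by a Jordan-curve-type argument at each corner, while the FKG inequality for decreasing events lifts this to a bounded-below probability; the crucial feature that makes everything work is the uniformity of \textbf{P5c} in boundary conditions, which is precisely the reason Proposition~\ref{prop:classical RSW} is set up with \textbf{P5c} as one of the equivalent statements. Once the uniform estimate is in place, the iteration over dyadic scales via the Domain Markov property is essentially routine.
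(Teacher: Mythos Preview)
Your proof is correct and follows the same nested-dyadic-annuli strategy as the paper. The only substantive difference is how you obtain the uniform dual-circuit bound in Step~1. You route through Proposition~\ref{prop:classical RSW} to upgrade {\bf P5a} to {\bf P5c} and then combine four dual crossings via FKG; the paper instead conditions on the (increasing) crossing events at the \emph{outer} scales $i>j$, uses comparison between boundary conditions to bound each conditional factor by $\P[A_j][1]{\Lambda_{2^{j-1}}\lr[A_j]\partial\Lambda_{2^j}}$, and then observes that under wired primal boundary conditions the complementary dual-open circuit event is, by duality, exactly a primal open circuit under free boundary conditions on the dual annulus---which is {\bf P5a} directly. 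So the paper avoids invoking the full equivalence {\bf P5a}$\Leftrightarrow${\bf P5c}. Your argument also works, but note a small technical point: {\bf P5c} as stated requires the boundary conditions to sit at macroscopic distance from the rectangle, so your ``north/south/east/west strips of $\Lambda_{2n}\setminus\Lambda_n$'' should be taken strictly inside the annulus (as in the paper's proof of {\bf P5}$\Rightarrow${\bf P5a}); alternatively, since the dual-circuit event is decreasing, you can bypass {\bf P5c} entirely by first comparing to wired boundary conditions on the annulus and then applying duality $+$ {\bf P5a}, which is both shorter and exactly what the paper does.
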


\begin{proof}
Let $k$ be such that $2^k\le n<2^{k+1}$. Also define the annuli $A_j=\Lambda_{2^j}\setminus\Lambda_{2^{j-1}-1}$ for $j\ge 1$. We have
\begin{align*}\P[\bbZ^2][1]{0\longleftrightarrow\partial\Lambda_n}&\le \prod_{j=1}^k\P[\bbZ^2][1]{\Lambda_{2^{j-1}}\lr[A_j]\partial\Lambda_{2^j}\left|\bigcap_{i>j}\big\{\Lambda_{2^{i-1}}\lr[A_i]\partial\Lambda_{2^i}\big\}\right.}\\
&\le \prod_{j=1}^k\P[A_j][1]{\Lambda_{2^{j-1}}\lr[A_j]\partial\Lambda_{2^j}}.\end{align*}
In the second line, we used the fact that the event upon which we condition depends only on edges outside of $\Lambda_{2^j}$ or on $\partial\Lambda_{2^j}$ together with the comparison between boundary conditions.

Now, the complement of $\Lambda_{2^{j-1}}\lr[A_j]\partial\Lambda_{2^j}$ is the event that there exists a dual-open circuit in $A_j^*$ surrounding the origin. Property {\bf P5a} implies
 that this dual-open circuit exists with probability larger than or equal to $c>0$ independently of $n\ge1$. This implies that
\begin{align*}
\P[\bbZ^2][1]{0\longleftrightarrow\partial\Lambda_n}&\le \prod_{j=1}^k(1-c)=(1-c)^k\le (1-c)^{\log n/\log 2}.\end{align*}
The proof follows by setting $\ep=-\frac{\log(1-c)}{\log 2}$. 
\end{proof}

\begin{remark}The proof of the previous lemma illustrates the need for {\em bounds which are uniform with respect to boundary conditions}. Indeed, it could be the case that the $\PP[][1]$-probability of an open path from the inner to the outer sides of $A_j$ is bounded away from 1, but conditioning on the existence of paths in each annulus $A_i$ (for $i<j$) could favor open edges drastically, and imply that the probability of the event under consideration is close to $1$.
\end{remark}

\subsection{Proof of Theorem~\ref{thm:main}: {\bf P4} implies {\bf P5}}
Recall from Proposition~\ref{prop:classical RSW}  that {\bf P5} is equivalent to {\bf P5b} and we therefore choose to prove that {\bf P4} implies {\bf P5b} when $R=8$. 
The proof follows two steps. 
First, we prove that either {\bf P5b} holds or $\phi^0(0\leftrightarrow\partial\Lambda_n)$ tends to 0 stretched-exponentially fast.  Second, we prove that if the speed of convergence is stretched exponential, it is in fact exponential.

\begin{proposition}\label{prop:dualityP_n}
Exactly one of these
  two cases occurs :
  \begin{enumerate}
  \item $\displaystyle \inf_{n\geq 1} \P[\Lambda_{8n}][0]{\mathcal A _n} >0$.
  \item There exists $\alpha>0$ such that for any $n\ge1$,
  $$\P[\bbZ^2][0]{0\longleftrightarrow\partial\Lambda_n}\le \exp(-n^\alpha).$$
  \end{enumerate}
\end{proposition}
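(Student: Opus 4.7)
The plan is to establish the contrapositive of the dichotomy: assuming that the first alternative fails, so that $q_n := \phi^0_{\Lambda_{8n}}[\calA_n]$ takes arbitrarily small values along some subsequence, I will show that the stretched exponential decay of the second alternative must hold. The starting point is to fix a single scale $n_0$ with $q_{n_0} \le \ep_0$ for a very small threshold $\ep_0 > 0$ to be chosen later, and to extract strong decorrelation from this one estimate.

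The first step is to convert the smallness of $q_{n_0}$ into a dual statement. Under $\phi^0_{\Lambda_{8n_0}}$, the complementary event $\calA_{n_0}^c$ holds with probability at least $1 - \ep_0$; by planar duality this is precisely the event that there is a dual-open path in the dual annulus $(\Lambda_{2n_0} \setminus \Lambda_{n_0})^*$ connecting its inner and outer boundaries. The crucial next move is to upgrade this single radial dual arm into a dual \emph{circuit} surrounding $0$. For this I would combine the abundance of such arms with the wired RSW bound of Theorem~\ref{thm:weakRSW} (which applies to the dual measure, whose boundary conditions are wired because the primal ones on $\Lambda_{8n_0}$ are free), rotational symmetry, and the FKG inequality, to produce four dual radial crossings in disjoint sectors and glue them through dual crossings of intermediate rectangles. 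The output would be an event $\calD_{n_0}$, a dual circuit surrounding $0$ in an annulus of scale comparable to $n_0$, with $\phi^0_{\Lambda_{8n_0}}[\calD_{n_0}] \ge 1 - F(\ep_0)$ where $F(\ep_0) \to 0$ as $\ep_0 \to 0$.

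The second step is to iterate across concentric annuli. For $0$ to be connected to $\partial \Lambda_N$ under $\phi^0_{\bbZ^2}$, no dual circuit may surround it inside $\Lambda_N$. Decomposing $\Lambda_N$ into annuli at scale $n_0$ (or, better, at geometrically growing scales), one uses the comparison between boundary conditions and the Domain Markov property (Proposition~\ref{thm:markovProperty}) to reduce the probability of a dual circuit in each annulus to a $\phi^0$-probability in a bounded-ratio box, which by the preceding step is at least $1 - F(\ep_0)$. Since the events ``no dual circuit in annulus $j$'' are decreasing in the primal (equivalently, increasing in the dual), the FKG inequality applied directly to the primal measure yields an upper bound of the form
\[
\phi^0_{\bbZ^2}[0 \longleftrightarrow \partial\Lambda_N] \le \prod_{j} \phi^0_{\bbZ^2}[\text{no dual circuit around $0$ in annulus }j],
\]
and each factor is bounded away from $1$. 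A naive count of the number of annuli only yields polynomial decay; to promote this to the required $\exp(-n^\alpha)$, I would bootstrap, feeding the polynomial decay back into the crossing estimates to show that $q_n \to 0$ increasingly fast at large scales, so that $F(q_n) \to 0$ rapidly enough for the product to become stretched exponentially small.

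The hardest ingredient, I expect, is the first step: turning $q_{n_0} \le \ep_0$ into a quantitative lower bound for the dual circuit event $\calD_{n_0}$ with constants that behave controllably as $\ep_0 \to 0$. Free boundary conditions provide only dual radial arms, and assembling four of them into a closed loop requires careful endpoint matching; one must use RSW-type rectangle gluings in a setting where the ambient measure is free (not wired), so that the available inputs are the wired RSW bound of Theorem~\ref{thm:weakRSW} (via duality) plus the large-probability radial crossings coming from $q_{n_0}$ being small. Making this gluing quantitatively robust and uniform in $n_0$ is, in my view, the crux of the proof.
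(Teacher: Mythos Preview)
Your plan has a genuine gap at the iteration step, and the error is not merely technical.

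First, a monotonicity slip: the event ``no dual circuit in annulus $j$'' is \emph{increasing} in the primal configuration (more open primal edges means fewer dual-open edges, hence dual circuits become less likely, so their complement becomes more likely). The FKG inequality therefore gives a \emph{lower} bound on the probability of the intersection, not the product upper bound you wrote.

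More seriously, even if you repair this by conditioning on outer annuli rather than invoking FKG, the argument collapses. Conditioning on the configuration outside an annulus $A_j$ yields some boundary condition $\xi$, and for the increasing event ``$\Lambda_{n_0}$ is connected across $A_j$'' the worst case is $\xi=1$. So the factor you need to control is $\phi^1_{A_j}[\text{no dual circuit}]=1-\phi^1_{A_j}[\text{dual circuit}]$. By self-duality at $p_c$, $\phi^1_{A_j}[\text{dual circuit}]$ equals a free-boundary primal circuit probability on the dual annulus, and by comparison of boundary conditions this is \emph{at most} $q_{n_0}$, the very quantity you assumed small. Hence each factor is at least $1-q_{n_0}$, close to $1$, and the product gives no decay whatsoever. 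The hypothesis ``primal circuits are rare under free boundary conditions'' simply does not transfer to ``dual circuits are common under wired boundary conditions''; indeed, the two statements are essentially dual to each other. This is precisely the obstruction that {\bf P5} (RSW uniform in boundary conditions) is designed to overcome, and you are implicitly assuming it.

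Relatedly, the ``upgrade'' in your first step is geometrically confused: $\calA_{n_0}^c$ gives a dual \emph{radial} arm (from inner to outer boundary of the annulus), which is topologically orthogonal to a dual \emph{circuit} (going around). Gluing four radial arms with azimuthal crossings does not produce a loop surrounding the origin.

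The paper's argument is of an entirely different nature. It proves a renormalization inequality
\[
\phi^0_{\Lambda_{56n}}[\calA_{7n}]\le C\,\big(\phi^0_{\Lambda_{8n}}[\calA_n]\big)^2,
\]
so that $u_n:=C\,\phi^0_{\Lambda_{8n}}[\calA_n]$ satisfies $u_{7n}\le u_n^2$; once $u_{n_0}<1$ at a single scale, iteration gives $u_{7^kn_0}\le u_{n_0}^{2^k}$, i.e.\ stretched-exponential decay with exponent $\log 2/\log 7$. The squaring is obtained by showing that, conditionally on $\calA_{7n}$, one can (with positive probability) produce two copies of $\calA_n$ around distant centres $z_\pm$, shielded from one another by dual crossings built via Lemmas~\ref{lem:stripRSW} and~\ref{lem:push} (crossings in strips and rectangles with mixed boundary conditions). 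These lemmas are the real work: they supply the dual barriers that decouple the two circuits without assuming {\bf P5}. Finally, Lemma~\ref{exterior circuit} transfers the decay of $u_n$ to decay of $\phi^0_{\bbZ^2}[0\leftrightarrow\partial\Lambda_n]$.
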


%

First, consider 
the strip $\bbS=\Z\times[-n,3n]$, and the boundary conditions $\xi$
defined to be wired on $\Z\times\{3n\}$, and free on $\Z\times
\{-n\}$. Let $\phi^{1/0}_{\bbS}$ be the associated random-cluster measure. Recall that boundary conditions at infinity are not relevant since the strip is essentially one dimensional.
\begin{lemma}\label{lem:stripRSW}
  For all $k\geq 1$, there exists a constant
  $c=c(k)>0$ such that, for all $n\geq 1$,
  \begin{equation}
    \label{eq:1}
    \Ps{\mathcal C_h([-kn,kn]\times[0,2n])} \geq c.  
\end{equation}
\end{lemma}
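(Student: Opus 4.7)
The plan is to combine the self-duality of $\bbS$ under vertical reflection composed with duality, the FKG inequality, horizontal translation invariance of $\phi := \phi^{1/0}_{\bbS}$, and the wired RSW estimate of Theorem~\ref{thm:weakRSW}. I would first establish a uniform lower bound on the horizontal crossing probability of the square $R_1 := [-n,n]\times[0,2n]$, and then bootstrap to the longer rectangle $R_k := [-kn,kn]\times[0,2n]$ via an FKG gluing argument.

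For the base case, I would first observe that the strip $\bbS = \Z \times [-n,3n]$ is invariant under the reflection $\rho : (x,y) \mapsto (x, 2n - y)$ about the middle height $y = n$, while the duality operation interchanges wired and free boundary conditions; hence the composition $\rho \circ *$ maps $\phi$ to itself, modulo a half-integer lattice shift which does not affect bulk events. Applying this self-duality to the square $R_1$ (which is invariant under $\rho$), together with the standard primal/dual dichotomy $\calC_h(R_1) = \neg \calC_v^*(R_1^*)$, I would derive
$$\phi[\calC_h(R_1)] + \phi[\calC_v(R_1)] = 1,$$
so that at least one of the two crossing probabilities is bounded below by $1/2$. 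To promote this to a uniform positive lower bound on $\phi[\calC_h(R_1)]$ itself, I would use an FKG square-root trick combined with horizontal translation invariance and the wired top boundary: if $\phi[\calC_v(R_1)]$ were close to $1$, then many horizontal translates of $R_1$ would simultaneously admit vertical crossings reaching up toward the wired top, which, together with Theorem~\ref{thm:weakRSW} applied to a larger region, would force a horizontal connection across $R_1$.

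For the bootstrapping step, I would cover $R_k$ by $2k-1$ overlapping horizontal translates of $R_1$, each consecutive pair sharing an overlap region of horizontal width $n$. A horizontal crossing of each translate, combined with a vertical crossing in each overlap region, yields a horizontal crossing of $R_k$. Horizontal crossings of the translates exist with probability at least $c_0$ by the base case and horizontal translation invariance of $\phi$; vertical crossings of the overlap squares are obtained from the corresponding base estimate in the vertical direction (same reflection/duality argument). The FKG inequality then yields $\phi[\calC_h(R_k)] \geq c(k) > 0$.

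The main obstacle lies in the base case: the identity $\phi[\calC_h(R_1)] + \phi[\calC_v(R_1)] = 1$ coming from self-duality only guarantees that the maximum of the two probabilities is at least $1/2$, while the statement requires a lower bound on the horizontal crossing specifically. Handling the asymmetric case, where vertical crossings dominate, requires a careful FKG square-root argument together with the wired top boundary condition used to convert vertical crossings into horizontal ones via the large top cluster. Once the base case is established, the iteration to general $k$ is routine.
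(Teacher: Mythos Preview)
Your overall strategy---reduce via self-duality of $\bbS$ to the case where vertical crossings of a square are abundant, then convert those into a horizontal crossing---matches the paper's. The difficulty, as you correctly identify, is the conversion step; however, the tools you propose do not carry it out.

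The appeal to Theorem~\ref{thm:weakRSW} goes the wrong way: that estimate is for $\PP[\Z^2][1]$, and since $\PP[\bbS][1/0]\le\PP[\Z^2][1]$ stochastically, it only yields \emph{upper} bounds on increasing events in the strip, not the lower bound you need. Your vertical crossings of $R_1$ reach height $2n$, not the wired arc at $3n$, so there is no ``large top cluster'' to which they are automatically attached; and no FKG square-root trick by itself manufactures a horizontal open path linking two disjoint vertical crossings. The paper's substitute is specific: from $\Ps{\calC_v([-kn,kn]\times[0,2n])}\ge 1/2$ one finds a short bottom segment $I$ (of length $n/9$) connected upward with probability $\ge c_1$, then conditions on the left-most open path from $I$ and the right-most open path from a neighbouring translate $J$. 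In the region between them the induced boundary conditions dominate wired on two opposite sides of a square and free on the other two, and the mixed-boundary estimate \eqref{crossing square mixed} then furnishes a connecting path with probability $\ge 1/(1+q^2)$. (A further reflection/duality comparison---Case~2 in the paper---is needed when the upward path exits through a lateral side rather than the top.) This conditioning-on-extremal-paths step, together with \eqref{crossing square mixed}, is the substantive ingredient your sketch lacks.

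Your bootstrapping step also has a gap. To glue horizontal crossings of overlapping $2n\times 2n$ squares you need vertical crossings of the $n\times 2n$ overlap rectangles---crossings in the \emph{long} direction---and the self-duality identity $\phi[\calC_h(R_1)]+\phi[\calC_v(R_1)]\approx 1$ gives no lower bound on these once $\phi[\calC_h(R_1)]$ may itself be close to $1$. The paper sidesteps this entirely by making the building block the event $\{I\lr[{[0,2n]^2}]J\}$ for \emph{adjacent} bottom segments $I,J$; horizontal translates of this event chain directly into a crossing of $[-kn,kn]\times[0,2n]$ without any auxiliary vertical crossings.
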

\begin{proof}
Fix $n,k\ge1$. We will assume that $n$ is divisible by 9 (one may adapt the argument for general values of $n$). By duality, the complement of 
$\calC_h ([-kn,kn]\times[0,2n])$
is $\calC_v^* ([-kn+\tfrac12,kn-\tfrac12]\times[-\tfrac12,2n+\tfrac12])$. Therefore, either \eqref{eq:1} is true for $c=1/2$, or
\[\Ps{\calC_v^* ([-kn+\tfrac12,kn-\tfrac12]\times[-\tfrac12,2n+\tfrac12])}\ge1/2.\] We
assume that we are in this second situation for the rest of the proof.

The dual of the measure on
the strip with free boundary conditions on the bottom and wired on
the top is the measure on the strip with free boundary conditions on
the top and wired on the bottom. This measure is the image of $\phi_{\bbS}^{1/0}$ under the orthogonal reflection with respect to the horizontal line $\mathbb R\times\{n-\tfrac14\}$ composed with a translation by the vector $(\tfrac12,0)$. We thus obtain that 
\begin{align*}\Ps{\calC_v
  ([-kn,kn]\times[0,2n])}&\ge \Ps{\calC_v
  ([-kn,kn-1]\times[-1,2n])}\\
  &=\Ps{\calC_v^* ([-kn+\tfrac12,kn-\tfrac12]\times[-\tfrac12,2n+\tfrac12])}\\
  &\ge1/2.\end{align*}
   Partitioning the segment $[-kn, kn]\times\{0\}$ into
the union of $18k$ segments of length $\lambda:=n/9$ (note that $\lambda$ is an integer), the union bound gives us
\begin{equation}
\label{eq:2}
  \Ps{I\lr{} \Z \times \{2n\}}\ge \frac1{36k}=:c_1,
\end{equation}
where $I=[4\lambda,5\lambda]\times\{0\}$. For future reference, let us also introduce the segment $J=[6\lambda,7\lambda]\times\{0\}$.

Define the rectangle $R=[0,9\lambda]\times[0,2n]$.  
When the event estimated in Equation~\eqref{eq:2} is realized, there
exists an open path in $R$ connecting
$I$ to the union of the top, left and right boundaries of $R$.
 Using the reflection with respect to the vertical line $\{\tfrac n2\}\times\bbR$, we find that at least one of the two following inequalities occurs:
 \medbreak
  Case 1: $\Ps{I\stackrel{R}{\longleftrightarrow} [0,n]\times\{2n\}}\ge c_1/3.$
  \medbreak
  Case 2: $\Ps{I\stackrel{R}{\longleftrightarrow}
    \{0\}\times[0,2n] }\ge c_1/3.$
     \begin{figure}[htbp]
       \begin{center}      
         \includegraphics[width=1.00\textwidth]{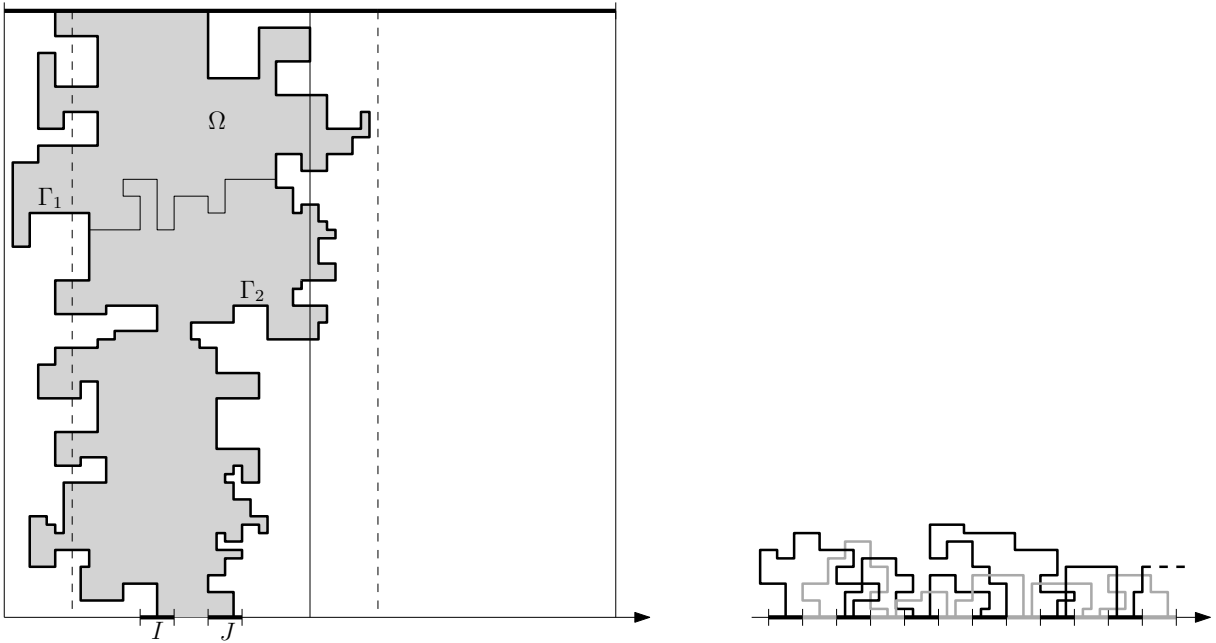}
         \caption{The construction in Case 1 with the two paths
           $\Gamma_1$ and $\Gamma_2$ and the domain $\Omega$ between
           the two paths. On the right, a combination of paths
           creating a long path from left to right.\label{fig:abcd}
         }\end{center}
 \end{figure}
\bigbreak
\noindent{\em Proof of~\eqref{eq:1} in Case 1:}    Consider the event that
  there exist
 \medbreak
 (i) an open path from $I$ to the top of $[0,2n]^2$ contained in $[0,2n]^2$,
 
 (ii) an open path from $J$ to the top of $[0,2n]^2$ contained in $[0,2n]^2$,
 
 (iii) an open path connecting these two paths in $[0,2n]^2$.
\medbreak 
 Each path in (i) and (ii) exists with probability larger than
 $c_1/3$ (since $R$ and $(2\lambda,0)+R$ are included in $[0,2n]^2$). Furthermore, let $\Gamma_1$ be the left-most path satisfying (i) and $\Gamma_2$ the right-most path satisfying (ii); see Fig.~\ref{fig:abcd}. The subgraph of $[0,2n]^2$
 between $\Gamma_1$ and $\Gamma_2$ is denoted by $\Omega$. Conditioning on
 $\Gamma_1$ and $\Gamma_2$, the boundary conditions on $\Omega$ are
 wired on $\Gamma_1$ and $\Gamma_2$, and dominate the free boundary
 conditions on the rest of $\partial \Omega$. We deduce that 
 boundary conditions on $\Omega$ dominate boundary conditions inherited by wired
 boundary conditions on the left and right sides of the box $[0,2n]^2$,
 and free on the top and bottom sides. As a consequence of
 \eqref{crossing square mixed}, conditionally on $\Gamma_1$ and
 $\Gamma_2$, there exists an open path in $\Omega$ connecting
 $\Gamma_1$ to $\Gamma_2$ with probability larger than $1/(1+q^2)$. In
 conclusion,  \begin{equation}\label{eq:4}
 \Ps{I\lr[{[0,2n]^2}]J}\ge\Ps{\text{(i), (ii) and (iii) occur}}\ge \left(\frac{c_1}3\right)^2\times\frac{1}{(1+q^2)}.
 \end{equation}
 For $x=j\lambda$, where $j\in\{-9k-5,\dots,9k-6\}$, define the translate of the event considered in \eqref{eq:4}:
 $$A_x:=\big\{\,(x+I)\lr[{x+[0,2n]^2}](x+J)\,\big\}.$$ If $A_x$ occurs for every such $x$, we obtain an open crossing from left to right in $[-kn,kn]\times[0,2n]$. The FKG inequality implies that this happens with probability larger than $\left(\frac{c_1^2}{9(1+q^2)}\right)^{20k}$.      

\bigbreak\noindent{\it Proof of~\eqref{eq:1} in Case 2:} Define the rectangle
  $R'=[4\lambda,9\lambda]\times[0,2n]$. Note that in Case 2, $J$ is connected to one side of $[2\lambda,11\lambda]\times[0,2n]$ with probability bounded from below by $c_1/3$, hence the same is true for $R'$ (since $[2\lambda,11\lambda]\times[0,2n]$ is wider than $R'$). Consider the event that
  there exist
\medbreak  
  (i) an open path from $I$ to the right side of $R$ contained in $R$,
  
  (ii) an open path from $J$ to the left side of $R'$ contained in $R'$,
  
   (iii) an open path connecting these two paths in $[0,2n]^2$.
\medbreak  
  The first path occurs with probability larger than $c_1/3$,
  and the second one with probability larger than $c_1/6$ (there exists a path to one of the sides with probability at least $c_1/3$, and therefore by symmetry in $R'$ to the left side with probability larger than $c_1/6$).
  By the
  FKG inequality, the event that both (i) and (ii) occur has probability larger than $c_1^2/18$. We now wish to prove that conditionally on (i) and (ii) occurring, the event (iii) occurs with probability bounded from below uniformly in $n$. 
  
  Define the segments $K(y,z)=\{4\lambda\}\times[y,z]$ for $y\le z\le \infty$. They are all subsegments of the vertical line of first coordinate equal to $4\lambda$.
  
  Consider the right-most open path $\Gamma_1$ satisfying (ii).  It intersects the segment $K(0,2n)$ at a unique point with second coordinate denoted by $y$. Also consider the left-most open path $\tilde\Gamma_2$ satisfying (i). Either $\Gamma_1$ and $\tilde\Gamma_2$ intersect, or they do not. In the first case, we are already done since (iii) automatically occurs. In the second, we consider the subpath $\Gamma_2$ of $\tilde\Gamma_2$ from $I$ to the first intersection with $K(y,2n)$ (this intersection must exist since $\tilde\Gamma_2$ goes to the right side of $R'$). Let us now show that $\Gamma_1$ and $\Gamma_2$ are connected with good probability. Note the similarity with the construction in \cite{BefDum12b} with symmetric domains, except that the lattice is not rotated here. The proof is therefore slightly more technical and we choose to isolate it from the rest of the argument.
  \medbreak
\noindent{\em Claim: There exists $c_2>0$ such that for any possible realizations $\gamma_1$ and $\gamma_2$ of $\Gamma_1$ and $\Gamma_2$,}
\begin{equation}\label{eq:5}\Ps{\g_1\stackrel{R}{\longleftrightarrow}\g_2\,\Big|\,\Gamma_1=\gamma_1,\Gamma_2=\gamma_2}\ge c_2.\end{equation}
\begin{proof}
Fig.~\ref{fig:domainOmega} should be very helpful in order to follow this proof.  Construct the subgraph $\Omega$ ``between $\gamma_1$ and $\gamma_2$'' formally delimited by:
\begin{itemize}[nolistsep,noitemsep]
\item  the arc $\gamma_2$,
\item  the segment $[0,n]\times\{0\}$,
\item  the arc $\gamma_1$,
\item the segment $K(y+1,2n)$ {\em excluded} (the vertices on this segment are not part of the domain). \end{itemize}
\medbreak\noindent
We wish to compare $\Omega$ (left of Fig.~\ref{fig:domainOmega}) to a reference domain $D$ (center of Fig.~\ref{fig:domainOmega}) defined as the upper half-plane minus the edges intersecting $\{4\lambda-\tfrac12\}\times(y,\infty)$ and
define the boundary conditions $mix$ on $D$ by:
\begin{itemize}[noitemsep,nolistsep]
\item  wired boundary conditions on $K(y,\infty)$ and $A:=(-\infty,4\lambda]\times\{0\}$;
\item  wired boundary conditions at infinity (by this we mean that we take the limit of measures on $D\cap\Lambda_n$, with wired boundary conditions on $\partial\Lambda_n$);
\item free boundary conditions elsewhere.
\end{itemize} 
The boundary conditions on $\Omega$ inherited by the conditioning $\Gamma_1=\gamma_1$ and $\Gamma_2=\gamma_2$ dominate wired on $\gamma_1$ and $\gamma_2$, and free elsewhere. 
Thus, we deduce that
 \begin{equation}\label{eq:7a}
   \Ps{\gamma_1\lr[\Omega]\gamma_2 \:\big |\: \Gamma_1=\gamma_1,\Gamma_2=\gamma_2}\ge
   \P[D][\mathrm{mix}]{K(y,\infty)\lr[D]A}.
 \end{equation}

 \begin{figure}[htbp]
 \begin{center}      \includegraphics[width=1.00\textwidth]{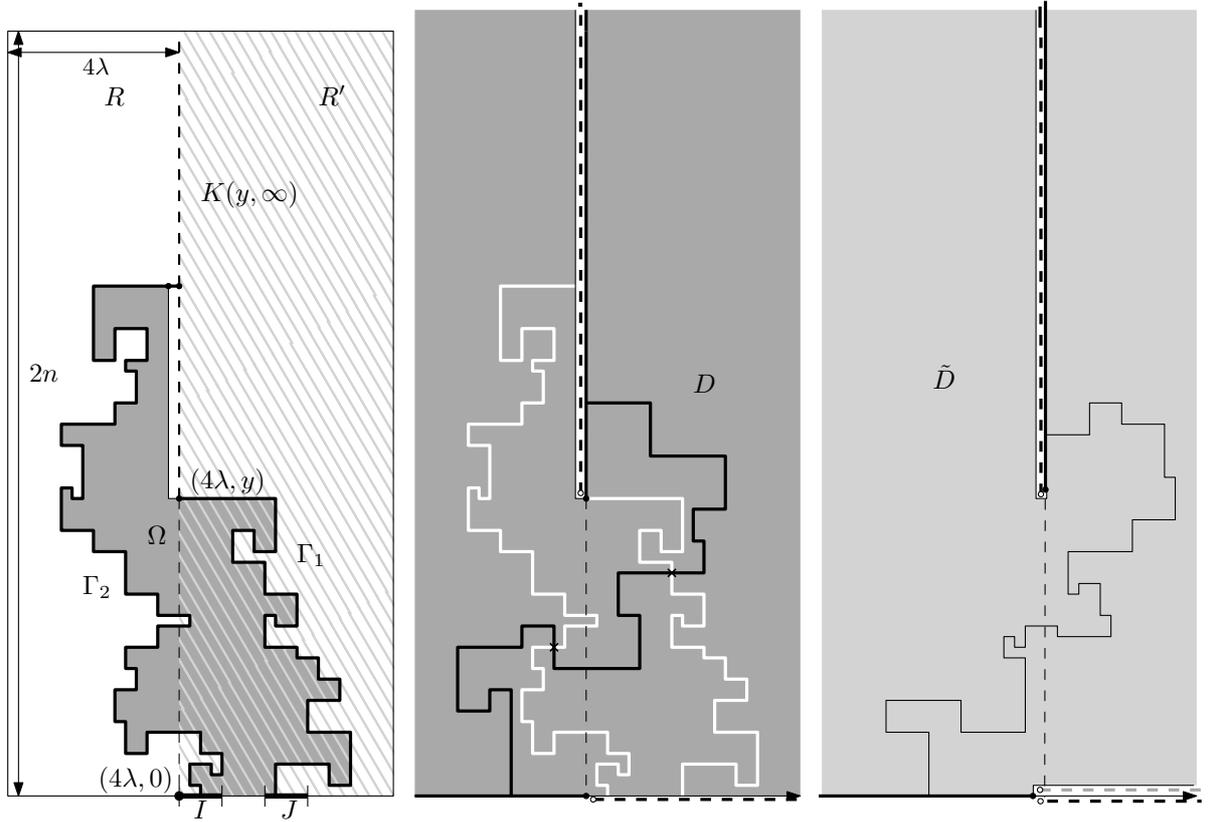}
   \caption{{\bf Left.} The domain $\Omega$. We depicted the part of the domain with free boundary conditions by putting dual wired boundary conditions on the associated dual arcs. The wired boundary conditions are depicted in bold. The rectangles $R$ and $R'$ are also specified ($R'$ is in dashed). {\bf Center.} The domain $D$. We depicted the domain $\Omega$ in white. The existence of an open path between $K(y,\infty)$ and $A$ implies the existence of an open path between $\g_1$ and $\g_2$ in $D$ (between the two crossings).  {\bf Right.} The domain $\tilde D$ with one path from $K(y+1,\infty)$ to $B$. The pre image of this path by the reflection mapping $D$ onto $\tilde D$ is a dual-path in $D$ preventing the existence of an open path from $K(y,\infty)$ to $A$. } \label{fig:domainOmega}\end{center}
   
 \end{figure}
  As mentioned above, the domain $D$ is not exactly a symmetric domain but it is still very close to be one. 
  Consider the domain $\tilde D$ (see on the right of Fig.~\ref{fig:domainOmega}) obtained from $D$ by the reflection with respect to the vertical line $d=\{(4\lambda-\frac14,y):y\in\bbR\}$ and a translation by $(\tfrac12,\tfrac12)$. 
Let $B=(-\infty,4\lambda-1]\times\{0\}$. Define the boundary conditions $mix$ on $\tilde D$ as
\begin{itemize}[noitemsep,nolistsep]
\item  wired boundary conditions on $K(y+1,\infty)\cup B$ (it is very important that the two arcs are wired together);
\item  free boundary conditions at infinity;
\item free boundary conditions elsewhere.
\end{itemize} 
  Using duality, we find that
  $$\P[D][\mathrm{mix}]{K(y,\infty)\stackrel{D}{\not\longleftrightarrow}A}=\P[\tilde D][\mathrm{mix}]{K(y+1,\infty)\stackrel{\tilde D}{\longleftrightarrow}B}$$
and thus
    \begin{equation}\label{eq:6}
   \P[D][\mathrm{mix}]{K(y,\infty)\lr[D]A}+\P[\tilde D][\mathrm{mix}]{K(y+1,\infty)\stackrel{\tilde D}{\longleftrightarrow}B}=1.
 \end{equation}
 Define the $mix'$ boundary conditions on $D$ as wired boundary conditions on $K(y+1,\infty)\cup B:=(-\infty,4\lambda-1]\times\{0\}$ (the two arcs are once again wired together) and free elsewhere (they correspond to the boundary conditions $mix$ on $\tilde D$). Since $\tilde D\subset D$,
$$\P[\tilde D][\mathrm{mix}]{K(y+1,\infty)\stackrel{\tilde D}{\longleftrightarrow}B}\le \P[D][\mathrm{mix'}]{K(y,\infty)\stackrel{D}{\longleftrightarrow}A}.$$
The boundary conditions for the term on the right can be compared to the boundary conditions {\em mix}. First, one may wire the vertices $(4\lambda,y)$ and $(4\lambda,y+1)$ together, and the vertices $(4\lambda-1,0)$ and $(4\lambda,0)$ together, which increases the probability of an open path between $K(y,\infty)$ and $A$. Second, one may unwire the arcs $B$ and $K(y+1,\infty)$, paying a multiplicative cost of $q^2$. Using the previous inequality and the comparison between the boundary conditions described in this paragraph, we deduce 
  $$\P[\tilde D][\mathrm{mix}]{K(y+1,\infty)\stackrel{\tilde D}{\longleftrightarrow}B}
  \le q^2 \P[D][\mathrm{mix}]{K(y,\infty)\stackrel{D}{\longleftrightarrow}A}.$$
  Putting this inequality in \eqref{eq:6} and then using \eqref{eq:7a}, we find that
  $$ \Ps{\gamma_1\lr[\Omega]\gamma_2 \:\big |\: \Gamma_1=\gamma_1,\Gamma_2=\gamma_2}\ge \P[D][\mathrm{mix}]{K(y,\infty)\stackrel{D}{\longleftrightarrow}A}\ge \frac1{1+q^2}.$$
  \end{proof}
 It follows from \eqref{eq:5} and the probabilities of (i) and (ii) that \begin{equation*}
 \Ps{I\stackrel{R}{\longleftrightarrow} J}\ge \frac1{1+q^2}\times\frac{c_1^2}{18}.
 \end{equation*}
 Here again, $20k$ translations of the event above guarantee the occurrence of an open crossing from left to right in $[-kn,kn]\times[0,2n]$. This occurs with probability larger than $(\frac{c_1^2}{18(1+q^2)})^{20k}$ thanks to the FKG inequality again.      
\end{proof}
In the next lemma, we consider horizontal crossings in rectangular shaped domains with free boundary conditions on the bottom and wired elsewhere.

\begin{lemma}\label{lem:push} 
  For all $k>0$ and $\ell\ge4/3$, there exists a constant $c=c(k,\ell)>0$ such that for all $n>0$,
  \begin{equation}
    \label{eq:7} 
    \P[D][1/0]{\mathcal C_h\left([-kn,kn]\times[0,n]\right)}\geq c
  \end{equation}
with $D=[-kn,kn]\times [0,\ell n]$, and $\PP[D][1/0]$ is the random-cluster measure with free boundary conditions on the bottom side, and wired on the three other sides.
\end{lemma}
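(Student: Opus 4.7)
The plan is to apply Lemma~\ref{lem:stripRSW} with scale $n_0:=n/3$ and width parameter $3k$, after translating the strip vertically by $n_0$. Letting $\bbS_0:=\bbZ\times[0,4n/3]$ with wired top at $y=4n/3$ and free bottom at $y=0$, the lemma together with translation invariance yields
\[
\P[\bbS_0][1/0]{\calC_h([-kn,kn]\times[n/3,n])}\;\ge\; c(3k).
\]
A horizontal crossing of $[-kn,kn]\times[n/3,n]$ is automatically a horizontal crossing of $[-kn,kn]\times[0,n]$, since the former rectangle sits inside the latter and they share the horizontal extent $[-kn,kn]$, so it remains to transfer the strip estimate to $D$.

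When $\ell=4/3$, the domain $D=[-kn,kn]\times[0,4n/3]$ has the same vertical extent as $\bbS_0$ and differs from it only in that $D$ has wired sides at $x=\pm kn$, while $\P[\bbS_0][1/0]$ is the $N\to\infty$ limit of the measures on $[-Nn,Nn]\times[0,4n/3]$ with wired top, free bottom, and free sides. Applying the Domain Markov property to these finite approximations, the marginal on edges of $D$ is an average over random boundary conditions induced on the sides of $D$ by configurations outside $D$; since each such random condition is dominated by wired, \eqref{comparison} gives $\P[\bbS_0][1/0]{A}\le\P[D][1/0]{A}$ for every increasing event $A$ depending only on edges of $D$. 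The strip estimate above then settles the lemma when $\ell=4/3$.

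For $\ell>4/3$, the strip $\bbS_0$ still fits in the lower part of $D$ but its wired top at $y=4n/3$ is interior to $D$. I would condition on a favorable event in the upper region $U:=[-kn,kn]\times[4n/3,\ell n]$, namely the existence of an open connection in $U$ wiring $[-kn,kn]\times\{4n/3\}$ to the wired sides of $D$; such an event has probability bounded below by a constant depending on $\ell$, obtained via another application of Lemma~\ref{lem:stripRSW} (or via Theorem~\ref{thm:weakRSW}) using the wired sides of $D$ as a source. Conditional on it, the Domain Markov property produces an effective wired boundary condition on $[-kn,kn]\times\{4n/3\}$ for the lower region, reducing the argument to the $\ell=4/3$ case handled above, and the FKG inequality then glues the two events together.

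The main obstacle lies in this $\ell>4/3$ case: one must manufacture an effective wired boundary at the internal height $y=4n/3$ through a connectivity construction in $U$, and verify that the boundary conditions it induces on the lower region dominate wired top with probability bounded away from zero uniformly in $n$. This requires a careful combination of the strip lemma, the mixed-boundary crossing estimate \eqref{crossing square mixed}, the Domain Markov property, and the FKG inequality, with constants that are allowed to depend on $\ell$.
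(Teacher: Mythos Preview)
Your base case $\ell=4/3$ is correct and is exactly what the paper does: the boundary conditions on $D=[-kn,kn]\times[0,4n/3]$ dominate those of the strip $\bbS_0$, so Lemma~\ref{lem:stripRSW} (applied at scale $n/3$ and width $3k$) gives a horizontal crossing of $[-kn,kn]\times[n/3,n]$, hence of $[-kn,kn]\times[0,n]$.

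For $\ell>4/3$, however, your one-shot scheme has a genuine gap. The ``favourable event'' you describe in $U=[-kn,kn]\times[4n/3,\ell n]$ is too weak if it is merely \emph{some} open connection from the segment $[-kn,kn]\times\{4n/3\}$ to the wired part of $\partial D$: a single path touching the segment at one point does not wire the whole segment, and the induced boundary condition on the lower region is then not ``wired on top'' but only partially wired. What you actually need to condition on is a \emph{horizontal crossing} of $[-kn,kn]$ at some height just above $4n/3$, since such a crossing reaches both wired vertical sides and, after conditioning on (say) the highest one, produces genuinely wired boundary conditions for the region below it via the Domain Markov property. But obtaining a horizontal crossing of $U$ near its \emph{free} bottom side is precisely the statement of the lemma for the domain $U$ of height $(\ell-4/3)n$; invoking Lemma~\ref{lem:stripRSW} or Theorem~\ref{thm:weakRSW} does not deliver this, because the former only reaches depth $n/3$ below the wired top and the latter requires fully wired boundary conditions.

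The paper closes this loop by an induction that pushes the crossing down by $n/3$ at a time: assume the result for $\ell$ and prove it for $\ell+\tfrac13$. In $D=[-kn,kn]\times[0,(\ell+\tfrac13)n]$, compare boundary conditions with the translate $[-kn,kn]\times[\tfrac n3,(\ell+\tfrac13)n]$, which has height $\ell n$; the inductive hypothesis yields
\[
\P[D][1/0]{\calC_h\big([-kn,kn]\times[\tfrac n3,\tfrac{4n}3]\big)}\ge c(k,\ell).
\]
Conditioning on the \emph{highest} such crossing, the region below it has boundary conditions dominating free-on-bottom/wired-on-three-sides in $[-kn,kn]\times[0,\tfrac{4n}3]$, and the base case finishes with $c(k,\ell+\tfrac13)=c(k,\ell)\,c(k,\tfrac43)$. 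Your proposal is essentially the first step of this induction; the missing idea is to iterate rather than to attempt the full descent in one move.
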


\begin{proof}
 For $\ell=4/3$, the result follows directly from Lemma~\ref{lem:stripRSW} since boundary conditions  dominate boundary conditions in the strip $\mathbb Z\times[0,\tfrac {4n}3]$, and therefore there exists an horizontal crossing of the rectangle $[-kn,kn]\times[\tfrac n3,n]$ with probability bounded away from 0. 
 
 Now assume that the result holds for $\ell$ and let us prove it for $\ell+1/3$. By comparison between boundary conditions in $[-kn,kn]\times[\tfrac n3,\ell n+\tfrac n3]$, we know that 
$$
    \P[D][1/0]{\mathcal C_h([-kn,kn]\times[\tfrac n3,\tfrac{4n}3])}\geq c(k,\ell).
$$
Conditioning on the highest such crossing, the boundary conditions below this crossing dominate the free  boundary conditions on the bottom side of $[-kn,kn]\times[0,\tfrac{4n}3]$, and wired on the other three sides of $[-kn,kn]\times[0,\tfrac{4n}3]$. An application of the case $\ell=\tfrac43$ enables us to set $c(k,\ell+\tfrac13)=c(k,\ell)c(k,\tfrac43)$.

The proof follows from the fact that the probability in \eqref{eq:7} is decreasing in $\ell$.
\end{proof}

\begin{lemma}\label{lem:induction1}
  There exists a
  constant $C<\infty$ such that, for all $n\geq 1$, 
  \begin{equation}
    \label{eq:8}
    \P[\Lambda_{56n}][0]{\mathcal A _{7n}}\leq C\, \P[\Lambda_{8n}][0]{\mathcal A _n}^2.
  \end{equation}
\end{lemma}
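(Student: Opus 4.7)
}

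The plan is to show that the existence of an open circuit around the origin at scale $7n$ essentially forces, modulo auxiliary open crossings of uniformly positive probability, the simultaneous existence of two open circuits at scale $n$ around two far-apart translated points. The two translated circuit events will then be decoupled, thanks to the geometric separation between the regions of edges on which they depend, via a Domain Markov argument tailored to the free boundary conditions.

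Set $z_1 = (0, 10n)$ and $z_2 = (0, -10n)$, and note that $z_1 + \Lambda_{8n}$ and $z_2 + \Lambda_{8n}$ are disjoint subsets of $\Lambda_{56n}$ separated by a vertical gap of $4n$. First, observe that if $\mathcal{A}_{7n}$ occurs, the circuit forces open horizontal crossings of the top arm $[-14n,14n]\times[7n,14n]$ and the bottom arm $[-14n,14n]\times[-14n,-7n]$ of the annulus, and vertical crossings of the left and right arms. Each of these crossings can be completed into a circuit around $z_i$ in $z_i + (\Lambda_{2n}\setminus\Lambda_n)$ by adjoining a constant number of auxiliary open paths (a short vertical path to the left of $z_i$, one to the right, and a short horizontal path on the far side of $z_i$) living in the free region $\Lambda_{56n} \setminus (\Lambda_{14n}\setminus\Lambda_{7n})$. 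Lemmas~\ref{lem:stripRSW} and~\ref{lem:push} (applied in long strips inside the free region, where the mixed and free-on-bottom boundary conditions they require are dominated by the inherited conditions) guarantee that each auxiliary path has probability bounded below by a constant independent of $n$. Applying the FKG inequality to the intersection of $\mathcal{A}_{7n}$ with all these auxiliary increasing events yields a constant $c>0$ such that
\[
c\,\P[\Lambda_{56n}][0]{\mathcal{A}_{7n}} \;\le\; \P[\Lambda_{56n}][0]{\mathcal{A}_n(z_1)\cap \mathcal{A}_n(z_2)}.
\]

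Second, bound the right-hand side by $C\,\P[\Lambda_{8n}][0]{\mathcal{A}_n}^2$. Here I would explore the configuration outside the box $z_2+\Lambda_{8n}$, apply the Domain Markov property, then do the same for $z_1+\Lambda_{8n}$. The difficulty (and the main obstacle) is that the boundary conditions induced on these smaller boxes are \emph{not} free---they can only be worse than free. One cannot directly invoke monotonicity in boundary conditions to bring the free boundary probability on the right. To circumvent this, I would use that under free b.c.\ on $\Lambda_{56n}$, the set $\Lambda_{56n}\setminus (z_1+\Lambda_{8n})\cup(z_2+\Lambda_{8n})$ contains very wide free regions in which dual RSW estimates (furnished by the dual versions of Lemmas~\ref{lem:stripRSW} and~\ref{lem:push}) produce, with uniformly positive probability, a dual-open path separating $z_i+\Lambda_{8n}$ from $\partial \Lambda_{56n}$ and from the other box. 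Conditioning on the outermost such dual barrier and invoking the Domain Markov property again \emph{does} yield free boundary conditions on the interior, and one then gets the claimed bound by translation invariance, up to an absolute constant absorbing the cost of the barriers.

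The main obstacle, as indicated above, is the decoupling step: the random-cluster model with $q\geq 1$ lacks a BK-type inequality, so factorizing an intersection of two increasing local events into the product of their free-b.c.\ probabilities is not automatic and genuinely requires the dual-barrier construction. The rest of the proof (Step 1, Step 2 up to FKG) is essentially a geometric bookkeeping exercise combining the circuit forced by $\mathcal{A}_{7n}$ with auxiliary crossings supplied by the previously proved Lemmas~\ref{lem:stripRSW} and~\ref{lem:push}.
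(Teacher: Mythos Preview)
Your two–step strategy (force two well-separated scale-$n$ circuits from $\mathcal A_{7n}$, then decouple them with dual barriers) is exactly the paper's, but Step~1 as written has a real gap.

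\textbf{Step 1.} With $z_i=(0,\pm 10n)$ the target annuli $z_i+(\Lambda_{2n}\setminus\Lambda_n)$ lie entirely inside the arm $\Lambda_{14n}\setminus\Lambda_{7n}$. The horizontal crossing of the top arm forced by $\mathcal A_{7n}$ sits at some \emph{unknown} height in $[7n,14n]$ and need not meet $z_1+(\Lambda_{2n}\setminus\Lambda_n)$ at all, so it cannot serve as a side of that small circuit. More seriously, you combine the auxiliary crossings with $\mathcal A_{7n}$ via FKG; this requires lower bounds on the auxiliary events under the \emph{base} measure $\phi^0_{\Lambda_{56n}}$. Lemmas~\ref{lem:stripRSW}--\ref{lem:push} need a wired side, and free boundary conditions on $\Lambda_{56n}$ provide none. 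Saying the required mixed boundary conditions are ``dominated by the inherited conditions'' amounts to conditioning on the circuit, not FKG, and then you have to control where the wired arc is. (There is also a geometric slip: the auxiliary paths for the small circuit must live in $z_i+(\Lambda_{2n}\setminus\Lambda_n)\subset\Lambda_{14n}\setminus\Lambda_{7n}$, not in the ``free region'' $\Lambda_{56n}\setminus(\Lambda_{14n}\setminus\Lambda_{7n})$.)

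The paper avoids all of this by placing $z_\pm=(\pm5n,0)$ \emph{inside} $\Lambda_{7n}$. Conditioning on $\mathcal A_{7n}$ then dominates wired boundary conditions throughout $\Lambda_{7n}$, and $\mathcal A_n(z_\pm)$ follows in one line from the wired RSW (Theorem~\ref{thm:weakRSW}) --- no auxiliary paths, no FKG gymnastics.

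\textbf{Step 2.} Your dual-barrier decoupling is the right mechanism and is what the paper does. One correction: the barrier must be produced \emph{conditionally} on $\mathcal A_n(z_+)\cap\mathcal A_n(z_-)$, not under the unconditioned free measure (the barrier is a decreasing event, so FKG goes the wrong way against the increasing circuit events). Conditionally on the two circuits, the slabs above and below see primal b.c.\ that are wired on the circuits' side and free on $\partial\Lambda_{56n}$; the dual therefore sees free-on-bottom/wired-elsewhere, exactly the hypothesis of Lemma~\ref{lem:push}, which yields horizontal dual crossings near the circuits. The paper then bridges these by three vertical dual crossings using the mixed-square estimate~\eqref{crossing square mixed}, obtaining dual circuits that isolate $z_+$ and $z_-$, and finishes by conditioning on the outermost such circuits.
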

\begin{proof}
 Define $z_\pm=(\pm 5n,0)$. If $\mathcal A_{7n}$ occurs, the boundary conditions on $\Lambda_{7n}$ dominate the wired boundary conditions on $\Lambda_{56n}$  due to the existence of the open circuit in $\Lambda_{14n}\setminus\Lambda_{7n}$. Theorem~\ref{thm:weakRSW} thus implies the existence of a constant $c_1>0$ such that, for all $n$,
  \begin{align}
    \label{eq:9}
    \phi^0_{\Lambda_{56n}}[\mathcal A_n(z_+) \cap \mathcal A_n(z_-) | \mathcal
      A_{7n}] &\geq  \phi^1_{\Lambda_{56n}}[\mathcal A_n(z_+) \cap \mathcal
      A_n(z_-)]\geq c_1.
  \end{align}
  It directly implies that for all $n$,
  \begin{equation}
    \label{eq:10}
    \phi^0_{\Lambda_{56n}}[\mathcal A_n(z_+) \cap  \mathcal A_n(z_-)] \geq c_1 \phi^0_{\Lambda_{56n}}[\mathcal A _{7n}].
  \end{equation}
  Now, examine the domain $D=[-56n,56n]\times[2n,56n]$ and consider the measure $\PP[D][1/0]$ with free boundary conditions on the bottom and wired boundary elsewhere. Also set $$R^*_+:=[-56n-\tfrac12,56n+\tfrac12]\times[2n+\tfrac12,3n-\tfrac12].$$ Under $\phi^0_{56n}[\,\cdot\,|\mathcal A_n(z_+)
  \cap \mathcal A_n(z_-)]$, the boundary conditions on $D$ are 
  dominated by wired boundary conditions on the bottom and free
  boundary conditions on the other sides. As a consequence,
  Lemma~\ref{lem:push} applied to $k=56$ and $\ell=54$ implies that
  \begin{align}
    \label{eq:11}
    \phi^0_{\Lambda_{56n}}\big[\mathcal
      C_h^*\left(R^*_+\right)& \big| \mathcal
      A_n(z_+) \cap  \mathcal A_n(z_-)\big]\geq
    \P[D][1/0]{\calC_h^*\left(R^*_+\right)}\geq c_2
  \end{align}
 for some universal constant $c_2>0$ independent of $n$.
 Similarly, with 
 $D'=[-56n,56n]\times[-56n,-2n]$
 and
 $$R_-^*:=[-56n-\tfrac12,56n+\tfrac12]\times[-3n+\tfrac12,-2n-\tfrac12],$$
 we find
  \begin{equation}
    \label{eq:12}
    \P[\Lambda_{56n}][0]{\mathcal
      C_h^*\left(R^+_-\right) \Big| \mathcal
      A_n(z_+) \cap  \mathcal A_n(z_-)}\geq c_2.
  \end{equation}
  Define the event $\mathcal B_n$, illustrated on Fig.~\ref{fig:eventsBH}, which is the intersection of the events
$\mathcal A_n(z_+)$, $\mathcal A_n(z_-)$, $\calC_h^*\left(R_+^*\right)$ and $\calC_h^*\left(R_-^*\right)$.  
 Equations \eqref{eq:10}, \eqref{eq:11} and \eqref{eq:12} lead to the
estimate
  \begin{equation}
    \label{eq:13}
    \P[\Lambda_{56n}][0]{\mathcal B_n}\geq c_3\P[\Lambda_{56n}][0]{\mathcal A _{7n}},
  \end{equation}
  where $c_3>0$ is a positive constant independent of $n$. 
  \begin{figure}[htbp]
    \centering
    \includegraphics[width=1.00\textwidth]{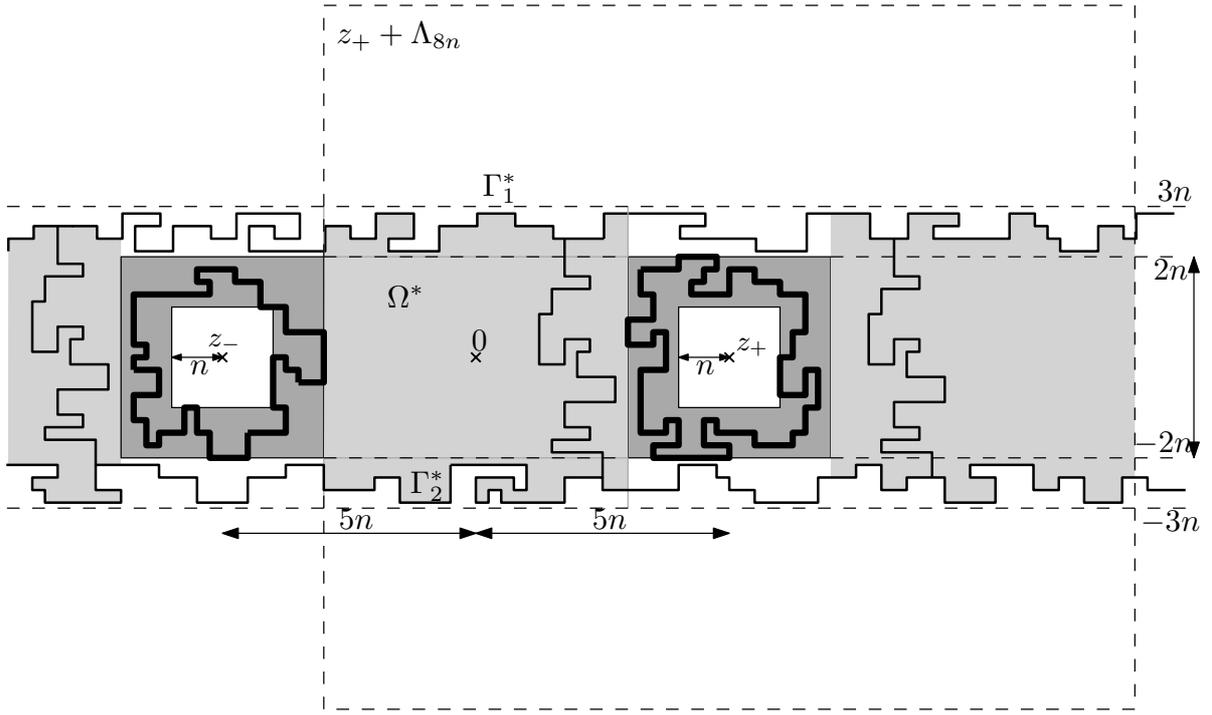}
    \caption{Primal open crossings are in bold, dual-open are in plain. The events $\mathcal A_n(z_+)$, $\mathcal A_n(z_-)$ and the existence of the dual horizontal crossings of $R_+^*$ and $R_-^*$ form $\mathcal B_n$. Conditionally on $\calB_n$, $\Gamma_1^*$ and $\Gamma_2^*$ are connected in $\Omega^*$ by a dual-open path with probability larger than $1/(1+q^2)$.}
    \label{fig:eventsBH}
  \end{figure}

  Assume $\mathcal B_n$ occurs and define $\Gamma_1^*$ to be the top-most horizontal dual-crossing of
  $R_+^*$ and $\Gamma_2^*$ to be the bottom-most horizontal dual-crossing of
  $R_-^*$. Note that these paths are dual paths. 
  Let $\Omega^*$ be the set of dual-vertices in $R^*:=[-3n+\tfrac12,3n-\tfrac12]^2$ below $\Gamma_1^*$
  and above $\Gamma_2^*$. Exactly as in the proof of
  Lemma~\ref{lem:stripRSW}, when conditioning on $\Gamma_1^*$, $\Gamma_2^*$
  and everything outside $\Omega^*$, the boundary conditions inside
  $\Omega^*$ are dual-wired on $\Gamma_1^*$ and $\Gamma_2^*$, and dual-free elsewhere. The dual measure inside $\Omega^*$ therefore dominates the restriction to $\Omega^*$ of the dual measure on $R^*$ with dual-wired boundary conditions on the top and bottom, and dual-free boundary conditions on the left and right sides. 
  Using \eqref{crossing square mixed}, we find
    \begin{equation*}
    \P[\Lambda_{56n}][0]{\calC_n\:\big|\:\mathcal
      B_n}\geq \frac1{1+q^2},
  \end{equation*}
  where $\calC_n=\{\Gamma_1\lr[*]\Gamma_2\text{ in }{R^*}\}$. Similar inequalities hold for the events
  \begin{align*}\calD_n&=\big\{\Gamma_1\lr[*]\Gamma_2\text{ in }{(-10n,0)+R^*}\big\},\\
  \calE_n&=\big\{\Gamma_1\lr[*]\Gamma_2\text{ in }{(10n,0)+R^*}\big\}.
  \end{align*}The FKG inequality thus implies
       \begin{equation}
    \label{eq:14}
    \P[\Lambda_{56n}][0]{\calC_n\cap\calD_n\cap\calE_n\:\big|\:\mathcal
      B_n}\geq \frac1{(1+q^2)^3}
  \end{equation}
   which, together with \eqref{eq:13}, leads to 
  \begin{equation}
    \label{eq:15}
    \P[\Lambda_{56n}][0] {\mathcal B_n \cap  \calC_n\cap\calD_n\cap\calE_n}\geq \frac{c_3}{(1+q^2)^3}  \P[\Lambda_{56n}][0]{\mathcal A _{7n}}.
  \end{equation}
  The event estimated in \eqref{eq:15} implies in particular
  the existence of dual circuits in $z_++\Lambda_{8n}^*$ and
  $z_-+\Lambda_{8n}^* $ disconnecting $z_++\Lambda_{2n}^*$ from
  $z_-+\Lambda_{2n}^*$. Writing $\mathcal F_n$ for the event that
  such dual circuits exist and using the comparison between boundary
  conditions one last time (more precisely a ``conditioning on the exterior-most circuit''-type argument), we obtain
  \begin{align*}
    \label{eq:16}
   \P[\Lambda_{8n}][0] {\mathcal A_n}^2&= \P[z_-+\Lambda_{8n}][0] {\mathcal A_n(z_-)}
    \P[z_++\Lambda_{8n}][0] {\mathcal A_n(z_+)}\\
    &\ge
    \P[\Lambda_{56n}][0] {\mathcal A_n(z_-) \: | \:  \mathcal A_n(z_+) \cap
      \mathcal F_n} \P[\Lambda_{56n}][0] {\mathcal A_n(z_+) \: | \:
      \mathcal F_n} \P[\Lambda_{56n}][0]{\mathcal F_n}\\
   &=  \P[\Lambda_{56n}][0] {\mathcal A_n(z_-)\cap\mathcal A_n(z_+) \cap
      \mathcal F_n }\\
      &\geq \frac {c_3}{(1+q^2)^3} \P[\Lambda_{56n}][0]{\mathcal A _{7n}}.  
  \end{align*}
This inequality implies the claim.
\end{proof}

We need a last lemma before being able to prove Proposition~\ref{prop:dualityP_n}.
 \begin{lemma}\label{exterior circuit}
Let  $1\le k\le n$, 
$$\P[\Lambda_n][0]{0\longleftrightarrow\partial\Lambda_k}\le \sum_{m\ge k}72m^4\max_{\substack{a\in\{0\}\times[0,m]\\ b\in\{m\}\times[0,m]}}\P[{[0,m]^2}][0]{a\longleftrightarrow b}.$$
\end{lemma}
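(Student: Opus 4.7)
The plan is to decompose $\{0\longleftrightarrow\partial\Lambda_k\}$ according to the $\ell^\infty$-diameter of the cluster $C(0)$ of the origin, reduce each scale by a geometric union bound to a two-point connection event in an $m\times m$ box, and then match the right-hand side of the statement via a random-cluster-specific \emph{cluster-in-a-box} inequality.

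First, I would observe that on $\{0\longleftrightarrow\partial\Lambda_k\}$ the $\ell^\infty$-diameter $D$ of $C(0)$ satisfies $D\ge k$, so $\P[\Lambda_n][0]{0\longleftrightarrow\partial\Lambda_k}\le\sum_{m\ge k}\P[\Lambda_n][0]{D=m}$. On $\{D=m\}$ the minimal axis-aligned bounding box of $C(0)$ has $m$ as one of its two side lengths; up to a factor $2$ for orientation I may assume the diameter is realized horizontally by a pair $u,v\in C(0)$ with $u_1=\min_{x\in C(0)}x_1$ and $v_1=u_1+m$. Embed the bounding box into the canonical square $B=[l_1,l_1+m]\times[l_2,l_2+m]$ with $l_1=u_1$ and $l_2=\min_{x\in C(0)}x_2$. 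Since $0\in C(0)\subseteq B$, the corner $(l_1,l_2)$ lies in $[-m,0]^2$, giving at most $(m+1)^2$ choices for $B$, and $(u_2,v_2)\in[l_2,l_2+m]^2$ gives another $(m+1)^2$ choices; a union bound over the resulting $\le 2(m+1)^4\le 72m^4$ configurations yields
\[
\P[\Lambda_n][0]{D=m}\le 72\,m^4\max_{B,u,v}\P[\Lambda_n][0]{u\longleftrightarrow v,\ C(u)\subseteq V_B}.
\]
Translating $B$ by $(-l_1,-l_2)$ sends $(u,v)$ to a pair $(a,b)$ with $a\in\{0\}\times[0,m]$ and $b\in\{m\}\times[0,m]$, and the vertical-extremity case follows by lattice symmetry; hence summing over $m\ge k$ gives the statement as soon as the inner maximum is bounded by $\P[{[0,m]^2}][0]{a\longleftrightarrow b}$.

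The crux is then the \emph{cluster-in-a-box} inequality $\P[\Lambda_n][0]{u\longleftrightarrow v,\ C(u)\subseteq V_B}\le\P[B][0]{u\longleftrightarrow v}$. I would prove it by partitioning both sides according to the realized cluster $C\subseteq V_B$ containing $u$ and $v$. The random-cluster formula gives
\[
\P[\Lambda_n][0]{C(u)=C}=\frac{q\,S_C\,(1-p)^{|\partial_{\Lambda_n}C|}\,Z^0_{\Lambda_n\setminus V_C}}{Z^0_{\Lambda_n}},
\]
with $S_C$ the partition function of configurations on $E_C$ making $V_C$ connected, and an analogous expression on $B$. A direct algebraic manipulation shows that the ratio of these two probabilities equals the ratio of ``isolation'' probabilities $\P[\Lambda_n][0]{V_C\text{ isolated in }\Lambda_n}/\P[B][0]{V_C\text{ isolated in }B}$, where $V_C$ is \emph{isolated} when every edge incident to $V_C$ is closed. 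Now $\{V_C\text{ isolated in }\Lambda_n\}\subseteq\{V_C\text{ isolated in }B\}$ (fewer edges to close), and the latter is a decreasing $E_B$-measurable event, so the stochastic dominance $\P[\Lambda_n][0]|_{E_B}\geq_{\rm st}\P[B][0]$ (valid for $q\ge 1$ by comparison between boundary conditions applied with arbitrary $\xi\ge 0$ inherited from the outside) yields $\P[\Lambda_n][0]{V_C\text{ isolated in }B}\le\P[B][0]{V_C\text{ isolated in }B}$. Hence the ratio is $\le 1$ and summing over admissible $C$ gives the key inequality.

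The main obstacle I expect is this cluster-in-a-box inequality: the algebraic reduction to the ratio of isolation probabilities is bookkeeping-heavy (tracking $E_C$, $\partial_{\Lambda_n}C$, and $\partial_BC$ carefully for clusters touching $\partial V_B$), and the final monotonicity step exploits $q\ge 1$ in an essential way; the rest of the argument (diameter decomposition, counting, and translation to $[0,m]^2$) is a routine geometric union bound.
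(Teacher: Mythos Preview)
Your argument is correct and shares the paper's overall architecture: decompose by the $\ell^\infty$-diameter $m$ of $C(0)$, union-bound over $O(m^4)$ choices of bounding square and endpoint positions, and reduce each term to a two-point connection in $[0,m]^2$ with free boundary conditions. The one substantive difference lies in how the cluster-in-a-box inequality $\P[\Lambda_n][0]{u\leftrightarrow v,\ C(u)\subseteq V_B}\le\P[B][0]{u\leftrightarrow v}$ is established. The paper argues by conditioning on the \emph{outermost} dual-open circuit in $B^*$ surrounding $u$ and $v$ (such a circuit exists because the cluster is confined to $B$; when $B$ shares sides with $\Lambda_n$, the dual-wired boundary inherited from the free boundary conditions on $\Lambda_n$ supplies the missing arcs); by the Domain Markov property the interior $\overline\gamma$ of this circuit carries free boundary conditions, and monotonicity in the volume gives $\phi^0_{\overline\gamma}[u\leftrightarrow v]\le\phi^0_B[u\leftrightarrow v]$. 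Your partition-function route --- rewriting the ratio $\P[\Lambda_n][0]{C(u)=C}/\P[B][0]{C(u)=C}$ as a ratio of isolation probabilities and then applying the stochastic domination $\PP[\Lambda_n][0]|_{E_B}\ge_{\mathrm{st}}\PP[B][0]$ to a decreasing event --- is a valid and more algebraic alternative that makes the use of $q\ge1$ fully explicit, at the cost of heavier bookkeeping than the paper's one-line dual-circuit exploration. One minor point to tighten: with your specific choice $l_2=\min_{x\in C(0)}x_2$, the square $B$ may protrude from $\Lambda_n$, in which case the domination step is not literally applicable; either shift $l_2$ so that $B\subseteq\Lambda_n$ (always possible since $m\le 2n$, without increasing the count), or apply your inequality first with $B\cap\Lambda_n$ and then use $\phi^0_{B\cap\Lambda_n}[u\leftrightarrow v]\le\phi^0_B[u\leftrightarrow v]$.
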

\begin{proof}
For $x=(x_1,x_2)$, define $\|x\|_\infty=\max\{|x_1|,|x_2|\}$.

 Define $\calC$ to be the connected component of the origin. Consider the event that $a$ and $b$ are two vertices in $\calC$ maximizing the $\|\cdot\|_\infty$-distance between each other. Since these vertices are at maximal distance from each other, they can be placed on the two opposite sides of a square box $\Lambda$ in such a way that $\calC$ is included in this box. Let $\calA_{\rm max}(a,b,\Lambda)$ be the event that $a$ and $b$ are connected in $\Lambda$ and that their cluster is contained in $\Lambda$.
 
We now wish to estimate the probability of $\mathcal A_{\rm max}(a,b,\Lambda)$. Let $\Lambda^*$ be the subgraph of $(\bbZ^2)^*$ composed of dual-edges whose end-points correspond to faces touching $\Lambda$. Let $\mathbf{C}$ be the set of dual self-avoiding
circuits $\gamma=\{\gamma_0\sim \gamma_1\sim\dots\sim \gamma_m\sim \gamma_0\}$ included in $\Lambda^*$ surrounding $a$ and $b$. As before, we denote by $\overline{\gamma}$ the interior of $\gamma$. 

On the event $\calC$, there exists $\gamma\in\mathbf{C}$ which is dual-open\footnote{Note that this is true even when $\Lambda=\Lambda_n$ since free boundary conditions can be seen as dual-wired boundary conditions on $\Lambda_n^*$, and that therefore $\partial\Lambda_n^*$ provides us with a dual self-avoiding circuit in $\mathbf{C}$ which is dual-open. A similar reasoning applies when $\Lambda$ only shares some sides with $\Lambda_n$.}, and $a$ and $b$ are connected in $\overline\gamma$. 
As before, we may condition on the outermost dual-open circuit $\Gamma$ in $\mathbf C$. We deduce as in the last proof that
\begin{align*}\phi^\xi_{\Lambda_n}(a\longleftrightarrow b\text{ in }\overline\gamma\,|\,\Gamma=\gamma)&=\phi^0_{\overline\gamma}(a\longleftrightarrow b\text{ in }\overline\gamma)\le \phi^0_{\Lambda}(a\longleftrightarrow b\text{ in }\overline\gamma)\le \phi^0_{\Lambda}(a\longleftrightarrow b).\end{align*}
We now partition $\calA_{\rm max}(a,b,\Lambda)$ into the events $\{\Gamma=\gamma\}$ to find
\begin{equation*}\phi^\xi_{\Lambda_n}(\calA_{\rm max}(a,b,\Lambda))\le \phi^0_{\Lambda}(a\longleftrightarrow b)\end{equation*} and therefore 
\begin{equation}\label{eq:17}\phi^\xi_{\Lambda_n}(\calA_{\rm max}(a,b,\Lambda))\le \max_{\substack{a\in\{0\}\times[0,m]\\ b\in\{m\}\times[0,m]}}\P[{[0,m]^2}][0]{a\longleftrightarrow b},\end{equation}
where $m=\|a-b\|_\infty$.

We may now use the fact that if $0$ is connected to distance $k$, there exist $a$ and $b$ at distance $m\ge k$ of each others and a box $\Lambda$ having $a$ and $b$ on opposite sides such that $\calA_{\rm max}(a,b,\Lambda)$ occurs. Let us bound the number of choices for $a$, $b$ and $\Lambda$. 

For a fixed $m\ge k$, there are $|\Lambda_m|=(2m+1)^2$ choices for $a$ (since $a$ must be at distance smaller or equal to $m$ from the origin). Then $a$ must be on the boundary of $\Lambda$ and there are therefore $|\partial\Lambda|=4m$ choices for $\Lambda$. The number of choices  for $b$ is bounded by $m+1$ (it must be on the opposite sides of $\Lambda$). Therefore, for fixed $m$ we can bound the number of possible triples $(a,b,\Lambda)$ by $4m(2m+1)^2(m+1)\le 72m^4$. We have been very wasteful in the previous reasoning and the bound on this number could be improved greatly but this will be irrelevant for applications. 

Overall, \eqref{eq:17} and a union bound gives
 $$\P[\bbZ^2][0]{0\leftrightarrow\partial\Lambda_k}\le\sum_{m\ge k}72m^4\max_{\substack{a\in\{0\}\times[0,m]\\ b\in\{m\}\times[0,m]}}\P[{[0,m]^2}][0]{a\lr b}.$$
\end{proof}

\begin{proof}[Proof of Proposition~\ref{prop:dualityP_n}] Obviously the cases 1 and 2 cannot occur simultaneously. Suppose that the first case does not occur and let us prove that the second does. 

For all
  $n\geq 1$, set $u_n=C\P[\Lambda_{8n}][0]{\mathcal A _n}$, where $C$ is defined as in Lemma~\ref{lem:induction1}.
  With this notation, Lemma~\ref{lem:induction1} implies that $u_{7n}\le u_n^2$
  for any $n\ge 1$ which implies that for $0\le \ell,k\le n$,
  \begin{equation}
  u_{7^kn_0}\le u_{n_0}^{2^k}\label{eq:18}
\end{equation}
for any positive $k\ge0$ and $n_0\ge 1$. Now, if $\displaystyle
 \liminf_{n\to\infty}\P[\Lambda_{8n}][0]{\mathcal A _n}=0$, then we may pick $n_0$ such that $u_{n_0}<1$. By \eqref{eq:18}, there exists $c_1>0$ such that for all $n$ of the form $n=7^kn_0$,
  \begin{equation}
   \label{eq:19}
   u_n\leq \exp\left (-c_1{n^{\log 2/\log 7}} \right ).
 \end{equation}
Fix $n=7^kn_0$ and consider $\tfrac n{7}\le m< n$. The FKG inequality and the comparison between boundary conditions imply that
\begin{align*}\P[{[0,m]^2}][0]{(0,k)\longleftrightarrow(m,\ell)}&\le \Big(\P[{[-m,m]\times[0,m]}][0]{(-m,\ell)\longleftrightarrow(m,\ell)}\Big)^{1/2}\\
&\le\Big(\P[{\Lambda_{8n}}][0]{\calC_h([-2n,2n]\times[0,m])}\Big)^{1/14}\\
&\le\Big(\P[{\Lambda_{8n}}][0]{\calA_n}\Big)^{1/56}\le \exp\left (-c_2{n^{\log 2/\log 7}} \right ).\end{align*}
In the first inequality, we used that if $(0,k)\longleftrightarrow(m,\ell)$ and $(-m,\ell)\longleftrightarrow(0,k)$, then $(-m,\ell)\longleftrightarrow(m,\ell)$. In the second inequality, we have used that if $(x,\ell)\longleftrightarrow(x+2m,\ell)$ occur for $x=2mj$ with $j\in\{-7,\dots,7\}$, then $\calC_h([-2n,2n]\times[0,m])$ occurs. Finally in the third inequality we combined four crossings as in the proof of {\bf P5}$\Rightarrow${\bf P5a}. Lemma~\ref{exterior circuit} implies the claim.
 \end{proof}
 \newcommand{\Pl}[2][m,2n]{\P[#1][\theta]{#2}}

Theorem~\ref{thm:main} follows directly from Proposition~\ref{prop:dualityP_n} and the following proposition:

\begin{proposition}\label{prop:dualityQ_n}
  If there exists $\alpha>0$ such that for all $n\ge 1$, $$\P[\bbZ^2][0]{0\longleftrightarrow \partial\Lambda_n}\le \exp(-n^{\alpha}),$$ then there exists $c>0$ such that for all $n
    \geq 1$, $$\P[\bbZ^2][0]{0\longleftrightarrow \partial\Lambda_n}\leq \exp(-cn).$$ 
\end{proposition}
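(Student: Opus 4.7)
The plan is to upgrade the stretched-exponential hypothesis into genuine exponential decay via a Simon--Lieb-type recursion, which the stretched bound allows us to close at a single, well-chosen scale. The heuristic is that stretched-exponential decay is already strong enough to force the susceptibility $\chi^{0}=\sum_{x}\P[\bbZ^2][0]{0\lr x}$ to be finite, and once $\chi^{0}<\infty$ one expects a Menshikov--Aizenman--Barsky-style argument to yield exponential decay even directly at $p=p_{c}$.

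First I would transfer the hypothesis to the two-point function: since $\{0\lr x\}\subset\{0\lr \partial\Lambda_{|x|_{\infty}}\}$, translation invariance gives
\[
\P[\bbZ^2][0]{0\lr x}\le \exp(-|x|_{\infty}^{\alpha}),
\]
and the same inequality holds in finite volume $\Lambda_{k}$ with the free measure by the monotonicity $\P[\Lambda_{k}][0]{A}\le \P[\bbZ^2][0]{A}$ for increasing events $A$ depending on edges in $\Lambda_{k}$. Fixing a small $\eta\in(0,1)$, one can then choose $k_{0}$ large enough so that
\[
\sum_{y\in\partial\Lambda_{k_{0}}}\P[\Lambda_{k_{0}}][0]{0\lr y}\le 8k_{0}\exp(-k_{0}^{\alpha})\le \eta,
\]
using that the boundary $\partial\Lambda_{k_{0}}$ has only $8k_{0}$ vertices.

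The heart of the proof is to establish a Simon--Lieb-type recursion of the form
\[
\P[\bbZ^2][0]{0\lr x}\le \sum_{y\in\partial\Lambda_{k_{0}}}\P[\Lambda_{k_{0}}][0]{0\lr y}\,\P[\bbZ^2][0]{y\lr x},\qquad x\notin\Lambda_{k_{0}}.
\]
Once this is in hand, set $f(n):=\max_{|x|_{\infty}=n}\P[\bbZ^2][0]{0\lr x}$. Translation invariance and the above inequality give $f(n)\le \eta\, f(n-k_{0})$, which iterates to $f(n)\le \eta^{\lfloor n/k_{0}\rfloor}$. A trivial union bound over the $8n$ vertices of $\partial\Lambda_{n}$ then yields
\[
\P[\bbZ^2][0]{0\lr \partial\Lambda_{n}}\le 8n\,\eta^{\lfloor n/k_{0}\rfloor}\le \exp(-cn),
\]
as desired.

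The main obstacle is clearly the justification of the Simon--Lieb recursion for arbitrary $q\ge 1$. For $q=1$ it is immediate from the BK inequality, and for $q=2$ it is Simon's classical proof for the Ising two-point function; for general $q$ neither tool is available. The natural route, in the spirit of the exterior-circuit techniques already used in the proof of Lemma~\ref{exterior circuit}, is to condition on the cluster $\calC_{0}\cap \Lambda_{k_{0}}$ of the origin restricted to $\Lambda_{k_{0}}$, to apply the Domain Markov Property on the complement, and to use the comparison between boundary conditions to dominate the resulting measure on $\Lambda_{k_{0}}^{c}$ by a free-boundary measure. The outermost dual-open circuit surrounding the revealed cluster serves as the ``separating surface'' that effectively decouples $\{0\lr y\}$ inside $\Lambda_{k_{0}}$ from $\{y\lr x\}$ outside, thereby playing the role of BK in the percolation proof. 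Getting the correct book-keeping (polynomial prefactors from the choice of $y$ on the separating surface, absorbed into the choice of $\eta<1$) is the technical crux of the argument.
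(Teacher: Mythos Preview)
Your Simon--Lieb strategy is natural, but the key inequality
\[
\P[\bbZ^2][0]{0\lr x}\ \le\ \sum_{y\in\partial\Lambda_{k_0}}\P[\Lambda_{k_0}][0]{0\lr y}\,\P[\bbZ^2][0]{y\lr x}
\]
is not known for general $q\ge 1$, and the derivation you sketch has a structural gap, not merely a book-keeping one. When you reveal the cluster $D:=\calC_0\cap\Lambda_{k_0}$ and its closed edge-boundary inside $\Lambda_{k_0}$, the Domain Markov property yields, on the unrevealed edges, a random-cluster measure whose boundary conditions are \emph{wired} on $Y:=D\cap\partial\Lambda_{k_0}$ (all vertices of $Y$ are connected through $D$). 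By comparison of boundary conditions this conditional measure \emph{dominates} the free one, so the conditional probability of $\{y\lr x\}$ is only bounded above by the wired quantity, not by $\P[\bbZ^2][0]{y\lr x}$. There is also no dual-open circuit in $\Lambda_{k_0}^*$ surrounding $D$ once $D$ reaches $\partial\Lambda_{k_0}$, so the ``outermost dual-open circuit surrounding the revealed cluster'' that you invoke does not exist inside $\Lambda_{k_0}$. Bounding instead by the wired measure is useless here: if the conclusion of the proposition holds then {\bf P4} fails, hence {\bf P1} fails by the implications already proved, and $\P[\bbZ^2][1]{y\lr x}$ does not decay.

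The paper circumvents Simon--Lieb entirely and instead \emph{manufactures} a separating dual surface at controlled cost. Lemma~\ref{lem:induction2} uses the stretched-exponential hypothesis at \emph{all} scales (not just one scale $k_0$) to show that any open crossing from $u$ to $v$ across $\Lambda_n$ is, up to negligible probability, confined to a tube of width $n^\ep$ around $[u,v]$. Lemma~\ref{lem:touch} then produces, at only polynomial cost, a dual-open path from just outside the tube to $\partial\Lambda_n$; closing the $O(n^\ep)$ edges across the tube genuinely severs the connection into two pieces, each enclosed by a dual-open contour and therefore each governed by a \emph{free} measure. Combined with Lemma~\ref{exterior circuit}, this gives a recursion of the form $q_{2n}\le C\max_{k+\ell\ge 2n}q_kq_\ell$ for $q_n=e^{n^\beta}\P[\bbZ^2][0]{0\lr\partial\Lambda_n}$ with $\ep<\beta<\alpha$, which iterates to exponential decay. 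The essential point is that the decoupling surface is built by hand (paying $e^{Cn^\ep}$ and polynomial factors) rather than obtained for free from a monotonicity that goes the wrong way.
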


%
We start by a lemma. Let $n\geq 1$ and  $\theta \in [-1,1]$.  Define the {\em tilted strip} in direction $\theta$:
$$S(n,\theta):=\{(x,y)\in\bbZ^2: 0 \leq y-\theta x \leq n\}.$$    Write 
  $\PP[S(n,\theta)][1/0]$ for the random-cluster measure
  on the tilted strip $S(n,\theta)$ with wired boundary conditions on the top side and free on the bottom side (the boundary conditions at infinity are irrelevant since the tilted strip is essentially a one-dimensional graph). 
  
  We will also consider a  truncated version of the tilted strip $S(n,\theta)$. For $m\ge0$, consider the {\em truncated tilted strip} $$S(n,m,\theta):=S(n,\theta)\cap \Lambda_m.
$$
We will always assume that $\theta m\in\N$ (the general case can be treated similarly). Write $\PP[S(n,m,\theta)][1/0]$ for the random-cluster measure
  with free boundary conditions on the bottom side and wired on the other three sides. 
  
For simplicity, we will call the bottom side of the strip or the truncated strip the {\em free arc}, and the rest of the boundary the {\em wired arc}.
   \begin{lemma}\label{lem:touch}
 For all $m\ge n\ge1$ and $\theta\in
  [-1,1]$,
  \begin{equation}
    \label{eq:21}
    \P[S(n,m,\theta)][1/0]{0\longleftrightarrow \mathrm{wired\ arc}}\geq \frac1{5m^2n^2}.
  \end{equation}
\end{lemma}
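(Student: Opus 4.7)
The plan is to prove the lemma by combining planar duality with a finite-energy (insertion tolerance) modification scheme and a polynomial enumeration. By planar duality in the truncated tilted strip $S(n,m,\theta)$ with its mixed $1/0$ boundary conditions, the complement event $\{0 \not\lr \mathrm{wired\ arc}\}$ is equivalent to the existence of a dual-open self-avoiding path $\gamma^*$ in the dual graph running from the dual free arc back to itself and enclosing the origin. On this event, I would select the \emph{outermost} such dual path $\Gamma^*$ (the one closest to the wired arc). By the outermost property, every primal vertex sitting immediately above $\Gamma^*$ is primal-connected to the wired arc through the slab lying between $\Gamma^*$ and the wired arc.

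Next, I would enumerate $\Gamma^*$ by its ``signature'': its two endpoints on the one-dimensional free arc, contributing at most $O(m^2)$ choices, together with a distinguished vertex $x^*$ of $\Gamma^*$ chosen to be closest in lattice distance to the origin, contributing at most $O(mn)$ additional choices (as $x^*$ lies in the truncated strip of area $O(mn)$). For each such signature, I would apply insertion tolerance to flip a bounded bundle of primal edges near $x^*$ so as to create a primal-open path from $0$ to a primal vertex immediately above $\Gamma^*$ adjacent to $x^*$, which is already primal-connected to the wired arc by the outermost property. Every modified configuration lies in $\{0 \lr \mathrm{wired\ arc}\}$, and the map from configurations-in-the-complement to configurations-in-the-target is at most $O(m^2 n^2)$-to-one after enumeration. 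This yields an inequality of the form
\[
\P[S(n,m,\theta)][1/0]{0\lr\mathrm{wired\ arc}} \;\ge\; \frac{c}{m^2 n^2}\,\P[S(n,m,\theta)][1/0]{0\not\lr\mathrm{wired\ arc}},
\]
from which the bound $1/(5m^2n^2)$ follows by absorbing combinatorial constants into the factor $1/5$ and bounding the right-hand probability by $1$.

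The main obstacle is step two: controlling the cost of the bundle of primal edges flipped near $x^*$. Naive insertion tolerance along a straight bridge from $0$ to $x^*$ of length $d\asymp n$ would cost only $\Cit^d$, which is exponentially small and too weak. To avoid this, I would either refine the enumeration to include an intermediate ``anchor'' vertex along the primal cluster of $0$ (absorbing the extra multiplicative factors into the polynomial $m^2 n^2$), or exploit that $0$ itself lies on the free arc and that $\Gamma^*$ terminates on the free arc at both endpoints, which forces $\Gamma^*$ to pass within bounded lattice distance of $0$ on a positive fraction of the event. The precise bookkeeping of this modification — choosing $x^*$ to lie within a bounded window of $0$ at the cost of a polynomial enumeration — is the technical heart of the proof and determines the exponents $m^2 n^2$ appearing in the bound.
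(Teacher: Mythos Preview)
Your approach has a genuine gap that you yourself flag but do not resolve. On the event $\{0\not\lr\text{wired arc}\}$, the outermost dual barrier $\Gamma^*$ can lie at distance of order $\min(m,n)$ from the origin: its two endpoints on the dual free arc may sit anywhere along a segment of length $\asymp m$, and the arc itself may stay at height $\asymp n$. Neither of your proposed fixes closes this. The ``anchor vertex along the primal cluster of $0$'' idea fails because nothing prevents the cluster of $0$ from being the singleton $\{0\}$; there is then no anchor, and you are back to bridging a macroscopic distance by insertion tolerance at exponential cost. The second claim --- that $\Gamma^*$ passes within bounded lattice distance of $0$ on a positive fraction of the event --- is precisely the kind of statement the lemma is meant to provide, and it is not true a priori: nothing in the complement event forces the dual barrier to dip near the origin. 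Your enumeration bookkeeping $O(m^2n^2)$ is plausible, but it cannot compensate for the factor $\Cit^{\,\text{dist}(0,\Gamma^*)}$, which is typically $\exp(-c\min(m,n))$ rather than polynomial.

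The paper's argument avoids insertion tolerance entirely and is conceptually quite different. It works in the \emph{infinite} tilted strip $S(2n,\theta)$ of double height and studies the random ``interface height'' $F(k)=\min\{\ell:(k,\ell)\text{ is connected to the top}\}-\theta k$. By duality, $F(0)\le n$ with probability at least $\tfrac12$. A pigeonhole argument on the finitely many values of $F$ then shows that on this event there must exist a horizontal position $k$ with $|k|\le nm^2$ at which $F$ attains a local minimum over the window $[k-m,k+m]$. At that position the vertex $x=(k,F(k))$ is connected to the wired arc of its own translated truncated strip $S_x(m)$ and is shielded from below by a dual-open path inside $S_x(m)$. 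A union bound over the $\asymp n\cdot nm^2$ possible locations of $x$ gives $\P[S][1/0]{\mathcal A(x)}\ge 1/(5n^2m^2)$ for some $x$, and conditioning on the lowest such dual shield plus translation invariance transfers this to $\P[S(n,m,\theta)][1/0]{0\lr\text{wired arc}}$. The key idea you are missing is this local-minimum-of-the-interface trick, which manufactures a point where the desired event holds \emph{deterministically} rather than by edge surgery.
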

\begin{proof}
  Fix $n\ge0$ and $\theta\in[-1,1]$. Let us work in the strip $S(2n,\theta)$. From now on, we drop the dependence in $n$ and $\theta$ and write for instance $S=S(n,\theta)$ and $S(m)=S(2n,m,\theta)$. Beware that there is a slightly confusing notation here: the height of the strip is $2n$ while the one of the truncated strip is $n$. 
  
  For $x\in S$, define the translate $S_{x}(m):=x+S(m)$ of $S(m)$. We extend the definition of wired and free arcs to this context. Let $\mathcal A(x)$ be the event that $x$ is connected to
  the wired arc of $S_{x}(m)$ and every open path from a vertex $y\notin S_x(m)$ to $x$ intersects the wired arc (of $S_x(m)$). In other words, no open path starting from $x$ ``exits'' $S_x(m)$ through the free arc (i.e. the bottom side).
  
    We consider the random function $F:\N \longrightarrow [0,2n]$ defined by 
  \begin{equation}
    \label{eq:22}
    F(k):=\min\{\ell : (k,\ell) \text{ is connected to the top side of $S$} \} -\theta k.
  \end{equation}
  Recall that $\theta m\in \N$. Therefore, $F$ can take only the $2nm+1$ following values: 
  $$\big\{0,\tfrac1m,\dots,\tfrac{2nm-1}{m},2n\big\}.$$ 
  On the event $\{F(0)\leq n \}$, there must exist
  $k\in\{-nm^2,\dots,nm^2\}$ such that $F(k)\le n$ and $F(k')\ge F(k)$
  for every $|k'-k|\le m$. Otherwise, if there is no such $k$, then there exists a sequence $0=k_0,\dots,k_{nm}$ with $|k_{i+1}-k_i|\le m$ and $0<F(k_{i+1})<F(k_i)$. But this provides $nm+1$ distinct values for $F$, all smaller or equal to $n$ and strictly larger than 0, which is contradictory.

 Now, for $k$ satisfying $F(k)\le n$ and $F(k')\ge F(k)$ for every $|k'-k|\le m$, the event $\calA((k,F(k)))$ is realized. In conclusion, if $F(0)\le n$, then there exists $x\in S(n,nm^2,\theta)$ such that $\calA(x)$ is realized and the union bound shows the existence of $x\in S(n,\theta)$ (the lower half of $S$) such that
 \begin{align*}
  \P[S][1/0]{\mathcal A(x)}\ge \frac{\P[S][1/0]{F(0)\le n}}{|S(n,nm^2,\theta)|}=\frac{\P[S][1/0]{F(0)\le n}}{n(2nm^2+1)}.
\end{align*} 
Consider the interface between the open cluster connected to the top side of the box and the dual-open cluster dual-connected to the bottom side. By duality, this interface intersects $\{0\}\times[0,n]$ with probability larger or equal to 1/2. Thus, $\P[S][1/0]{F(0)\leq n}\geq
\frac12 $ and therefore
 $$\P[S][1/0]{\mathcal A(x)}\ge \frac1{5n^2m^2}.$$
 In order to conclude, we simply need to prove that
\begin{equation}\label{eq:23}\P[S(m)][1/0]{0\longleftrightarrow \text{wired arc}}\ge \P[S][1/0]{\mathcal
    A(x)}.\end{equation}
\begin{figure}
  \centering
  \includegraphics[width=0.60\textwidth]{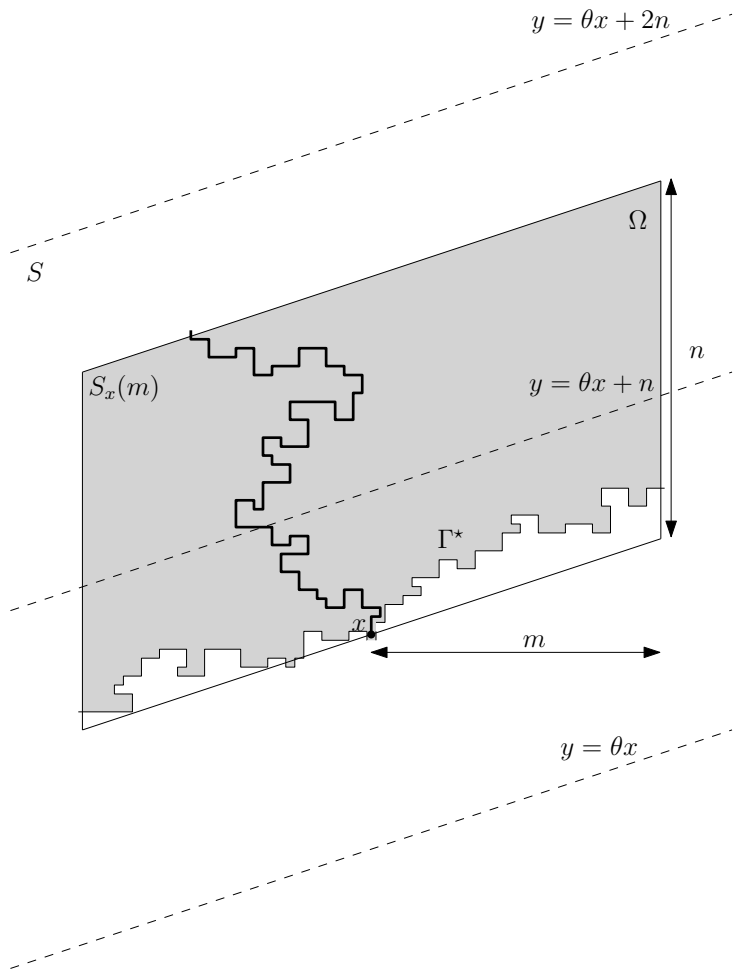}
  \caption{The event $\mathcal A(x)$. The bottom-most  dual-path $\Gamma^*$. }
  \label{fig:lowestPoint}
\end{figure}
First, observe that since $x$ is contained in the bottom half $S(n,\theta)$ of $S$, the set $S_x(m)$ is entirely included in $S$. Second, since there is no open path containing $x$ and exiting $S_x(m)$ by the free arc, there
exists a lowest dual-open path in $S_x(m)^*$, denoted $\Gamma^*$, preventing the existence of such a path, see Fig.~\ref{fig:lowestPoint}. Let $\Omega$ be the set of
vertices of $S_x(m)$ above $\Gamma^*$.  The boundary conditions on $\Omega$ are dominated by free boundary conditions on the bottom side of $S_x(m)$ and wired on the three other sides of $S_x(m)$. If $A(x)$ occurs, then conditionally on $\Gamma^*$,
$x$ is connected to the wired arc of $S_x(m)$ by an open path contained in $\Omega$.
Thus,
\begin{align*}\P[S][1/0]{x\longleftrightarrow \text{wired arc of }S_x(m)|\Gamma^*}&\le \P[S_x(m)][1/0]{x\longleftrightarrow \text{wired arc of }S_x(m)}\\
&=\P[S(m)][1/0]{0\longleftrightarrow \text{wired arc of }S(m)}.
\end{align*}
We omitted a few lines to get the first inequality since we already mentioned such an argument. The equality follows from invariance under translations. Since the previous bound is uniform in the possible realizations of $\Gamma^*$, 
we deduce
\begin{align*}\P[S][1/0]{\calA(x)}\le \P[S(m)][1/0]{0\longleftrightarrow \text{wired arc of }S(m)}.\end{align*}
The result follows readily.
\end{proof}

The next lemma  will be used recursively in the proof of Proposition~\ref{prop:dualityQ_n}.
\begin{lemma}\label{lem:induction2}
Assume that there exists $\alpha>0$ such that for all $n\ge 1$, $$\P[\bbZ^2][0]{0\longleftrightarrow \partial\Lambda_n}\le \exp(-n^{\alpha}).$$ Then for $\ep>0$ small enough, there exists a constant $C<\infty$  such that for any $n\geq 1$, any $u\in\{-n\}\times[-n,n]$ and any $v\in\{n\}\times[-n,n]$,
  \begin{align}
    \label{eq:24}
    \P[\Lambda_n][0]{u\lr v}\leq  e^{Cn^\ep} &\P[\bbZ^2][0]{0\lr\partial\Lambda_{n}}^2+
    Cn^6\sum_{\substack{k,\ell\ge n^\ep\\k+\ell= 2n}}\P[\bbZ^2][0]{0\lr\partial\Lambda_k}\P[\bbZ^2][0]{0\lr\partial\Lambda_\ell}.\nonumber
  \end{align}
\end{lemma}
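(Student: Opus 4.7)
The strategy is a dichotomy on the vertical extent of the cluster $\calC$ of $u$ in $\Lambda_n$ (which contains $v$ on $\{u\lr v\}$). Let $h^+$ (resp.\ $h^-$) denote the maximum (resp.\ minimum) $y$-coordinate of a vertex of $\calC$, and write $P_k:=\P[\bbZ^2][0]{0\lr\partial\Lambda_k}$ for brevity.

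\medskip
\noindent\textbf{Wide regime.} Assume, without loss of generality, that $h^+\ge\max(u_y,v_y)+n^{\ep}$ (the symmetric case being analogous), and pick any $w=(w_1,w_2)\in\calC$ with $w_2=h^+$. Then $w$ is connected to both $u$ and $v$ inside $\Lambda_n$, and an elementary computation gives
\[ d_\infty(u,w)+d_\infty(v,w)\ge|w_1+n|+|n-w_1|=2n, \qquad \min\bigl(d_\infty(u,w),d_\infty(v,w)\bigr)\ge n^{\ep}. \]
Summing over the $O(n^2)$ positions of $w$ and over the corresponding split $(k,\ell)=(d_\infty(u,w),d_\infty(v,w))$ (so $k+\ell\ge 2n$ and $k,\ell\ge n^\ep$), the goal is the bound
\[ \P[\Lambda_n][0]{u\lr w,\ v\lr w,\ w\text{ is topmost in }\calC}\le C\,P_k\,P_\ell, \]
which feeds the sum term in the statement. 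Decoupling the two arms is the central difficulty since the BK inequality is unavailable for $q>1$. The plan is a surgery exploiting that $w$ is topmost: reveal the leftmost open path $\gamma$ from $u$ to $w$ inside $\calC$ (a cluster-measurable random arc), condition on $\gamma=\gamma_0$, and invoke the Domain Markov property. The event $\{v\lr w\}$ is then a random-cluster event on the subgraph of $\Lambda_n$ lying on one side of $\gamma_0$, with boundary conditions dominated by wired along $\gamma_0$ and free elsewhere; comparison of boundary conditions \eqref{comparison} and the monotonicity $\P[\Lambda_n][0]\le\P[\bbZ^2][0]$ for increasing events bound it by $C\,P_\ell$. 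Summing over the possible realizations of $\gamma_0$ yields a further factor at most $C\,P_k$. The polynomial factor $n^6$ absorbs the sums over $w$ and the splits, together with the constants from the monotonicity steps.

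\medskip
\noindent\textbf{Narrow regime and main obstacle.} If both $h^+<\max(u_y,v_y)+n^\ep$ and $h^->\min(u_y,v_y)-n^\ep$, then $\calC$ is confined to a tilted slab of width at most $2n^\ep$ around the segment $[u,v]$. This regime yields the first term $e^{Cn^\ep}P_n^2$. The plan is to apply Lemma~\ref{lem:touch} (with $m=O(n^\ep)$) on $O(1)$ truncated tilted strips covering the slab, so as to convert any crossing $\{u\lr v\}$ inside the slab into two half-arms of length $\sim n$ rooted at $u$ and $v$ respectively; each application of the lemma pays a multiplicative overhead $O(m^2n^2)=n^{O(\ep)}$, and a union bound over the $O(n^\ep)$ admissible vertical positions within the slab aggregates to a total cost at most $e^{Cn^\ep}$ (for $\ep$ small enough). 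The dominant difficulty of the argument is the decoupling step in the wide regime, which genuinely relies on the topmost-vertex condition to let the leftmost-arm exploration isolate the two arms; the narrow regime is comparatively routine, being dispatched by Lemma~\ref{lem:touch} together with the thinness of the slab.
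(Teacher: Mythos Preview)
Your dichotomy (narrow slab vs.\ the cluster protruding by at least $n^\ep$) matches the paper's split into $\mathcal E$ vs.\ $\mathcal E_\pm$, and the arithmetic $d_\infty(u,w)+d_\infty(v,w)\ge 2n$ with both distances $\ge n^\ep$ is correct. But the decoupling you propose in the wide regime does not work.

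\medskip
\textbf{The gap.} You condition on the leftmost open path $\gamma_0$ from $u$ to the topmost vertex $w$ and claim that, on the region to one side of $\gamma_0$, the event $\{v\lr w\}$ is bounded by $C\,P_\ell$. But once you condition on $\gamma_0$, the boundary conditions along $\gamma_0$ are \emph{wired}, and $w\in\gamma_0$; hence in that region the event $\{v\lr w\}$ is the same as $\{v\lr\gamma_0\}$. Nothing prevents $\gamma_0$ from passing within distance $O(1)$ of $v$ (for instance when $w_1>0$, the path must cross the whole box horizontally before reaching $w$), so $\{v\lr\gamma_0\}$ is not an arm event of length $\ell$ and cannot be bounded by $P_\ell$. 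More generally, exploring a primal open path in a random-cluster model with $q>1$ does not decouple its two sides; you need a \emph{dual} barrier for that.

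\medskip
\textbf{What the paper does instead.} In the wide regime the paper picks $z$ on the \emph{lowest} crossing $\Gamma^-$ at maximal height above the segment $[u,v]$, and then applies Lemma~\ref{lem:touch} to the region \emph{above} $\Gamma^-$ (where the boundary conditions are wired on $\Gamma^-$ and free on $\partial\Lambda_n$) to produce, with probability at least $1/(5(2n)^4)$, a dual-open arm from $z+(\tfrac12,\tfrac12)$ to $\partial\Lambda_n^*$. One then closes the four edges around $z$ (paying only a constant via insertion tolerance). Combined with the dual arm and the fact that $z$ lies on the lowest crossing, this manufactures a dual circuit separating the cluster of $u$ from the cluster of $v$; conditioning on the outermost such circuit gives genuine free boundary conditions inside and yields the product $P_k\,P_\ell$. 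This dual-arm-plus-surgery step is the missing idea in your argument.

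\medskip
\textbf{Narrow regime.} Your sketch here is closer in spirit but the bookkeeping is off. Lemma~\ref{lem:touch} is applied with height $O(n^\ep)$ and horizontal extent $O(n)$, so each application costs a polynomial in $n$, not $n^{O(\ep)}$; the factor $e^{Cn^\ep}$ in the paper does \emph{not} come from accumulating these polynomial costs, but from closing the $O(n^\ep)$ edges of a vertical slit through the slab via insertion tolerance ($\Cit^{O(n^\ep)}$). Before doing so, one must also check that the crossing can be taken to lie \emph{inside} the slab $D$; this is where the stretched-exponential hypothesis is actually used (to rule out a dual cluster of diameter $\ge n^\ep$ between $\Gamma^-$ and $\Gamma^+$). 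Lemma~\ref{lem:touch} is then invoked twice, above and below $D$, to produce dual arms from the two ends of the slit to $\partial\Lambda_n^*$, completing the separating dual path.
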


\begin{proof}
  \newcommand{\Pn}[1]{\P[\tilde\Lambda_n][0]{#1}}
Fix $\ep>0$. Let us translate the box $\Lambda_n$ in such a way that $u=-v$; the new box is denoted by $\tilde\Lambda_n$. Define the set $$D=\left \{z\in\tilde\Lambda_n:
    \mathrm{d}(z,[u,v])<n^\ep\right\}.$$
  \begin{figure}[htbp]
    \centering
    \includegraphics[width=.495\textwidth]{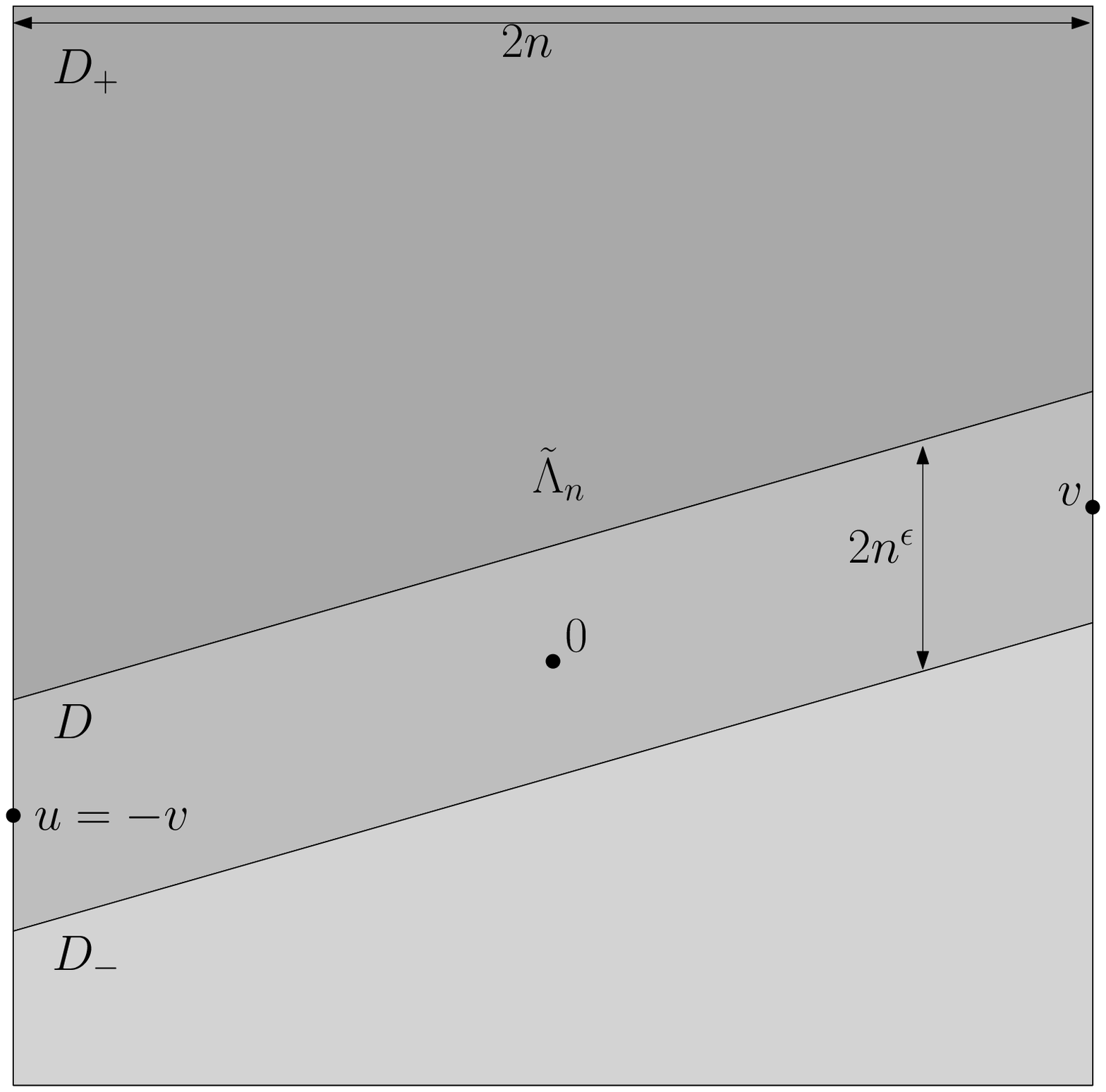}  \includegraphics[width=.495\textwidth]{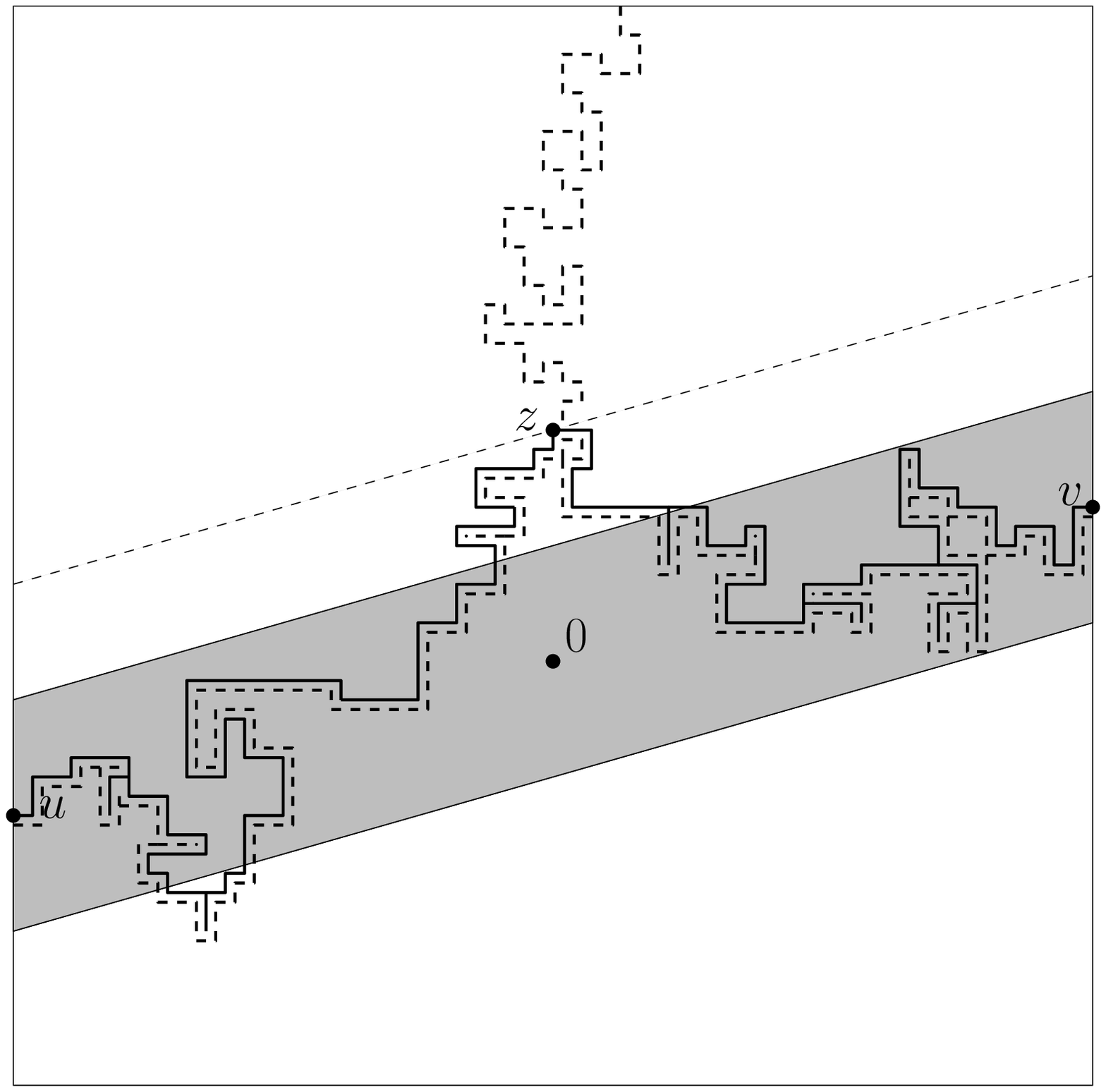}
    \caption{{\bf Left.} The regions $D$, $D_-$ and $D_+$. Note that $0$ is not necessarily at the center of $\tilde\Lambda_n$. {\bf Right.} The situation before closing the edges surrounding $z$ when $\calG_n(z)$ and $\{z+(\frac12,\frac12)\stackrel{*}{\longleftrightarrow}\partial \Lambda_n\}$ occurs. The dual-open paths are depicted in dash lines.}
    \label{fig:partitionLambda}
  \end{figure}
  As illustrated in Fig.~\ref{fig:partitionLambda}, we consider the sets
  $D_-$ and $D_+$ of points $z\in\tilde\Lambda_n$ lying respectively
  below $D$ and above $D$. On $\{u \lr[\tilde\Lambda_n] v\}$, define
  $\Gamma^-$ and $\Gamma^+$ to be respectively the lowest and highest
  open (non-necessarily self-avoiding) paths connecting $u$ to $v$. The event $\{u
  \lr[\tilde\Lambda_n] v\}$ is included in the union of the following three sub-events:
  \begin{align}
     &\mathcal E_-=\{u \lr[\tilde\Lambda_n]v\}\cap\{  \Gamma^-\cap D_+ \neq \emptyset \},\\
     &\mathcal E_+=\{u \lr[\tilde\Lambda_n]v\}\cap\{\Gamma^+\cap D_- \neq \emptyset \},\\
     &\mathcal E=\{u \lr[\tilde\Lambda_n]v\}\cap\{\Gamma^+\subset D_+\cup D \}\cap\{\Gamma^-\subset
      D_-\cup D \}.
  \end{align}
  In the rest of the proof, we will bound separately $\Pn{\mathcal E_-}$ (and therefore $\Pn{\mathcal E_+}$ by symmetry) and  $\Pn{\mathcal E}$, hence the two terms on the right-hand side of the inequality in the statement.
  
 \medbreak\noindent\emph{Estimation of $\Pn{\cal E_-}$.}
     For $z\in D_+\cap \Z^2$, let $\calG_n(z)$ be the event that:
     \begin{itemize}[noitemsep,nolistsep]
     \item $u$ is connected to $v$ in $\tilde \Lambda_n$,
     \item $z \in \Gamma^-$ and $\displaystyle\mathrm d
       (z,[u,v])=\max_{z'\in\Gamma^- \cap D_+} \mathrm d(z',[u,v])$.
     \end{itemize}
     Note that
     \begin{equation}
       \label{eq:27}
       \calE_-=\bigcup_{z\in D_+} \calG_n(z). 
     \end{equation}
      Conditionally on $\Gamma^-$, what is above $\Gamma^-$ follows a random-cluster measure with wired boundary conditions on $\Gamma^-$ and free on $\partial \tilde\Lambda_n$. Thus, by comparison between boundary conditions and Lemma~\ref{lem:touch} (with $m=n$ and $\theta=\frac{v_2-u_2}{v_1-u_1}$, where $u=(u_1,u_2)$ and $v=(v_1,v_2)$), we find that 
     \begin{equation}
       \label{eq:28}
       \Pn{z+(1/2,1/2) \lr[*] \partial B_n^* \: \Big| \: \calG_n(z) }
       \geq  \frac1{5(2n)^4}, 
     \end{equation}
     When both $\mathcal G_n(z)$ and $\{z+(1/2,1/2) \lr[*] \partial
     B_n^*\}$ occur, closing the four dual edges surrounding the vertex
     $z$ disconnects $\Gamma^-$ into two paths separated by dual-open circuits
     (see Fig.~\ref{fig:partitionLambda}).
     The respective end-to-end distances $\ell$ and $k$ of these paths satisfy
     $k+\ell\ge 2n-2$.

     Using the comparison between boundary
     conditions once-again, we find
     \begin{align}
       \label{eq:29}
       \Pn{\calG_n(z)} &\leq 5(2n)^4\, \Pn{\calG_n(z) \cap \{z+(1/2,1/2) \lr[*] \partial B_n^*\}} \\
       &\leq \frac{80}{\Cit^4}n^4 \, \sum_{\substack {k,\ell\ge
           n^{\ep}\\ k+\ell=2
           n-2}}\P[\bbZ^2][0]{u\lr u+\partial\Lambda_k}\P[\bbZ^2][0]{v\lr v+\partial\Lambda_\ell}.
     \end{align}
     The finite energy property is used in the second line to close the edges around $z$.  Summing over all possible $z\in D_+$ gives     \begin{equation*}
       \Pn{\cal E_-}\le \Cl{summing} n^6\sum_{\substack {k,\ell\ge n^{\ep}\\ k+\ell=2 n-2}}\P[\bbZ^2][0]{0\lr\partial\Lambda_k}\P[\bbZ^2][0]{0\lr\partial\Lambda_\ell}.
     \end{equation*}
     The finite energy property once again implies that $\P[\bbZ^2][0]{0\lr\partial\Lambda_{r+1}}\ge \Cit\P[\bbZ^2][0]{0\lr\partial\Lambda_{r}}$ for any $r\ge0$ and thus
       \begin{equation}
       \label{eq:30}
       \Pn{\cal E_-}\le \Cl{summing2} n^{6}\sum_{\substack {k,\ell\ge n^{\ep}\\ k+\ell=2 n}}\P[\bbZ^2][0]{0\lr\partial\Lambda_k}\P[\bbZ^2][0]{0\lr\partial\Lambda_\ell}.
     \end{equation}

 \medbreak\noindent\emph{Estimation of $\Pn{\mathcal E}$.}
    First, we wish to justify that conditionally on the occurrence of
    $\mathcal E$, there exists an open path between $u$ and $v$ which is staying in $D$ with probability close to 1. To see this, remark that any open path between $u$ and $v$ must
    lie in the region $\Omega$ between $\Gamma^-$ and
    $\Gamma^+$ (see Fig.~\ref{fig:construction2}). Furthermore, conditioning on $\Gamma^+$ and $\Gamma^-$, the boundary conditions on $\Omega$ are wired. In particular, the configuration in $\Omega$ dominates the restriction to $\Omega$ of a configuration $\tilde\omega$ sampled according to a random-cluster measure with wired boundary conditions at infinity. Since $\Gamma^+$ and $\Gamma^-$ are already open, $u$ and $v$ are connected in $D$ if there exists an open path in $\tilde\omega$ from left to right in $D$. The complement of this event is included in the event that a dual-vertex of $D^*$ is dual-connected to distance $n^\ep$ of itself in $\tilde\omega$. The probability of this event can thus be bounded by $4n^{1+\ep}\exp(-n^{\alpha\ep})$ thanks to the assumption made on connection probabilities. We deduce
    \begin{equation}
      \label{eq:31}
      \Pn{u\lr[D] v}\geq \big(1-4n^{1+\ep}\exp(-n^{\alpha\ep})\big)\Pn{\cal E}.
    \end{equation}
    Now, consider the set of edges $E$ of $D$ intersecting the line $\{\tfrac12\}\times\bbR$. Also define $w_-$ and $w_+$ to be respectively the highest point of $D_-^*$ and the lowest point of $D_+^*$ with first coordinate equal to $\tfrac12$. Let $\mathcal F$ be the event that
    \begin{itemize}[noitemsep,nolistsep]
    \item all the edges of $E$ are closed, 
    \item $w_-$ and $w_+$ are dual-connected to  $\partial\tilde\Lambda_n^*$ in $D_-^*$ and $D_+^*$ respectively.\end{itemize}
     Consider the event $u\lr[D] v$ and modify the configuration by closing all edges in $E$. The finite energy property implies that
     \begin{align}
       \label{eq:32}
       &\Pn{\calF\cap\{u\lr u+\partial\Lambda_{n}\}\cap\{v\lr v+\partial\Lambda_{n-1}\}}\\
       &\quad\quad\quad\quad\quad \ge \Pn{u\lr[D] v} \times \Cit^{2\sqrt 2n^\ep}\times \left(\frac1{5(2n)^4}\right)^2,\nonumber
     \end{align}
   where the term $\Cit^{2\sqrt 2n^\ep}$ is a uniform lower bound for the
   probability that all edges in $E$ are closed, and $[5(2n)^4)]^{-2}$
   comes from the fact that Lemma~\ref{lem:touch} gives 
   \begin{align*}
   \Pn{w_-\lr[*] \partial\tilde\Lambda_n^*\text{ in }D_-\,\Big|\,u\lr[D] v}&\ge \frac1{5(2n)^4}\quad\text{ and }\\
   \Pn{w_+\lr[*] \partial\tilde\Lambda_n^*\text{ in }D_+\,\Big|\,u\lr[D] v}&\ge \frac1{5(2n)^4}.
   \end{align*}
The event $\cal F$ forces the
   existence of a dual path disconnecting the cluster of $u$ and the
   cluster of $v$ (see Fig.~\ref{fig:construction2}).
   \begin{figure}[htbp]
     \centering
     \includegraphics[width=.495\textwidth]{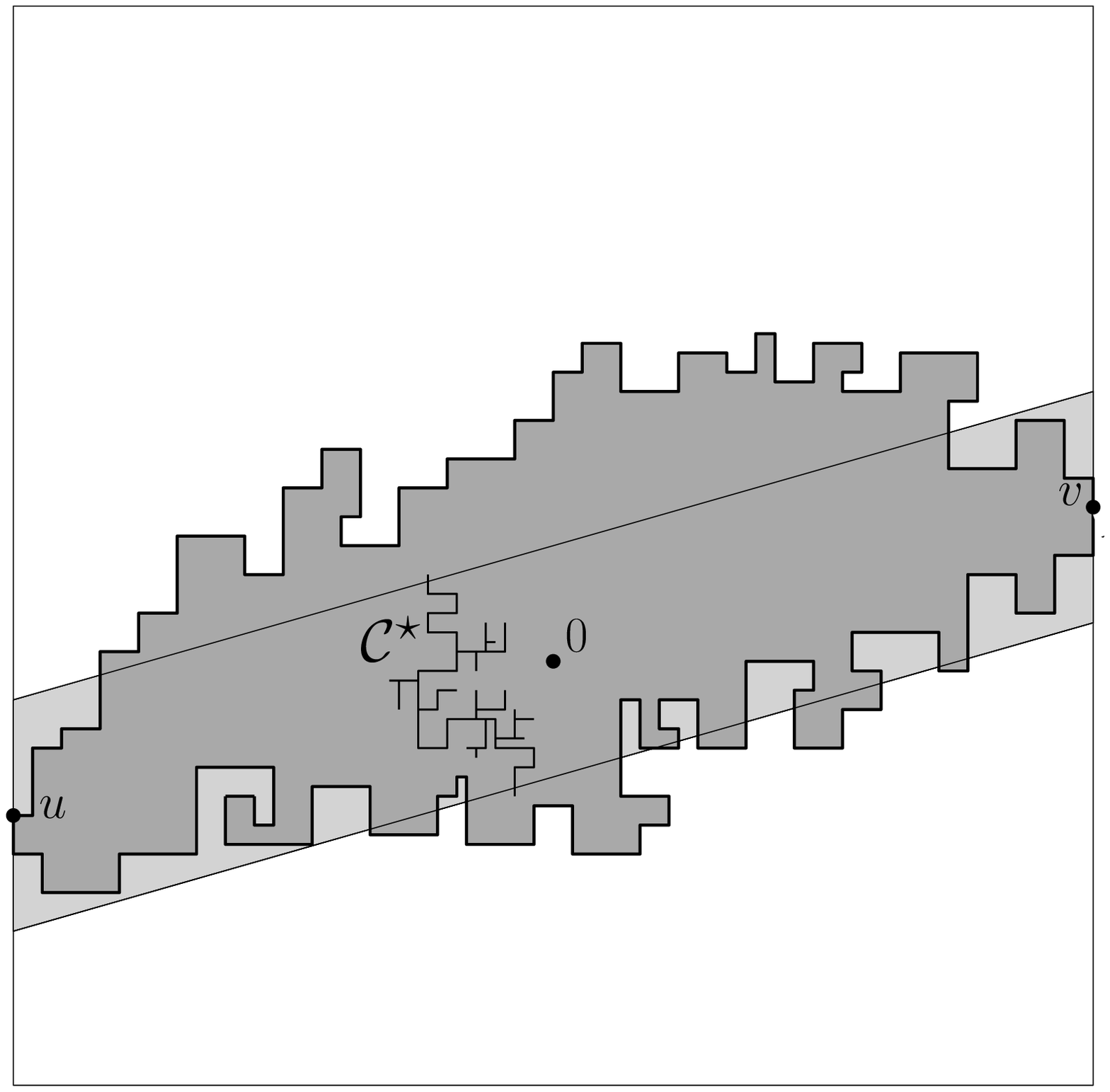} \includegraphics[width=.495\textwidth]{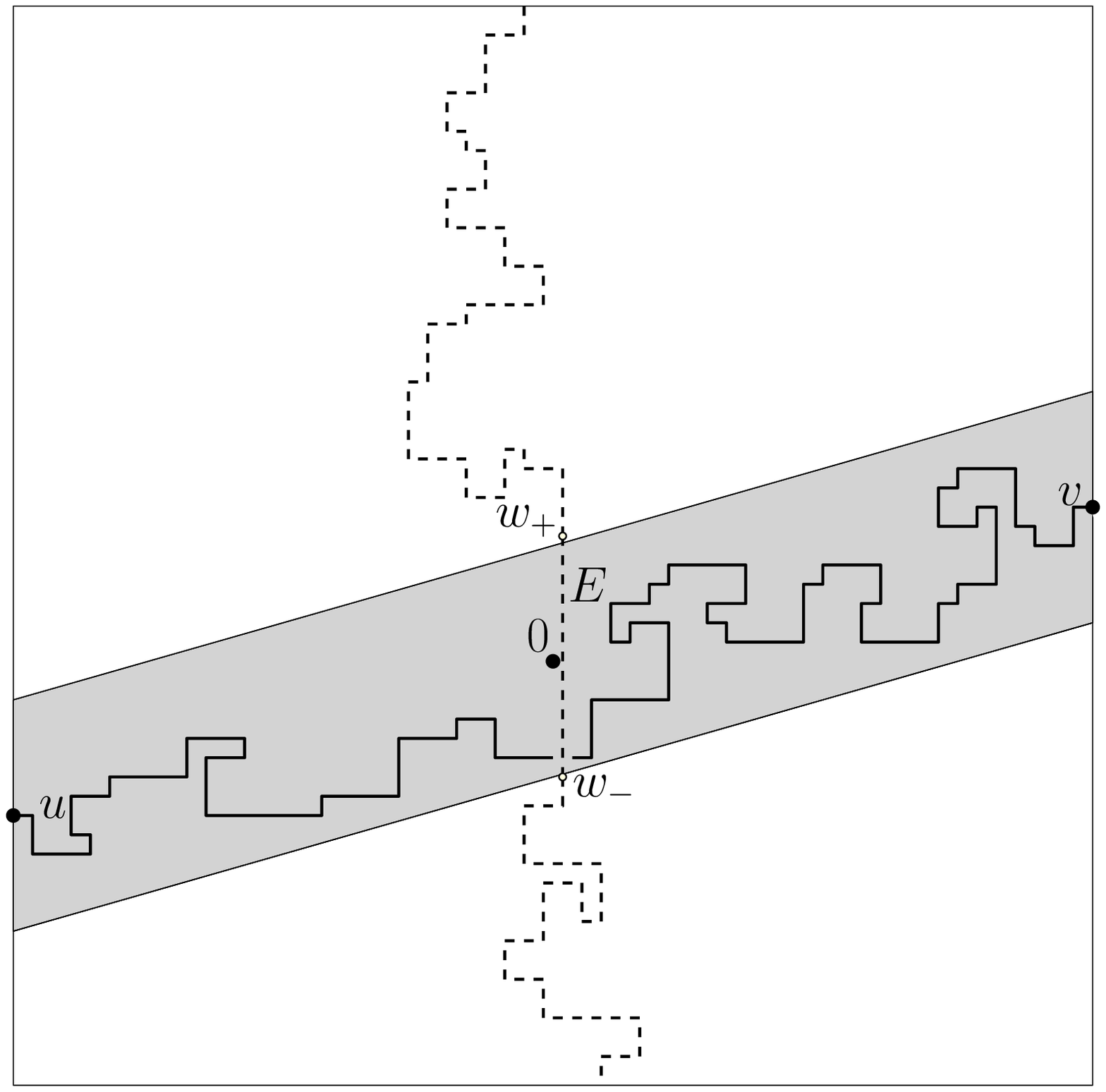}
     \caption{{\bf Left.} The domain $\Omega$ between $\Gamma_-$ and $\Gamma_+$. Inside, a dual cluster $\calC^*$ preventing the existence of an open path from $u$ to $v$ in $D$. Since we assumed that connection probabilities decay as a stretched exponential, this cluster exists with very small probability. {\bf Right.} Splitting the open path from $u$ to $v$ in two pieces.}
     \label{fig:construction2}
   \end{figure}
Conditioning on the cluster of $u$ and its boundary, the boundary conditions in what remains are dominated by free boundary conditions at infinity, and we deduce  that
\begin{equation}
\Pn{\cal E}\le e^{\cl{exp2}n^\ep}\P[\bbZ^2][0]{0\lr\partial\Lambda_{n}}\P[\bbZ^2][0]{0\lr\partial\Lambda_{n-1}}\le \frac{e^{\Cr{exp2}n^\ep}}{\Cit}\P[\bbZ^2][0]{0\lr\partial\Lambda_{n}}^2,
\end{equation}
where once again we used insertion tolerance in the last inequality. The claim follows readily.
 \end{proof}
 
 \begin{remark}
 The previous lemma implies that $\phi_{[0,2n]^2}^0(u\lr v)$ is bounded by the right hand-side of \eqref{eq:24} for any $u$ and $v$ on two opposite sides of $[0,2n]^2$. Let us argue that $\phi_{[0,2n-1]^2}^0(u'\lr v')$ is also bounded by a universal constant $C$ times the right-hand side of \eqref{eq:24} uniformly on $u'$ and $v'$ on opposite sides of $[0,2n-1]^2$. Indeed, the comparison between boundary conditions shows that 
 $$\phi_{[0,2n-1]^2}^0(u'\lr v')\le \phi_{[0,2n]^2}^0\big(u'\lr[{[0,2n-1]^2}] v'\big).$$
 Now let $u$ and $v$ be two neighbors of $u'$ and $v'$ on opposite sides of $[0,2n]^2$. The finite energy property implies that 
 $$\phi_{[0,2n-1]^2}^0(u'\lr v')\le \Cit\phi_{[0,2n]^2}^0(u\lr v)$$
 and we may apply the previous lemma.
  \end{remark}
\begin{proof}[Proof of Proposition~\ref{prop:dualityQ_n}]
Assume that there exists $\alpha>0$ such that 
\begin{equation}\label{eq:assump}\P[\bbZ^2][0]{0\longleftrightarrow\partial\Lambda_n}\le \exp(-n^\alpha)\end{equation}
for any $n\ge 0$.
Fix $\ep<\beta<\alpha$ to be chosen later. Set 
$$q_n=e^{n^\beta}\P[\bbZ^2][0]{0\leftrightarrow\partial\Lambda_{n}}.$$
Lemma~\ref{exterior circuit} applied to $2n$ and Lemma~\ref{lem:induction2} (together with the previous remark) imply that there exists $\Cl{20}>0$ such that
\begin{align*}
q_{2n}&\le e^{(2n)^\beta}\sum_{m\ge n}\Cr{20}m^4\Big(e^{\cl{C4}m^\ep}\phi^0(0\leftrightarrow\partial\Lambda_{m})^2\\
&\quad\quad\quad\quad\quad\quad\quad\quad\quad+\Cr{C4}m^6\sum_{\substack{k,\ell\ge m^\ep\\k+\ell= 2m}}\phi^0(0\lr\partial\Lambda_k)\phi^0(0\lr\partial\Lambda_\ell)\Big)\\
&\le e^{(2n)^\beta}\sum_{m\ge n}\Cr{20}m^4\Big(e^{\Cr{C4}m^\ep}e^{-2m^\beta}q_m^2+\Cr{C4}m^6\sum_{\substack{k,\ell\ge m^\ep\\k+\ell= 2m}}e^{-(k^\beta+\ell^\beta)}q_kq_\ell\Big)\\
&\le \Big(\max_{\substack{k,\ell\ge n^\ep\\k+\ell\ge 2n}}q_kq_\ell\Big)e^{(2n)^\beta}\sum_{m\ge n}\Cr{20}m^4\Big(e^{\Cr{C4}m^\ep}e^{-2m^\beta}+\Cr{C4}m^6\sum_{\substack{k,\ell\ge m^\ep\\ k+\ell= 2m}}e^{-(k^\beta+\ell^\beta)}\Big)\\
&\le \Cr{18}\max_{\substack{k,\ell\ge n^\ep\\k+\ell\ge 2n}}q_kq_\ell,
\end{align*}
where $\Cl{18}<\infty$ is a constant independent of $n$. The existence of $\Cr{18}$ follows from a simple computation using $\ep<\beta$ and the fact that $\beta<1$ and $k,\ell\ge n^\ep$ imply
$$e^{-(k^\beta+\ell^\beta)}\le e^{-(k+\ell)^\beta}e^{-\cl{1900}n^{\ep\beta}}$$
for some constant $\Cr{1900}>0$\  \footnote{Let us make a small remark before proceeding forward with the proof. It was crucial to keep the division in the inequality of Lemma~\ref{lem:induction2} between a term $k=\ell=m$ with a stretched exponential penalty $8m^3e^{\Cr{C4}m^\ep}$, and the general term $k+\ell=2m$, for which we have only a polynomial penalty $8m^3\Cr{C4}m^6$.  If we would have replaced the polynomial bound by a stretched exponential one for every $k$ and $\ell$, the values of $k$ or $\ell$ close to $n^\ep$ would have created difficulties since the correction would have been of the order of the largest of the two terms.}.

Let us now come back to the proof. The finite energy property implies the existence of $\cl{301}>0)$ such that
$\Cr{301}q_{k}\le q_{k+1}\le q_{k}/\Cr{301}$ for any $k\ge0$.
Using this fact, the previous inequality immediately extends to odd integers and there exists $\Cl{19}<\infty$ such that
\begin{align*}
q_{n}\le\Cr{19}\max_{\substack{k,\ell\ge n^\ep\\k+\ell\ge n}}q_kq_\ell.
\end{align*}
We unfortunately need to include the following technical trick. We do not know a priori that $(q_n)$ is decreasing. For this reason, we set $Q_n=\Cr{19}\max\{q_m:m\ge n\}$. For this definition, we get
\begin{align*}
Q_{n}\le\max_{\substack{k,\ell\ge n^\ep\\k+\ell\ge n}}Q_kQ_\ell.
\end{align*}
We are now in a position to conclude. The assumption implies that $(Q_n)$ tends to zero. Pick $n_0$ such that $Q_n<1$ for $n\ge n_0^{\ep}$.
Since $(Q_n)_{n\ge 0}$ is decreasing, the maximum of $Q_kQ_\ell$ is not reached for $k\ge n$ or $\ell\ge n$ and we obtain that for $n\ge n_0$,\begin{align*}
Q_{n}\le\max_{\substack{n>k,\ell\ge n^\ep\\k+\ell\ge n}}Q_kQ_\ell.
\end{align*}
We can now proceed by induction to prove  that for $n\ge n_0$,
\begin{align*}Q_{n}&\le \exp(-\cl{constant1}n)\quad\text{ where }\quad
\Cr{constant1}:=\max_{n_0^\ep\le n\le n_0}-\tfrac1n\log(Q_n)>0.\end{align*}
We therefore conclude that
$$\P[\bbZ^2][0]{0\longleftrightarrow\partial\Lambda_n}\le n\exp(n^\beta)\cdot\frac{1}{\Cr{19}}\exp(-\Cr{constant1}n).$$
\end{proof}

\section{Proof of Theorem~\ref{thm:decide}}
\label{sec:observable}

\subsection{An input coming from the theory of parafermionic observables}

We will use the parafermionic observable. For a complete exposition of the current
knowledge on parafermionic observables and a proof of this statement,
we refer to \cite[Chapter 6]{Dum13}.

\paragraph{Dobrushin domains.} In order to properly state and use the result, we first define the notion of Dobrushin domain. 

Let us start by defining the {\em medial lattice} $(\mathbb Z^2)^\diamond$, which is the graph with the centers of edges of $\mathbb Z^2$ as vertex set, and edges connecting nearest vertices. The vertices and edges of the medial lattice are called medial-vertices and medial-edges. This lattice is a rotated and rescaled (by a factor $1/\sqrt 2$) version of $\mathbb Z^2$. Edges of $(\mathbb Z^2)^\diamond$ are oriented counterclockwise around medial-faces having a vertex of $\bbZ^2$ at their center. Like that, the medial lattice can sometimes be seen as an oriented graph.

Let $a^\diamond$ and $b^\diamond$ be two distinct medial-vertices, and $\partial_{ab}^\diamond=\{v_0\sim v_1\sim \dots\sim v_n\}$,
$\partial_{ba}^\diamond=\{w_0\sim w_1 \sim\dots\sim w_m\}$ two paths of neighboring medial-vertices satisfying the following properties:
\begin{itemize}[nolistsep,nolistsep]
\item The paths start from $a^\diamond$ and end at $b^\diamond$, i.e. $v_0=w_0=a^\diamond$ and $v_n=w_m=b^\diamond$.
\item The paths follow the orientation of the medial lattice.
\item The path $\partial_{ab}^\diamond$ goes counterclockwise (around
  the set enclosed by $\partial_{ab}^\diamond\cup\partial_{ba}^\diamond$), while $\partial_{ba}^\diamond$ goes clockwise.
\item The paths are edge-avoiding.
\item The paths intersect only at $a^\diamond$ and $b^\diamond$.
\end{itemize}
Note that $\partial_{ab}^\diamond\cup\partial_{ba}^\diamond$ is a non self-crossing edge-avoiding polygon. However, some vertices might be visited twice. 

\begin{definition}[medial Dobrushin domains]
Let $\partial_{ab}^\diamond$ and $\partial_{ba}^\diamond$ be two paths as above, and let $\Omega^\diamond$ be the subgraph of $(\bbZ^2)^\diamond$ induced by the
medial-vertices that are enclosed by or in the path $\partial_{ab}^\diamond\cup\partial_{ba}^\diamond$.
Then, $(\Omega^\diamond,a^\diamond,b^\diamond)$ is called a {\em medial Dobrushin domain}. An example is given in Fig.~\ref{fig:medial domain}.
\end{definition}

As it stands, $a^\diamond$ and $b^\diamond$ have three incident medial-edges in $E_{\Omega^\diamond}$. Call $e_a$ and $e_b$ the fourth medial-edges incident to $a^\diamond$ and $b^\diamond$ respectively. 

\begin{figure}
\begin{center}
\includegraphics[width=0.80\textwidth]{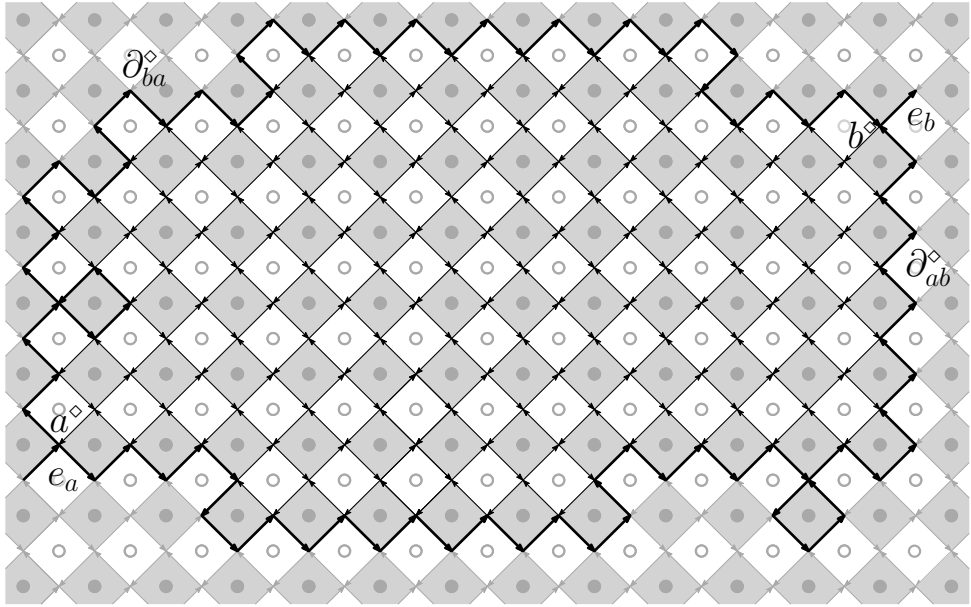}
\end{center}
\caption{\label{fig:medial domain} A medial Dobrushin domain. Note the position of $e_a$ and $e_b$.}
\end{figure}

\begin{definition}[primal and dual Dobrushin domains with two marked points]
Let $(\Omega^\diamond,a^\diamond,b^\diamond)$ be a medial Dobrushin domain. 

Let $\Omega\subset\bbZ^2$ be the graph with edge set composed of edges passing through end-points of medial-edges in $E_{\Omega^\diamond}\setminus\partial_{ab}^\diamond$ (if a medial-vertex is the end-point of a medial-edge in $E_{\Omega^\diamond}\setminus\partial_{ab}^\diamond$ and one in $\partial_{ab}^\diamond$, it is included) and vertex set given by the end-points of these edges. Let $a$ and $b$ be the two vertices of $\Omega$ bordered by $e_a$ and $e_b$. The triplet $(\Omega,a,b)$ is called a {\em primal Dobrushin domain}. We denote by $\partial_{ba}$ the set of edges corresponding to medial-vertices in $\partial\Omega^\diamond$ which are also end-points of medial-edges in $\partial_{ba}^\diamond$, and set $\partial_{ab}=\partial\Omega\setminus\partial_{ba}$.

Let $\Omega^*\subset\bbZ^*$ be the graph with edge set composed of dual-edges passing through medial-edges in $E_{\Omega^\diamond}\setminus \partial_{ba}^\diamond$ and vertex set given by the end-points of these dual-edges. Let $a^*$ and $b^*$ be the two dual-vertices of $\Omega^*$ bordered by $e_a$ and $e_b$. The triplet $(\Omega^*,a^*,b^*)$ is called a {\em dual Dobrushin domain}. We denote by $\partial_{ab}^*$ the set of dual-edges corresponding to medial-vertices in
$\partial\Omega^\diamond$ which are also end-points of medial-edges in $\partial_{ab}^\diamond$, and set $\partial_{ba}^*=\partial\Omega^*\setminus\partial_{ab}^*$.
\end{definition}

For a Dobrushin domain, let us define the Dobrushin boundary conditions on $(\Omega,a,b)$ to be wired on $\partial_{ba}$, and free on $\partial_{ab}$. The random-cluster measure with these boundary conditions and $p=p_c(q)$ is denoted by $\PP[\Omega][a,b]$. We enforce the boundary conditions as follows: we open every edge of $\partial_{ba}$ to create wired boundary conditions on this arc, and every dual-edge of $\partial_{ab}^*$ to create dual-wired boundary conditions on this dual arc (and therefore free boundary conditions on the corresponding primal arc by duality). 

\begin{figure}
\begin{center}
\includegraphics[width=0.80\textwidth]{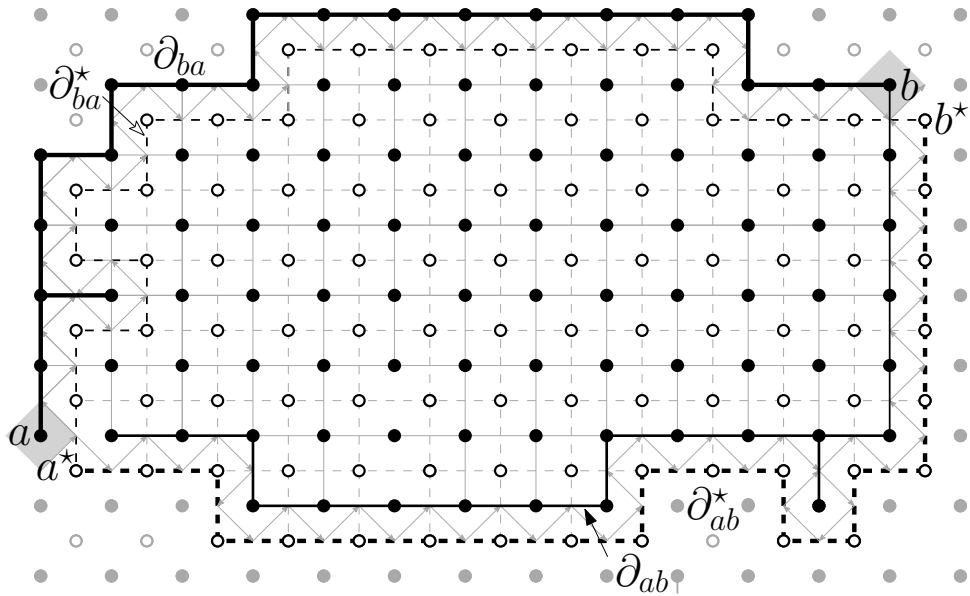}
\end{center}
\caption{\label{fig:primal domain} The primal and dual Dobrushin domains associated to a medial Dobrushin domain. Note the position of $a$, $a^*$, $b$ and $b^*$.}
\end{figure}

\begin{figure}[t]
\begin{center}
\includegraphics[width=0.80\textwidth]{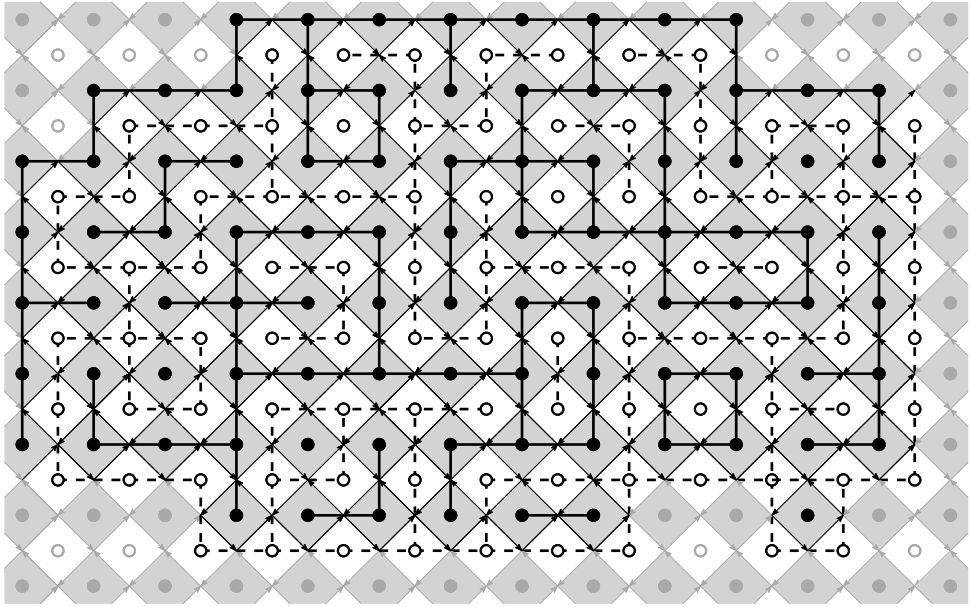}
\caption{\label{fig:1} The configuration $\omega$ with its dual configuration $\omega^*$.}
\end{center}
\end{figure}

\begin{figure}[t]
\begin{center}
\includegraphics[width=0.80\textwidth]{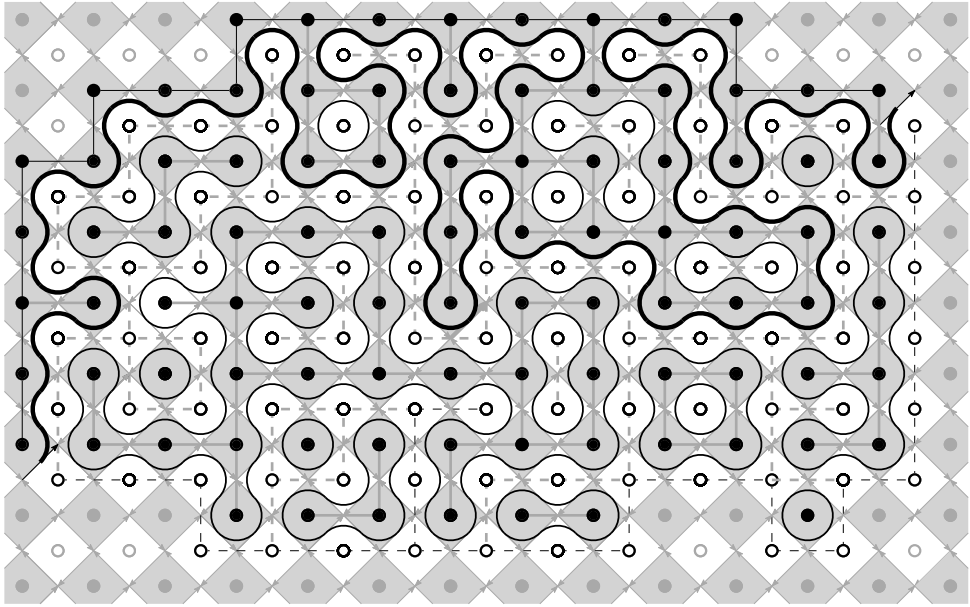} 

\caption{\label{fig:2} The loop representation associated to the primal and dual configurations in the previous picture. The exploration path is drawn in bold.}
\end{center}
\end{figure}

\paragraph{The main statement.} We start by defining the loop-configuration associated to a percolation-configuration. Fix a  Dobrushin domain $(\Omega,a,b)$ and consider a 
configuration $\omega\in\{0,1\}^{E_\Omega}$ together with its dual-configuration $\omega^\star\in\{0,1\}^{E_{\Omega^\star}}$. Through every vertex of the medial graph $\Omega^\diamond$ of $\Omega$ passes either an open 
bond of $\Omega$ or a dual open bond of $\Omega^\star$. Draw self-avoiding loops on $\Omega^\diamond$ as follows: a loop arriving at a vertex of the medial lattice 
always makes a $\pm \pi/2$ turn so as not to cross the open or dual open 
bond through this vertex, see Fig.~\ref{fig:2}. 
The loop representation contains loops together with a self-avoiding path going from $a^\diamond$ to $b^\diamond$, see Fig.~\ref{fig:2}. This curve is called the 
\emph{exploration path} and is denoted by $\gamma$. 

\begin{remark}The loops correspond to the interfaces separating clusters from dual 
clusters, and the exploration path  corresponds 
to the interface between the cluster connected to $\partial_{ba}$ and the 
dual cluster connected to $\partial_{ab}^*$.
\end{remark}
We are now in a position to define the parafermionic observable. 

\begin{definition}The \emph{winding} 
$\text{W}_{\Gamma}(e,e')$ of a curve $\Gamma$ (on the medial lattice) between two medial-edges $e$ and 
$e'$ of the medial graph is the total signed rotation in radians that the 
curve makes from the mid-point of the edge $e$ to that of the edge 
$e'$. By convention, if $\Gamma$ does not go through $e'$, we set $\text{W}_{\Gamma}(e,e')=0$.\end{definition}
For a curve drawn on the medial lattice, the winding can be computed in a very simple way: it corresponds to $\tfrac{\pi}2$ times the number of $\tfrac\pi2$-turns on the left minus the number of $\tfrac\pi2$-turns on the right.

\begin{definition}[Smirnov \cite{Smi10}]\label{def:parafermionic observable}
Consider a Dobrushin domain $(\Omega,a,b)$ and $q>0$. 
 The {\em edge parafermionic observable} $F=F(q,\Omega,a,b)$  is defined for any medial edge $e\in E_{\Omega^\diamond}$ by
\begin{equation*}
  F(e) ~:=~\begin{cases}\phi^{a,b}_{\Omega,p_c(q),q}[{\rm e}^{{\rm i}\sigma 
  \text{W}_{\gamma}(e,e_b)} \mathbf 1_{e\in \gamma}]&\text{ if $q\ne 4$,}\\
  \phi^{a,b}_{\Omega,p_c(4),4}[ 
  \text{W}_{\gamma}(e,e_b){\rm e}^{{\rm i} 
  \text{W}_{\gamma}(e,e_b)} \mathbf 1_{e\in \gamma}]&\text{ otherwise,}\end{cases}\end{equation*}
where $\gamma$ is the exploration path and $\sigma$ is given by the 
relation
$\displaystyle  \sin (\sigma \pi/2) = \sqrt{q}/2.$
\end{definition}

\begin{remark}
The observable $F_\delta(v)$ mentioned in Conjecture~\ref{FK parafermion} is defined on a vertex $v$ of $\Omega_\delta^\diamond=\delta(\bbZ^2)^\diamond\cap\Omega$ as half the sum of $F(e)$ over edges $e$ incident to $v$.
\end{remark}
\begin{remark}
Note that $\sigma$ is real for $q\le 4$, and belongs to $1+i\R$ for $q>4$.  For $q\in[0,4]$, $\sigma$ has the physical interpretation of a spin, which is fractional in general, hence the name parafermionic (fermions have half-integer spins while bosons have integer spins, there are no particles with fractional spin, but the use of such fractional spins at a theoretical level has been very fruitful in physics). For $q>4$, $\sigma$ is not real anymore and does not have any physical interpretation. \end{remark} 

The parafermionic observable satisfies a very specific property at criticality regarding contour integrals. 
One may define a dual $(\Omega^\diamond)^\star$ of $\Omega^\diamond$ in the following way: the vertex set of $(\Omega^\diamond)^\star$ is $V_\Omega\cup V_{\Omega^\star}$ and the edges of the dual connect nearest vertices together. We extend the definitions available for other graphs to this context.
  A {\em discrete contour} $\calC$ is a finite sequence $z_0\sim z_1\sim \dots\sim z_n=z_0$ in $(\Omega^\diamond)^\star$  such that the path $(z_0,\dots,z_n)$ is edge-avoiding. 
The discrete contour integral of the parafermionic observable $F$ along $\cal C$ is defined by
$$\oint_{\cal C}F(z)dz:=\sum_{i=0}^{n-1}(z_{i+1}-z_i)F\left(\{z_i,z_{i+1}\}^\star\right),$$
where the $z_i$ are considered as complex numbers and $\{z_i,z_{i+1}\}^\star$ denotes the edge of $\Omega^\diamond$ intersecting $\{z_i,z_{i+1}\}$ in its center. 

\begin{theorem}[Vanishing contour integrals]\label{thm:contours}
Let $(\Omega,a,b)$ be a Dobrushin domain, $q>0$ and $p=p_c$. For any discrete contour $\cal C$ of $(\Omega,a,b)$,
$$\oint_{\cal C}F(z)dz=0.$$
\end{theorem}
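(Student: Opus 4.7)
The plan is to reduce the claim to a purely local identity at each face of the medial-dual lattice $(\Omega^\diamond)^\star$, and then verify that identity by pairing percolation configurations and exploiting the very specific choice of $p_c(q)$ and of $\sigma$.

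First, I would perform a discrete Stokes-type decomposition. Any discrete contour $\mathcal{C}$ in $(\Omega^\diamond)^\star$ bounds a finite union of elementary faces, and each interior edge appears with opposite orientation in the two faces it separates. Hence the contour integral $\oint_{\mathcal{C}} F(z)\,dz$ equals the sum of the contour integrals around the elementary faces enclosed by $\mathcal{C}$. Each elementary face of $(\Omega^\diamond)^\star$ is a unit square centred on some vertex $v$ that is either a vertex of $\Omega$ or of $\Omega^\star$, and whose four boundary edges cross precisely the four medial edges $e_1,e_2,e_3,e_4$ incident to $v$ in $\Omega^\diamond$. Thus it is enough to prove the local relation
\begin{equation}
\sum_{j=1}^4 (z_{j+1}-z_j)\,F(e_j)\;=\;0
\end{equation}
for every such vertex $v$ in the interior of $\Omega^\diamond$.

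Second, I would prove this local identity by a pairing argument on configurations. Around a vertex $v$ the exploration path $\gamma$ enters and leaves only through the $e_j$, so it uses either $0$, $2$ or $4$ of them; the case $0$ contributes trivially. In the nontrivial cases, toggle the state of the primal edge (or, equivalently for a dual vertex, the dual edge) through $v$: this involution $\omega \leftrightarrow \omega'$ pairs configurations in which $\gamma$ uses a prescribed pair of medial edges $e_j,e_k$ with configurations in which it uses the complementary pair. The point is that in each such pair, the two contributions to $\sum_j (z_{j+1}-z_j) F(e_j)$ differ by three explicit factors: (i) a ratio of edge weights $p/(1-p)$ coming from the toggle, (ii) a factor of $q$ when the cluster count $k(\omega^{\xi})$ changes (here the convention that boundary arcs $\partial_{ba}$ and $\partial_{ab}^*$ are fully wired/dual-wired is what makes the change in $k$ always equal to $\pm 1$), and (iii) a phase ${\rm e}^{\pm i\sigma\pi/2}$ coming from the winding, since moving from $e_j$ to an adjacent $e_{j+1}$ around $v$ changes $\mathrm{W}_\gamma(\cdot,e_b)$ by $\pm\pi/2$. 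Combined with the geometric factors $(z_{j+1}-z_j)\in\{\pm 1,\pm i\}$, the cancellation reduces to the single algebraic identity
\begin{equation}
\frac{p}{1-p}\;=\;\frac{\sqrt q}{q-\sqrt q\cdot 2\cos(\sigma\pi/2)\cdot\text{(sign)}},
\end{equation}
which, using $\sin(\sigma\pi/2)=\sqrt q/2$ (equivalently $2\cos(\sigma\pi/2)=\sqrt{4-q}$ for $q<4$), is precisely equivalent to $p=p_c(q)=\sqrt q/(1+\sqrt q)$.

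Third, the $q=4$ case requires a separate argument because $\sigma=1$ and the observable carries the extra winding prefactor $\mathrm{W}_\gamma(e,e_b)$. The standard trick is to note that the generic local identity above is an identity of holomorphic functions of $\sigma$ in a neighbourhood of the critical value, and differentiating in $\sigma$ at $\sigma=1$ produces the modified local relation for the $q=4$ observable; summing it around an elementary face gives zero again, and the Stokes reduction then yields the global vanishing.

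The main obstacle is the case analysis in the pairing step: one must enumerate all local configurations of the four primal (and dual) edges at $v$, track how $\gamma$ rewires under the involution (including the possibility that toggling merges or splits a loop so that the number of connected components of $\omega^\xi$ jumps by $1$), and check that in every case the signed phase factors align as claimed. This is entirely mechanical but is where the critical value of $p$ and the precise definition of $\sigma$ enter, and it is carried out in detail in \cite[Chapter~6]{Dum13}, to which we refer for the full computation.
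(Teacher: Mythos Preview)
The paper does not prove this theorem; it states it and defers entirely to \cite[Chapter~6]{Dum13}. Your sketch is in fact the standard proof (the one in \cite{Dum13} and originally due to Smirnov), so in spirit you are reproducing exactly what the paper cites.

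There is, however, one genuine geometric slip that you should fix. The elementary faces of $(\Omega^\diamond)^\star$ are \emph{not} centred at vertices of $\Omega$ or $\Omega^\star$; those are the \emph{vertices} of $(\Omega^\diamond)^\star$, as the paper spells out just before the definition of the discrete contour integral. The elementary faces are centred at \emph{medial} vertices $v\in\Omega^\diamond$, each of which corresponds to a primal edge $e$ (equivalently its dual $e^\star$). The four medial edges $e_1,\dots,e_4$ around such a $v$ are the four medial edges incident to that medial vertex, and the paper records the resulting local relation as $F(A)-F(C)=\ii\,(F(D)-F(B))$. Once you make this correction, your involution ``toggle the primal edge through $v$'' becomes ``toggle the state of the edge $e$ corresponding to the medial vertex $v$'', which is exactly the right map, and your pairing argument goes through. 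As written, your description of $v$ and of the edges incident to it is inconsistent (a primal vertex is not a vertex of $\Omega^\diamond$, so ``medial edges incident to $v$ in $\Omega^\diamond$'' is undefined).

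Two smaller remarks. First, the displayed algebraic identity with the ``$\text{(sign)}$'' placeholder is too vague to be a verification; what one actually checks in each of the local cases is that the pair of contributions cancel precisely when $p/(1-p)=\sqrt q$ and $e^{i\sigma\pi/2}+e^{-i\sigma\pi/2}$ matches the combinatorial coefficient coming from the loop/cluster count, which together with $\sin(\sigma\pi/2)=\sqrt q/2$ pins down $p=p_c(q)$. Second, your handling of $q=4$ by differentiating the $q<4$ identity in $\sigma$ at $\sigma=1$ is correct and is how the modified observable with the winding prefactor arises.
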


\begin{remark}The fact that discrete contour integrals vanish seems to be close to a well-known property of holomorphic functions: their contour integrals vanish. Nevertheless, one should be slightly careful when drawing such a parallel: the edge-observable is defined on edges, and should rather be understood as the discretization of a form than as the discretization of a function. As a form, the fact that these discrete contour integrals vanish should be interpreted as the discretization of the property of being closed. \end{remark}

For elementary contours surrounding a vertex of $\Omega^\diamond$ with four medial edges incident to it, the fact that discrete contour integrals vanish translate into the following relation:
\begin{equation}\label{eq:CR}F(A)-F(C)=\mathrm{i}(F(D)-F(B)),\end{equation}
where $A$, $B$, $C$ and $D$ are the four medial edges around the vertex, indexed in clockwise order. 
For clarity, we will use the following slightly modified version of the observable:
\begin{equation}
\widehat F(e)~:=~\begin{cases}\phi^{a,b}_{\Omega,p_c(q),q}[{\rm e}^{{\rm i}\widehat\sigma 
  \text{W}_{\gamma}(e,e_b)} \mathbf 1_{e\in \gamma}]&\text{ if $q\ne 4$,}\\
  \phi^{a,b}_{\Omega,p_c(4),4}[ 
  \text{W}_{\gamma}(e,e_b) \mathbf 1_{e\in \gamma}]&\text{ otherwise,}\end{cases}\end{equation}
where $\widehat \sigma=1-\sigma$ satisfies $\cos(\pi\widehat\sigma/2)=\sqrt q/2$.
With this definition, the relation that the discrete contour integrals
vanish can be restated as follows: \eqref{eq:CR} becomes
$$\widehat F(A)+\widehat F(C)=\widehat F(B)+\widehat F(D),
$$
and for any set $V$ of vertices of
$\Omega^\diamond$ having four incident edges in
$E_{\Omega^\diamond}\cup \{e_a,e_b\}$, we can sum the previous relation to get
\begin{equation}
  \sum_{e\text{ incident to exactly one vertex in }V} \eta(e)\widehat F(e)=0,
  \label{eq:main equation}
\end{equation}
where $\eta(e)$ equals 1 if $e$ points towards a vertex in $V$, and
$-1$ if it points away from a vertex in $V$. Notice that only edges with exactly one end-point in $V$ contribute to this sum.
\subsection{Proof of Theorem~\ref{thm:decide}}

The original proof of Theorem~\ref{thm:decide} can be found in \cite{Dum12}. However, we choose to present a shorter proof here which is based on some of the new arguments of the previous section. 

In this section, we fix $p=p_c(q)$.  For simplicity, we start by treating the $1\le q\le2$ case in order to illustrate the strategy. Then, we focus on the general $1\le q\le 4$ case. 
\begin{proof}[Proof of Theorem~\ref{thm:decide} in the $1\le q\le 2$ case]
  For $n\ge1$, consider the set $R_n:=[0,n]\times[-n,n]$ and recall
  that $\Lambda_n=[-n,n]^2$. See $R_n$ as a Dobrushin domain with
  wired arc equal to the vertex $(0,0)$ (see Fig~\ref{fig:Rn}). In
  such case, the exploration path is the loop going around $(0,0)$,
  and $e_a$ and $e_b$ are the two edges of the boundary bordering
  (0,0). 

\begin{figure}[htbp]
  \centering
  \hfill
  \begin{minipage}[b]{.47\linewidth}
    \includegraphics[width=\linewidth]{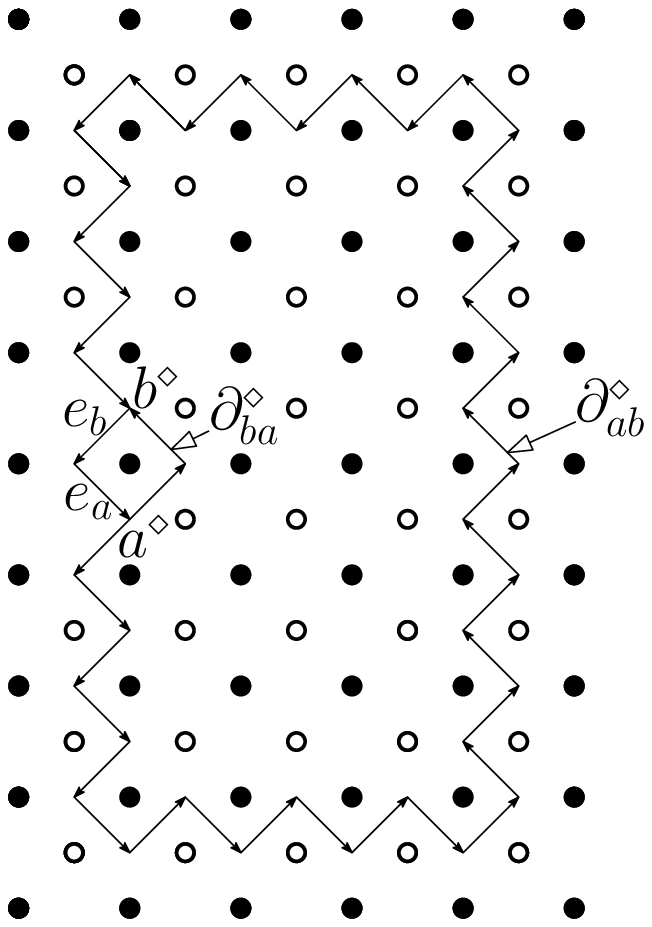}
   
  \end{minipage}
  \hfill
    \begin{minipage}[b]{.47\linewidth}
    \includegraphics[width=\linewidth]{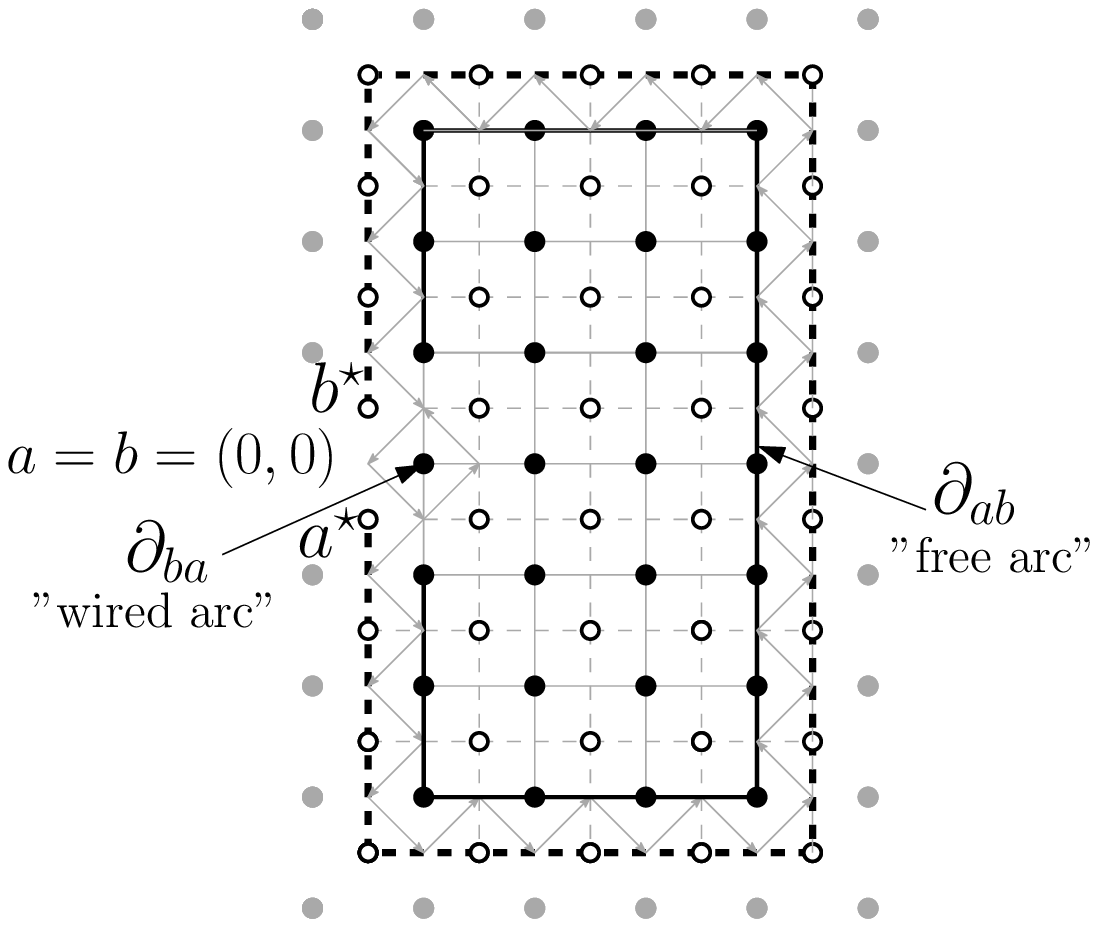}
  
  \end{minipage}
  \hfill
\caption{\textbf{Left.} The medial paths $\partial^\diamond_{ab}$ and
  $\partial^\diamond_{ba}$  allowing to define $R_n$ as a Dobrushin
  domain.    \textbf{Right.} The set $R_n$, seen as a Dobrushin
  domain.}
\label{fig:Rn}
\end{figure}

Let $V$ be the set of medial vertices of $R_n^\diamond$ with four
incident medial edges in $E_{R_n^\diamond}\cup \{e_a,e_b\}$.
Alternatively, $V$ can be defined as the set of medial-vertices
corresponding to the edges in $R_n$. Equation \eqref{eq:main equation}
implies
 \begin{equation}\sum_{e\in \partial_1}\eta(e) \widehat
   F(e)=-\sum_{e\in\partial_2}\eta(e) \widehat F(e),\end{equation}
 where $\partial_1$ (resp. $\partial_2$) is the set of medial-edges of
 $R_n^\diamond$ incident to exactly one medial-vertex of $V$, and  bordering a vertex of $\partial R_n\cap\Lambda_{n-1}$ (resp. $\partial R_n\cap\partial\Lambda_n$).
 \medbreak
 Let us bound the complex modulus of the sums on the left and right from below and above respectively. We start by the sum on the right. Observe that
$$|\widehat F(e)|\le \phi^0_{R_n}(e\in \gamma)=\phi^0_{R_n}(0\longleftrightarrow x)\le \phi^0(0\longleftrightarrow x),$$
where $x$ is the vertex of $\partial R_n\cap\partial\Lambda_n$ bordered by the medial edge $e$. In the first inequality, we used that $\widehat\sigma\in \bbR$ and therefore ${\rm e}^{{\rm i}\widehat\sigma {\rm W}_{\gamma}(e,e_b)}$ is of modulus 1, in the equality, the fact that $e$ is on the exploration path if and only if $0$ and $x$ are connected, and in the second inequality the comparison between the boundary conditions. Since every such vertex is bordered by two medial edges, we find that
\begin{equation}\label{eq:33}\Big|-\sum_{e\in\partial_2}\eta(e)\widehat
  F(e)\Big|\le 2\sum_{x\in\partial R_n\cap\partial\Lambda_n}\phi^0(0\longleftrightarrow x)\le 2\sum_{x\in\partial\Lambda_n}\phi^0(0\longleftrightarrow x).\end{equation}
Let us now bound the left-hand side from below. First, consider the two medial edges $e_a$ and $e_b$. Their contributions are given by 
$$F(e_a)-F(e_b):=e^{i\widehat\sigma3\pi/2}-1=2{\rm i}{\rm e}^{{\rm i}\widehat\sigma 3\pi/4}\sin(\widehat\sigma3\pi/4).$$
Second, consider the medial edges $e_1$, $e_2$, $e_3$ and $e_4$ of $\partial_1$ bordering the vertices $x=(0,x_2)$ and $-x=(0,-x_2)$ where $1\le x_2<n$. Observe that for edges on the boundary, the winding of any possible realization of the exploration path going through an edge $e$ is deterministic (we denote it by ${\rm W}(e)$). As a consequence,
$$F(e)={\rm e}^{{\rm i}\widehat\sigma {\rm W}(e)}\phi^0_{R_n}(e\in \gamma)={\rm e}^{{\rm i}\widehat\sigma {\rm W}(e)}\phi^0_{R_n}(0\longleftrightarrow x).$$
Therefore, using the explicit values of $W(e_1)$, $W(e_2)$, $W(e_3)$ and $W(e_4)$, we obtain
\begin{align*}\sum_{i=1}^4\eta(e_i)F(e_i)&=\Big(\sum_{i=1}^4\eta(e_i){\rm e}^{{\rm i}\widehat\sigma {\rm W}(e_i)}\Big)\phi^0_{R_n}(0\longleftrightarrow x)\\
&=\Big({\rm e}^{{\rm i}\widehat\sigma3\pi/2}-{\rm e}^{{\rm i}\widehat\sigma2\pi}+{\rm e}^{-{\rm i}\widehat\sigma\pi/2}-1\Big)\phi^0_{R_n}(0\longleftrightarrow x)\\
&=-4{\rm i}{\rm e}^{{\rm i}\widehat\sigma 3\pi/4}\cos(\widehat\sigma \pi)\sin(\widehat\sigma\pi/4)\phi^0_{R_n}(0\longleftrightarrow x).
\end{align*}
Altogether, for $q\le 2$, $1/2 \le\widehat\sigma\le1$ and we deduce
\begin{align*}\Big|\sum_{e\in \partial_1}\eta(e)F(e)\Big|&=2\sin(\widehat\sigma3\pi/4)-4\cos(\widehat\sigma \pi)\sin(\widehat\sigma\pi/4)\sum_{x\in \partial R_n\cap\Lambda_{n-1}}\phi^0_{R_n}(0\longleftrightarrow x)\\
&\ge2\sin(\widehat\sigma3\pi/4).\end{align*}
Putting this relation together with \eqref{eq:33}, we find that for every $n\ge1$,
\begin{equation}\label{eq:34}\sum_{x\in\partial\Lambda_n}\phi^0(0\longleftrightarrow x)\ge \sin(\widehat\sigma3\pi/4).\end{equation}
Summing over every $n$ shows {\bf P3} of Theorem~\ref{thm:main} and therefore Theorem~\ref{thm:decide}.
\end{proof}
\begin{remark}
It is worth mentioning that the strategy used for $1\le q\le 2$ cannot work directly for $q>2$. Indeed, predictions using conformal invariance give that 
$$\phi^0_{\bbN\times\bbZ}(0\longleftrightarrow \partial\Lambda_n)=n^{-\alpha(\pi,q)+o(1)}$$
where $\alpha(\pi,q)$ is a critical exponent. The value of the exponent increases from $1/3$ to $1$ when $q$ goes from $1$ to $4$. In particular, for $q>2$, the value becomes larger or equal to $1/2$, and it is natural to expect that
$$\phi^0_{R_n}(0\longleftrightarrow x)\approx \phi^0_{\bbN\times\bbZ}(0\longleftrightarrow \partial\Lambda_n/2)^2=n^{2\alpha(\pi,q)+o(1)}\ll 1/n$$
for $n$ large enough. This is in contradiction with \eqref{eq:34}. For $q>2$, we will therefore extend our reasoning and work with domains of the form $R_n^{\theta_0}:=\{re^{i\theta}\in\bbZ^2:\theta\in[-\theta_0,\theta_0]\}$ with $\theta_0\ge\pi/2$. Indeed, in this case, conformal invariance predicts that $\phi^0_{R_n^{\theta_0}}(0\longleftrightarrow\partial\Lambda_n)=n^{-\alpha(\theta_0,q)}$, where $\alpha(\theta_0,q)=\alpha(\pi,q)\cdot\pi/\theta_0$, and therefore
$$\phi^0_{R_n^{\theta_0}}(0\longleftrightarrow x)\approx n^{\alpha(\pi,q)+\alpha(\theta_0,q)+o(1)}.
$$
This gives hope that a strategy similar to the previous one
could work as long as $\alpha(\pi,q)\le \theta_0/(\pi+\theta_0)$. Since $\alpha(\pi,q)$ tends to 1 as $q\le 4$, it would in fact be necessary to work with domains which are not subsets of $\bbZ^2$ (for instance, we will consider subsets of the graph $\mathbb
U$, defined below). \end{remark}
\medbreak
We now focus on the proof of Theorem~\ref{thm:decide} in the $1\le q\le 4$ case. \medbreak
Set $C_n$ to be the slit domain obtained by removing from $\Lambda_n$
the edges between the vertices of  $\{(0,k):0\le k\le n\}$. We define
Dobrushin boundary conditions on $C_n$ to be wired on $\{(0,k):0\le
k\le n\}$ and free elsewhere. For simplicity, we now refer to
$\{(0,k):0\le k\le n\}$ as the {\em wired arc}. The measure on $C_n$
with these boundary conditions is denoted $\PP[C_n][\rm dobr]$.
Equivalently, one may obtain $\PP[C_n][\rm dobr]$ by taking
$\P[\Lambda_n][0]{\,\cdot\,|\omega(e)=1 \text{ for\ all\ }e\ \mathrm{ in\ wired\ arc}}$ and we therefore think of $C_n$ as the box $\Lambda_n$ with free boundary conditions and $\{(0,k):0\le k\le n\}$ wired; see Fig.~\ref{fig:case1}.
\begin{lemma}\label{lem:crucial}
Let $q\in[1,4]$, there exists $c>0$ such that for any $n\ge 1$, 
$$\P[C_n][\rm dobr]{(0,-n)\longleftrightarrow\mathrm{wired\ arc}}\ge \frac{c}{n^{16}}.$$
\end{lemma}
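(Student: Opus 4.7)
The plan is to adapt the parafermionic-observable / vanishing-contour-integral strategy used for the $1\le q\le 2$ case, but work on the slit domain $C_n$ rather than the rectangle $R_n$. The reason for switching is explained in the remark preceding the lemma: for $q>2$ the spin $\widehat\sigma$ is so small that the coefficient $\cos(\widehat\sigma\pi)$ appearing in the boundary sum on $R_n$ becomes large (or of the wrong sign) and cancels the constant term coming from $e_a,e_b$. On $C_n$ however, walking once around the slit tip adds an extra winding of $2\pi$, which effectively multiplies $\widehat\sigma$ by a larger factor and restores an effective ``spin'' in the good range.

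\textbf{Step 1 (setup and contour identity).} I would view $C_n$ as a Dobrushin domain whose wired arc is the slit $\{(0,k):0\le k\le n\}$ (with both copies of the slit wired together by the boundary conditions), with $a$ and $b$ placed at the two sides of the slit tip $(0,0)$. Define $\widehat F$ as in Section~\ref{sec:observable}. Apply equation~\eqref{eq:main equation} to the set $V$ of all medial vertices having four incident medial edges in $E_{C_n^\diamond}$. This yields
\[
\sum_{e\in\{e_a,e_b\}}\eta(e)\widehat F(e)\;+\;\sum_{e\in\partial_{\rm out}}\eta(e)\widehat F(e)\;+\;\sum_{e\in\partial_{\rm slit}}\eta(e)\widehat F(e)\;=\;0,
\]
where $\partial_{\rm out}$ consists of medial edges bordering $\partial\Lambda_n$ and $\partial_{\rm slit}$ those bordering the slit.

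\textbf{Step 2 (explicit boundary values).} Each medial edge $e$ on the boundary of $C_n^\diamond$ has a deterministic winding $W(e)$, so $\widehat F(e)=e^{i\widehat\sigma W(e)}\,\phi^{\mathrm{dobr}}_{C_n}[v(e)\leftrightarrow\mathrm{wired\ arc}]$. The tip term $e^{i\widehat\sigma W(e_a)}-e^{i\widehat\sigma W(e_b)}$ gives an \emph{explicit nonzero constant} depending only on $\widehat\sigma$. The contributions of $\partial_{\rm slit}$ telescope or cancel in pairs once one accounts correctly for the two ``copies'' of the slit in the universal-cover picture. The outer boundary contribution is, up to constants of modulus $1$,
\[
\sum_{x\in\partial\Lambda_n}\alpha(x)\,\phi^{\mathrm{dobr}}_{C_n}[x\leftrightarrow\mathrm{wired\ arc}],
\]
with $|\alpha(x)|\le 1$, so that taking complex modulus gives
\[
\sum_{x\in\partial\Lambda_n}\phi^{\mathrm{dobr}}_{C_n}[x\leftrightarrow\mathrm{wired\ arc}]\;\ge\;c
\]
for an absolute constant $c=c(q)>0$. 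By pigeonhole, this produces some point $x^\star\in\partial\Lambda_n$ with $\phi^{\mathrm{dobr}}_{C_n}[x^\star\leftrightarrow\mathrm{wired\ arc}]\ge c/n$.

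\textbf{Step 3 (transferring to $(0,-n)$).} To replace $x^\star$ by the specific point $(0,-n)$, I would run several FKG+duality steps in the slit domain, each one gluing an open path crossing inside a suitable rectangle between $x^\star$ and $(0,-n)$. Each such crossing is controlled by Theorem~\ref{thm:weakRSW} and the explicit mixed-boundary lower bound \eqref{crossing square mixed} (which hold uniformly in wired boundary conditions and are available without~{\bf P5}); a crude count of the number of rectangles needed accumulates polynomial losses, and combining them gives the stated exponent $1/n^{16}$.

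\textbf{Main obstacle.} The delicate step is Step~2: one must book-keep very carefully how the windings behave on both sides of the slit and at the tip, to make sure that $(i)$ the tip contribution is a strictly nonzero constant and $(ii)$ the $\partial_{\rm slit}$ contributions do not conspire to annihilate this constant. This is precisely where having the extra $2\pi$ from the slit is essential, and where the proof departs from the $R_n$-based argument used in the $1\le q\le 2$ case.
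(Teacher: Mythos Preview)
Your proposal has a genuine gap in Step~2, and the misconception propagates back to Step~1.

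First, the setup. If $a^\diamond$ and $b^\diamond$ sit at the slit tip $(0,0)$, then the two boundary arcs from $a$ to $b$ are (i) the tiny arc around the tip and (ii) the long arc going up one side of the slit, around $\partial\Lambda_n$, and down the other side. Neither of these is ``wired on the slit, free on $\partial\Lambda_n$''. To realise those Dobrushin boundary conditions you must take $a^\diamond,b^\diamond$ near $(0,n)$, where the slit meets $\partial\Lambda_n$; then the tip is an \emph{interior} point of the wired arc, and there is no special ``tip term'' separating from the slit contribution.

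Second, and more importantly, the slit contributions do \emph{not} telescope or cancel. The domain $C_n$ is planar and simply connected; there are not ``two copies of the slit in the universal-cover picture'' to pair off. Each medial edge bordering the slit contributes a term of the form $e^{i\widehat\sigma W(e)}\phi^{\mathrm{dobr}}_{C_n}[u\lr[*]\partial\Lambda_n^*]$ for the adjacent dual vertex $u$, and the trigonometric coefficients arising here have the wrong sign for $2<q\le4$, exactly as in the $R_n$ computation you are trying to escape. The ``extra $2\pi$'' you invoke does not come from a planar slit: it comes from a genuine branch point, which $C_n$ does not have.

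The paper resolves this by working not on $C_n$ but on a bounded piece $U_n$ of the universal cover $\mathbb U$ of $\mathbb Z^2$ minus a face, with the wired arc taken to be the \emph{single vertex} at the origin. There the only non-outer boundary contributions are $e_a,e_b$, and the branch point forces the winding from $e_a$ to $e_b$ to be $2\pi$, giving $\widehat F(e_a)-\widehat F(e_b)=e^{2\pi i\widehat\sigma}-1\ne0$ for $q<4$ (with a separate computation for $q=4$). One then obtains $\sum_{x\in\partial U_n}\phi^0_{U_n}[0\lr x]\ge c_1$. A further argument using Lemma~\ref{lem:touch} shows that vertices at very high winding level ($|x_3|=n^5$) contribute negligibly, so some $x$ with bounded $|x_3|$ has $\phi^0_{U_n}[0\lr x]\ge c_2/n^6$. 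Only then does one descend to $C_n$ by embedding it in $U_n$ and comparing boundary conditions. Your Step~3 (transferring to $(0,-n)$) is in spirit correct, but the paper uses Lemma~\ref{lem:touch} rather than Theorem~\ref{thm:weakRSW} for this, since one needs a lower bound on a point-to-arc connection under mixed boundary conditions.
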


The main estimate used in the proof of this lemma will follow from \eqref{eq:main equation} applied in a well-chosen domain. Then, we
compare boundary conditions in this domain to Dobrushin boundary
conditions in $C_n$. To exploit the whole power of
\eqref{eq:main equation}, we will consider a domain which is non-planar.
Namely, let us introduce the following graph $\mathbb U$ (see
Fig.~\ref{fig:U}): the vertices are given by $\Z^3$ and the edges by
 \begin{itemize}[noitemsep,nolistsep]
 \item $[(x_1,x_2,x_3),(x_1,x_2+1,x_3)]$ for every $x_1,x_2,x_3\in \Z$,
 \item $[(x_1,x_2,x_3),(x_1+1,x_2,x_3)]$ for every $x_1,x_2,x_3\in \Z$ such that $x_1\neq 0$,
 \item  $[(0,x_2,x_3),(1,x_2,x_3)]$ for every $x_2\ge0$ and $x_3\in\Z$,
 \item $[(0,x_2,x_3),(1,x_2,x_3+1)]$ for every $x_2<0$ and $x_3\in \Z$.
 \end{itemize}
This graph is the universal cover of $\mathbb Z^2\setminus F$, where $F$ is the face centered at $(\tfrac12,-\tfrac12)$. It can also be seen at $\Z^2$ with a branching point at $(\tfrac12,-\tfrac12)$.
All definitions of dual and medial graphs extend to this context, as well as \eqref{eq:main equation}. \begin{figure}[t]
\begin{center}
\includegraphics[width=1\textwidth]{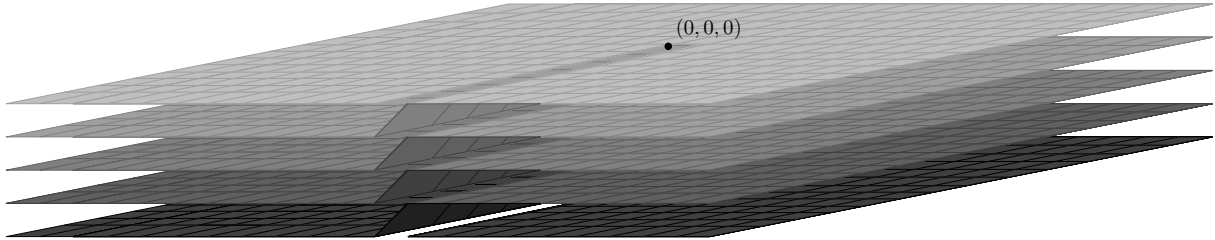}
\caption{\label{fig:U}The graph $\mathbb U$.}
\end{center}
\end{figure}

\begin{proof}
For $n\ge 1$, define 
$$U_n:=\big\{(x_1,x_2,x_3)\in\mathbb U:|x_1|,|x_2|\le n\text{ and
}|x_3|\le n^5\big\}.$$ We wish to apply \eqref{eq:main equation} to
the Dobrushin domain obtained from $U_n$ by fixing the wired arc to be
the vertex (0,0,0). Even if the domain is non-planar, one can easily
define a medial graph. The medial edges $e_a$ and $e_b$ both correspond to the medial edge
$e$ of the boundary of this domain which is adjacent to 0, and the
exploration path is the loop going through this edge. For simplicity,
and in order to define the wiring of the curve between $e_a$ and $e_b$
correctly, we will consider $e_a$ and $e_b$ as begin two different
edges, $e_a$ (resp. $e_b$) being adjacent to only one medial vertex of
the medial graph, namely the endpoint of $e$ towards which (resp. from
which) $e$ is pointing. Let $V$ be the set of medial-vertices incident
to four edges of $E_{U_n^\diamond}\cup\{e_a,e_b\}$. Equation
\eqref{eq:main equation} applied to $V$ gives exactly as in the previous proof, that
$$F(e_a)-F(e_b)=-\sum_{e\in \partial}\eta(e)F(e),
$$
where $\partial$ is the set of medial edges different from $e_a$ and $e_b$ incident to only one vertex of $V$. Now, let us focus for a moment on the $q<4$ case. Then, $F(e_a)-F(e_b)=e^{i\widehat\sigma 2\pi}-1$, and as before $|-\sum_{e\in \partial}\eta(e)F(e)|\le 2\sum_{x\in\partial U_n}\P[U_n][0]{x\lr 0}$. We immediately deduce that
\begin{equation}\label{eq:35}
\sum_{x\in\partial U_n}\P[U_n][0]{0\lr x}\ge c_1
\end{equation}
for some constant $c_1=c_1(q)>0$ independent of $n$. The same computation may be performed for $q=4$ and we also get \eqref{eq:35}: indeed in such case, 
$$F(e_a)-F(e_b)=2\pi-1$$
and for two medial edges $e,e'\in\partial$ bordering the same primal vertex $x$ on the boundary,
$$|\eta(e)F(e)+\eta(e')F(e')|=|W(e)-W(e')|\cdot\P[U_n][0]{0\lr x}\le \frac{3\pi}2\P[U_n][0]{0\lr x},$$
where $W(e)$ (resp. $W(e')$) denotes the winding of any possible loop going from $e$ (resp. $e'$) to $e_b$.

We now wish to bootstrap this estimate to an estimate on $C_n$. Let us start by proving the following claim (observe that $|x_3|<n^5$ in the statement). 
\medbreak
\noindent{\em Claim: There exists $c_2>0$ (independent of $n$) such that  there exists $x=(x_1,x_2,x_3)\in\partial U_n$ with $|x_3|<n^5$ and}
$$\P[U_n][0]{0\lr x}\ge \frac{c_2}{n^{6}}.$$
\medbreak
We will prove this fact by showing that vertices $x$ with $|x_3|=n^5$ have very small probability of being connected to the origin and therefore cannot account for much in \eqref{eq:35}.
\medbreak
\noindent{\em Proof of the Claim.} 
Let $R_0^*$ be the dual graph of the subgraph of $U_n$ with vertex set $R_0:=[-n,n]\times[0,n]\times\{0\}$, i.e.\@ the graph with edge set $\{e^*:e\in E_{R_0}\}$ and vertex set given by the end-points of these edges. Note that uniformly in the state of edges outside $R_0$, the boundary conditions in $R_0$ are  dominated by wired boundary conditions on the ``bottom side'' $[-n,n]\times\{0\}\times\{0\}$ of $R_0$, and free elsewhere. Passing to the dual model, we deduce that uniformly in the state of edges outside $R_0$, \begin{align*}&\P[U_n][0]{(-\tfrac12,-\tfrac12,0)\lr[*]\partial U_n^*\text{ in }R_0^*\Big|\,\text{edges outside }R_0}\ge \P[R_0^*][1/0]{(-\tfrac12,-\tfrac12,0)\lr[*]\partial U_n^*\text{ in }R_0^*},\end{align*}
where $\PP[R_0^*][1/0]$ is the (dual) random-cluster measure on $R_0^*$ with free boundary conditions on the bottom and wired  boundary conditions everywhere else.  Lemma~\ref{lem:touch} (with $m=n$ and $\theta=0$) thus implies that 
\begin{align}&\P[U_n][0]{(-\tfrac12,-\tfrac12,0)\lr[*]\partial U_n^*\text{ in }R_0^*\Big|\,\text{edges outside }R_0}\ge \frac1{5n^4}\label{eq:36}.\end{align} 
The same is also true for $R_k^*=(0,0,k)+R_0^*$ with $|k|\le n^5$.

If a vertex $x=(x_1,x_2,x_3)\in \partial U_n$ with $x_3=n^5$ is connected to $(0,0,0)$, then none of the dual vertices $(-\tfrac12,-\tfrac12,k)$ are dual connected to $\partial U_n$ in $R_k^*$ for $0< k<x_3$ (the symmetric claim holds for $x_3=-n^5$). Equation~\eqref{eq:36} applied $|x_3|-1$ times implies that
\begin{align*}\P[U_n][0]{0\longleftrightarrow x}~&\le~ \Big(1-\frac1{5n^4}\Big)^{|x_3|-1}.\end{align*}
The probability is therefore exponentially small when $|x_3|=n^5$. 
Together with \eqref{eq:35}, the previous inequality implies that for $n$ large enough,
$$\sum_{x\in\partial U_n:|x_3|<n^5}\P[U_n][0]{0\lr x}\ge \frac{c_1}2.$$
The claim follows directly from the union bound, provided that $c_2$ is chosen small enough.
\begin{flushright}$\diamond$\end{flushright}

Fix $x$ given by the claim and rotate and translate vertically\footnote{Seen as a graph, $U_n$ is invariant by rotation by $\pi/2$ since the line where $x_3$ ``increases'' is invisible from inside $U_n$.}  $U_n$ in such a way that $x=(x_1,-n,0)$ for some $-n\le x_1\le n$. Consider $C_n$ as a subgraph of $U_n$. The boundary conditions on $C_n$ induced by the free boundary conditions on $U_n$ are dominated by the Dobrushin boundary conditions on $C_n$ defined above. Furthermore, the existence of an open path from $x$ to the origin implies the existence of a path from $x$ to the wired arc in $C_n$. Thus, the claim implies that
\begin{equation}\label{eq:eq}\P[C_n][\rm dobr]{\textstyle x\lr \mathrm{wired\ arc}}~\ge~ \displaystyle\frac{c_2}{n^6}.\end{equation}

To conclude the proof, we need to obtain a lower bound for the probability that the vertex $(0,-n,0)$ itself is connected to the wired arc. We use once again a ``conditioning on the right-most and left-most paths type argument''. Since we now work on a sub-domain of $\Z^2$, we drop the third coordinate from the notation. 

We may assume that $x_1\ge0$ and that the two vertices $x=(x_1,-n)$ and $(-x_1,-n)$ are connected to the wired arc. The FKG inequality implies that this occurs with probability $(\frac{c_2}{n^6})^2$. Consider the right-most open path from $(x_1,-n)$ to the wired arc, and the left-most open path from $(-x_1,-n)$ to the wired arc. Let $S$ be the part of $C_n$ between these two paths, see Fig.~\ref{fig:case1}. The  boundary conditions in $S$  dominate the free boundary conditions on the bottom of $C_n$, and wired elsewhere. 
We use a comparison between boundary conditions. The reasoning is the
same as usual: we compare boundary conditions on $S$ with the boundary
conditions induced by boundary conditions on $\Lambda_n$ with free
boundary conditions on the bottom and wired boundary conditions on the
other sides. Lemma~\ref{lem:touch} (applied to $2n$, $m=n$ and
$\theta=0$) thus implies that $(0,-n)$ is connected to the wired arc
with conditional probability larger than $\frac{1}{20n^4}$, and we finally
obtain
\begin{equation}
  \P[C_n][\rm dobr]{(0,-n)\longleftrightarrow\mathrm{wired\ arc}}\ge \left(\frac{c_2}{n^{6}}\right)^2\frac1{20n^4}.
\end{equation}
\end{proof}

\begin{figure}
\begin{center}
\includegraphics[width=7cm]{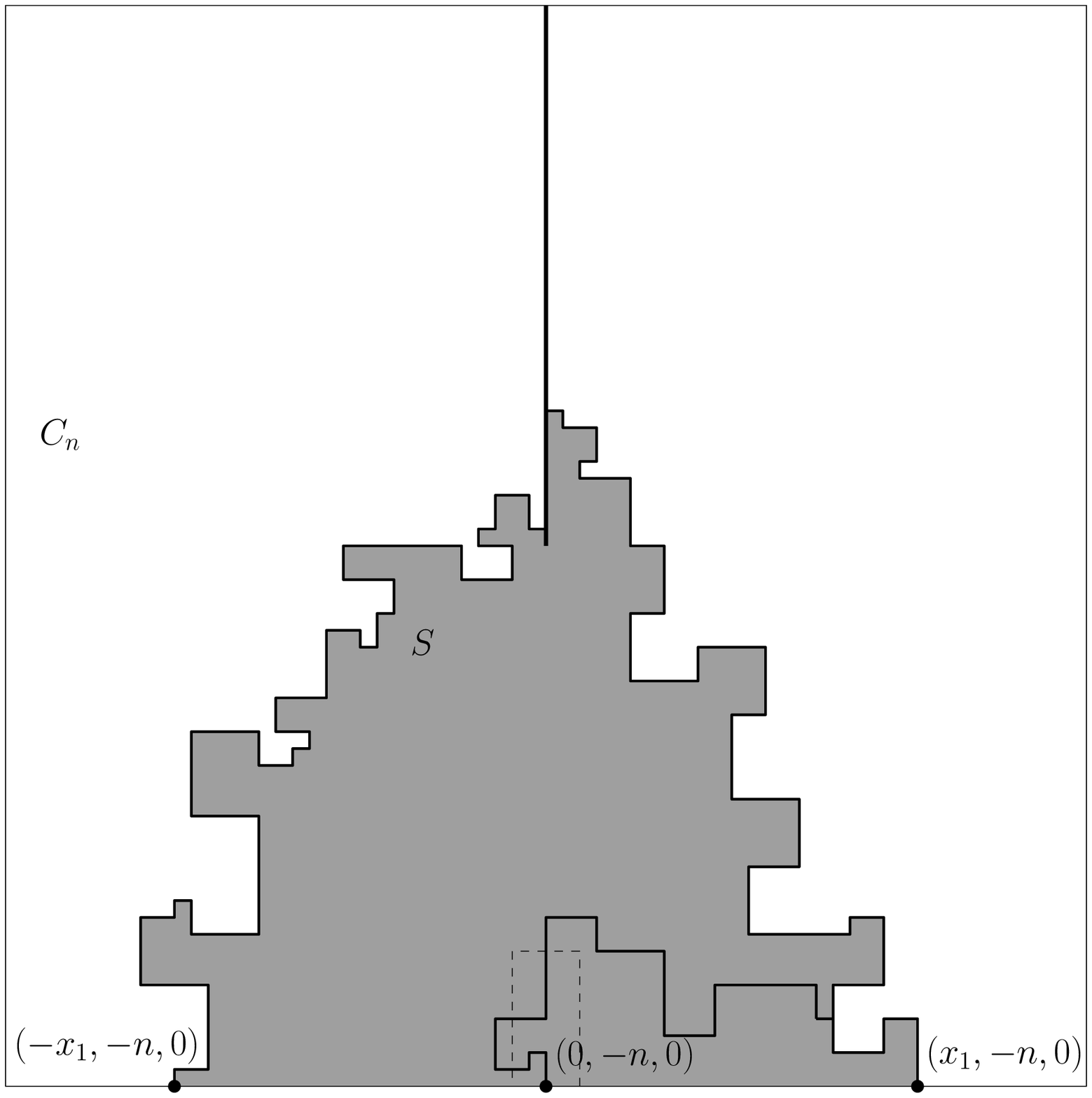}\quad\quad\quad \includegraphics[width=7cm]{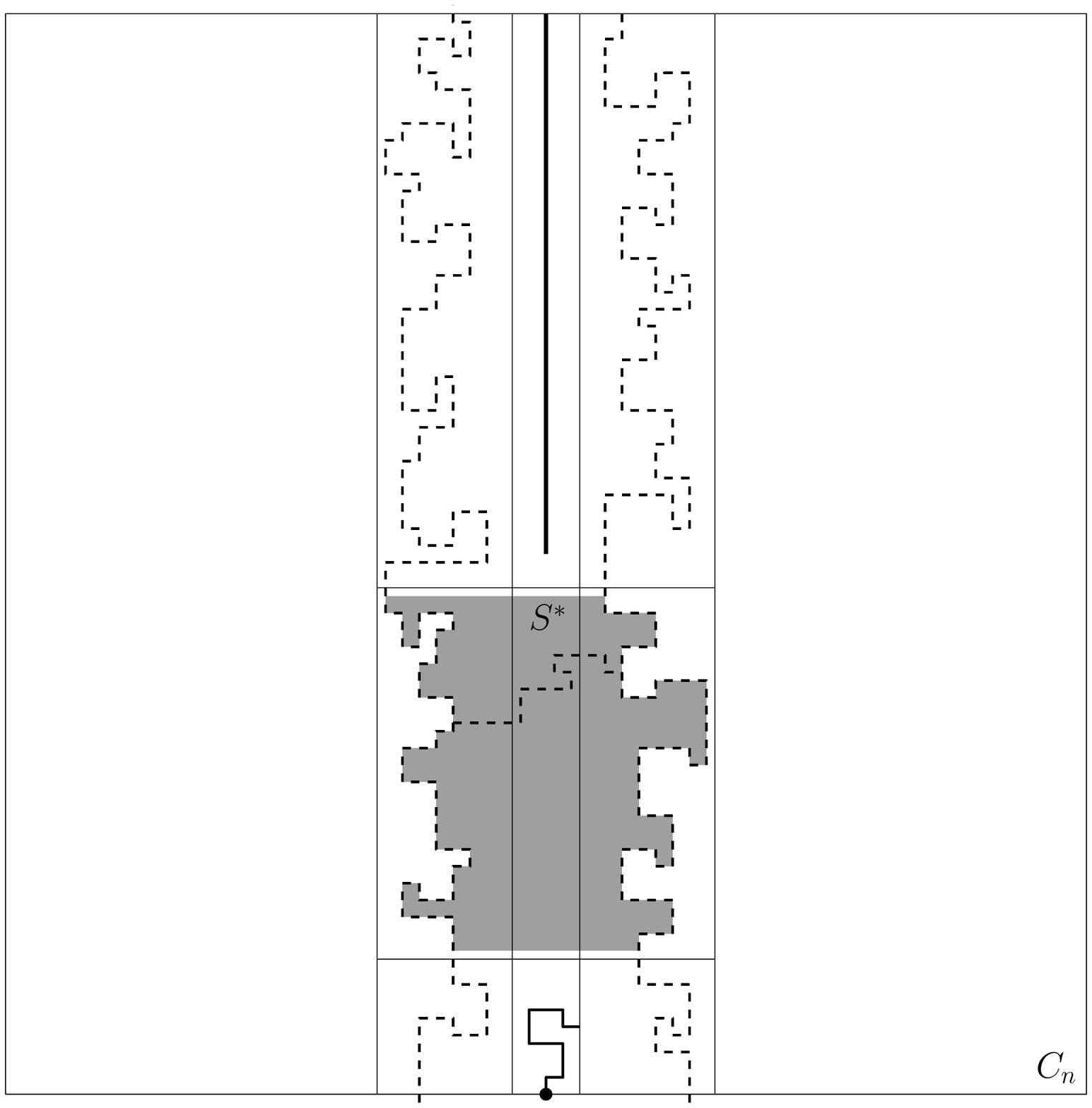}
\caption{\label{fig:case1}{\bf Left.} The two paths connecting the wired arc to $(x_1,-n,0)$ and $(-x_1,-n,0)$ (or simply $(x_1,-n)$ and $(-x_1,-n)$) and the area $S$ between them. \label{fig:case2}{\bf Right.} The two dual-open paths in the long rectangles $R_{\rm right}^*$ and $R_{\rm left}^*$.}
\end{center}
\end{figure}

We are now in a position to prove Theorem~\ref{thm:decide}. 
Let $\partial_n$ be the set of vertices at distance $\tfrac{n}{16}$ of the vertex $(0,-n)$ in $C_n$. The reasoning is similar to the proof of Lemma~\ref{lem:induction1} except that instead of isolating primal circuits around $z_-$ and $z_+$ from each other, we will isolate the primal path from $(0,-n)$ to $\partial_n$ from the wired arc. 
\begin{proof}[Proof of Theorem~\ref{thm:decide} for {$q\in(2,4]$}]
Introduce the three rectangles 
\begin{align*}&R_{\rm right}^*:=\left[\tfrac n{16}+\tfrac12,\tfrac{5n}{16}-\tfrac12\right]\times[-(n+\tfrac12),n+\tfrac12],\\
&R_{\rm left}^*:=\left[-\tfrac{5n}{16}+\tfrac12,-\tfrac{n}{16}-\tfrac12\right]\times[-(n+\tfrac12),n+\tfrac12],\\
&R^*:=\big[-\tfrac{5n}{16}+\tfrac12,\tfrac{5n}{16}-\tfrac12\big]\times\big[-\tfrac{3n}{4}+\tfrac12,-\tfrac{n}{8}-\tfrac12\big].\end{align*} Define the three events
$\mathcal E=\{(0,-n)\longleftrightarrow \partial_n\text{ in $C_n$}\}$,
$\mathcal F_{\rm right}$ and $\mathcal F_{\rm left}$ that there exists a dual-open dual-path from bottom to top in $R_{\rm right}^*$ and $R_{\rm left}^*$ respectively. Let $\mathcal C$ be the event that there exists a dual-open dual-path in $R^*$ connecting a dual open path crossing $R_{\rm left}^*$ from top to bottom to a dual open path crossing $R_{\rm right}^*$ from top to bottom.

Conditioning on $\mathcal F_{\rm left}\cap \mathcal F_{\rm right}\cap \mathcal C$, boundary conditions on $R_n$ are  dominated by free boundary conditions in the plane. Therefore
\begin{align*}\P[\Z^2][0]{\textstyle0\leftrightarrow \partial \Lambda_{n/16}}\ge \P[C_n][\rm dobr]{\mathcal E| \mathcal F_{\rm left}\cap \mathcal F_{\rm right}\cap \mathcal C}\ge \P[C_n][\rm dobr]{\mathcal E\cap \mathcal F_{\rm left}\cap \mathcal F_{\rm right}\cap \mathcal C}.\end{align*}
We now prove a lower bound on the term on the right:
\begin{align*}&\P[C_n][\rm dobr]{\mathcal E\cap \mathcal F_{\rm left}\cap \mathcal F_{\rm right}\cap \mathcal C}=\P[C_n][\rm dobr]{\mathcal E}\cdot\P[C_n][\rm dobr]{\mathcal F_{\rm left}\cap \mathcal F_{\rm right}|\mathcal E}\cdot\P[C_n][\rm dobr]{\mathcal C|\mathcal E\cap\mathcal F_{\rm left}\cap \mathcal F_{\rm right}}.\end{align*}

First, Lemma~\ref{lem:crucial} implies that $\phi^{\rm dobr}_{C_n}(\mathcal E)\ge \frac{c}{n^{16}}$. 
Second, conditioned on everything on the left of $\{\frac n{16}\}\times[-n,n]$, boundary conditions on $[\frac n{16},n]\times[-n,n]$ are  dominated by wired boundary conditions on the left side and free elsewhere. In particular, boundary conditions for the dual model  dominate free boundary conditions on the left side and wired elsewhere. Lemma~\ref{lem:push} implies that $\P[C_n][\rm dobr]{\mathcal F_{\rm right}|\calE}\ge c_2$ and the same lower bound holds true for $\P[C_n][\rm dobr]{\mathcal F_{\rm left}|\mathcal F_{\rm right}\cap\calE}$. We obtain
$$\P[C_n][\rm dobr]{\mathcal F_{\rm left}\cap \mathcal F_{\rm right}|\mathcal E}\ge c_2^2.$$
Third, we turn to $\P[C_n][\rm dobr]{\mathcal C|\mathcal E\cap\mathcal F_{\rm left}\cap \mathcal F_{\rm right}}$. Let $S^*$ be the area of the dual graph in $R^*$ between the right-most dual-open path from top to bottom in $R_{\rm right}^*$, and the left-most dual-open path crossing from top to bottom in $R_{\rm left}^*$, see Fig.~\ref{fig:case2}. The boundary conditions for the dual model on $S^*$ dominate free boundary conditions on top and bottom, and wired elsewhere. The domain Markov property and the comparison between boundary conditions imply that boundary conditions for the dual model on $S^*$ dominate free boundary conditions on top and bottom sides of $R^*$, and wired on the two other sides. Therefore, the probability of having a dual-open path in $S^*$ crossing from left to right is larger than $1/(1+q^2)$ thanks to \eqref{crossing square mixed}. In particular, 
$$\P[C_n][\rm dobr]{\mathcal C|\mathcal E\cap\mathcal F_{\rm left}\cap \mathcal F_{\rm right}}\ge \frac1{1+q^2}.$$
Putting everything together, we find that
$$\P[\bbZ^2][0]{\textstyle0\leftrightarrow \partial \Lambda_{n/16}}\ge \frac{c}{n^{16}}\cdot c_2^2\cdot \frac{1}{1+q^2}$$
and indeed
$$\lim_{n\rightarrow \infty}-\tfrac1n\log\big(\,\P[\bbZ^2][0]{\textstyle0\leftrightarrow \partial \Lambda_{n}}\big)=0.$$
\end{proof}

\begin{remark}
We worked in the specific domain $U_n$ but one may apply these techniques in more general subdomains of $\bbU$ to obtain exponential decay of correlations using the parafermionic observable and ideas from \cite{DumTas15,DumTas15a}. We refer to \cite{Dum15} for an exposition of a proof based on this idea in the $q\le 3$ case (the proof could be extended using the techniques of Section~\ref{subsection:scaling limit} to the $q\in[3,4]$ case, but the proof has not been written down). Note that for $q>4$, a proof of exponential decay of correlations using the observable only can be found in \cite{BefDumSmi12}.
\end{remark}

\subsection{Ordering for $q>4$}

As mentioned in the introduction, the phase transition of the random-cluster model (or equivalently of the Potts model) with $q>4$
is expected to be discontinuous. In particular, this would mean that there exists an infinite cluster almost surely for the critical measure with wired boundary conditions. We are currently unable to prove this result. Nevertheless, we are able to prove the following (much) weaker result.

The graph $\bbU$ is planar and we can define its dual graph, denoted
$\widetilde \bbU$. Notice that all the vertices of $\widetilde \bbU$
have degree $4$, except one vertex, denoted $\mathbf b$, which has
infinite degree (it corresponds to the vertex at the center of the
spiral). Given a finite subgraph $\widetilde U$ of $\widetilde \bbU$,
containing the vertex $\mathbf b$, we define the random cluster
measure in $\widetilde U$ with wired boundary conditions: the boundary
of $\widetilde U$ is given by $\mathbf b$ together with all the
vertices of $\widetilde U$ with degree strictly smaller than $4$ in
$\widetilde U$. We can then define the random-cluster measure on
$\widetilde \bbU$ with wired boundary condition, denoted by
$\phi^1_{\widetilde \bbU}$, by taking the limit as $\widetilde U$
converges to $ \widetilde \bbU$. 
We fix a vertex $\mathbf v$ of $\widetilde \bbU$, disjoint from $\mathbf
b$, and write $\mathbf v \longleftrightarrow\infty$ for the event that
there exists an infinite open path from $\mathbf v$ which is \emph{disjoint from $\mathbf b$}.
   
\begin{proposition}\label{prop:qge5}
For $q>4$,
$\phi_{\widetilde \bbU}^1(\mathbf v \longleftrightarrow\infty)>0.$
\end{proposition}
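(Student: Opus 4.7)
The proof combines planar duality with the parafermionic observable in the regime $q>4$, building on the framework of Section~\ref{sec:observable}. The starting point is the duality $\phi^1_{\widetilde{\mathbb{U}}}\leftrightarrow\phi^0_{\mathbb{U}}$ between the wired measure on $\widetilde{\mathbb{U}}$ and the free measure on $\mathbb{U}$, under which the infinite-degree vertex $\mathbf{b}$ corresponds to the ``missing face'' around which $\mathbb{U}$ branches. The event that $\mathbf{v}$ is connected to infinity in $\widetilde{\mathbb{U}}$ disjoint from $\mathbf{b}$ is complementary to the existence, in $\mathbb{U}$ with free boundary conditions, of a dual-open path surrounding the face associated to $\mathbf{v}$ with zero total winding around the branch. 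It therefore suffices to show that the $\phi^0_{\mathbb{U}}$-probability of a primal-open cluster winding around the branch face is strictly less than $1$.

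For $q>4$, the equation $\sin(\sigma\pi/2)=\sqrt{q}/2>1$ forces $\sigma\in 1+i\mathbb{R}$, so that $\widehat\sigma=1-\sigma=-i\tau$ for the unique $\tau>0$ with $\cosh(\tau\pi/2)=\sqrt{q}/2$. The winding factor entering the modified observable $\widehat F(e)$ then becomes the real exponential $e^{i\widehat\sigma W}=e^{\tau W}$. The key technical step is to apply the vanishing contour integral identity~\eqref{eq:main equation} on Dobrushin domains $V_n\subset\mathbb{U}$ whose wired arc is a single vertex, now chosen so that the loop-type exploration path around the wired arc is required to wind $n$ times around the branch. Following the scheme of Lemma~\ref{lem:crucial}, the endpoint-edge contribution becomes $e^{2\pi n\tau}-1$ rather than the bounded oscillation $e^{i\widehat\sigma 2\pi}-1$, and balancing this against the boundary sum (whose entries carry weights $e^{\tau W(e)}$ bounded in modulus by a connection probability to a boundary vertex) forces the $\phi^0_{\mathbb{U}}$-probability that $0$ is connected to a vertex on the $k$-th sheet of $\mathbb{U}$ to decay at least like $e^{-2\pi k\tau}$.

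With exponential decay of winding-connection probabilities in hand, a union bound over all boundary vertices at winding distance at least $n$ produces a bound of the form $Ce^{-cn}$ on the $\phi^0_{\mathbb{U}}$-probability of the existence of an open cluster of winding at least $n$ around the branch. Choosing $n$ large enough makes this probability strictly less than $1$, which by the duality of the first paragraph yields $\phi^1_{\widetilde{\mathbb{U}}}[\mathbf{v}\longleftrightarrow\infty\text{ disjoint from }\mathbf{b}]>0$, as desired. The central obstacle lies in the second paragraph: unlike in the $q\le 4$ setting of Lemma~\ref{lem:crucial}, where the modulus-one winding factor enables a direct balancing of terms of bounded size, here the identity involves competing exponentially large quantities. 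Carefully isolating the dominant term $e^{2\pi n\tau}$ and cancelling it against the contributions of boundary edges of large positive winding---in essence the adaptation of the Beffara--Duminil-Copin--Smirnov machinery of \cite{BefDumSmi12} from $\mathbb{Z}^2$ to the branched geometry of $\mathbb{U}$---constitutes the most delicate part of the argument.
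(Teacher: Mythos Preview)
Your outline shares the paper's strategy: pass by duality to $\phi^0_{\mathbb U}$, exploit that for $q>4$ the parameter $\widehat\sigma$ is purely imaginary so that the winding factor $e^{i\widehat\sigma W}$ is a real exponential, apply the contour identity~\eqref{eq:main equation} on a subdomain of $\mathbb U$ to get exponential decay in the sheet index, and conclude by Borel--Cantelli. In this sense you have the right idea.

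However, your description of the mechanism is misdirected. The paper does not choose domains so that the exploration path is ``required to wind $n$ times''; the wired arc is the single vertex $\mathbf 0=(0,0,0)$ and the contribution $F(e_a)-F(e_b)=e^{2\pi\tilde\sigma}-1$ is independent of $k$. The exponential factor $e^{2\pi\tilde\sigma(k+1)}$ comes instead from \emph{removing one edge} $f_k$ adjacent to $(0,0,-k)$: the two medial edges $e_1,e_2$ bordering $(0,0,-k)$ then appear in the boundary sum with weight
\[
-\eta(e_1)F(e_1)-\eta(e_2)F(e_2)=e^{2\pi\tilde\sigma(k+1)}\bigl(e^{\pi\tilde\sigma/2}-1\bigr)\,\phi^0_{U_n}\bigl[\mathbf 0\leftrightarrow(0,0,-k)\bigr],
\]
and it is this term that is balanced against $e^{2\pi\tilde\sigma}-1$.

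More importantly, your stated concern about ``competing exponentially large quantities'' and ``cancelling against contributions of boundary edges of large positive winding'' is exactly what the paper \emph{avoids}. Pairing the two medial edges bordering each remaining boundary vertex, one checks that the total contribution $\sum_{e\in E}\eta(e)F(e)$ has a definite sign (it is $\le 0$). There is therefore no delicate cancellation at all: the identity~\eqref{eq:37} directly yields
\[
e^{2\pi\tilde\sigma(k+1)}\bigl(e^{\pi\tilde\sigma/2}-1\bigr)\,\phi^0_{U_n}\bigl[\mathbf 0\leftrightarrow(0,0,-k)\bigr]\ \le\ e^{2\pi\tilde\sigma}-1,
\]
whence $\phi^0_{\mathbb U}[\mathbf 0\leftrightarrow(0,0,-k)]\le Ce^{-2\pi\tilde\sigma k}$ after letting $n\to\infty$ and using finite energy to restore $f_k$. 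This sign observation is the crux of the argument and is missing from your sketch; without it, your proposed ``balancing'' step is not a proof.
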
 
It would be very interesting to improve this result by showing that it implies $\phi^1_{\bbZ^2}[0\lr\infty]>0$. As for today, we did not manage to do so. Let us mention a question whose understanding may help solving this problem. Consider the upper half-plane $\bbH=\bbZ\times\bbN$. Assume that there exists an infinite cluster in this plane (once again not using any boundary edge) for the random-cluster measure with wired boundary conditions on the boundary of $\bbH$. Can one show that there exists an infinite cluster for the random-cluster measure on $\bbZ^2$ with wired boundary conditions. Obviously, this is true for Bernoulli percolation since $\bbH\subset\bbZ^2$. Yet this fact is not sufficient to prove the claim for general cluster-weights $q>1$ since the wiring on $\bbZ^2\times\{0\}$ may influence the measure inside the upper half-plane by favoring open edges.

Let us now prove the proposition.

\begin{proof}
  Consider the random-cluster measure on $\bbU$ with free boundary
  conditions. Let us prove that ${\bf 0}:=(0,0,0)$ and $(0,0,-k)$ are
  connected to each other with probability decaying exponentially fast
  in $k\ge0$. The Borel-Cantelli lemma would then imply that finitely
  many pairs of integers $k\ge 0$ and $\ell> 0$ are such that
  $(0,0,-k)$ and $(0,0,\ell)$ are connected to each other. This
  immediately shows the existence of an infinite dual-open cluster in
  the dual model, which is the random cluster on $ \widetilde \bbU$ with wired boundary  conditions.

  To prove this exponential decay, we invoke \eqref{eq:main equation}
  again.

  Let $n>k+1\ge 2$. Let $U_n$ be the subdomain of $\bbU$ generated by
  $\bbU\cap[-n,n]^3$, except that the edge $f_k:=\{(0,0,-k),(0,-1,-k)\}$
  is removed. $U_n$ can be seen as the Dobrushin domain with
  $a=b=(0,0,0)$. Let $V$ be the set of all medial vertices in $U_n$ with
  four incident medial edges. Medial vertices in $V$ corresponds to primal edges in
  $\bbU\cap[-n,n]^3$ except the edge $f_k$. Let $e_1,e_2,e_3,e_4$ be
  the four medial edges incident to the medial vertex corresponding to
  $f_k$. These four medial edges are incident to exactly one vertex in
  $V$.  Two of them, say $e_1$ and $e_2$, border the vertex $(0,0,-k)$. Let $E$ be the set of all medial edges adjacent to exactly one
  vertex in $V$, distinct from $e_1,e_2,e_a,e_b$.
  Equation~\eqref{eq:main equation} implies
  \begin{equation}
    -\eta(e_1)F(e_1)-\eta(e_2)F(e_2)-\sum_{e\in E}\eta(e)F(e) = F(e_a) -F(e_b).\label{eq:37}
  \end{equation}
  
  The Dobrushin boundary conditions are simply the free boundary
  conditions in this case. Set $\tilde\sigma=\mathrm i \hat\sigma$ which is a
  real number {\em since we assume that $q>4$}. Without loss of
  generality, we can assume $\tilde\sigma>0$. We find
  \begin{equation}
-\eta(e_1)F(e_1)-\eta(e_2)F(e_2)=e^{\tilde \sigma 2\pi (k+1)}
(e^{\pi\tilde\sigma/2}-1)\P[U_n][0]{{\bf 0}\lr(0,0,-k)},\label{eq:38}
\end{equation}
and
\begin{equation}
  F(e_a) -F(e_b)=e^{\tilde \sigma 2\pi}-1\label{eq:39}.
\end{equation}
One can arrange the edges of $E$ in pairs, by putting together two
medial edges when they border the same (primal) vertex. Using this
pairing, one sees that 
\begin{equation}
  \label{eq:40}
  \sum_{e\in E}\eta(e)F(e)\le 0.
\end{equation}
Plugging Equations \eqref{eq:38},\eqref{eq:39} and \eqref{eq:40} in
\eqref{eq:37}, we deduce that there exists $C=C(q)>0$ not depending on
$k$ or $n$ so that
\begin{align}\nonumber
\P[U_n][0]{{\bf 0}\lr(0,0,-k)}&\le Ce^{-2\pi\tilde \sigma k}.
\end{align}
Letting $n$ tend to infinity, and using the finite energy property (recall that the edge $f_k$ is closed in $U_n$, but not necessarily in $\bbU$), we
conclude that
$$\P[\bbU][0]{{\bf 0}\lr(0,0,-k)}\le Ce^{-2\pi\tilde\sigma k}.$$
\end{proof}

\begin{remark}
The fact that $\sigma\in[0,1]$ for $q\le 4$ and $\sigma=1-{\rm i}\bbR_+$ for $q>4$ explains the difference of behavior between $q\le 4$ and $q>4$ random-cluster models.
\end{remark}
\section{Proofs of other theorems}

\subsection{Proof of Theorem~\ref{thm:spatial mixing}}

This section contains the proof of Theorem~\ref{thm:spatial mixing}.

\begin{lemma}\label{lem:no one arm}
Let $k\le n$ and $\xi$ some arbitrary boundary conditions on $\partial\Lambda_n$. There exist two couplings $\mathbf P$ and $\mathbf Q$ on configurations $(\omega_\xi,\omega_1)$ with the following properties:
\begin{itemize}
\item $\omega_\xi$ and $\omega_1$ have respective laws $\PP[\Lambda_n][\xi]$ and $\PP[\Lambda_n][1]$.
\item $\mathbf P$-almost surely, if $\omega_1^*$ contains a dual-open dual-circuit in $\Lambda_n^*\setminus \Lambda_k^*$ and $\Gamma^*$ is the outermost such circuit, then $\Gamma^*$ is also closed in $\omega_\xi$, and furthermore $\omega_1$ and $\omega_\xi$ coincide inside $\Gamma^*$.
\item $\mathbf Q$-almost surely, if $\omega_\xi$ contains an open circuit in $\Lambda_n\setminus \Lambda_k$ and $\tilde \Gamma$ is the outermost such circuit, then $\tilde \Gamma$ is also open in $\omega_1$, and furthermore $\omega_1$ and $\omega_\xi$ coincide inside $\tilde \Gamma$.
\end{itemize}
\end{lemma}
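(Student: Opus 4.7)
The natural plan is to combine the classical monotone coupling of $\phi^\xi_{\Lambda_n}$ and $\phi^1_{\Lambda_n}$ with a resampling of the interior of the outermost circuit, exploiting the Domain Markov property. For $\mathbf{P}$, Holley's theorem (the coupling form of the comparison between boundary conditions \eqref{comparison}) yields a joint law of $(\omega_\xi, \omega_1)$ with the correct marginals and $\omega_\xi \leq \omega_1$ almost surely. In such a coupling every edge closed in $\omega_1$ is automatically closed in $\omega_\xi$, so the primal edges crossed by $\Gamma^*$ -- which are closed in $\omega_1$ by the very definition of a dual-open circuit -- are also closed in $\omega_\xi$; this is the first part of the statement about $\mathbf{P}$.

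The delicate point is arranging $\omega_\xi = \omega_1$ on the interior of $\Gamma^*$. The key observation is the following consequence of the Domain Markov property (Proposition~\ref{thm:markovProperty}): under any boundary condition $\psi$ on $\Lambda_n$, once we condition on the configuration in the exterior of $\Gamma^*$ (including the primal edges crossed by $\Gamma^*$, all closed), the induced law on $\mathrm{Int}(\Gamma^*)$ is $\phi^0_{\mathrm{Int}(\Gamma^*)}$, independently of $\psi$. Indeed, the closed edges crossing $\Gamma^*$ insulate $\mathrm{Int}(\Gamma^*)$ from the rest of the graph, so the induced boundary partition on $\partial \mathrm{Int}(\Gamma^*)$ is the finest one, i.e.\@ free. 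In particular this common conditional distribution is the same under $\phi^1_{\Lambda_n}$ and under $\phi^\xi_{\Lambda_n}$, which is exactly what will let us couple $\omega_\xi$ and $\omega_1$ equal on the interior.

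Concretely, I would proceed via an outside-in exploration: iteratively reveal edges of $\omega_1$ needed to locate $\Gamma^*$ (a procedure that only probes edges lying in $\mathrm{Ext} := \Lambda_n \setminus \mathrm{Int}(\Gamma^*)$), and at each step use the conditional monotone coupling to reveal the matching edge of $\omega_\xi$. Once $\Gamma^*$ is determined (or the exploration concludes no such circuit exists), extend the revealed set to all of $\mathrm{Ext}$. If $\Gamma^*$ exists, draw a single $\tau \sim \phi^0_{\mathrm{Int}(\Gamma^*)}$ and set $\omega_\xi$ and $\omega_1$ both equal to $\tau$ on $\mathrm{Int}(\Gamma^*)$; otherwise, sample the remaining edges in each configuration independently from their respective conditional laws. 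The construction of $\mathbf{Q}$ is the mirror image: one explores inward in $\omega_\xi$ to identify $\tilde\Gamma$, uses $\omega_\xi \leq \omega_1$ to transfer openness of $\tilde\Gamma$ to $\omega_1$, and couples the interiors equal to a sample from $\phi^1_{\mathrm{Int}(\tilde\Gamma)}$ (now wired, since an open primal circuit wires its vertices together). The main obstacle is bookkeeping, namely checking that the exploration only inspects edges outside $\mathrm{Int}(\Gamma^*)$ so that the Domain Markov property applies verbatim to the untouched interior and the marginals of $\omega_\xi$ and $\omega_1$ stay correct throughout the resampling step.
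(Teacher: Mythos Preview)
Your proposal is correct and essentially the same as the paper's proof: both build the monotone coupling via an outside-in edge-by-edge exploration (the paper reveals at each step an edge of $E_{\Lambda_n}\setminus E_{\Lambda_k}$ adjacent to the already-discovered boundary cluster of $\omega_1$, using common uniform variables $U_e$ for the two configurations), then invoke the Domain Markov property to see that once the dual circuit is located the remaining conditional law is $\phi^0$ on the unexplored region for \emph{both} boundary conditions, allowing a single common sample there; the construction of $\mathbf Q$ is the dual mirror. Your phrasing separates ``Holley coupling'' from ``exploration'' while the paper merges them into one sequential procedure, but this is only a cosmetic difference.
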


\begin{proof} We start by explaining how to sample $\PP[\Lambda_n][\xi]$. The Domain Markov property enables us to construct a configuration as follows. Consider uniform random variables $U_e$ on $[0,1]$ for every edge $e$. Choose an edge $e_1$ and declare it open if 
$U_{e_1}$ is smaller or equal to $\P[\Lambda_n][\xi]{\omega(e_1)=1}$. Choose another edge $e_2$ and declare it open if $U_{e_2}\le\P[\Lambda_n][\xi]{\omega(e_2)=1|\omega(e_1)}$. We iterate this procedure for every edge. Also note that we can stop the procedure after a certain number of edges and sample the rest of the edges according to the right conditional law. The domain Markov property guarantees that the measure thus obtained is $\PP[\Lambda_n][\xi]$. Note that the choice of the next edge can be random, as long as it depends only on the state of edges discovered so far.

Of course, the previous construction is useless for one measure, but it becomes interesting if we consider two measures:
one may sample both configurations based on the same random variables $U_e$ with a specific way of choosing the next edges. Let us now describe the way we are choosing the edges:
\begin{itemize}
\item {\em Construction of }{\bf P}: After $t$ steps, the edge $e_{t+1}\in E_{\Lambda_n}\setminus E_{\Lambda_k}$ is chosen in such a way that it has one end-point connected to  $\partial\Lambda_n$ by an open path in $\omega_1$, until it is not possible anymore. Then sample all remaining edges at once according to the correct conditional law. 
If there is a closed circuit surrounding $\Lambda_k$ in $\omega_1$, then there is a time $t$ such that at time $t+1$, no undiscovered edges has an end-point which is connected to the boundary in $\omega_1$. Since this procedure guarantees that $\omega_1\ge\omega_\xi$, no such edges is connected to the boundary in $\omega_\xi$ as well.  Therefore, the configuration sampled inside the remaining domain is a random-cluster model with free boundary conditions in both cases.  In particular, both configurations coincide in $\Lambda_k$.

\item {\em Construction of }{\bf Q}: After $t$ steps, the edge $e_{t+1}\in E_{\Lambda_n}\setminus E_{\Lambda_k}$ is chosen in such a way that one end-point of $e^*_{t+1}$ is dual-connected to  $\partial\Lambda_n^*$ by a dual-open path in $\omega_\xi^*$, until it is not possible anymore. Then sample all remaining edges according to the correct conditional law. 
 If there is an open circuit surrounding $\Lambda_k$ in $\omega_\xi$, then there is a first time $t$ such that the open circuit is discovered exactly at time $t$. Once again, $\omega_1\ge\omega_\xi$ and this circuit is also open in $\omega_1$. Then, the configuration inside the connected component of $\Lambda_k$ in $\Lambda_n\setminus\{e_1,e_2,\dots,e_t\}$ will be sampled according to a random-cluster configuration with wired boundary conditions. In particular, both configurations coincide in $\Lambda_k$.\end{itemize}\end{proof}

\begin{proof}[Proof of Theorem~\ref{thm:spatial mixing}]
It is clearly sufficient to prove that there exists $\alpha>0$ such that
\begin{equation*}
\big|\P[\Lambda_n][\xi]{A}-\P[\Lambda_n][1]{A}\big|\leq \left(\frac{k}{n}\right)^\alpha\P[\Lambda_n][\xi]{A}
\end{equation*}
for any event $A$ depending on edges in $\Lambda_k$. Let $E$ be the event that there exists a dual-open dual-circuit in $\omega_\xi^*$ included in $\Lambda_n^*\setminus\Lambda_k^*$. We deduce
\begin{align*}
\P[\Lambda_n][\xi]{A}&\ge\P[\Lambda_n][\xi]{A\cap E}=\mathbf Q[\omega_\xi\in A\cap E]\ge \mathbf Q[\omega_1\in A\cap E]\\
&=\P[\Lambda_n][1]{A\cap E}\ge (1-(k/n)^\alpha)\P[\Lambda_n][1]{A}\end{align*}
where in the third inequality, we used the fact that if $\omega_1$ belongs to $A\cap E$, then $\omega_1$ and therefore $\omega_\xi$ belong to $E$. Since on $E$, $\omega_1$ and $\omega_\xi$ coincide in $\Lambda_k$, then $\omega_\xi\in A$. The existence of $\alpha$ in the last inequality follows exactly as in the proof of Lemma~\ref{lem:one arm} from Property {\bf P5a} applied in concentric annuli $\Lambda_{k2^{i+1}}\setminus \Lambda_{k2^{i}+1}$ with $0\le i\le \log_2(n/k)$.

Reciprocally, if $F$ denotes the event that there is an open circuit in $\Lambda_n\setminus\Lambda_k$, we find
\begin{align*}
\P[\Lambda_n][1]{A}&\ge\P[\Lambda_n][1]{A\cap F}=\mathbf P[\omega_1\in A\cap F]\ge \mathbf P[\omega_\xi\in A\cap F]\\
&=\P[\Lambda_n][\xi]{A\cap F}\ge(1-(k/n)^\alpha)\P[\Lambda_n][\xi]{A}\end{align*}
where once again, we used in the third inequality that if $\omega_\xi\in A\cap F$, then $\omega_1$ is in $F$, and since $\omega_1$ and $\omega_\xi$ then coincide on $\Lambda_k$,  we get that $\omega_1\in A$. The last inequality is due to {\bf P5a} once again.
\end{proof}

\subsection{Proof of Theorem~\ref{thm:Loewner}}

In the statement of Theorem~\ref{thm:Loewner}, we consider an approximation of a simply connected domain $\Omega$ with two points $a$ and $b$ on its boundary. More precisely, $(\Omega_\delta,a_\delta,b_\delta)$ denotes a Dobrushin domain defined on the square lattice of mesh size $\delta$, i.e. $\delta\bbZ^2$. All the definitions and result extend to this context in a direct fashion.

A family of Dobrushin domain {\em approximates} a continuous domain $(\Omega,a,b)$ if $(\Omega_\delta,a_\delta,b_\delta)$ converges in the Carath\'eodory sense as $\delta$ tends to 0. This convergence is classical, we refer to \cite[Chapter 3]{Dum13} for details. Let us simply say that for smooth domains, it corresponds to the convergence in the Hausdorff sense.

In what follows, $\gamma_\delta$ denotes the exploration path in the Dobrushin domain $(\Omega_\delta,a_\delta,b_\delta)$.

In order to prove Theorem~\ref{thm:Loewner}, \cite{KemSmi12} shows that we only need to check the condition {\bf G2} defined now.
Consider a fixed simply connected domain $(\Omega,a,b)$ and a parametrized continuous curve $\Gamma$ from $a$ to $b$ in $\Omega$. A connected set $C$ is said to disconnect $\Gamma(t)$ from $b$ if it disconnects a neighborhood of $\Gamma(t)$ from a neighborhood of $b$ in $\Omega\setminus\Gamma[0,t]$. 

Fig.~\ref{fig:six arm event} will help the reader here. For any annulus $A=A(z,r,R):=z+(\Lambda_R\setminus\Lambda_r)$, let 
$A_t$ be the subset of $\Omega$ satisfying $A_t:=\emptyset$ if $\partial (z+\Lambda_r)\cap\partial (\Omega\setminus\Gamma[0,t])=\emptyset$, and otherwise
\begin{align*}A_t:=\left\{\begin{array}{l}z\in A\setminus\Gamma[0,t]\text{ such that the connected component of $z$}\\
\text{in $A\setminus\Gamma[0,t]$ does not disconnect $\Gamma(t)$ from $b$ in $\Omega\setminus\Gamma[0,t]$}\end{array}\right\}.\end{align*}
Consider the exploration path $\gamma_\delta$ as a continuous curve
from $a_\delta^\diamond$ to $b_\delta^\diamond$ parametrized in such a
way that it goes along one medial vertex in time 1 (in particular,
after time $n$ the path explored $n$ medial-vertices). For simplicity,
once the path reaches $b_\delta^\diamond$, it remains at
$b_\delta^\diamond$ for any subsequent time.

\paragraph{Condition G2}{\em There exists $C>1$ such that for any $(\gamma_\delta)$ in $(\Omega_\delta,a_\delta,b_\delta)$, for any stopping time $\tau$ and any annulus $A=A(z,r,R)$ with $0<Cr<R$, 
$$\phi_{\Omega_\delta}^{a_\delta,b_\delta}\Big(\gamma_\delta[\tau,\infty]\text{ makes a crossing of $A$ contained in $A_\tau$}\Big|\gamma_\delta[0,\tau]\Big)
< \tfrac12.$$
Above, ``$\gamma_\delta[\tau,\infty]$  makes a crossing of $A$
  contained in $A_\tau$'' means that there exists a sub-path
  $\gamma_\delta[t_1,t_2]$ of the continuous path
  $\gamma_\delta[\tau,\infty]$ that intersects both the boundary of
  $z+\Lambda_r$ and the boundary of $z+\Lambda_R$,  and such that
  $\gamma_\delta[t_1,t_2]\subset A_\tau$.  
}

\begin{figure}
\begin{center}
\includegraphics[width=0.80\textwidth]{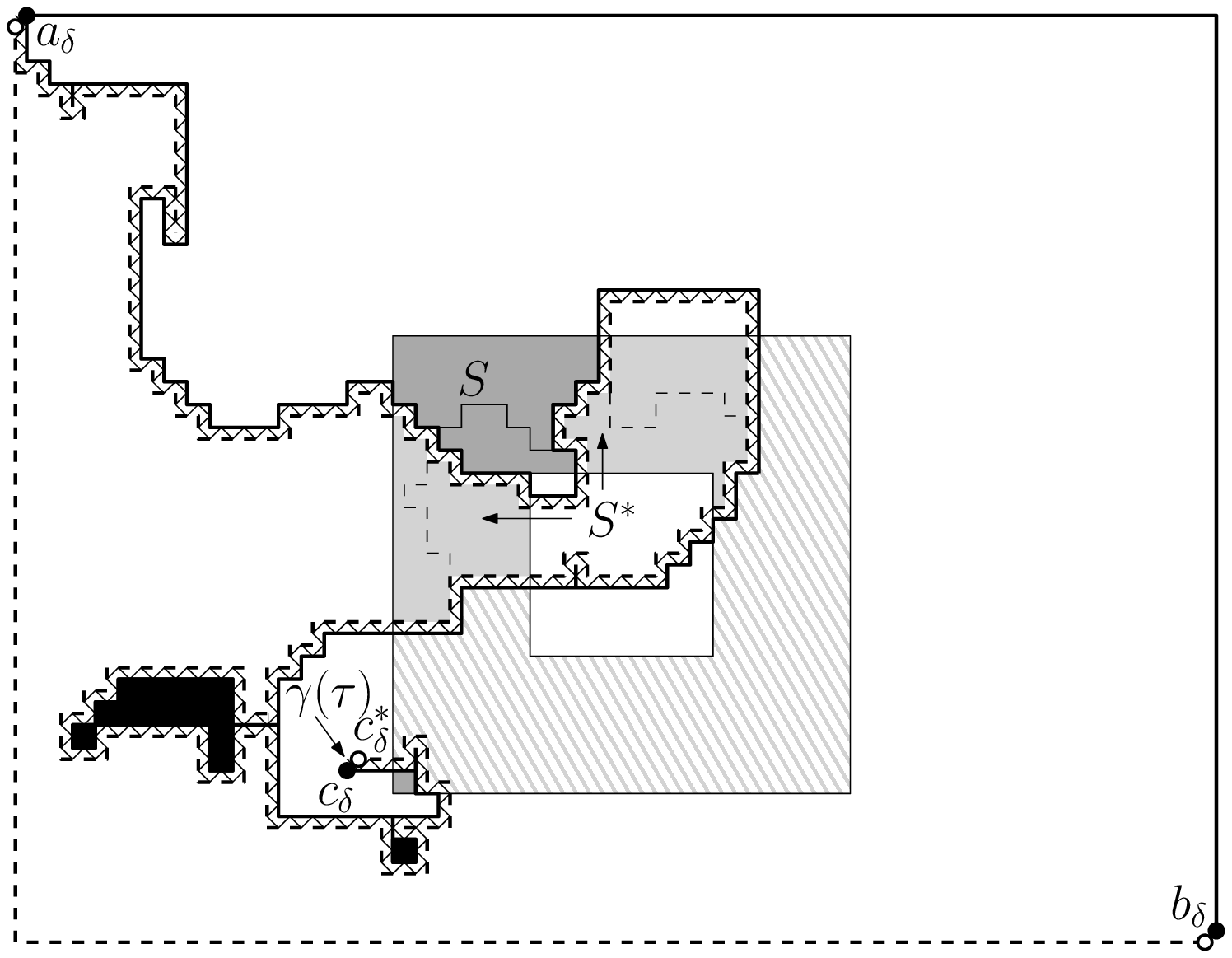}
\end{center}
\caption{\label{fig:six arm event}The dashed area is a connected component of $A\setminus\gamma_\delta[0,\tau]$ which is disconnecting $\gamma_\delta(\tau)$ from $b_\delta^\diamond$, or equivalently $c_\delta$ from $b_\delta$, and which is therefore not in $A_\tau$.
 The black parts are not included in the slit domain since they correspond to connected components that are not containing $b_\delta^\diamond$. Conditioning on $\gamma_\delta[0,\tau]$ induces Dobrushin boundary conditions in the new domain.
 The dark grey area is $S$ and the light-gray $S^*$. We depicted a blocking open path in $S$ and a dual-open path in each connected component of $S^*$.  }
\end{figure}

Before proving Condition {\bf G2} (and therefore Theorem~\ref{thm:Loewner}), let us introduce the notion of slit Dobrushin domain. 
Fig.~\ref{fig:six arm event} provides an example of what it may be.  

Fix a Dobrushin domain $(\Omega_\delta,a_\delta,b_\delta)$ and consider the exploration path $\gamma_\delta$ in the loop representation on $\Omega_\delta$. The path $\gamma_\delta$ can be seen as a random parametrized curve (the parametrization being simply given by the number of steps along the curve  between a medial-vertex in $\gamma_\delta$ and $a_\delta^\diamond$). 
\begin{definition}\label{def:slit domain}The {\em slit domain} $\Omega_\delta\setminus\gamma_\delta[0,n]$ is defined as the subdomain of $\Omega_\delta$ constructed by removing all the primal edges crossed by $\gamma_\delta[0,n]$ and by keeping only the connected component of the remaining graph containing $b_\delta$. It is seen as a Dobrushin domain by fixing the points $c_\delta$ and $b_\delta$, where $c_\delta$ is the vertex of $\delta\bbZ^2$ bordered by the last medial edge of $\gamma_\delta[0,n]$. \end{definition}

Similarly, one may define the dual Dobrushin domain. The marked point is then $c_\delta^*$, where $c_\delta^*$ is the dual-vertex of $(\delta\bbZ^2)^*$ bordered by the last medial edge of $\gamma_\delta[0,n]$. It is worth mentioning that the construction is symmetric for the dual Dobrushin domain: the dual of the slit domain $\Omega_\delta\setminus\gamma[0,n]$ is simply the connected component of $b_\delta^*$ in the subgraph of $\Omega_\delta^*$ obtained  by removing the dual-edges crossed by the curve.

\begin{remark}
The notation $\Omega_\delta\setminus\gamma_\delta[0,n]$ could somewhat be misleading, since $\Omega_\delta$ is a subset of $\delta\bbZ^2$ and $\gamma_\delta[0,n]$ is a path of medial edges. Nevertheless, we allow ourselves some latitude here since we find this notation both concise and intuitive.
\end{remark}

If one starts with Dobrushin boundary conditions on $(\Omega_\delta,a_\delta,b_\delta)$, then conditionally on $\gamma_\delta[0,n]$ the law of the configuration inside $\Omega_\delta\setminus\gamma_\delta[0,n]$ is a random-cluster model with Dobrushin boundary conditions wired on $\partial_{b_\delta c_\delta}$ and free elsewhere. This comes from the fact that the exploration path $\gamma_\delta[0,n]$ ``slides between open edges and dual-open dual-edges'' and therefore the edges on its left must be open and the dual-edges on its right dual-open. This implies that the arc $\partial_{a_\delta c_\delta}$ must be wired (and therefore $\partial_{b_\delta c_\delta}$ is since $\partial_{b_\delta a_\delta}$ was already wired to start with) and the dual arc $\partial_{c_\delta^* a_\delta^*}$ is dual-wired.
\bigbreak

We are now in a position to prove Condition {\bf G2}.

\medbreak
\noindent{\em Proof of Condition {\bf G2}.}
Let $A(z,r,R)$ and $A_\tau$ as defined above. We can fix a realization of $\gamma_\delta[0,\tau]$, and work in the slit Dobrushin domain $(\Omega_\delta\setminus\gamma_\delta[0,\tau],c_\delta,b_\delta)$. 

See $A_\tau$ as the union of connected components of the Dobrushin
domain $\Omega_\delta^\diamond$ seen as an open domain of $\bbR^2$
minus the path $\gamma_\delta[0,\tau]$. We denote generically a
connected component by $\calC$ (we see it as a subset of $\bbR^2$).

The connected components can be divided into three classes:
\begin{itemize}[noitemsep,nolistsep]
\item $\partial\calC$ intersects both $\partial_{c_\delta b_\delta}^\diamond$ and $\partial_{b_\delta c_\delta}^\diamond$;
\item $\partial\calC$ intersects $\partial_{b_\delta c_\delta}^\diamond$ but not $\partial_{c_\delta b_\delta}^\diamond$.
\item $\partial\calC$ intersects $\partial_{c_\delta b_\delta}^\diamond$ but not $\partial_{b_\delta c_\delta}^\diamond$;
\end{itemize}
In fact, there cannot be any connected component of the first type. Indeed, let us assume that such a connected component $\calC$ does exist. Let $\gamma$ be a self-avoiding path in $\calC$ going from $\partial_{b_\delta c_\delta}^\diamond$ to $\partial_{c_\delta b_\delta}^\diamond$. Topologically,  $c_\delta^\diamond$ and $b_\delta^\diamond$ must be on two different sides of $\Gamma$ in $(\Omega_\delta\setminus \gamma_\delta[0,\tau])^\diamond\setminus\Gamma$
. But this means that $\calC$ disconnects $c_\delta$ from $b_\delta$, and therefore that $\calC$ is not part of $A_\tau$, which is contradictory.

We can therefore safely assume that the connected components are either of the second or third types. We now come back to the interpretation in terms of graphs.

Let $S$ be the subgraph of
$\Omega_\delta\setminus\gamma_\delta[0,\tau]$ given by the union of
the connected components (seen as primal graphs this time) of the
second type (see Fig.~\ref{fig:six arm event}). This set is a subset
of $A_\tau$. Furthermore, the boundary conditions induced by the
conditioning on $\gamma_\delta[0,\tau]$ are wired on $\partial
S\setminus\partial A_\tau$. Therefore, conditioned on
$\gamma_\delta[0,\tau]$ and the configuration outside $A(z,r,R)$, the
configuration $\omega$ in $S$ dominates $\omega'_{|S}$, where
$\omega'$ follows the law of a random-cluster model in $A(z,r,R)$ with
free boundary conditions. In particular, if there exists an open
circuit in $\omega'$ surrounding $z+\Lambda_r$ in $A(z,r,R)$, then the
restriction of this path to $S$ is also open in $\omega$ and it
disconnects $z+\partial\Lambda_r$ from $z+\partial\Lambda_R$ in $S$.
In particular, the exploration path $\gamma_\delta[\tau,\infty]$
cannot cross $A_\tau$ inside $S$ since this would require the
existence of a dual-open dual-path from the outer to the inner part of
$A_\tau^*$.

Property~{\bf P5a} implies that this open circuit exists in $\omega'$
with probability larger than a constant $c>0$ not depending on
$\delta$, and that therefore $\gamma_\delta[\tau,\infty]$ cannot cross
$A_\tau$ inside $S$ with probability larger than $c$ uniformly on the
configuration outside $A_\tau$.

Let now $S^*$ be the subgraph of $\Omega_\delta^*\setminus\gamma_\delta[0,\tau]$ given by the union of the connected components (seen as dual graphs) of the third type. The same reasoning for the dual model implies that with probability $c>0$, the exploration path $\gamma_\delta[\tau,\infty]$ cannot cross $A_\tau^*$ inside $S^*$.

Altogether, $\gamma_\delta[\tau,\infty]$ cannot cross $A_\tau$ with probability $c^2$. Now, Proposition~\ref{prop:classical RSW} shows that $c$ can be taken to be equal to $1-(1-c_2)^{\lfloor\log_2(R/r)\rfloor}$. Since $R/r\ge C$, we can guarantee that $c^2\ge 1/2$ by choosing $C$ large enough.

\subsection{Proof of Theorem \ref{thm:strongest RSW}} \label{subsection:scaling limit}

We omit certain details in the proof of Theorem~\ref{thm:strongest RSW} (for instance when we use {\bf P5}, since we already did it several times in this article).
We start by a lemma. Define the upper half-plane $\bbH=\bbZ\times\bbN$.

\begin{lemma}
Assume that there exists $c>0$ such that for any $n\ge 1$,
$$\P[\mathbb H][0]{[-n,n]\times\{0\}\longleftrightarrow \bbZ\times\{n\}}\ge c.$$
Then, for any $\alpha>0$ there exists $c(\alpha)>0$ such that for every $n\ge1$,
$$\P[{[-n,n]\times[0,\alpha n]}][0]{\calC_v([-n,n]\times[0,\alpha n])}\ge c(\alpha).$$
\end{lemma}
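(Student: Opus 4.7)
The proof reduces to establishing the base case
$$\P[{[-n,n]\times[0,n]}][0]{\calC_v([-n,n]\times[0,n])} \ge c_0$$
for some universal constant $c_0 > 0$; extension to arbitrary aspect ratio $\alpha$ then follows by the standard chaining argument, stacking $O(\alpha)$ vertical crossings and joining them via horizontal crossings provided by \textbf{P5} together with FKG. For $\alpha<1$ the target rectangle is shorter than the base, and the crossing can be extracted directly from the base case via FKG combined with horizontal \textbf{P5}-crossings.

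For the base case, I would first localize the hypothesis. Since the half-plane connection event is increasing and equals the monotone union, as $M\to\infty$, of the events asking that the connection be realized using paths confined to the strip $[-Mn,Mn]\times[0,n]$, there exists $M$ such that the probability under $\P[\bbH][0]$ of the localized event $P$ is at least $c/2$. Monotone convergence of $\P[B_K][0]$ to $\P[\bbH][0]$ for local increasing events, as $B_K:=[-Kn,Kn]\times[0,Kn]$ exhausts $\bbH$, then transfers this estimate to a finite free-boundary-condition box: $\P[B][0]{P} \ge c/4$ for $B = B_{M'}$ with $M'$ large enough.

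Next, within $B$, I would construct a dual-open circuit around $R = [-n,n]\times[0,n]$. Using \textbf{P5} (dualized via the self-duality of the model at $p_c$), I construct three dual-open path events $D_L$, $D_R$, $D_T$, realized in the left strip $[-Mn,-n-1]\times[0,n+1]$, the right strip $[n+1,Mn]\times[0,n+1]$, and the top strip $[-n-1,n+1]\times[n+1,2n]$ respectively. Combined with the automatic wiring along the bottom of $B$ through the exterior dual vertex induced by the free boundary conditions on $\partial\bbH$, these three events form a dual circuit enclosing $R$. By the domain Markov property, conditional on $D_L\cap D_R\cap D_T$, the configuration inside $R$ is distributed as $\P[R][0]$; and conditional on the same event, any realization of $P$ is forced to stay inside $R$, so $P\cap D_L\cap D_R\cap D_T$ coincides with $\calC_v(R)\cap D_L\cap D_R\cap D_T$. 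This yields the identity
$$\P[B][0]{P\cap D_L\cap D_R\cap D_T}=\P[B][0]{D_L\cap D_R\cap D_T}\cdot\P[R][0]{\calC_v(R)}.$$

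To lower-bound the left-hand side, I would condition first on $P$, then iteratively on each of $D_L$, $D_R$, $D_T$, applying at each step the uniform-in-boundary-conditions estimate \textbf{P5c} (and its dual version). Uniformity in the boundary conditions absorbs any conditioning from previous steps, yielding $\P[B][0]{D_L\cap D_R\cap D_T\mid P}\ge c_2^3$ for some $c_2>0$, whence $\P[B][0]{P\cap D_L\cap D_R\cap D_T}\ge (c/4)c_2^3$. Combining with the identity above and the trivial bound $\P[B][0]{D_L\cap D_R\cap D_T}\le 1$ gives the base case. The principal obstacle is the FKG-incompatibility between the increasing primal event $P$ and the decreasing dual events $D_L,D_R,D_T$; it is circumvented by the uniform-in-boundary-conditions estimate \textbf{P5c}, which makes the conditional analysis robust. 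A secondary technical point is that the dual strips touch $\partial\bbH$ directly, which must be handled through the exterior dual vertex in the duality of $\P[B][0]$.
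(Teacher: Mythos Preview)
There is a genuine gap in the core step. Your claim that on $D_L\cap D_R\cap D_T$ ``any realization of $P$ is forced to stay inside $R$'' is incorrect, and so is the displayed identity
\[
\P[B][0]{P\cap D_L\cap D_R\cap D_T}=\P[B][0]{D_L\cap D_R\cap D_T}\cdot\P[R][0]{\calC_v(R)}.
\]
The dual crossings $D_L,D_R,D_T$ live in strips strictly outside $R=[-n,n]\times[0,n]$, so the dual circuit they form (together with the dual-wired bottom) encloses a random region $\overline\Gamma$ that strictly contains $R$. A primal path witnessing $P$ is only confined to $\overline\Gamma$, not to $R$: it can wander left or right of $[-n,n]$ as far as the actual positions of $D_L,D_R$ allow, and its endpoint at height $n$ may lie anywhere in $\overline\Gamma\cap(\bbZ\times\{n\})$, not in $[-n,n]\times\{n\}$. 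Hence $P\cap D_L\cap D_R\cap D_T$ does not imply $\calC_v(R)$. Likewise, conditioning on the existence of a surrounding dual circuit does not make the law in $R$ equal to $\PP[R][0]$; what the domain Markov property gives is that, after conditioning on the innermost such circuit $\Gamma$, the law in $\overline\Gamma$ is $\PP[\overline\Gamma][0]$, with $\overline\Gamma\supsetneq R$ random.

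The missing idea, which the paper supplies, is to first force the primal path to reach the \emph{top} of a box of controlled width rather than escaping through its sides. One chooses $\beta>1$ using the upper bound in {\bf P5c} so that $\P[\bbZ^2][1]{\calC_h([0,(\beta-1)m]\times[0,m])}\le c/3$; then on the hypothesis event for $m$, the path from $[-m,m]\times\{0\}$ either hits $[-\beta m,\beta m]\times\{m\}$ inside $[-\beta m,\beta m]\times[0,m]$ (the event $\calA(m)$), or produces a long horizontal crossing of a side strip, which is unlikely. This yields $\P[\bbH][0]{\calA(m)}\ge c/3$. Only after securing this anchored top-hitting event does one shield with a dual half-annulus (your $D$-type idea, but placed around the box $[-\beta m,\beta m]\times[0,m]$) to transfer $\calA(m)$ to free boundary conditions on $[-2\beta m,2\beta m]\times[0,2m]$. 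One then takes $m=\lfloor n/(2\beta)\rfloor$, mirrors the construction at the top of $[-n,n]\times[0,\alpha n]$, and joins the two anchored pieces by {\bf P5} crossings in the bulk via FKG. Your shielding/decoupling instinct is right, but it must be applied \emph{after} the $\beta$-trick that pins the landing point of the path.
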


\begin{proof}
First, Property {\bf P5} allows us to choose $\beta>1$ so that for any $m\ge1$,
$$\P[\bbZ^2][1]{\calC_h([0,(\beta-1) m]\times[0,m])}\le \tfrac c3.$$
Let 
$$\calA(m):=\left\{[-m,m]\times\{0\}\lr[{[-\beta m,\beta m]\times[0,m]}] [-\beta m,\beta m]\times\{m\}\right\}.$$
With this choice of $\beta$, and because of the comparison between boundary conditions, we find that
$$\P[\bbH][0]{\calA(m)}\ge \tfrac c3.$$
Using Property {\bf P5} for the dual model, we deduce that there exists $c_1>0$ such that for any $m\ge1$,
\begin{equation}\P[{[-2\beta m,2\beta m]\times[0,2m]}][0]{\calF(m)\cap\calA(m)}\ge c_1,\end{equation}
where $\calF(m)$ is the existence of a dual-path in the
half-annulus $$[-2\beta m,2\beta m]\times[0,2m]\setminus [-\beta
m,\beta m]\times[0,m]$$ from $[\beta m,2\beta m]\times\{0\}$ to
$[-2\beta m,-\beta m]\times\{0\}$. (The occurrence of $\cal F_n$ and
$\cal A_n$ is illustrated on Fig.~\ref{fig:touching}.) 
After conditioning on the exterior most such dual path, the domain Markov property and the comparison between boundary conditions imply that
\begin{equation}\label{eq:lol}\P[{[-2\beta m,2\beta m]\times[0,2m]}][0]{\calA(m)}\ge c_1.\end{equation}

\begin{figure}[htbp]
  \centering
  \begin{minipage}[b]{.47\linewidth}
    \includegraphics[width=\linewidth]{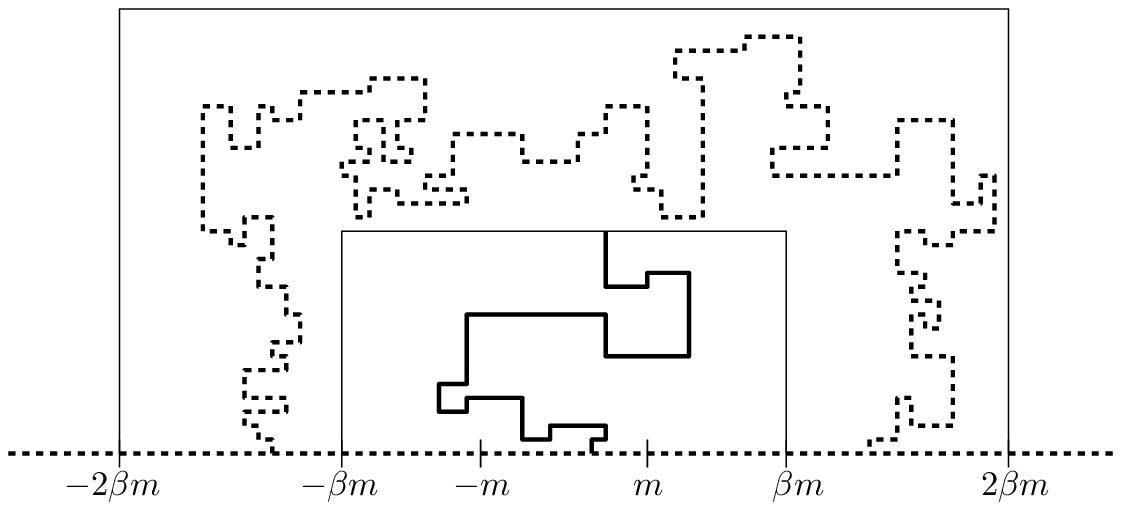}
    \caption{The occurrence of $\calA$ and $\calF$ in $\bbH$ with
      free boundary conditions on $\bbZ\times\{0\}$}
    \label{fig:touching}
  \end{minipage}
  \hfill
    \begin{minipage}[b]{.47\linewidth}
    \includegraphics[width=\linewidth]{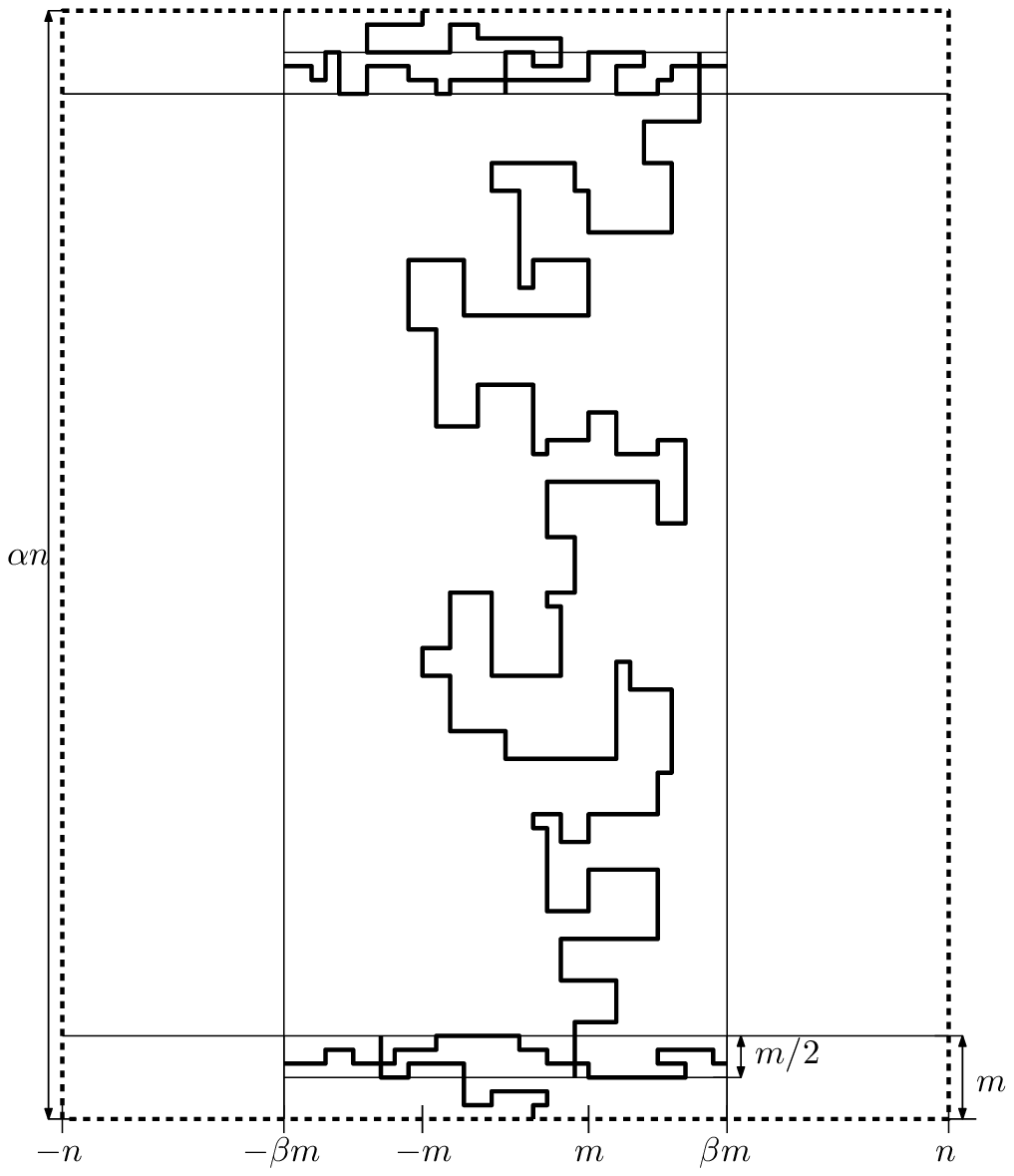}
    \caption{The occurrence of $\calA,\ldots,\cal E$ in
      $[-n,n]\times[0,\alpha n]$ with
      free boundary conditions}
    \label{fig:touchingCrossing}
  \end{minipage}
\end{figure}

We now fix $\alpha\ge 2$ and $n\ge1$. We assume without loss
of generality that $\alpha n$ is an integer. Set $m=\lfloor
n/(2\beta)\rfloor$. Define the following four events:
\begin{align*}
\calB(m)&=\big\{[-\beta m,\beta m]\times\{\alpha n-m\}\lr[{[-\beta m,\beta m]\times[\alpha n-m,\alpha n]}][-m, m]\times\{\alpha n\}\big\},\\
\calC(m)&=\calC_h\big([-\beta m,\beta m]\times [\tfrac m2,m]\big),\\
\calD (m)&= \calC_h\big([-\beta m,\beta m]\times [\alpha n-m,\alpha n-\tfrac m2]\big),\\
\calE (m)&=\calC_v\big([-\beta m,\beta m]\times [\tfrac m2,\alpha n-\tfrac m2]\big).
\end{align*}
Note that the event $\calB(m)$ is the equivalent of the event $\calA(m)$, but ``at the top of the rectangle''. By \eqref{eq:lol} and comparison between boundary conditions, $\P[{[-n,n]\times[0,\alpha n]}][0]{\calA(m)}$ and $\P[{[-n,n]\times[0,\alpha n]}][0]{\calB(m)}$ are larger or equal to $c_1$. Furthermore, Property {\bf P5} also implies that $\P[{[-n,n]\times[0,\alpha n]}][0]{\calC(m)}$ and $\P[{[-n,n]\times[0,\alpha n]}][0]{\calD(m)}$ are larger than $c_2>0$, and that $\P[{[-n,n]\times[0,\alpha n]}][0]{\calE(m)}$ is larger than $c_3(\alpha)>0$. 

Now observe that if the five events $\calA(m),\ldots,\calE(m)$ occur
simultaneously, then the rectangle $[-n,n]\times[0,\alpha n]$ is
crossed vertically (see Fig.~\ref{fig:touchingCrossing}). The FKG inequality thus implies that
\begin{align*}\P[{[-n,n]\times[0,\alpha n]}][0]{\calC_v([-n,n]\times[0,\alpha n])}&\ge\P[{[-n,n]\times[0,\alpha n]}][0]{\calA(m)\cap\calB(m)\cap\calC(m)\cap\calD(m)\cap\calE(m)}\\
&\ge c_1^2\cdot c_2^3.\end{align*} We just proved the result for $\alpha\ge 2$ with $c(\alpha)=c_1^2c_2^2c_3(\alpha)>0$. For $\alpha\in(0,2)$, simply apply the comparison between boundary conditions in the rectangle $[-\tfrac{\alpha n}4,\tfrac{\alpha n}4]\times[0,\alpha n]$ which has aspect ratio 2 and is included in $[-n,n]\times[0,\alpha n]$.
\end{proof}
The previous lemma shows that in order to prove Theorem~\ref{thm:strongest RSW}, we may focus on showing that there exists a constant $c>0$ such that 
for any $n\ge 1$,
$$\P[\mathbb H][0]{[-n,n]\times\{0\}\longleftrightarrow \bbZ\times\{n\}}\ge c.$$
In order to do so, we use the second-moment method. Let $p_n=\P[\mathbb H][0]{0\lr \Z\times\{n\}}$ and define
$$\mathsf N:=\sum_{x\in [-n,n]\times\{0\}}{\mathbf 1}_{x\leftrightarrow \bbZ\times\left\{n\right\}}.$$
By definition $\P[\mathbb H][0]{\mathsf N}=(2n+1)p_n$. Furthermore, 
\begin{align}\nonumber\P[\mathbb H][0]{\mathsf N^2}&\le (2n+1)\sum_{x\in [-2n,2n]\times\{0\}}\P[\mathbb H][0]{0,x\lr \Z\times\{n\}}\\
\nonumber&\le (2n+1)\cdot \Cl{blabla}\sum_{x\in [-2n,2n]\times\{0\}}\P[\mathbb H][0]{\Lambda_{2|x|}\lr \Z\times\{n\}}\P[\mathbb H][0]{0\lr \partial\Lambda_{|x|/4}}^2\\
&\le (2n+1)\cdot \Cl{blablabla}\sum_{x\in [-2n,2n]\times\{0\}}\P[\mathbb H][0]{0\lr \Z\times\{n\}}\P[\mathbb H][0]{0\lr \Z\times\{|x|\}}\nonumber\\
&= (2n+1)\cdot \Cr{blablabla}\cdot p_n\cdot 2\sum_{k=0}^{2n} p_k\label{eq:eq1}.
\end{align}
In the second, we used that 
$$\big\{0,x\lr \Z\times\{n\}\big\}\subset\big\{\Lambda_{2|x|}\lr \Z\times\{n\}\big\}\cap\big\{0\lr \partial\Lambda_{|x|/4}\big\}\cap\big\{x\lr x+\partial\Lambda_{|x|/4}\big\}$$ and Theorem~\ref{thm:spatial mixing} (more precisely a direct adaptation to the upper half-plane) to decouple the different events on the right side. In the third line, we used Property {\bf P5} as follows. Let $\calA$ be the event that 
\begin{itemize}[noitemsep,nolistsep]
\item $\partial\Lambda_{|x|/8}\lr\partial\Lambda_{4|x|}$,
\item there exists an open path in $\Lambda_{|x|/4}\setminus\Lambda_{|x|/8}$ disconnecting 0 from infinity in $\bbH$,
\item there exists an open path in $\Lambda_{4|x|}\setminus\Lambda_{2|x|}$ disconnecting 0 from infinity in $\bbH$.
\end{itemize}
With this definition, we see that if $0\lr \partial\Lambda_{|x|/4}$, $\Lambda_{2|x|}\lr \Z\times\{n\}$ and $\calA$ occur, then $0$ is connected to $\Z\times\{n\}$. Now, the event $\calA$ has probability bounded away from 0 uniformly in $x$ thanks to {\bf P5}, so that the FKG inequality directly implies the third inequality of \eqref{eq:eq1}. Similarly, one can use {\bf P5} to prove that $\P[\mathbb H][0]{0\lr \partial\Lambda_{|x|/4}}\le \Cl{156}
\P[\mathbb H][0]{0\lr \Z\times\{|x|\}}$.

Let us now state the following claim.
\bigbreak
\noindent{\em Claim: There exists $\Cr{constant200}>0$ such that for any $n\ge0$,} 
 $$\sum_{k=0}^n p_k\le \Cl{constant200}np_n.$$

 Before proving the claim, let us finish the proof of Theorem~\ref{thm:strongest RSW}. The claim and \eqref{eq:eq1} imply that 
 \begin{align*}\P[\mathbb H][0]{\mathsf N^2}&\le (2n+1)\Cr{blablabla}p_n2\Big(\Cr{constant200}np_n+np_n\Big)\le \Cr{blablabla}(\Cr{constant200}+1) (2n+1)2np_n^2\le \Cr{blablabla}(\Cr{constant200}+1) \P[\mathbb H][0]{\mathsf N}^2.
 \end{align*}
(We also used that $p_k\le p_n$ for $k\ge n$.) By applying the Cauchy-Schwarz inequality, we find
$$\P[\mathbb H][0]{[-n,n]\times\{0\}\longleftrightarrow\Z\times\{n\}}=\P[\mathbb H][0]{\mathsf N>0}\ge \frac{\P[\mathbb H][0]{\mathsf N}^2}{\P[\mathbb H][0]{\mathsf N^2}}\ge \frac1{\Cr{blablabla}(\Cr{constant200}+1)}>0.$$

\medbreak
\noindent{\em Proof of the Claim.} We use \eqref{eq:main equation} one more time.
Recall that $q<4$. Let $\mathbb V$ be the subgraph of $\bbZ^3$ defined as follows:
the vertices are given by $\Z^3$ and the edges by
 \begin{itemize}[noitemsep,nolistsep]
\item $[(x_1,x_2,x_3),(x_1,x_2+1,x_3)]$ for every $x_1,x_2,x_3\in \Z$,
 \item $[(x_1,x_2,x_3),(x_1+1,x_2,x_3)]$ for every $x_1,x_2,x_3\in \Z$ such that $x_1\neq 0$,
 \item  $[(0,x_2,x_3),(1,x_2,x_3)]$ for every $x_2\ge-n$ and $x_3< 0$,
 \item $[(0,x_2,x_3),(1,x_2,x_3+1)]$ for every $x_2<-n$ and $x_3<  0$,
 \item  $[(0,x_2,x_3),(1,x_2,x_3)]$ for every $x_2> n$ and $x_3\ge  0$,
 \item $[(0,x_2,x_3),(1,x_2,x_3+1)]$ for every $x_2\le n$ and $x_3\ge  0$.
 \end{itemize}
\medbreak
For $\theta>0$ and $n\ge0$, let 
$$V_{n,\theta}:=\big\{(x_1,x_2,x_3)\in\mathbb V:|x_1|\le n,|x_2|\le 2n\text{ and }|x_3|\le \theta\big\}.$$
We also set $a=(1,-n,0)$ and $b=(1,n,0)$. One can see $V_{n,\theta}$
as a Dobrushin domain constructed from the two medial vertices
$a^\diamond=(1,-n-\tfrac12,0)$ and $b^\diamond=(1,n+\tfrac12,0)$. (The
definitions need to be slightly adapted to this framework). The
boundary conditions are wired on the segment $\partial_{ba}$ between
$a$ and $b$, and free elsewhere. Let $\partial$ be the part of
$\partial_{ab}$ composed of vertices such that $|x_1|=n$ or
$|x_2|=2n$. One can choose $\theta=\theta(q)$ large enough such that
the following holds: for every vertex $x$ on
$\partial_{ab}\setminus\partial$, the two medial edges $e_1,e_2$ of
$\partial_{ab}^\diamond$ bordering $x$ satisfies
\begin{equation}
  \label{eq:41}
  \mathrm{Im}\left(\eta(e_1)\widehat F(e_1)+\eta(e_2)\widehat F(e_2)\right)\ge 0.
\end{equation}
Let $V$ be the set of medial vertices with four incident medial edges
in $V_{n,\theta}^\diamond$.  In order to apply  Equation \eqref{eq:main
  equation}, we wish to compute the sum
\begin{equation}
  \label{eq:42}
  S:=\sum_{\substack{e\text{ incident to exactly}\\  \text{one vertex in }V}} \mathrm{Im}\left(\eta(e)\widehat F(e)\right).
\end{equation} 
\begin{itemize}
\item The contribution of the medial edges $e_a,e_b$ to the
  sum $S$ is given by
  \begin{equation}
    \label{eq:43}
    \mathrm{Im}(e^{\mathrm i \hat \sigma3\pi/2}-1)=\sin(\hat\sigma3\pi/2)\ge0
  \end{equation}
\item The contribution of the  medial edges bordering a dual-vertex $u$ on the arc
  $\partial^\star_{ba}=\{\tfrac32\}\times[-n+\tfrac12,n-\tfrac12]$  is given by
  \begin{equation}
    \label{eq:44}   
    2\cos\left(\hat\sigma\tfrac{3\pi}4\right)
    \sin\left(\hat\sigma\tfrac{\pi}4\right) \sum_{u\in\partial_{ba}^*}\P[V_{n,\theta}][a,b]{u\lr[*] \partial_{ab}^*}.
  \end{equation}
\item Using Equation~\eqref{eq:41}, we find that the contribution
  $S(\partial_{ab})$ of all the medial edges bordering a vertex on the
  arc $\partial_{ab}$ satisfies
  \begin{equation}
    \label{eq:45}
    S(\partial_{ab})\ge -2\sum_{x\in \partial}\P[V_{n,\theta}][a,b]{x\lr \partial_{ba}}.
  \end{equation}
\end{itemize}

Using that $S=0$ together with the three equations above, we obtain
\begin{equation}
  \label{eq:46}
 2\cos\left(\hat\sigma\tfrac{3\pi}4\right)\sin\left(\hat\sigma\tfrac{\pi}4\right)
 \sum_{u\in\partial_{ba}^*}\P[V_{n,\theta}][a,b]{u\lr[*] \partial_{ab}^*}\le
 2\sum_{x\in \partial}\P[V_{n,\theta}][a,b]{x\lr \partial_{ba}}.
\end{equation}
Notice that the condition $1\le q<4$ implies that
$2\cos\left(\hat\sigma\tfrac{3\pi}4\right)\sin\left(\hat\sigma\tfrac{\pi}4\right)>0$. Then, using the comparison
between boundary conditions and Property {\bf P5}, we find that for every
$x$~on~$\partial$, $\P[V_{n,\theta}][a,b]{x\lr \partial_{ba}}\le \C p_n$.
Plugging this in Equation~\eqref{eq:46}, we obtain

\begin{equation}
  \sum_{u\in\partial_{ab}^*}\P[V_{n,\theta}][a,b]{u\lr[*] \partial_{ba}^*}\le \Cl{constant202}np_n.
\end{equation}

Now, for a dual-vertex $u$ of $\partial_{ab}^*$ at distance $k$ from $a$ or $b$ (let us assume without loss of generality that $u$ is at distance $k$ from $a$). Let $\Lambda_k^+=[-k,k]\times[0,k]$. If the following three events occur simultaneously:
\begin{itemize}[noitemsep,nolistsep]
\item $u\lr[*]u+\partial\Lambda_k^+$,
\item the half-annulus $u+\partial\Lambda_k^+\setminus\Lambda_{k/2}^+$ contains a dual-open dual path disconnecting $u$ from the $\partial_{ba}^*$ in $V_{n,\theta}$,
\item that there exists a dual-open dual path disconnecting $a$ from $\partial$ in 
$$\{u\in V_{n,\theta}:k/2\le |(u_1,u_2)-(0,n)|\le k\}$$
(this set is a topological rectangle winding around the singularity
$a$ at distance $k/2$ of it),
\end{itemize}
then $u$ is connected by a dual-open path to $\partial_{ba}^*$. Now, the second and third events occur with probability bounded away from 0 uniformly in $n$ thanks to {\bf P5} and the fact that $\theta(q)<\infty$ does not depend on $n$. By conditioning on these two events and using the comparison between boundary conditions, we find that
\begin{equation}\P[V_{n,\theta}][a,b]{u\lr[*] \partial_{ba}^*}\ge \cl{constant300}\P[u+\Lambda_k^+][1]{u\lr[*] u+\partial\Lambda_k^+}\ge \Cr{constant300}p_k,\end{equation}
where in the last inequality we used duality and the comparison between boundary conditions.
In conclusion, we find
$$2\Cr{constant300}\sum_{k=0}^np_k\le \Cr{constant202}np_n$$
and the conclusion follows immediately.
\begin{flushright}$\diamond$\end{flushright}

\begin{remark}
Note that $\theta=\theta(q)<\infty$ for any $q<4$, but that $\theta(q)$ tends to infinity as $q$ tends to 4 and there is no such $\theta(q)$ for $q=4$. This fact explains why this proof does not work for $q=4$ (and should not work since the statement is expected to be false). 
\end{remark}

\subsection{Proof of Theorem \ref{prop:uniqueness}}

The coupling between Potts and random-cluster models shows that
$$\mu_{\bbZ^2,\beta_c(q),q}[\sigma_x=\sigma_0]-\frac1q=\P[\bbZ^2,p_c(q),q][1]{0\lr x}.$$
Lemma~\ref{lem:one arm} implies the existence of $\eta_2>0$ such that 
$$\P[\bbZ^2,p_c(q),q][1]{0\lr x}\le \frac{1}{|x|^{\eta_2}},$$
thus implying the second inequality of Theorem~\ref{prop:uniqueness}.

In order to obtain the first, we do not need {\bf P5} but just Theorem~\ref{thm:weakRSW}. Indeed, if $\calC_h([0,n]\times[0,n])$ occurs, then one of the vertices of $[0,n]\times\{0\}$ is connected to distance $n$.
As a consequence, the union bounded implies that 
$$\P[\bbZ^2,p_c(q),q][1]{0\lr \partial\Lambda_n}\ge \frac{c}{n}.$$
Let $\calA(n)$ be the event that there exists an open circuit in $\Lambda_{2n}\setminus \Lambda_{n}$ surrounding the origin. The FKG inequality and Theorem~\ref{thm:weakRSW} one more time show that there exists $c'>0$ so that
\begin{align*}\P[\bbZ^2,p_c(q),q][1]{0\lr x}&\ge \P[\bbZ^2,p_c(q),q][1]{0\lr\partial\Lambda_{3|x|}}\P[\bbZ^2,p_c(q),q][1]{x\lr\partial\Lambda_{3|x|}}\P[\bbZ^2,p_c(q),q][1]{\calA(|x|)}\\
&\ge \frac{c'}{|x|^2}\end{align*}
and the proof follows by choosing $\eta_1$ small enough.

\bibliographystyle{alphaNames}
\bibliography{biblicomplete}

\end{document}